\pdfoutput=1
\documentclass[a4paper,10pt,reqno]{amsart} 

\usepackage{dsfont,amsmath,amsfonts,amscd,amssymb,graphicx,mathrsfs}
\usepackage{setspace} 
\usepackage{tikz}
\usepackage{tikz-cd}
\usetikzlibrary{calc}
\usepackage[T1]{fontenc}
    \usepackage{ae}
    \usepackage{aecompl}

\usepackage{epstopdf}
 \usepackage{hyperref}

\usepackage{amsthm}
\usepackage{enumerate}
\usepackage[all,cmtip]{xy}
\usepackage[T1]{fontenc}
\usepackage{mathtools}
\usepackage{stmaryrd}
\usepackage{wasysym}
\usepackage{youngtab}
\usepackage{ytableau}
\usepackage{tcolorbox}
\usepackage{enumitem}
\usepackage{indentfirst}

\setcounter{tocdepth}{1}

\addtolength{\hoffset}{-0.5cm} \addtolength{\textwidth}{1cm}
\addtolength{\voffset}{-1.5cm} \addtolength{\textheight}{2cm}

\newtheoremstyle{plainx}
  {3pt} % Space above
  {3pt} % Space below
  {\itshape} % Body font
  {} % Indent amount
  {\bfseries} % Theorem head font
  {.} % Punctuation after theorem head
  {.5em} % Space after theorem head
  {} % Theorem head spec (can be left empty, meaning `normal')

\newtheoremstyle{definitionx}
  {3pt} % Space above
  {3pt} % Space below
  {} % Body font
  {} % Indent amount
  {\bfseries} % Theorem head font
  {.} % Punctuation after theorem head
  {.5em} % Space after theorem head
  {} % Theorem head spec (can be left empty, meaning `normal')

\theoremstyle{plainx}
\newtheorem{theorem}{Theorem}[section]
\newtheorem{lemma}[theorem]{Lemma}
\newtheorem{corollary}[theorem]{Corollary}
\newtheorem{proposition}[theorem]{Proposition}

\theoremstyle{definitionx}
\newtheorem{definition}[theorem]{Definition}
\newtheorem{example}[theorem]{Example}

\newtheorem{remark}[theorem]{Remark}
\newtheorem*{notat}{Notation}

\DeclareSymbolFont{TXlettersA}{U}{txmia}{m}{it}
\SetSymbolFont{TXlettersA}{bold}{U}{txmia}{bx}{it}
\DeclareFontSubstitution{U}{txmia}{m}{it}
\DeclareMathSymbol{\varpitx}{\mathord}{TXlettersA}{36}

\def \C {\mathbb{C}}
\def \Q {\mathbb{Q}}
\def \Z {\mathbb{Z}}
\def \N {\mathbb{N}}

\def \g {\mathfrak{g}}

\def \h {\mathfrak{h}}

\def \O {\mathcal{O}}

\DeclareMathOperator{\Ima}{Im}

\DeclareMathOperator{\End}{End}

\DeclareMathOperator{\Spec}{Spec}

\DeclareMathOperator{\pr}{pr}

\DeclareMathOperator{\Aut}{Aut}

\DeclareMathOperator{\Irrep}{Irrep}

\DeclareMathOperator{\Res}{Res}

\DeclareMathOperator{\ch}{ch}
\DeclareMathOperator{\mmod}{mod}
\DeclareMathOperator{\Proj}{Proj}
\DeclareMathOperator{\Hilb}{Hil}
\DeclareMathOperator{\Lie}{Lie}
\DeclareMathOperator{\MaxSpec}{MaxSpec}
\DeclareMathOperator{\Hom}{Hom}

\DeclareMathOperator{\id}{id}
\DeclareMathOperator{\Id}{Id}
\DeclareMathOperator{\GL}{GL}
\DeclareMathOperator{\Rep}{Rep}
\DeclareMathOperator{\diag}{diag}
\DeclareMathOperator{\Eig}{Eig}

\newcommand{\Eeig}{\underline{\mathsf{Eig}}}
\newcommand{\Eeigg}{\mathsf{Eig}}

\newcommand{\triv}{\mathsf{triv}}

\newcommand{\Bas}{\mathsf{Bas}}
\newcommand{\Cell}{\mathsf{Cell}}
\newcommand{\EG}{\mathsf{EG}}
\newcommand{\quot}{\underline{\mathsf{Quot}}}
\newcommand{\core}{\mathsf{Core}}
\newcommand{\arm}{\mathsf{arm}}
\newcommand{\leg}{\mathsf{leg}}
\newcommand{\type}{\mathsf{type}}
\newcommand\scalemath[2]{\scalebox{#1}{\mbox{\ensuremath{\displaystyle #2}}}}
\newcommand{\arxiv}[1]{\href{http://arxiv.org/abs/#1}{\tt arXiv:\nolinkurl{#1}}}

\bibliographystyle{plain}

\setcounter{secnumdepth}{2} 

\begin{document}
\subjclass[2010]{16G99, 05E10}
\keywords{Rational Cherednik algebras, Hilbert schemes, Nakajima quiver varieties, Calogero-Moser space, $q$-hook formula, wreath Macdonald polynomials, degenerate affine Hecke algebras}
\title[$\C^*$-fixed points in Calogero-Moser spaces and Hilbert schemes]{The combinatorics of $\C^*$-fixed points in generalized Calogero-Moser spaces and Hilbert schemes}
\author{Tomasz Przezdziecki}
\date{} 

\begin{abstract} 
In this paper we study the combinatorial consequences of the relationship between rational Cherednik algebras of type $G(l,1,n)$, cyclic quiver varieties and Hilbert schemes. We classify and explicitly construct $\C^*$-fixed points in cyclic quiver varieties and calculate the corresponding characters of tautological bundles. Furthermore, we give a combinatorial description of the bijections between $\C^*$-fixed points induced by the Etingof-Ginzburg isomorphism and Nakajima reflection functors. 
We apply our results to obtain a new proof as well as a generalization of the $q$-hook formula. 
\end{abstract}

\maketitle

\tableofcontents

\section{Introduction}

Rational Cherednik algebras were introduced by Etingof and Ginzburg \cite{EG}. They depend on a complex reflection group as well as two parameters $\mathbf{h}$ and $t$. In the case of Weyl groups they coincide with certain degenerations of double affine Hecke algebras introduced earlier by Cherednik \cite{Che}. When $t=0$, rational Cherednik algebras have large centres and the corresponding affine varieties are known as generalized Calogero-Moser spaces. 
In this paper we consider cyclotomic rational Cherednik algebras  at $t=0$, i.e., the rational Cherednik algebras $\mathbb{H}_{\mathbf{h}}:=\mathbb{H}_{t=0,\mathbf{h}}(\Gamma_n)$ associated to complex reflection groups $\Gamma_n:=(\Z/l\Z)~\wr~S_n$ of type $G(l,1,n)$. 
The associated generalized Calogero-Moser spaces are especially interesting because they admit a realization as Nakajima quiver varieties. This fact was exploited by Gordon \cite{gor-qv}, who established a connection between generalized Calogero-Moser spaces and Hilbert schemes of points in the plane. 

Let us recall the framework of \cite{gor-qv} in more detail. 
Assume that the variety $\mathcal{Y}_{\mathbf{h}} := \Spec Z(\mathbb{H}_{\mathbf{h}})$ is smooth. 
Etingof and Ginzburg showed in \cite{EG} that $\mathcal{Y}_{\mathbf{h}}$ is isomorphic to a~cyclic quiver variety $\mathcal{X}_{\theta}(n\delta)$ (see \S \ref{Tautbundlesec}) generalizing Wilson's construction of the Calogero-Moser space in \cite{Wil}. Considering $\mathcal{X}_{\theta}(n\delta)$ as a hyper-K\"{a}hler manifold, one can use reflection functors, defined by Nakajima in \cite{N}, to construct a hyper-K\"{a}hler isometry $\mathcal{X}_{\theta}(n\delta) \to \mathcal{X}_{-\mathbf{\frac{1}{2}}}(\gamma)$ between quiver varieties associated to different parameters. Furthermore, rotation of complex structure yields a diffeomorphism between $\mathcal{X}_{-\mathbf{\frac{1}{2}}}(\gamma)$ and a certain GIT quotient $\mathcal{M}_{-\mathbf{1}}(\gamma)$ (see \S \ref{Tautbundlesec}). The latter is isomorphic to an irreducible component $\Hilb_K^\nu$ of $\Hilb_K^{\Z/l\Z}$, where  
$\Hilb_K$ denotes the Hilbert scheme of $K$ points in $\C^2$. 
The following diagram summarizes all the maps involved:

\begin{equation} \label{Beginning all maps} \mathcal{Y}_{\mathbf{h}} \xrightarrow{\EG} \mathcal{X}_{\theta}(n\delta) \xrightarrow{\mathsf{Refl. Fun.}} \mathcal{X}_{-\mathbf{\frac{1}{2}}}(\gamma) \xrightarrow{\mathsf{Rotation}} \mathcal{M}_{-\mathbf{1}}(\gamma) \hookrightarrow \Hilb_K^{\Z/l\Z}.\end{equation}

Let us explain the parameters. The affine symmetric group $\tilde{S}_l$ acts on dimension vectors and the parameter space associated to the cyclic quiver with $l$ vertices (see \S \ref{subsec: part cyclic quiver}, \S \ref{reflection functors definition section}). 
We apply this action to the dimension vector $n \delta$ and the parameter $\mathbf{-\frac{1}{2}} := -\frac{1}{2l}(1, \hdots,1)$. 
Fix $w \in \tilde{S}_l$ and set $\theta := w^{-1} \cdot (\mathbf{-\frac{1}{2}})$ and $\gamma := w * n\delta$. Then $\gamma = n\delta + \gamma_0$, where $\gamma_0$ is the $l$-residue of a uniquely determined $l$-core partition $\nu$. Set $K:=nl+|\nu|$. The relation between the parameters $\mathbf{h}$ and $\theta$ is explained in \S \ref{subsec: EG iso}. 

Both $\mathcal{Y}_{\mathbf{h}}$ and $\Hilb_K$ carry natural $\C^*$-actions with respect to which  \eqref{Beginning all maps} is equivariant. It follows from \cite{gor-bvm} that the closed $\C^*$-fixed points in $\mathcal{Y}_{\mathbf{h}}$ are labelled by $l$-multipartitions of~$n$. On the other hand, the $\C^*$-fixed points in $\Hilb_K$ correspond to monomial ideals in $\C[x,y]$ of colength $K$ and are therefore labelled by partitions of $K$. 
In particular, the $\C^*$-fixed points in $\Hilb_K^{\nu}$ are labelled by partitions of $K$ with $l$-core $\nu$. 
Since \eqref{Beginning all maps} is equivariant, it induces a bijection 
\begin{equation} \label{intro pln pvk bij} \mathcal{P}(l,n) \longleftrightarrow \mathcal{P}_\nu(K),\end{equation}   
where $\mathcal{P}_\nu(K)$ denotes the set of partitions of $K$ with $l$-core $\nu$ and $\mathcal{P}(l,n)$ the set  of $l$-multipartitions of $n$. 

Our main result is an explicit combinatorial description of \eqref{intro pln pvk bij}. We note that such a description already appeared in \cite{gor-qv}, but its proof in \emph{loc.\ cit.} is incorrect (see Remark \ref{remark counterexample}). Our result has a number of interesting applications. We use it to establish a higher-level version of the combinatorial identity called the $q$-hook formula - a $q$-analogue of the well-known relationship between the dimension of a Specht module and hook lengths in the corresponding Young diagram. Our main result has also recently been used by Bonnaf\'{e} and Maksimau \cite{BM} in their study of fixed-point subvarieties in  Calogero-Moser spaces. Moreover, the combinatorial description of \eqref{intro pln pvk bij} is a key ingredient in several older results, such as Bezrukavnikov and Finkelberg's \cite{FinBez}, as well as Losev's \cite{Los}, proofs of Haiman's wreath Macdonald positivity conjecture.

\subsection{Main results.} 

 Our first result gives a classification as well as an explicit description of $\C^*$-fixed points in quiver varieties associated to the cyclic quiver. We also consider tautological bundles on these varieties and calculate the characters of their fibres at the $\C^*$-fixed points. 

\begin{theorem} \label{intro: class of fp}
Let $u \in \tilde{S}_l$, $\xi:= u * n\delta =  n\delta + \xi_0$ and let $\omega$ be the transpose of the $l$-core corresponding to $\xi_0$. Set $L := nl+|\omega|$. 
Let $\alpha \in \Q^l$ be any parameter such that $\mathcal{X}_\alpha(\xi)$ is smooth. Let $\mathcal{V}_\alpha(\xi)$ denote the tautological bundle on $\mathcal{X}_\alpha(\xi)$ (see \S \ref{Tautbundlesec}). Then: 
\begin{enumerate}[label=\alph*), font=\textnormal,noitemsep,topsep=3pt,leftmargin=0.55cm]
\item The $\C^*$-fixed points in $\mathcal{X}_\alpha(\xi)$ are naturally labelled by $\mathcal{P}_{\omega}(L)$. We construct them explicitly as equivalence classes of quiver representations. 
\item Let $\mu \in \mathcal{P}_{\omega}(L)$. Then the $\C^*$-character of the fibre of $\mathcal{V}_\alpha(\xi)$ at $\mu$ is given by
\[\ch_t \mathcal{V}_\alpha(\xi)_{\mu} = \Res_{\mu}(t):= \sum_{\square \in \mu} t^{c(\square)}.\]
\end{enumerate} 
\end{theorem}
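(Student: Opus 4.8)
The plan is to work directly with the moduli-theoretic description of $\mathcal{X}_\alpha(\xi)$ as a Nakajima quiver variety for the cyclic quiver with $l$ vertices and dimension vector $\xi$. First I would recall the explicit $\C^*$-action: it acts on the arrow spaces of the doubled quiver (scaling the two ``halves'' of each loop-type pair with opposite weights, say, and fixing the framing maps), so that a point of $\mathcal{X}_\alpha(\xi)$ is $\C^*$-fixed precisely when the corresponding orbit is stable under rescaling the representation, i.e. when the representation admits a compatible grading. Concretely, a $\C^*$-fixed point is represented by a quiver representation $(X, Y, i, j)$ together with a $\C^*$-action on the underlying $\Z/l\Z$-graded vector space $V = \bigoplus_{a} V_a$ making $X$, $Y$ homogeneous of the two fixed weights and $i$, $j$ equivariant. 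This is the quiver-variety analogue of the classical statement that $\C^*$-fixed points of $\Hilb_K(\C^2)$ are monomial ideals; I would cite or reprove the standard fact (going back to Nakajima, and used in this context by Gordon) that for a smooth quiver variety the fixed locus is itself a union of quiver varieties for smaller dimension vectors, and in the present stability chamber it is in fact a finite set of reduced points.

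Next I would set up the combinatorial bookkeeping. For each $\C^*$-fixed representation, the generator structure (applying $X$ and $Y$ to the cyclic vector coming from the framing) produces a set of monomials $x^p y^q$ which, together with the $\Z/l\Z$-grading, fills out a Young diagram whose boxes carry residues; demanding that the $l$-residue content of $V$ match $\xi = n\delta + \xi_0$ forces the diagram to be a partition $\mu$ of $L = nl + |\omega|$ whose $l$-core is exactly $\omega$ (here the transpose enters because the two weights of $X$ and $Y$ play asymmetric roles relative to the chosen orientation of the quiver, which swaps arm and leg, hence rows and columns). Conversely, given $\mu \in \mathcal{P}_\omega(L)$ I would write down the obvious monomial-ideal-type representation on $V = \C\{x^p y^q : (p,q) \in \mu\}$ with its natural grading, check the dimension vector is $\xi$, check $\alpha$-stability (the framing vector is cyclic, so the stability condition in the relevant chamber holds automatically), and verify $\C^*$-invariance. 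This establishes part (a): the assignment $\mu \mapsto [\text{its monomial representation}]$ is a bijection $\mathcal{P}_\omega(L) \xrightarrow{\sim} \mathcal{X}_\alpha(\xi)^{\C^*}$.

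For part (b), the tautological bundle $\mathcal{V}_\alpha(\xi)$ has fibre at a point equal to the underlying vector space $V$ of the representing quiver representation, with its residual $\C^*$-action. For the monomial representation attached to $\mu$, $V$ has basis $\{x^p y^q : (p,q) \in \mu\}$ and the $\C^*$-weight of $x^p y^q$ is $t^{q - p}$ (or $t^{p-q}$, depending on sign conventions), which is exactly $t^{c(\square)}$ where $\square = (p,q)$ is the corresponding box and $c(\square)$ its content. Summing over boxes gives $\ch_t \mathcal{V}_\alpha(\xi)_\mu = \sum_{\square \in \mu} t^{c(\square)} = \Res_\mu(t)$, as claimed. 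I would be careful to pin down the sign/normalization of the $\C^*$-action once and for all so that this matches the conventions used later in the paper for the $q$-hook formula.

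The main obstacle, I expect, is not the "monomial ideal" heuristic itself but making the dictionary between the $\Z/l\Z$-grading on $V$ and the $l$-core/$l$-quotient combinatorics completely precise, and in particular correctly tracking the transpose and the role of $\omega$ versus its transpose — i.e. showing that the residue-content vector of a monomial representation supported on $\mu$ equals $n\delta + \xi_0$ \emph{if and only if} $\mu$ has $l$-core $\omega$ (the transpose of the core attached to $\xi_0$), together with checking stability and reducedness of the fixed locus uniformly in the parameter $\alpha$ throughout its smooth chamber. The equivariance of the reflection-functor/rotation maps in \eqref{Beginning all maps} will ultimately let one transport this description between chambers, but for a clean self-contained proof I would verify $\C^*$-fixedness and stability directly for the explicit representations rather than relying on that transport.
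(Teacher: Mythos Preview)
There is a genuine gap: you have conflated the \emph{deformation} parameter $\alpha$ with a GIT \emph{stability} parameter. In this paper $\mathcal{X}_\alpha(\xi) = \mu_\xi^{-1}(\alpha)\sslash G(\xi)$ is the affine quotient of the fibre of the moment map at the nonzero value $\alpha$, so a representative $(\mathbf{X},\mathbf{Y},I,J)$ must satisfy $[\mathbf{X},\mathbf{Y}] + JI = \alpha$. Your ``monomial-ideal-type'' representation on $\C\{x^py^q:(p,q)\in\mu\}$ with the natural shift operators satisfies $[\mathbf{X},\mathbf{Y}]+JI=0$ (or something rank-one and $\alpha$-independent), so it does not lie in $\mu_\xi^{-1}(\alpha)$ at all. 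The phrase ``check $\alpha$-stability'' confirms the confusion: there is no stability condition here, only a deformed preprojective relation. The monomial picture you describe is correct for $\mathcal{M}_\theta(\xi)$ or for $\Hilb_K$, but not for $\mathcal{X}_\alpha(\xi)$.

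The paper's construction is designed precisely to solve this problem. For each $\mu\in\mathcal{P}_\omega(L)$ one writes $\mu$ in Frobenius form $(a_1,\ldots,a_k\mid b_1,\ldots,b_k)$ and builds block matrices $\Lambda(\mu)$, $A(\mu)$ whose subdiagonal entries are specific $\Z$-linear combinations of the $\alpha_i$, so that $[\Lambda(\mu),A(\mu)]+J(\mu)I(\mu)$ is exactly $\alpha$; this generalises Wilson's description of fixed points in the classical Calogero--Moser space. The $\C^*$-fixedness is then proved by exhibiting an explicit conjugating element $D(t)P(t)Q(t)\in G(\xi)$, and the character of the tautological fibre is read off from the eigenvalues of that element, which turn out to be $\{t^{c(\square)}:\square\in\mu\}$. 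Finally, injectivity of $\mu\mapsto[\mathbf{A}(\mu)]$ follows from the character computation (distinct partitions have distinct residues), and surjectivity is obtained not by a direct grading argument but by a \emph{counting} argument: one transports the fixed-point set along $\mathcal{Y}_{\mathbf{h}}\xrightarrow{\EG}\mathcal{X}_{w\cdot\alpha}(n\delta)\xrightarrow{\text{refl.}}\mathcal{X}_\alpha(\xi)$ to conclude $|\mathcal{X}_\alpha(\xi)^{\C^*}|=|\mathcal{P}(l,n)|=|\mathcal{P}_\omega(L)|$. Your proposal to establish surjectivity directly via the grading is plausible in spirit (and close to what the paper later does in \S\ref{para-mu-grading} for a different purpose), but as written it rests on the wrong model of the fixed points.
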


This theorem combines the results of Theorem \ref{C fp classification theorem}, Proposition \ref{residue-character} and Corollary \ref{v-core fp bij}  below. Let us briefly explain our description of the $\C^*$-fixed points. To each partition $\mu \in \mathcal{P}_{\omega}(L)$ we associate a quadruple of matrices depending on $\alpha, \xi$ and the Frobenius form of $\mu$. Our construction can be regarded as a generalization of Wilson's description of the $\C^*$-fixed points in the (classical) Calogero-Moser space \cite[Proposition 6.11]{Wil}. One can also interpret our construction in terms of unravelling the cyclic quiver into the infinite linear quiver $A_\infty$. This connects our results with earlier work on quiver varieties of type $A_\infty$  \cite{FS,Sav}.

Our second result describes the bijection between the $\C^*$-fixed points induced by the Etingof-Ginzburg isomorphism. 
\begin{theorem}[Theorem \ref{Cor final final}] \label{thm intro 1}
The map $ \mathcal{Y}_{\mathbf{h}} \xrightarrow{\EG} \mathcal{X}_{\theta}(n\delta)$ induces a bijection \[ \mathcal{P}(l,n) \to \mathcal{P}_\varnothing(nl), \quad \quot(\mu)^\flat \mapsto \mu,\]
where $\quot(\mu)^\flat$ denotes the reverse of the $l$-quotient of $\mu$ (see 
\S \ref{partitions and multis section}, \S \ref{subsec: cores and quotients}) and $\varnothing$ is the empty partition. 
\end{theorem}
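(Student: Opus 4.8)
The plan is to trace the bijection \eqref{intro pln pvk bij} through the right-hand portion of diagram \eqref{Beginning all maps} for the special choice $w = \mathrm{id}$ (so that $\theta = \mathbf{-\frac12}$, $\gamma = n\delta$, $\nu = \varnothing$, $K = nl$), compose it with the explicit labelling of $\C^*$-fixed points supplied by Theorem \ref{intro: class of fp}, and then match the result against the known combinatorics of $\C^*$-fixed points in $\mathcal{Y}_{\mathbf{h}}$ coming from \cite{gor-bvm}. Concretely, I would proceed as follows. First, recall from \cite{gor-bvm} that the closed $\C^*$-fixed points of $\mathcal{Y}_{\mathbf{h}} = \Spec Z(\mathbb{H}_{\mathbf{h}})$ are in canonical bijection with $\mathcal{P}(l,n)$, the fixed point attached to $\mu \in \mathcal{P}(l,n)$ being the one supported on the block of the restricted Cherednik algebra labelled by the simple $\Gamma_n$-module indexed by $\mu$. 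Under the Etingof--Ginzburg isomorphism $\EG$ this point maps to a $\C^*$-fixed point of the quiver variety $\mathcal{X}_{\theta}(n\delta)$, which by Theorem \ref{intro: class of fp}(a) (applied with $u = \mathrm{id}$, $\xi = n\delta$, $\omega = \varnothing$, $L = nl$) is labelled by a uniquely determined partition in $\mathcal{P}_\varnothing(nl)$. The content of the theorem is that this assignment is $\mu \mapsto$ (the partition whose reverse $l$-quotient is $\mu$), equivalently $\quot(\mu)^\flat \mapsto \mu$.

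The key computational step is to identify, for a given $\mu$, the target partition by comparing $\C^*$-characters. On the Cherednik side, the fibre of the tautological object at the fixed point labelled by $\mu$ carries a $\C^*$-character which is a standard invariant of $\mathbb{H}_{\mathbf{h}}$-theory: it is the graded character of the corresponding baby Verma / simple module, and it is expressible through the Frobenius-type combinatorics of the multipartition $\mu$ (the $l$ partitions $\mu^{(0)}, \dots, \mu^{(l-1)}$ together with the charge shifts governed by $\mathbf{h}$). On the quiver side, Theorem \ref{intro: class of fp}(b) says the character of $\mathcal{V}_\alpha(n\delta)$ at the fixed point labelled by a partition $\lambda \in \mathcal{P}_\varnothing(nl)$ is $\Res_\lambda(t) = \sum_{\square \in \lambda} t^{c(\square)}$, the generating function of contents of cells of $\lambda$. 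Since $\EG$ is $\C^*$-equivariant and matches the tautological bundle on $\mathcal{X}_\theta(n\delta)$ with the tautological object on $\mathcal{Y}_{\mathbf h}$ (this compatibility is part of the Etingof--Ginzburg construction and is recalled in \S\ref{subsec: EG iso}), the two characters must agree. So the proof reduces to the purely combinatorial identity: the content generating function of a partition $\lambda$ with empty $l$-core equals the ``shifted content'' generating function of its $l$-quotient, and moreover the bijection $\lambda \leftrightarrow \quot(\lambda)$ is uniquely pinned down by this character (one must check the characters separate fixed points, i.e. distinct $\lambda \in \mathcal{P}_\varnothing(nl)$ give distinct multisets of contents modulo the relevant torus weights). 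This is a standard fact about the $l$-quotient construction — cells of $\lambda$ distribute among the components $\lambda^{(i)}$ of the quotient, and the content of a cell of $\lambda$ is determined, up to the additive shift recording the component index $i$, by the content of the corresponding cell of $\lambda^{(i)}$ — but I would need to set conventions carefully, in particular to see that the \emph{reverse} $\quot(\mu)^\flat$ (rather than $\quot(\mu)$ itself) is what appears; this sign/orientation bookkeeping is dictated by how $\mathbf{h}$ translates into the quiver parameter $\theta$ via the dictionary of \S\ref{subsec: EG iso} and by the orientation of the cyclic quiver.

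The main obstacle, as usual in this circle of ideas, is not any single hard inequality but the careful alignment of three separate indexing conventions: (i) the labelling of simple $\Gamma_n$-modules and hence $\C^*$-fixed points of $\mathcal{Y}_{\mathbf h}$ by $l$-multipartitions, following \cite{gor-bvm}; (ii) the labelling of $\C^*$-fixed points of the quiver variety by partitions with prescribed $l$-core, as set up in Theorem \ref{C fp classification theorem}; and (iii) the classical bijection between partitions with empty $l$-core and $l$-tuples of partitions given by the $l$-quotient, whose normalization (direction of reading, placement of the core, whether one takes $\quot$ or $\quot^\flat$) varies across the literature. I would handle this by reducing everything to the explicit matrix quadruples of Theorem \ref{intro: class of fp}: write down the quiver representation attached to a multipartition $\mu$ via the Etingof--Ginzburg construction, write down the quiver representation attached to a partition $\lambda$ via \S\ref{Tautbundlesec}, and show the two representations are isomorphic precisely when $\lambda = \quot(\mu)^\flat$, using the content-character comparison above as the bridge. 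Once the conventions are fixed the verification is a finite check on Frobenius coordinates; the surjectivity and injectivity of the resulting map are then automatic since both sides have the same (finite) cardinality, namely $|\mathcal{P}(l,n)| = |\mathcal{P}_\varnothing(nl)|$.
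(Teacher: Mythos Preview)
Your approach via comparing tautological-bundle characters has a genuine gap. On the Cherednik side, the fibre $(\mathcal{R}_{\mathbf{h}}^{\Gamma_{n-1}})_{\underline{\lambda}}$ is not described by any ``shifted content generating function'' of $\underline{\lambda}$; its character is the ratio $[nl]_t \sum_{\underline{\mu}\uparrow\underline{\lambda}} f_{\underline{\mu}}(t)/f_{\underline{\lambda}}(t)$ of fake degree polynomials (this is Theorem~\ref{RCA fibre calc}). Showing that this equals $\Res_\mu(t)$ exactly when $\underline{\lambda}=\quot(\mu)^\flat$ is precisely the higher-level $q$-hook formula (Theorem~\ref{cyclotomic q-hook formula}), which the paper proves \emph{as a consequence} of Theorem~\ref{Cor final final}, not as an input to it. Your claimed ``standard fact'' that the contents of $\lambda$ agree, up to shifts, with the contents of the components of $\quot(\lambda)$ is not correct: there is no such cellwise bijection (the cells of $\lambda$ correspond to $l$-hooks, not to quotient cells). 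So the reduction you propose is circular.

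The paper instead uses a different and finer invariant than the full bundle character. Via the Dunkl--Opdam subalgebra one gets a commutative triangle with maps $\rho_1:\mathcal{Y}_{\mathbf{h}}\to\C^n/S_n$ and $\rho_2:\mathcal{X}_\theta(n\delta)\to\C^n/S_n$ (Proposition~\ref{comm diagram lemma}). Here $\rho_1(\chi_{\underline{\lambda}})$ is the $\mathbf{e}$-residue $\Res^{\mathbf{e}}_{\underline{\lambda}}(t^h)$, a multiset of $n$ numbers that, for generic $\mathbf{h}$, determines $\underline{\lambda}$ directly (Lemma~\ref{Martino lemma}); and $\rho_2([\mathbf{A}(\mu)])$ is the multiset of eigenvalues of $Y_1X_0$, which is computed explicitly from the Frobenius form of $\mu$ as $\Eig(\mu)$ (Lemmas~\ref{rho2-eig-calc b} and~\ref{rho2-eig-calc}). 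The identification $\Eeig(\mu)=\quot(\mu)^\flat$ is then established by an inductive combinatorial argument on $n$: one checks that removing an $l$-rim-hook from $\mu$ removes a single cell from both $\Eeig(\mu)$ and $\quot(\mu)^\flat$ (Proposition~\ref{Eig rem hook pro} and Lemma~\ref{rem hooks quot}), handles a base class of ``$l$-special'' partitions directly (Proposition~\ref{comb P7}), and closes the induction via a two-rim-hook trick (Proposition~\ref{comb T8}). The point is that $\rho_1,\rho_2$ land in a space of dimension $n$, not $nl$, and the Cherednik-side value is transparently readable off $\underline{\lambda}$ --- unlike the full bundle character.
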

\noindent
The proof of Theorem \ref{thm intro 1} occupies sections \ref{dahas chapter} and \ref{lquo chapter}. We use the Dunkl-Opdam subalgebra of $\mathbb{H}_{\mathbf{h}}$ to construct a commutative diagram 
\begin{equation*} \label{intro comm}
\begin{tikzcd}
\mathcal{Y}_{\mathbf{h}} \arrow[rr,"\EG"] \arrow{dr}[swap]{\rho_1} &  & \mathcal{X}_{\theta}(n\delta) \arrow[ld, "\rho_2"] \\
 & \C^n/S_n & 
\end{tikzcd}
\end{equation*} 
\noindent
where $\rho_1$ sends a fixed point labelled by $\underline{\lambda}$ to its residue and $\rho_2$ sends a quiver representation to a certain subset of its eigenvalues. Given a partition $\mu$, we use our description of the corresponding fixed point from Theorem \ref{intro: class of fp} to obtain an explicit formula for $\rho_2(\mu)$ in terms of the Frobenius form of $\mu$. We then use a combinatorial argument to show that this formula describes the residue of $\quot(\mu)^\flat$.

We next consider reflection functors, which were introduced by Nakajima \cite{N} and also studied by Maffei \cite{Maf} and Crawley-Boevey and Holland \cite{CBH,CBH2}. 
Assuming smoothness, reflection functors are $U(1)$-equivariant hyper-K\"{a}hler isometries 
between quiver varieties associated to different parameters. They satisfy Weyl group relations and have been used by Nakajima to define Weyl group representations on homology groups of quiver varieties. 

Let us explain the role reflection functors play in our setting. 
To each simple reflection $\sigma_i \in \tilde{S}_l$, we associate a reflection functor $\mathfrak{R}_i : \mathcal{X}_{\alpha}(\xi) \to \mathcal{X}_{\sigma_i \cdot \alpha}(\sigma_i * \xi)$. One can show that $\sigma_i * \xi = n\delta + \sigma_i*\xi_0$, where $\sigma_i*\xi_0$ is the $l$-residue of a uniquely determined $l$-core $\omega'$.  Then the reflection functor $\mathfrak{R}_i$ induces a  bijection between the labelling sets of $\C^*$-fixed points 
\begin{equation} \label{reflection functor bij intro} \mathbf{R}_i : \mathcal{P}_\omega(L) \to \mathcal{P}_{(\omega')^t}(L'), \end{equation} 
where $L':= nl + |\omega'|$.

Our third result gives a combinatorial description of this bijection. We use the action of $\tilde{S}_l$ on the set of all partitions defined by Van Leeuwen in \cite{Leew}. This action involves combinatorial ideas reminiscent of those describing the $\hat{\mathfrak{sl}}_l$-action on the Fock space. More precisely, if $\mu$ is a partition then $\sigma_i * \mu$ is the partition obtained by simultaneously removing and adding all the removable (resp.\ addable) cells of content $i$ $\mmod$ $l$ from (to) the Young diagram of $\mu$. It is noteworthy that this action also plays a role in the combinatorics describing the Schubert calculus of the affine Grassmannian \cite{Lam1, Lam2}.

\begin{theorem}[Theorem \ref{RkTk}] \label{thm intro 2}
Let $\mu \in \mathcal{P}_{\omega}(L)$. Then \[ \mathbf{R}_i(\mu) = (\sigma_i*\mu^t)^t.\]
\end{theorem}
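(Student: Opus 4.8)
The plan is to reduce Theorem~\ref{thm intro 2} to the explicit description of the $\C^*$-fixed points furnished by Theorem~\ref{intro: class of fp} together with the definition of the reflection functor $\mathfrak{R}_i$ on the quiver-representation level. Recall that $\mathbf{R}_i(\mu)$ is characterized by the property that $\mathfrak{R}_i$ sends the fixed point labelled by $\mu$ in $\mathcal{X}_\alpha(\xi)$ to the fixed point labelled by $\mathbf{R}_i(\mu)$ in $\mathcal{X}_{\sigma_i\cdot\alpha}(\sigma_i*\xi)$. So the first step is to take the explicit quadruple of matrices attached to $\mu$ (depending on $\alpha$, $\xi$ and the Frobenius form of $\mu$), apply the formulas defining $\mathfrak{R}_i$ to it, and identify the resulting quiver representation — up to the equivalence relation defining points of the quiver variety — as the explicit quadruple attached to some partition $\mu'$. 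The content of the theorem is then the combinatorial identity $\mu' = (\sigma_i*\mu^t)^t$.

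Second, I would organize the computation by unravelling the cyclic quiver into the infinite linear quiver $A_\infty$, as suggested after Theorem~\ref{intro: class of fp}. In the $A_\infty$ picture the fixed point attached to $\mu$ is built from the $\beta$-set (edge sequence) of the Young diagram of $\mu$, and the reflection functor $\mathfrak{R}_i$ acts locally: at the vertex of $A_\infty$ corresponding to content $\equiv i \pmod l$, it performs the elementary Nakajima reflection, which on the level of combinatorial data swaps the roles of the incoming and outgoing arrows at that vertex. The key lemma to prove is that this local swap, performed simultaneously at all vertices of $A_\infty$ of residue $i$, is exactly Van Leeuwen's operation $\sigma_i$ read on the transposed diagram: removable cells of content $i \bmod l$ in $\mu^t$ correspond to addable cells of content $-i \bmod l$ — equivalently to the "box/no-box" flips at residue-$i$ vertices of $A_\infty$ — and adding/removing them simultaneously is precisely what the reflection does to the $\beta$-set. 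Transposing back converts $\sigma_i * \mu^t$ into the stated $(\sigma_i*\mu^t)^t$; the transpose appears because the two arrow directions of the doubled quiver get interchanged, exchanging arm and leg, hence rows and columns.

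Third, one must check that the dimension-vector bookkeeping is consistent: that $\sigma_i * \xi = n\delta + \sigma_i * \xi_0$ with $\sigma_i*\xi_0$ the $l$-residue of the core $\omega'$, and that $L' = nl + |\omega'|$ matches $|\mu'|$, i.e.\ that the $l$-core of $(\sigma_i*\mu^t)^t$ is indeed $(\omega')^t$. This is a purely combinatorial compatibility between Van Leeuwen's action on partitions and the affine $\tilde{S}_l$-action on cores/residues, and should follow from the fact that both are modelled on the same $A_\infty$ edge-sequence combinatorics; it also re-confirms that $\mathbf{R}_i$ as defined in~\eqref{reflection functor bij intro} has the correct target $\mathcal{P}_{(\omega')^t}(L')$.

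The main obstacle I anticipate is matching normalizations at the interface between the three languages — the Frobenius/matrix description of the fixed point, the $A_\infty$ $\beta$-set description, and Van Leeuwen's partition operators — in particular pinning down exactly where the transpose, and a possible sign/orientation reversal $i \leftrightarrow -i$, enter. Concretely, one has to verify that the reflection functor genuinely interchanges the two families of matrices in the quadruple (the pair going "around" the cyclic quiver in opposite directions) rather than acting within one of them, since it is precisely this interchange that produces the outer transpose in $(\sigma_i*\mu^t)^t$. Once the orientation conventions are fixed consistently, the remaining steps are a finite local check at each residue-$i$ vertex of $A_\infty$ plus the eigenvalue/residue bookkeeping, both of which are routine.
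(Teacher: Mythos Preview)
Your overall strategy---unravel to $A_\infty$, compute the effect of $\mathfrak{R}_i$ locally at each vertex of residue $i$, and match this with Van Leeuwen's operation---is exactly the route the paper takes. The paper's execution is a bit more streamlined: rather than working with $\beta$-sets, it encodes the $A_\infty$ data as a $\Z$-grading (the ``$\mu$-grading'') on $\widehat{\mathbf{V}}^\nu$ whose Poincar\'e polynomial is $\Res_{\mu^t}(t)$, then computes directly that the reflection replaces $\dim \mathbf{W}_{lj+i}$ by $\dim \mathbf{W}_{lj+i-1}+\dim \mathbf{W}_{lj+i+1}-\dim \mathbf{W}_{lj+i}$ (plus a $+1$ correction when $i=j=0$), and finally checks via an elementary lemma on residues that this is precisely the effect of adding all $i$-addable and removing all $i$-removable cells from $\mu^t$. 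Your ``local swap at residue-$i$ vertices'' is exactly this dimension formula, so the two arguments coincide once translated.

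One point where your proposal goes astray is the explanation of the transpose. You suggest it arises because the reflection functor ``interchanges the two families of matrices'' (the $X$'s and $Y$'s). It does not: $\mathfrak{R}_i$ replaces $\mathbf{V}_i$ by $\ker(X_{i-1}+Y_{i+1})$ and redefines only the four maps touching vertex $i$; there is no global swap of the two orientations. The transpose is already baked into the labelling of the fixed points: the $\mu$-grading on $\widehat{\mathbf{V}}^\nu$ has Poincar\'e polynomial $\Res_{\mu^t}(t)$ (not $\Res_\mu(t)$), because the basis vector $\Bas(q_i+j)$ sits in degree $-j$, so contents are negated. Thus one computes the reflection on $\mu^t$ and then transposes back to return to the labelling set; the outer transpose in $(\sigma_i*\mu^t)^t$ is a consequence of this convention, not of any arm/leg interchange performed by $\mathfrak{R}_i$.
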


Combining Theorem \ref{thm intro 1} with (iterated applications of) Theorem \ref{thm intro 2} allows us to give an explicit combinatorial description of bijection \eqref{intro pln pvk bij}. 
\begin{theorem}[Theorem \ref{caracal1}] \label{intro fixed points bij all}
The map \eqref{Beginning all maps} induces the following bijections
\[
\begin{array}{c c c c c c c}
\mathcal{Y}_{\mathbf{h}}^{\C^*} &\longrightarrow& \mathcal{X}_{\theta}(n\delta)^{\C^*} &\longrightarrow&  \mathcal{X}_{-\mathbf{\frac{1}{2}}}(\gamma)^{\C^*} &\longrightarrow& (\Hilb_K^\nu)^{\C^*} \\
\mathcal{P}(l,n) &\longrightarrow& \mathcal{P}_{\varnothing}(nl) &\longrightarrow& \mathcal{P}_{\nu^t}(K) &\longrightarrow& \mathcal{P}_{\nu}(K) \\
\quot(\mu)^\flat &\longmapsto& \mu &\longmapsto& (w*\mu^t)^t &\longmapsto& w*\mu^t.
\end{array}
\]
Moreover, $\nu = w*\varnothing$. 
\end{theorem}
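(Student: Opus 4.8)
The plan is to assemble Theorem~\ref{intro fixed points bij all} by composing the three bijections already established, tracking the labelling sets along the diagram \eqref{Beginning all maps}. The first arrow $\mathcal{Y}_{\mathbf{h}}^{\C^*} \to \mathcal{X}_{\theta}(n\delta)^{\C^*}$ is exactly Theorem~\ref{thm intro 1}, which identifies it with $\mathcal{P}(l,n) \to \mathcal{P}_\varnothing(nl)$, $\quot(\mu)^\flat \mapsto \mu$. The third arrow (``Rotation'') is a diffeomorphism that is the identity on the underlying set of $\C^*$-fixed points once one identifies $\mathcal{X}_{-\mathbf{\frac{1}{2}}}(\gamma)$ and $\mathcal{M}_{-\mathbf{1}}(\gamma)$, and the inclusion $\mathcal{M}_{-\mathbf{1}}(\gamma) \hookrightarrow \Hilb_K^{\Z/l\Z}$ identifies $\mathcal{M}_{-\mathbf{1}}(\gamma)$ with $\Hilb_K^\nu$; so the only content in the last column is the bookkeeping identity $\mathcal{P}_{\nu^t}(K) \to \mathcal{P}_\nu(K)$, $\kappa \mapsto \kappa^t$, together with the claim $\nu = w*\varnothing$, which I address below. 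The real work is the middle arrow.

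For the middle arrow $\mathcal{X}_{\theta}(n\delta)^{\C^*} \to \mathcal{X}_{-\mathbf{\frac{1}{2}}}(\gamma)^{\C^*}$, I would factor the reflection-functor composite according to a reduced word $w = \sigma_{i_k}\cdots\sigma_{i_1}$ in $\tilde S_l$ (recall $\theta = w^{-1}\cdot(\mathbf{-\frac12})$ and $\gamma = w*n\delta$), so that the map is $\mathfrak{R}_{i_k}\circ\cdots\circ\mathfrak{R}_{i_1}$ through the intermediate quiver varieties $\mathcal{X}_{\sigma_{i_j}\cdots\sigma_{i_1}\cdot\theta}(\sigma_{i_j}\cdots\sigma_{i_1}*n\delta)$. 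On labelling sets, each $\mathfrak{R}_{i_j}$ induces $\mathbf{R}_{i_j}$, which by Theorem~\ref{thm intro 2} is $\mu \mapsto (\sigma_{i_j}*\mu^t)^t$. Conjugating by transpose, the composite of the $\mathbf{R}_{i_j}$ sends $\mu \mapsto (\sigma_{i_k}*\cdots*\sigma_{i_1}*\mu^t)^t = (w*\mu^t)^t$, using that Van~Leeuwen's operations $\sigma_i*(-)$ satisfy the braid/Coxeter relations of $\tilde S_l$ so that the iterated action $\sigma_{i_k}*\cdots*\sigma_{i_1}*(-)$ is well-defined and equals $w*(-)$ independently of the reduced word. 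This gives the third column entry $\mu \mapsto (w*\mu^t)^t$ and, composing with the transpose from the last column, $\mu \mapsto w*\mu^t$. I should also check the labelling sets match: starting from $\mathcal{P}_\varnothing(nl)$ and applying $\sigma_{i_j}$-steps, the $l$-core gets transformed along $\varnothing \mapsto \sigma_{i_1}*\varnothing \mapsto \cdots \mapsto w*\varnothing$ (each step is the $l$-core level of \eqref{reflection functor bij intro}, reading off $\omega'$), and transposing throughout matches the indexing $\mathcal{P}_{(w*\varnothing)^t}(K)$; hence the identity $\nu = w*\varnothing$ is forced by comparing with the setup in the introduction where $\gamma = n\delta + \gamma_0$ and $\gamma_0$ is the $l$-residue of $\nu$.

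The \textbf{main obstacle} I anticipate is not any single computation but the coherence of all the identifications: verifying that the reduced-word decomposition of the reflection-functor composite is compatible, on the nose, with the choice of intermediate parameters in \eqref{Beginning all maps}, that the $\tilde S_l$-action on dimension vectors ($w*n\delta$) and on parameters ($w^{-1}\cdot\theta$) is genuinely the one governing the successive $\mathfrak{R}_{i_j}$, and — most delicately — that Van~Leeuwen's operations $\sigma_i*(-)$ really do give a well-defined $\tilde S_l$-action (so the formula $(w*\mu^t)^t$ does not depend on the reduced word). For the last point I would either cite Van~Leeuwen \cite{Leew} for the Coxeter relations directly, or deduce well-definedness a posteriori from the geometry: since $\mathfrak{R}_{i_j}$ satisfy the Weyl group relations (as $U(1)$-equivariant hyper-Kähler isometries), the induced maps on fixed-point labels do too, and Theorem~\ref{thm intro 2} then propagates this to the combinatorial operations. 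A secondary, more routine check is that ``Rotation'' and the inclusion into $\Hilb_K^{\Z/l\Z}$ act as the identity/transpose on fixed points as claimed; this should follow from the explicit identification of $\mathcal{M}_{-\mathbf{1}}(\gamma)$ with $\Hilb_K^\nu$ recalled in \S\ref{Tautbundlesec}, where the monomial-ideal fixed points of $\Hilb_K$ correspond to partitions of $K$ and the component $\Hilb_K^\nu$ picks out those with $l$-core $\nu$, the transpose arising from the standard $\C[x,y] \leftrightarrow \C[y,x]$ swap relating the two natural conventions.
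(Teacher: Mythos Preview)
Your proposal is correct and follows essentially the same decomposition as the paper: the first arrow is Theorem~\ref{Cor final final}, the middle arrow is obtained by iterating Theorem~\ref{RkTk} (equivalently Corollary~\ref{bigcorAmbassadors}) along a reduced word for $w$, and the last arrow is transposition. One point to sharpen: your justification for the transpose in the last column (``the standard $\C[x,y]\leftrightarrow\C[y,x]$ swap'') is heuristic; in the paper this is the content of Lemma~\ref{Pro:Hilb bijection}, which is proved by comparing $\C^*$-characters of tautological-bundle fibres --- $\ch_t \mathcal{V}_{-\mathbf{\frac{1}{2}}}(\gamma)_{[\mathbf{A}(\mu)]}=\Res_\mu(t)$ from Proposition~\ref{residue-character} versus $\ch_t(\mathcal{T}_K)_{I_\lambda}=\Res_{\lambda^t}(t)$ from Lemma~\ref{KocinkaFionka} --- rather than by a convention argument. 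The rotation map $\mathcal{X}_{-\mathbf{\frac12}}(\gamma)\to\mathcal{M}_{-\mathbf{1}}(\gamma)$ is only $U(1)$-equivariant a priori, so one cannot simply assert it is the identity on fixed-point labels; the character comparison is what pins it down.
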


Let us rephrase our result slightly. Given $w \in \tilde{S}_l$, we define the $w$\emph{-twisted} $l$\emph{-quotient bijection} to be the map
\[ \tau_{w} \colon \mathcal{P}(l,n) \to \mathcal{P}_{\nu}(K), \quad \quot(\mu) \mapsto w*\mu.\]

\begin{corollary}[Corollary \ref{caracal2}] \label{cor intro tau}
Bijection \eqref{intro pln pvk bij} is given by 
\begin{equation*} \underline{\lambda} \mapsto \tau_{w}(\underline{\lambda}^t). \end{equation*} 
\end{corollary}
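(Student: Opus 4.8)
The plan is to read off Corollary~\ref{cor intro tau} from Theorem~\ref{intro fixed points bij all} after one combinatorial substitution. By Theorem~\ref{intro fixed points bij all}, the bijection~\eqref{intro pln pvk bij} is the composite of the three displayed maps, so on labelling sets it is the rule $\quot(\mu)^{\flat} \mapsto w*\mu^{t}$ for $\mu \in \mathcal{P}_{\varnothing}(nl)$, and moreover $\nu = w*\varnothing$ and $K = nl + |\nu|$. Hence, given $\underline{\lambda} \in \mathcal{P}(l,n)$, I would first use the $l$-quotient bijection $\mathcal{P}(l,n) \xrightarrow{\sim} \mathcal{P}_{\varnothing}(nl)$ (see Theorem~\ref{thm intro 1}) to write $\underline{\lambda} = \quot(\mu)^{\flat}$ for a unique $\mu \in \mathcal{P}_{\varnothing}(nl)$; then~\eqref{intro pln pvk bij} sends $\underline{\lambda}$ to $w*\mu^{t}$.

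It remains to recognise $w*\mu^{t}$ as $\tau_{w}(\underline{\lambda}^{t})$. By the definition of the $w$-twisted $l$-quotient bijection, $\tau_{w}(\underline{\lambda}^{t}) = w*\mu'$ where $\mu'$ is the unique partition with $\quot(\mu') = \underline{\lambda}^{t}$, so it suffices to prove $\quot(\mu^{t}) = \underline{\lambda}^{t}$. Since $\underline{\lambda}^{t} = (\quot(\mu)^{\flat})^{t}$, and since componentwise transposition commutes with the reversal $\flat$ of the components, this is exactly the statement that transposing a partition corresponds, on its $l$-quotient, to transposing each component and reversing their order:
\[ \quot(\mu^{t}) = \bigl(\quot(\mu)^{t}\bigr)^{\flat} = \bigl(\quot(\mu)^{\flat}\bigr)^{t}. \]
This compatibility of the $l$-quotient with transposition is classical; I would derive it in the conventions of \S\ref{subsec: cores and quotients} from the beta-set (abacus) description, where transposing $\mu$ amounts to negating and reversing the beta-set, which reverses the order of the $l$ runners and transposes the partition recorded on each runner. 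With this identity, $\mu' = \mu^{t}$ and therefore $\tau_{w}(\underline{\lambda}^{t}) = w*\mu^{t}$, which is precisely the image of $\underline{\lambda}$ under~\eqref{intro pln pvk bij}.

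For consistency I would also record that $\mu^{t}$ lies in $\mathcal{P}_{\varnothing}(nl)$ — its $l$-core is the transpose of the $l$-core of $\mu$, hence $\varnothing$, and $|\mu^{t}| = |\mu| = nl$ — so that $\tau_{w}(\underline{\lambda}^{t})$ is well defined and lands in $\mathcal{P}_{\nu}(K)$ with $\nu = w*\varnothing$, in agreement with Theorem~\ref{intro fixed points bij all}. The only genuine input beyond bookkeeping is the transpose/$l$-quotient identity above, so the main point to be careful about is matching the orientation conventions for Van Leeuwen's action $w*$, the component-reversal $\flat$, and the labelling of the cyclic quiver's vertices with the normalisations fixed earlier in the paper; a mismatch there would show up as a spurious reversal or transpose, and is the one place where the argument could go wrong.
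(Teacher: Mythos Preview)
Your proof is correct and follows essentially the same route as the paper's: both start from Theorem~\ref{intro fixed points bij all} to get $\quot(\mu)^\flat \mapsto w*\mu^t$, then invoke the identity $\quot(\mu^t) = (\quot(\mu)^t)^\flat$ (recorded in the paper as \eqref{Bridgetjones}) to identify $\underline{\lambda}^t$ with $\quot(\mu^t)$ and hence $\tau_w(\underline{\lambda}^t)$ with $w*\mu^t$. Your additional remarks—sketching the abacus justification of the transpose/quotient identity and checking $\mu^t \in \mathcal{P}_\varnothing(nl)$—are sound but not strictly needed, as the paper simply cites \eqref{Bridgetjones}.
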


One of Gordon's motivations in \cite{gor-qv} was to give a geometric interpretation of highest weight structures on category $\O_{\mathbf{h}}$ for rational Cherednik algebras $\mathbb{H}_{t=1,\mathbf{h}}(\Gamma_n)$ at $t=1$. 
Consider the combinatorial ordering $\prec_{\mathbf{h}}^{\mathsf{com}}$ on $\mathcal{P}(l,n)$ defined by  
\[ \underline{\mu} \preceq_{\mathbf{h}}^{\mathsf{com}} \underline{\lambda} \iff \tau_{w}(\underline{\lambda}^t)  \trianglelefteq \tau_{w}(\underline{\mu}^t),\]
where $\trianglelefteq$ denotes the dominance ordering on partitions. 
It was shown by Dunkl and Griffeth \cite[Theorem 1.2]{Gri} that $\O_{\mathbf{h}}$ is a highest weight category with respect to this ordering. There is also a geometric ordering $\prec_{\mathbf{h}}^{\mathsf{geo}}$ on $\mathcal{P}(l,n)$, defined by the closure relations between the attracting sets of $\C^*$-fixed points in $\mathcal{M}_{2\theta}(n\delta)$. Using Corollary \ref{cor intro tau} and the results of Nakajima from \cite{Nak-Jack} we deduce the following partial geometric interpretation of the combinatorial ordering.  

\begin{corollary}[Corollary \ref{cor macdonald}] \label{intro orderings}
Let $\underline{\mu}, \underline{\lambda} \in \mathcal{P}(l,n)$. Then
$ \underline{\mu} \preceq_{\mathbf{h}}^{\mathsf{geo}} \underline{\lambda} \ \Rightarrow \ \underline{\mu} \preceq_{\mathbf{h}}^{\mathsf{com}} \underline{\lambda}.$ 
\end{corollary}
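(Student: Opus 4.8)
\textbf{Proof proposal for Corollary \ref{intro orderings}.}

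The plan is to transport Gordon's geometric ordering through the chain of equivariant maps in \eqref{Beginning all maps} and then invoke Nakajima's comparison of geometric and combinatorial orderings on fixed points in quiver varieties. First I would recall that $\prec_{\mathbf{h}}^{\mathsf{geo}}$ is defined via the closures of the attracting sets of $\C^*$-fixed points in $\mathcal{M}_{2\theta}(n\delta)$, which is the GIT side of the same quiver variety $\mathcal{X}_{\theta}(n\delta)$ (with stability parameter $2\theta$ rather than $\theta$); since passing between the hyper-K\"{a}hler quotient and the GIT quotient does not change the underlying variety or the $\C^*$-action, and since the reflection functors and the rotation of complex structure in \eqref{Beginning all maps} are $\C^*$-equivariant and send attracting sets to attracting sets, the geometric ordering on $\mathcal{P}(l,n)$ is identified — via the bijections of Theorem \ref{intro fixed points bij all} and Corollary \ref{cor intro tau} — with the geometric ordering on the set of $\C^*$-fixed points in $\Hilb_K^\nu$ (equivalently, in the relevant quiver variety with a suitable stability parameter).

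The key input on the Hilbert scheme / quiver variety side is Nakajima's result from \cite{Nak-Jack}: on a Nakajima quiver variety the closure ordering on attracting sets of torus fixed points refines (is contained in) the dominance ordering on the labelling partitions, once the fixed points are matched up with partitions via the standard (tautological character) recipe. Here I would use Theorem \ref{intro: class of fp}(a) together with the character computation in part (b) — $\ch_t \mathcal{V}_\alpha(\xi)_\mu = \Res_\mu(t)$ — to confirm that the labelling of fixed points in our quiver varieties by partitions agrees with the one Nakajima uses, so that his comparison applies verbatim. Concretely, Corollary \ref{cor intro tau} says bijection \eqref{intro pln pvk bij} is $\underline{\lambda} \mapsto \tau_w(\underline{\lambda}^t)$, so the geometric ordering pushed forward to $\mathcal{P}_\nu(K)$ becomes: $\underline{\mu} \preceq_{\mathbf{h}}^{\mathsf{geo}} \underline{\lambda}$ iff $\tau_w(\underline{\mu}^t)$ precedes $\tau_w(\underline{\lambda}^t)$ in the geometric ordering on fixed points in $\Hilb_K^\nu$; Nakajima's theorem then gives $\tau_w(\underline{\mu}^t) \preceq^{\mathsf{geo}} \tau_w(\underline{\lambda}^t) \Rightarrow \tau_w(\underline{\lambda}^t) \trianglelefteq \tau_w(\underline{\mu}^t)$ (note the order reversal, matching the definition of $\prec_{\mathbf{h}}^{\mathsf{com}}$), which is exactly $\underline{\mu} \preceq_{\mathbf{h}}^{\mathsf{com}} \underline{\lambda}$.

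The main obstacle I anticipate is bookkeeping rather than conceptual: one must be careful about (i) which stability/GIT parameter Gordon uses for $\mathcal{M}_{2\theta}(n\delta)$ versus the one appearing in \eqref{Beginning all maps}, and check that the reflection functors and rotation genuinely preserve the attracting-set closure order (not merely the fixed-point sets) — this uses the fact that they are isomorphisms of the varieties intertwining the $\C^*$-actions, so the claim is formal once equivariance is in hand; and (ii) the several transposes and reversals ($\mu \mapsto \mu^t$, $\quot(\mu)^\flat$, the $w$-twist) must be tracked so that the direction of the implication and the appearance of $\trianglelefteq$ versus $\trianglerighteq$ come out consistent with the stated definition of $\prec_{\mathbf{h}}^{\mathsf{com}}$. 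Once these compatibilities are verified, the corollary follows by simply composing Corollary \ref{cor intro tau} with Nakajima's inequality.
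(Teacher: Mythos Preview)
Your proposal is correct and follows essentially the same argument as the paper: transport the geometric ordering along the equivariant isomorphisms to $\Hilb_K^\nu$ using Corollary~\ref{cor intro tau}, then apply Nakajima's result \cite[(4.13)]{Nak-Jack} that the closure order on attracting sets is refined by the anti-dominance order. The paper's proof is more terse about the bookkeeping you flag in (i) and (ii), but the structure and the key inputs are identical.
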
 

We remark that the statements of Corollaries \ref{cor intro tau} and \ref{intro orderings} first appeared in \cite{gor-qv} (see Proposition 7.10 and its proof). However, the proof of Proposition 7.10 in \cite{gor-qv} is incorrect - see Remark \ref{remark counterexample} for an explanation. 

\subsection{The higher-level $q$-hook formula.}

Our results have several interesting applications. One of them is a new proof as well as a generalization of the $q$-hook formula:
\begin{equation} \label{intro hf} \sum_{\square \in \mu} t^{c(\square)} = [n]_t \sum_{\lambda \uparrow \mu}\frac{ f_{\lambda}(t)}{f_{\mu}(t)},\end{equation}
where $\mu$ is a partition of $n$, $c(\square)$ is the content of $\square$ and $f_\mu(t)$ is the fake degree polynomial associated to $\mu$. The sum on the RHS ranges over subpartitions of $\mu$ obtained by deleting precisely one cell in the Young diagram of $\mu$. Note that the RHS of \eqref{intro hf} can also be reformulated in terms of Schur functions and hook length polynomials. 
The $q$-hook formula has been proven by Kerov \cite{ker}, Garsia and Haiman \cite{GH} and Chen and Stanley \cite{CS} using probabilistic, combinatorial and algebraic methods. We prove the following generalization. 

\begin{theorem}[Theorem \ref{cyclotomic q-hook formula}] \label{theorem D}
Let $\mu \in \mathcal{P}_\varnothing(nl)$. Then:
\begin{equation} \label{cyclo} \sum_{\square \in \mu} t^{c(\square)} = [nl]_t \sum_{\underline{\lambda} \uparrow \quot(\mu)^\flat}\frac{f_{\underline{\lambda}}(t)}{f_{\quot(\mu)^\flat}(t)}.\end{equation}
\end{theorem}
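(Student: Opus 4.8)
The strategy is to compute the $\C^*$-character of the fibre of a tautological bundle at one $\C^*$-fixed point in two ways: a ``geometric'' computation on the quiver variety produces the left-hand side of \eqref{cyclo}, and a ``representation-theoretic'' computation on the rational Cherednik algebra $\mathbb{H}_{\mathbf{h}}$ produces the right-hand side. For the first, apply Theorem \ref{intro: class of fp} with $u$ the identity of $\tilde S_l$, so that $\xi = n\delta$, $\xi_0 = 0$, $\omega = \varnothing$ and $L = nl$; taking the (smooth) parameter $\alpha = \theta$, part (b) gives $\ch_t \mathcal{V}_\theta(n\delta)_\mu = \Res_\mu(t) = \sum_{\square\in\mu} t^{c(\square)}$ for every $\mu\in\mathcal{P}_\varnothing(nl)$. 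Fix such a $\mu$ and put $\underline\lambda := \quot(\mu)^\flat\in\mathcal{P}(l,n)$. By Theorem \ref{thm intro 1} the $\C^*$-equivariant isomorphism $\EG$ sends the fixed point of $\mathcal{Y}_{\mathbf{h}}$ labelled by $\underline\lambda$ to the one of $\mathcal{X}_\theta(n\delta)$ labelled by $\mu$ and identifies $\mathcal{V}_\theta(n\delta)$ with its pullback $\mathcal{V}_{\mathbf{h}}$ to $\mathcal{Y}_{\mathbf{h}}$ (see \S\ref{Tautbundlesec}, \S\ref{subsec: EG iso}). Hence $\sum_{\square\in\mu} t^{c(\square)} = \ch_t (\mathcal{V}_{\mathbf{h}})_{\underline\lambda}$, and \eqref{cyclo} becomes the purely Cherednik-theoretic identity
\[ \ch_t (\mathcal{V}_{\mathbf{h}})_{\underline\lambda} \;=\; [nl]_t \sum_{\underline\nu\uparrow\underline\lambda} \frac{f_{\underline\nu}(t)}{f_{\underline\lambda}(t)}. \]

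To prove this I would use that, $\mathcal{Y}_{\mathbf{h}}$ being smooth, the fixed point $\underline\lambda$ is the support of the irreducible baby Verma $\Delta(\underline\lambda) = \C[\h]\otimes\underline\lambda$, graded by $\eu$, and that through the Etingof--Ginzburg construction the fibre $(\mathcal{V}_{\mathbf{h}})_{\underline\lambda}$ is a canonical $\C^*$-stable subquotient of (the restricted) $\Delta(\underline\lambda)$ attached, via the $\Gamma_n$-module structure, to the isotypic piece $\underline\lambda$, of rank $nl = [\Gamma_n:\Gamma_{n-1}]$. Its graded character can then be read off from two standard facts about $\Gamma_n = (\Z/l\Z)\wr S_n$: the coinvariant algebra of $\Gamma_n$ decomposes as a graded $\Gamma_n$-module as $\bigoplus_{\underline\rho} f_{\underline\rho}(t)\,\underline\rho$, and the restriction $\Res^{\Gamma_n}_{\Gamma_{n-1}}\underline\rho = \bigoplus_{\underline\sigma\uparrow\underline\rho}\underline\sigma$ is multiplicity-free, while the invariant degrees of $\Gamma_n$ are $l, 2l,\dots, nl$, so that the $\Gamma_n$-coinvariant algebra is free of graded rank $[nl]_t$ over that of $\Gamma_{n-1}$. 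Feeding these into the character of the chosen subquotient converts the branching into the sum over $\underline\nu\uparrow\underline\lambda$, creates the factor $[nl]_t$, and produces the denominator $f_{\underline\lambda}(t)$ upon passing to the $\underline\lambda$-multiplicity space; together with the first step this gives \eqref{cyclo}, and the case $l = 1$ recovers the classical $q$-hook formula \eqref{intro hf}.

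The main obstacle is this last step: one must identify precisely which $\C^*$-stable subquotient of $\Delta(\underline\lambda)$ equals $(\mathcal{V}_{\mathbf{h}})_{\underline\lambda}$ under $\EG$, and then keep track of every grading shift --- the powers of $t$ coming from the $\eu$-weight of $\underline\lambda$ and from $\EG$ itself --- so that the normalisation matches the honest polynomials $f_{\underline\lambda}(t)$, $f_{\underline\nu}(t)$ and not shifted or rescaled variants of them; the multipartition branching combinatorics for $\Gamma_n\supset\Gamma_{n-1}$ also has to be made explicit. A built-in consistency check, automatic once the two computations agree, is that the \emph{a priori} rational function $[nl]_t\sum_{\underline\nu\uparrow\underline\lambda} f_{\underline\nu}(t)/f_{\underline\lambda}(t)$ is in fact a polynomial. (Alternatively, \eqref{cyclo} can be deduced from the classical $q$-hook formula for $\mu\vdash nl$ together with a hook-length identity relating the fake degrees of $S_{nl}$ and of $G(l,1,n)$ through the $l$-quotient --- deleting a cell from $\quot(\mu)^\flat$ corresponds to deleting an $l$-rim hook from $\mu$ --- but the tautological bundle computation above is the one our results point to.)
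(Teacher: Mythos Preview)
Your overall strategy is exactly the one the paper uses: compute the $\C^*$-character of one fibre of the tautological bundle in two ways, via Proposition \ref{residue-character} on the quiver side and via the Cherednik algebra on the other, and match them using the fixed-point bijection of Theorem \ref{thm intro 1}. The ingredients you list for the Cherednik side (branching rule, fake-degree decomposition of the coinvariant algebra, and the graded freeness of $\C[\h]^{co\Gamma_n}$ over $\C[\h']^{co\Gamma_{n-1}}$ with quotient character $[nl]_t$) are precisely those used in Lemmas \ref{coinv iso} and \ref{del-fake}.

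The one place where your sketch is imprecise is the identification of the fibre. Under the Etingof--Ginzburg isomorphism the tautological bundle $\mathcal{V}_{\mathbf{h}}$ is identified with $\mathcal{R}_{\mathbf{h}}^{\Gamma_{n-1}}=e_{n-1}\mathcal{R}_{\mathbf{h}}$ (Theorem \ref{EG iso theoremm}), so the fibre at $\underline{\lambda}$ is $e_{n-1}L(\underline{\lambda})$ up to a shift by $q(\underline{\lambda})$ (Lemma \ref{fibre char}); it is the $\Gamma_{n-1}$-invariants of the \emph{simple} module, not an ``isotypic piece $\underline{\lambda}$'' of the baby Verma. In particular $\Delta(\underline{\lambda})$ is not irreducible even in the smooth case: it contains $L(\underline{\lambda})$ with graded multiplicity $t^{-q(\underline{\lambda})}f_{\underline{\lambda}}(t)$ (Lemma \ref{delta-l}). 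The computation then runs as
\[
\ch_t (\mathcal{R}^{\Gamma_{n-1}}_{\mathbf{h}})_{\underline{\lambda}}
= t^{-q(\underline{\lambda})}\ch_t e_{n-1}L(\underline{\lambda})
= \frac{\ch_t e_{n-1}\Delta(\underline{\lambda})}{f_{\underline{\lambda}}(t)}
= \frac{[nl]_t\sum_{\underline{\nu}\uparrow\underline{\lambda}} f_{\underline{\nu}}(t)}{f_{\underline{\lambda}}(t)},
\]
with the shifts cancelling. This is exactly the ``main obstacle'' you flagged, and once you replace ``isotypic piece'' by ``$\Gamma_{n-1}$-invariants of $L(\underline{\lambda})$'' and use the quotient $\Delta/L$ mechanism above, your argument becomes the paper's proof verbatim.
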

We call \eqref{cyclo} the \emph{higher-level} $q$-hook formula. Setting $l=1$ we recover the classical $q$-hook formula. 
Our proof of Theorem \ref{theorem D} is geometric in nature. Let us briefly explain the main idea behind it. 
Let $e_n$ denote the symmetrizing idempotent in $\Gamma_n$. The right $e_n\mathbb{H}_{\mathbf{h}}e_n$-module $\mathbb{H}_{\mathbf{h}}e_n$ defines a coherent sheaf on $\mathcal{Y}_{\mathbf{h}}$. Since we are assuming that the variety $\mathcal{Y}_{\mathbf{h}}$ is smooth, this sheaf is also locally free. Let $\mathcal{R}_{\mathbf{h}}$ denote the corresponding vector bundle. 
It was shown in \cite{EG} that there exists an isomorphism of vector bundles $\mathcal{R}_{\mathbf{h}}^{\Gamma_{n-1}} \xrightarrow{\sim} \mathcal{V}_\theta(n\delta)$ 
lifting the Etingof-Ginzburg isomorphism $\mathcal{Y}_{\mathbf{h}} \xrightarrow{\sim} \mathcal{X}_{\theta}(n\delta)$. 
Let $\mu \in \mathcal{P}_\varnothing(nl)$. By Theorem \ref{thm intro 1}, the Etingof-Ginzburg map sends the fixed point labelled by $\quot(\mu)^\flat$ to the fixed point labelled by $\mu$. 
We obtain the higher level $q$-hook formula \eqref{cyclo} by comparing the $\C^*$-characters of the corresponding fibres $(\mathcal{R}_{\mathbf{h}}^{\Gamma_{n-1}})_{\quot(\mu)^\flat}$ and ${\mathcal{V}_\theta(n\delta)}_{\mu}$. 

\subsection{Other applications.} 
Let us mention a few other applications of our results. The first is related to Finkelberg and Bezrukavnikov's \cite{FinBez} as well as Losev's \cite{Los} proofs of Haiman's wreath Macdonald positivity conjecture. 
The original positivity conjecture, proven by Haiman in \cite{Hai2}, asserts that Kostka-Macdonald polynomials, which express the change of basis between transformed Macdonald functions and Schur functions, have non-negative coefficients. Haiman \cite{Hai} later proposed a generalized conjecture, known as the wreath Macdonald positivity conjecture, in which the ring of symmetric functions is replaced by the space of virtual characters of the group $\Gamma_n$. 

We will now explain the role the description of the bijection \eqref{intro pln pvk bij} from Corollary \ref{cor intro tau} plays in the above-mentioned proofs of the wreath Macdonald positivity conjecture. 
The key step in Bezrukavnikov and Finkelberg's proof is a characterization of the support of certain Verma modules in positive characteristic \cite[Proposition 2.6]{FinBez}. Losev's proof also relies on a calculation of the supports of certain quotients of Procesi bundles \cite[Proposition 5.3]{Los}. The proofs of these two statements invoke \cite[Lemma 3.8]{FinBez}. But the latter implicitly uses Corollary \ref{cor intro tau} (see also \cite[\S 2.3]{FinBez}). 

We mention two other applications of ours results. 
Gordon and Martino \cite{GM} gave a combinatorial description of the blocks of the restricted rational Cherednik algebra of type $G(l,1,n)$  (also for parameters $\mathbf{h}$ for which the corresponding Calogero-Moser space $\mathcal{Y}_{\mathbf{h}}$ is singular) in terms of $J$-classes of partitions. Corollary \ref{cor intro tau} is an important ingredient in their proof. 

More recently, Bonnaf\'{e} and Maksimau \cite{BM} studied the irreducible components of the fixed point subvariety under the action of a finite cyclic group on a smooth Calogero-Moser space. They use Theorem \ref{thm intro 1} to give an explicit description of these components for Calogero-Moser spaces of type $G(l,1,n)$. 

\subsection*{Acknowledgements.} 
The research for this paper was carried out with the financial support of the College of Science \& Engineering at the University of Glasgow. The material will form part of my PhD thesis. 
I would like to thank G. Bellamy for suggesting the problem to me as well as his support and encouragement. I am also grateful to C. Stroppel for comments on a draft version of this paper and to I. Gordon for discussing \cite{gor-qv} with~me. 

\section{Rational Cherednik algebras}
We begin by recalling the definition and basic properties of cyclotomic rational Cherednik algebras at $t=0$. 
We work over the field of complex numbers throughout the paper. 

\subsection{Partitions and multipartitions.} \label{partitions and multis section}
Let $k$ be a non-negative integer. A partition $\lambda$ of $k$ is an infinite non-increasing sequence $( \lambda_1, \lambda_2, \lambda_3, \hdots \ )$ of non-negative integers such that $\sum_{i=1}^{\infty} \lambda_i = k$. We write $|\lambda| = k$ and denote the set of all partitions of $k$ by $\mathcal{P}(k)$. Let $\ell(\lambda)$ be the positive integer $i$ such that $\lambda_i \neq 0$ but $\lambda_{i+1}=0$. We say that $\mu = (\mu_1, \mu_2, \mu_3, \hdots )$ is a subpartition of $\lambda$ if $\mu$ is a partition of some positive integer $m \leq k$ and $\mu_i \leq \lambda_i$ for all $i = 1, 2, \hdots \ .$ A subpartition $\mu$ of $\lambda$ is called a restriction of $\lambda$, denoted $\mu \uparrow \lambda$, if $| \mu | = k-1$. Let $\varnothing = (0,0,\hdots)$ denote the empty partition. 

An $l$-composition $\alpha$ of $k$ is an $l$-tuple $(\alpha_0, \hdots, \alpha_{l-1})$ of non-negative integers such that $\sum_{i=0}^{l-1} \alpha_i = k$. An $l$-multipartition $\underline{\lambda}$ of $k$ is an $l$-tuple $(\lambda^0, \hdots, \lambda^{l-1})$ such that each $\lambda^i$ is a partition and $\sum_{i=0}^{l-1} |\lambda^i| = k$. We consider the upper indices modulo $l$. 
Let $\mathcal{P}(l,k)$ denote the set of $l$-multipartitions of $k$. We say that $\underline{\mu} = (\mu^0, \hdots, \mu^{l-1})$ is a submultipartition of $\underline{\lambda}$ if each $\mu^i$ is a subpartition of $\lambda^i$. We call a submultipartition $\underline{\mu}$ of $\underline{\lambda}$ a restriction of $\underline{\lambda}$, denoted $\underline{\mu} \uparrow \underline{\lambda}$, if $\sum_{i=0}^{l-1} |\mu^i| = k-1$. 

If $\lambda$ is a partition we denote its transpose by $\lambda^t$. If $\underline{\lambda} = (\lambda^0, \hdots, \lambda^{l-1}) \in \mathcal{P}(l,k)$, we call $\underline{\lambda}^t = ((\lambda^0)^t, \hdots, (\lambda^{l-1})^t)$ the \emph{transpose} multipartition and $\underline{\lambda}^{\flat} := (\lambda^{l-1}, \lambda^{l-2}, \hdots, \lambda^0)$ the \emph{reverse} multipartition. Finally, we set
$\mathcal{P} := \bigsqcup_{k \in \Z_{\geq 0}} \mathcal{P}(k)$ and $\underline{\mathcal{P}} := \bigsqcup_{k \in \Z_{\geq 0}} \mathcal{P}(l,k).$

\subsection{Wreath products.} \label{subsec: wreath products} Let us fix once and for all two positive integers $n,l$. 
We regard the symmetric group $S_n$ as the group of permutations of the set $\{1, \hdots, n\}$. For $1 \leq i < j \leq n$ let $s_{i,j}$ denote the transposition swapping numbers $i$ and $j$. We abbreviate $s_i = s_{i,i+1}$ for $i=1,\hdots,n-1$. 
Let  $C_l := \Z / l \Z = \langle \epsilon \rangle$ and set $\Gamma_n := C_l \wr S_n = (C_l)^{n} \rtimes S_n$, the wreath product of $C_l$ and $S_n$. It is a complex reflection group of type $G(l,1,n)$. For $ 1 \leq i \leq n$ and $ 1 \leq j \leq l-1$ let $\epsilon^j_i$ denote the element $ (1, \hdots , 1 , \epsilon^j, 1 , \hdots , 1) \in (C_l)^n$ which is non-trivial only in the $i$-th coordinate. Let $e_{n} := (l^nn!)^{-1}\sum_{g \in \Gamma_{n}} g$ be the symmetrizing idempotent and let 
$\triv$ denote the trivial $\Gamma_n$-module. 

We regard $S_{n-1}$ as the subgroup of $S_n$ generated by the transpositions $s_{2}, \hdots, s_{n-1}$. 
We also regard $(C_l)^{n-1}$ as a subgroup of $(C_l)^n$ consisting of elements whose first coordinate is equal to one. This determines an embedding $\Gamma_{n-1} \hookrightarrow \Gamma_n$. 
Note that $(\C \Gamma_n)^{\Gamma_{n-1}} = e_{n-1}\C\Gamma_n$ and $| (\C \Gamma_n)^{\Gamma_{n-1}} | = nl$. 

Isomorphism classes of irreducible $\Gamma_n$-modules are naturally parametrized by $\mathcal{P}(l,n)$. We use the parametrization given in \cite[\S 6.1.1]{Rouq}. Let $S(\underline{\lambda})$ denote the irreducible $\Gamma_n$-module corresponding to the $l$-multipartition $\underline{\lambda}$. We will later need the following branching rule \cite[Theorem 10]{Pus}. 

\begin{proposition} \label{branching} 
Let $\underline{\lambda} \in \mathcal{P}(l,n)$. Then \ 
$ \displaystyle S(\underline{\lambda})|_{\Gamma_{n-1}} := \Res_{\Gamma_{n-1}}^{\Gamma_n} S(\underline{\lambda}) = \bigoplus_{\underline{\mu} \uparrow \underline{\lambda}} S(\underline{\mu}).$
\end{proposition}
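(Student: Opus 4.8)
The statement to prove is the branching rule in Proposition~\ref{branching}: for $\underline{\lambda} \in \mathcal{P}(l,n)$, the restriction $\Res_{\Gamma_{n-1}}^{\Gamma_n} S(\underline{\lambda})$ decomposes as $\bigoplus_{\underline{\mu} \uparrow \underline{\lambda}} S(\underline{\mu})$.

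The plan is to cite the standard Clifford-theoretic construction of the irreducibles of a wreath product $\Gamma_n = C_l \wr S_n$ and reduce the branching problem to the classical branching rule for symmetric groups. First I would recall the explicit model for $S(\underline{\lambda})$: writing $\underline{\lambda} = (\lambda^0,\dots,\lambda^{l-1})$ with $|\lambda^i| = n_i$ and $\sum n_i = n$, the module is built by inducing from the Young subgroup $\Gamma_{n_0} \times \cdots \times \Gamma_{n_{l-1}}$ the outer tensor product $\bigotimes_i (\chi_i^{\otimes n_i} \boxtimes S^{\lambda^i})$, where $\chi_i$ is the character of $C_l$ sending the generator $\epsilon$ to $\zeta^i$ (a fixed primitive $l$-th root of unity) and $S^{\lambda^i}$ is the Specht module for $S_{n_i}$. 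Equivalently, $S(\underline\lambda) = \Ind_{\prod_i \Gamma_{n_i}}^{\Gamma_n} \bigotimes_i S(\lambda^i)$ where $S(\lambda^i)$ denotes the corresponding $\Gamma_{n_i}$-irreducible supported on a single component.

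The main step is then a Mackey-style computation. Restricting $S(\underline\lambda) = \Ind_{\prod_i \Gamma_{n_i}}^{\Gamma_n} M$ from $\Gamma_n$ to $\Gamma_{n-1}$, I would apply the Mackey decomposition with respect to the subgroups $\prod_i \Gamma_{n_i}$ and $\Gamma_{n-1}$ of $\Gamma_n$. Choosing double-coset representatives that move the "missing" index $1$ into each of the $l$ blocks in turn, each double coset contributes a term of the form $\Ind_{(\text{stabilizer})}^{\Gamma_{n-1}} \Res(\cdots)$; the relevant subgroup intersection turns out to be $\Gamma_{n_0} \times \cdots \times \Gamma_{n_j - 1} \times \cdots \times \Gamma_{n_{l-1}}$ for $j = 0,\dots,l-1$ (only blocks with $n_j \geq 1$ contribute). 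On the $j$-th such block the restriction acts only on the $S_{n_j}$-factor, so the classical symmetric-group branching rule $\Res_{S_{n_j-1}}^{S_{n_j}} S^{\lambda^j} = \bigoplus_{\rho \uparrow \lambda^j} S^{\rho}$ applies, while the character factors $\chi_i^{\otimes n_i}$ are unaffected (one copy of $C_l$ is simply dropped). Reassembling, the $j$-th double coset yields $\bigoplus_{\rho \uparrow \lambda^j} S(\lambda^0,\dots,\rho,\dots,\lambda^{l-1})$, and summing over $j$ gives exactly $\bigoplus_{\underline\mu \uparrow \underline\lambda} S(\underline\mu)$, since a restriction $\underline\mu \uparrow \underline\lambda$ is precisely a choice of one component $j$ together with a partition $\rho \uparrow \lambda^j$ in that component.

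I expect the main obstacle to be purely bookkeeping: correctly matching the parametrization of $\Gamma_n$-irreducibles used here (that of \cite[\S 6.1.1]{Rouq}) with the Clifford-theoretic model above, so that the character twists $\chi_i$ are attached to the correct components and the resulting labels $\underline\mu$ agree with the notion of restriction defined in \S\ref{partitions and multis section}. A subtlety worth checking is that no multiplicities greater than one arise and that the $l$ double cosets are genuinely distinct, which uses that the blocks $\Gamma_{n_i}$ are permuted trivially once we fix the support of each character $\chi_i$. Since the cleanest route is simply to invoke \cite[Theorem 10]{Pus}, where this branching rule is proved in exactly this form, I would cite that reference for the precise statement and only sketch the Mackey argument above as motivation; the real content has already been recorded in the literature.
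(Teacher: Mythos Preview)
Your proposal is correct, and indeed the paper itself does not give any proof: the statement is simply attributed to \cite[Theorem 10]{Pus} and stated without argument. Your Mackey--Clifford sketch is a standard and sound derivation of that result, so you are in fact supplying more detail than the paper does; the bookkeeping concerns you raise are real but routine, and since you conclude by citing the same reference, your treatment aligns with (and slightly expands on) the paper's approach.
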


\subsection{Rational Cherednik algebras.}
Let us recall the definition of the rational Cherednik algebra of type $G(l,1,n)$ at $t=0$.  
Set $\eta := e^{2 \pi i / l}$. 
Let $\h$ be the $n$-dimensional representation of $\Gamma_n$ with basis $y_1, \hdots, y_n$ such that $\epsilon_i\sigma.y_j = \eta^{-\delta_{i,\sigma(j)}} y_{\sigma(j)}$ for any $\sigma \in S_n$. Let $x_1, \hdots, x_n$ be the dual basis of $\h^*$.

\begin{definition} \label{RCA definition}
Let us choose a parameter $\mathbf{h} = (h, H_1, \hdots, H_{l-1}) \in \Q^l$ and set $H_0 = - (H_1 + \hdots + H_{l-1})$. The \emph{rational Cherednik algebra} $\mathbb{H}_{\mathbf{h}}$ (at $t=0$) associated to $\Gamma_n$ is the quotient of the cross-product $T(\h \oplus \h^*) \rtimes \C \Gamma_n$ 
by the relations
\begin{itemize}
\item $[x_i, x_j] = [y_i, y_j] = 0$ for all $1 \leq i,j \leq n$, 
\item $[x_i,y_j] = - h \sum_{k=0}^{l-1} \eta^k s_{i,j} \epsilon_i^k \epsilon^{-k}_j$ for all $1 \leq i \neq j \leq n$, 
\item $[x_i,y_i] =  h\sum_{j \neq i} \sum_{k=0}^{l-1} s_{i,j} \epsilon_i^k \epsilon^{-k}_j + \sum_{k=0}^{l-1} ( \sum_{m=0}^{l-1} \eta^{-mk} H_m) \epsilon_i^k$ for all $1 \leq i \leq n$.
\end{itemize}
\end{definition}

\noindent
Let $Z_{\mathbf{h}}$ denote the centre of $\mathbb{H}_{\mathbf{h}}$. 
By \cite[Theorem 3.1]{EG}, the map
\[ Z_{\mathbf{h}} \to e_n \mathbb{H}_{\mathbf{h}} e_n, \ z \mapsto z \cdot e_n\]
is an algebra isomorphism. It is called the Satake isomorphism. 
Consider the $\C^*$-action on $\mathbb{H}_{\mathbf{h}}$ defined by the rule $t.x_i = tx_i$, $t.y_i = t^{-1} y_i$ and $t.g = g$, where $1 \leq i \leq n$, $g \in \Gamma_n$ and $t \in \C^*$. This action restricts to actions on $e_n\mathbb{H}_{\mathbf{h}}e_n$ and $Z_{\mathbf{h}}$, with respect to which the Satake isomorphism is equivariant. 
Note that the $\C^*$-action on $\mathbb{H}_{\mathbf{h}}$ can also be interpreted as a $\Z$-grading such that $\deg x_i = 1$, $\deg y_i = -1$ and $\deg g = 0$. 

\begin{notat}
Given a $\Z$-graded vector space $V$ with finite-dimensional homogeneous components, let $\ch_t V \in \Z[[t,t^{-1}]]$ denote its Poincar\'{e} series. Since a $\Z$-grading is equivalent to a $\C^*$-module structure, we can regard $\ch_t V$ as the $\C^*$-character of $V$.
\end{notat}

\begin{definition}
Let  $ \C[\h]^{\Gamma_n}_+$ (resp.\ $\C[\h^*]^{\Gamma_n}_-$) denote the ideal of $\C[\h]^{\Gamma_n}$ (resp.\ $ \C[\h^*]^{\Gamma_n}$) generated by homogeneous elements of positive (resp.\ negative) degree, in the grading defined by the $\C^*$-action on $\mathbb{H}_{\mathbf{h}}$. The quotient 
\[ \overline{\mathbb{H}}_{\mathbf{h}} := \mathbb{H}_{\mathbf{h}}/ \mathbb{H}_{\mathbf{h}}. (\C[\h]^{\Gamma_n}_+ + \C[\h^*]^{\Gamma_n}_-)\]
is called the \emph{restricted rational Cherednik algebra}. It is a finite-dimensional algebra. 
\end{definition}

Let $\C[\h]^{co\Gamma_n} := \C[\h] / \C[\h].\C[\h]^{\Gamma_n}_+$ be the algebra of coinvariants with respect to the $\Gamma_n$-action.
It follows from the PBW theorem for rational Cherednik algebras \cite[Theorem 1.3]{EG} that there is an isomorphism of graded vector spaces $\overline{\mathbb{H}}_{\mathbf{h}} \cong \C[\h]^{co \Gamma_n} \otimes  \C[\h^*]^{co \Gamma_n} \otimes \C\Gamma_n$. Moreover, $\C[\h^*]^{co \Gamma_n} \rtimes \C \Gamma_n$ is a subalgebra of $\overline{\mathbb{H}}_{\mathbf{h}}$.
\begin{definition}
Let $\underline{\lambda} \in \mathcal{P}(l,n)$. The irreducible $\C \Gamma_n$-module $S(\underline{\lambda})$ becomes a module over $\C[\h^*]^{co \Gamma_n} \rtimes \C \Gamma_n$ by means of the projection $\C[\h^*]^{co \Gamma_n} \rtimes \C \Gamma_n \to \C\Gamma_n$. The \emph{baby Verma module} associated to $\lambda$ is the induced module
\[ \Delta(\underline{\lambda}) := \overline{\mathbb{H}}_{\mathbf{h}} \otimes_{\C[\h^*]^{co \Gamma_n} \rtimes \C \Gamma_n} S(\underline{\lambda}).\] 
We consider $\Delta(\underline{\lambda})$ as a graded $\overline{\mathbb{H}}_{\mathbf{h}}$-module  with $1 \otimes S(\underline{\lambda})$ in degree $0$. 
\end{definition}

\begin{proposition}[{\cite[Proposition 4.3]{gor-bvm}}] \label{bvms}
Let $\underline{\lambda} \in \mathcal{P}(l,n)$. The baby Verma module $\Delta(\underline{\lambda})$ is indecomposable with simple head $L(\underline{\lambda})$. Moreover, $\{ L(\underline{\lambda}) \mid \underline{\lambda} \in \mathcal{P}(l,n)\}$ form a complete and irredundant set of representatives of isomorphism classes of graded simple $\overline{\mathbb{H}}_{\mathbf{h}}$-modules, up to a grading shift. 
\end{proposition}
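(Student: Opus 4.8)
The statement to prove is Proposition \ref{bvms}, which asserts that each baby Verma module $\Delta(\underline{\lambda})$ is indecomposable with simple head $L(\underline{\lambda})$, and that these simple heads exhaust the graded simple $\overline{\mathbb{H}}_{\mathbf{h}}$-modules up to grading shift. My approach would be the standard highest-weight-category-style argument adapted to the graded setting. First I would observe that $\Delta(\underline{\lambda})$ is a positively graded module: by the PBW decomposition $\overline{\mathbb{H}}_{\mathbf{h}} \cong \C[\h]^{co\Gamma_n} \otimes \C[\h^*]^{co\Gamma_n} \otimes \C\Gamma_n$, we have $\Delta(\underline{\lambda}) \cong \C[\h]^{co\Gamma_n} \otimes S(\underline{\lambda})$ as graded vector spaces, with $\C[\h]^{co\Gamma_n}$ concentrated in non-negative degrees and with degree-zero component $\C$. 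Hence the degree-zero part of $\Delta(\underline{\lambda})$ is exactly $1 \otimes S(\underline{\lambda})$, which is an irreducible $\C\Gamma_n$-module, and everything in positive degree is generated from it under the action of $\C[\h]^{co\Gamma_n}_+$.

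The key step is then to show $\Delta(\underline{\lambda})$ has a unique maximal graded submodule. Any graded submodule $M \subsetneq \Delta(\underline{\lambda})$ must have zero intersection with the degree-zero component: if $M$ contained a nonzero element of degree $0$, then since $S(\underline{\lambda})$ is $\C\Gamma_n$-irreducible it would contain all of $1 \otimes S(\underml{\lambda})$, hence (being a submodule) all of $\Delta(\underline{\lambda})$. Therefore the sum of all proper graded submodules is still proper (it meets degree $0$ trivially), giving a unique maximal graded submodule, whose quotient I call $L(\underline{\lambda})$; this is graded simple with head in degree $0$ equal to $S(\underline{\lambda})$, and it also shows $\Delta(\underline{\lambda})$ is indecomposable since its endomorphism ring is local (any graded endomorphism is determined by its action on degree $0$, where it is a scalar by Schur, and a non-scalar-invertible endomorphism would have proper image). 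For completeness and irredundancy: given any graded simple $\overline{\mathbb{H}}_{\mathbf{h}}$-module $T$, after a grading shift its lowest-degree component is a nonzero $\C\Gamma_n$-submodule, hence contains some $S(\underline{\lambda})$; since $\C[\h^*]^{co\Gamma_n}_-$ lowers degree it must annihilate this bottom piece, so by the universal property of induction there is a nonzero (hence, by simplicity of $T$, surjective) graded map $\Delta(\underline{\lambda}) \to T$, forcing $T \cong L(\underline{\lambda})$. Irredundancy follows because the degree-zero component of $L(\underline{\lambda})$ recovers $S(\underline{\lambda})$, and the $S(\underline{\lambda})$ are pairwise non-isomorphic over $\C\Gamma_n$.

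The main obstacle — or rather the point requiring the most care — is the interplay between the grading and the module structure: one must use that the grading on $\overline{\mathbb{H}}_{\mathbf{h}}$ has $\C[\h]$ in positive degrees and $\C[\h^*]$ in negative degrees, that the coinvariant algebra $\C[\h]^{co\Gamma_n}$ is finite-dimensional with one-dimensional socle/top in the relevant degree, and that $S(\underline{\lambda})$ sits in a single degree. The argument is essentially formal once these gradedness facts (which follow from the PBW theorem cited as \cite[Theorem 1.3]{EG}) are in place; since the proposition is quoted from \cite[Proposition 4.3]{gor-bvm}, in the paper itself one would simply cite that reference, but the sketch above is how the proof goes.
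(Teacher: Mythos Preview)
Your sketch is correct and is essentially the standard highest-weight argument that appears in Gordon's original paper \cite{gor-bvm}. Note, however, that the present paper does not give any proof of this proposition at all: it is stated with a direct citation to \cite[Proposition 4.3]{gor-bvm} and no proof block follows. So there is nothing to compare your approach against here beyond the observation that you have correctly reconstructed the argument behind the cited result; in the context of this paper one would simply cite Gordon, exactly as you anticipated in your final sentence.
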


\subsection{The variety $\mathcal{Y}_{\mathbf{h}}$.} \label{RCA: taut v bundle}

Let $\mathcal{Y}_{\mathbf{h}} := \Spec Z_{\mathbf{h}}$. 
We will always assume that the parameter $\mathbf{h}$ is chosen so that the variety $\mathcal{Y}_{\mathbf{h}}$ is \emph{smooth}. 
A criterion for smoothness can be found in e.g.\ \cite[Lemma 4.3]{gor-qv}. 

Let $\Irrep (\mathbb{H}_{\mathbf{h}})$ denote the set of isomorphism classes of irreducible representations of $\mathbb{H}_{\mathbf{h}}$. If $[M] \in \Irrep (\mathbb{H}_{\mathbf{h}})$, let $\chi_M : Z_{\mathbf{h}} \to \C$ denote the character by which $Z_{\mathbf{h}}$ acts on $M$. 
By \cite[Theorem 1.7]{EG}, there is a bijection
\begin{equation}\label{bijection} \Irrep (\mathbb{H}_{\mathbf{h}})  \longleftrightarrow  \MaxSpec Z_{\mathbf{h}}, \quad  [M] \mapsto \ker \chi_M.
\end{equation}
We are now going to recall another description of $\mathcal{Y}_{\mathbf{h}}$. 
\begin{definition} \label{sec rep var}
Let $\Rep_{\C \Gamma_n} (\mathbb{H}_{\mathbf{h}})$ be the variety of all algebra homomorphisms $\mathbb{H}_{\mathbf{h}} \to \End_{\C}(\C \Gamma_n)$ whose restriction to $\C \Gamma_n \subset \mathbb{H}_{\mathbf{h}}$ is the $\C \Gamma_n$-action by left multiplication, i.e., the regular representation. This is an affine algebraic variety. 
\end{definition} 
Let $\phi \in \Rep_{\C \Gamma_n} (\mathbb{H}_{\mathbf{h}})$. The one-dimensional vector space $e_n \C \Gamma_n$ is stable under all the endomorphisms in $\phi(e_n\mathbb{H}_{\mathbf{h}}e_n)$. 
Therefore, $\phi|_{e_n\mathbb{H}_{\mathbf{h}}e_n}$ composed with the Satake isomorphism yields an algebra homomorphism $\chi_\phi : Z_{\mathbf{h}} \cong e_n\mathbb{H}_{\mathbf{h}}e_n \to \End_{\C}(e_n\C \Gamma_n) \cong \C$. 
We obtain a morphism of algebraic varieties
\begin{equation} \label{pi} \pi : \Rep_{\C \Gamma_n} (\mathbb{H}_{\mathbf{h}}) \to \mathcal{Y}_{\mathbf{h}}, \quad \phi \mapsto \ker \chi_\phi.\end{equation}
The $\C^*$-action on $\mathbb{H}_{\mathbf{h}}$ induces $\C^*$-actions on the varieties $\Rep_{\C \Gamma_n} (\mathbb{H}_{\mathbf{h}})$ and $\mathcal{Y}_{\mathbf{h}}$, with respect to which $\pi$ is equivariant. 

Let $\Aut_{\Gamma_n}(\C \Gamma_n)$ be the group of $\C$-linear $\Gamma_n$-equivariant automorphisms of $\C \Gamma_n$. 
The group $\Aut_{\Gamma_n}(\C \Gamma_n)$ acts naturally on $\Rep_{\C \Gamma_n} (\mathbb{H}_{\mathbf{h}})$: if $g \in \Aut_{\Gamma_n}(\C \Gamma_n)$ and $\phi \in \Rep_{\C \Gamma_n} (\mathbb{H}_{\mathbf{h}})$ then $(g.\phi)(z) = g\phi(z)g^{-1}$, for all $z \in \mathbb{H}_{\mathbf{h}}$. By \cite[Theorem 3.7]{EG}, there 
exists an irreducible component $\Rep^o_{\C \Gamma_n} (\mathbb{H}_{\mathbf{h}})$ of $\Rep_{\C \Gamma_n} (\mathbb{H}_{\mathbf{h}})$ such that 
\eqref{pi} induces a $\C^*$-equivariant isomorphism of algebraic varieties
\begin{equation} \label{pi2} \Rep^o_{\C \Gamma_n} (\mathbb{H}_{\mathbf{h}}) \sslash \Aut_{\Gamma_n}(\C \Gamma_n)   \xrightarrow{\sim} \mathcal{Y}_{\mathbf{h}}.\end{equation}

Next, consider the $(\mathbb{H}_{\mathbf{h}},e_n\mathbb{H}_{\mathbf{h}}e_n)$-bimodule $\mathbb{H}_{\mathbf{h}}e_n$ together with the $\C^*$-action inherited from $\mathbb{H}_{\mathbf{h}}$. The bimodule $\mathbb{H}_{\mathbf{h}}e_n$ defines a $\C^*$-equivariant coherent sheaf $\widetilde{\mathbb{H}_{\mathbf{h}}e_n}$ on $\Spec e_n\mathbb{H}_{\mathbf{h}}e_n  \cong  \mathcal{Y}_{\mathbf{h}}$. 
Since we are assuming that $\mathcal{Y}_{\mathbf{h}}$ is smooth, \cite[Theorem 1.7]{EG} implies that this sheaf is locally free. 

\begin{definition}
Let $\mathcal{R}_{\mathbf{h}}$ denote the $\C^*$-equivariant vector bundle whose sheaf of sections is $\widetilde{\mathbb{H}_{\mathbf{h}}e_n}$. 
\end{definition}

The group $\Gamma_n$ acts naturally on every fibre of $\mathcal{R}_{\mathbf{h}}$ from the left. Let $\mathcal{R}^{\Gamma_{n-1}}_{\mathbf{h}}=e_{n-1}\mathcal{R}_{\mathbf{h}}$ be the subbundle of $\mathcal{R}_{\mathbf{h}}$ consisting of $\Gamma_{n-1}$-invariants and let $(\mathcal{R}^{\Gamma_{n-1}}_{\mathbf{h}})_{\underline{\lambda}}$ denote its fibre at~$\chi_{\underline{\lambda}}$. 

\subsection{$\C^*$-fixed points.} \label{subsec:cfp}

Let us recall the classification of $\C^*$-fixed points in $\mathcal{Y}_{\mathbf{h}}$ in terms of $l$-multipartitions of $n$ from \cite{gor-bvm}. 
By \cite[Proposition 4.15]{EG}, the subalgebra $\C[\h]^{\Gamma_n} \otimes \C[\h^*]^{\Gamma_n}$ of $\mathbb{H}_{\mathbf{h}}$ is contained in $Z_{\mathbf{h}}$ and $Z_{\mathbf{h}}$ is a free $\C[\h]^{\Gamma_n} \otimes \C[\h^*]^{\Gamma_n}$-module of rank $|\Gamma_n|$.
The inclusion $\C[\h]^{\Gamma_n} \otimes \C[\h^*]^{\Gamma_n} \hookrightarrow Z_{\mathbf{h}}$ induces a $\C^*$-equivariant morphism of algebraic varieties 
\[ \Upsilon \colon \mathcal{Y}_{\mathbf{h}} \to \h/\Gamma_n\times \h^* / \Gamma_n.\] 
The only $\C^*$-fixed point in $\h/\Gamma_n\times \h^* / \Gamma_n$ is $0$. Since the group $\C^*$ is connected and the fibre $\Upsilon^{-1}(0)$ is finite, $\mathcal{Y}_{\mathbf{h}}^{\C^*} = \Upsilon^{-1}(0).$ 
By Theorem 5.6 in \cite{gor-bvm}, there is a bijection between the closed points of $\Upsilon^{-1}(0)$ and isomorphism classes of simple modules over the restricted rational Cherednik algebra $\overline{\mathbb{H}}_{\mathbf{h}}$. Hence there is a bijection
\[ \mathcal{P}(l,n) \longleftrightarrow (\MaxSpec Z_{\mathbf{h}})^{\C^*}, \quad \underline{\lambda} \mapsto \ker \chi_{L(\underline{\lambda})}.\]
We will also write $\chi_{\underline{\lambda}}$ for $\chi_{L(\underline{\lambda})}$. 

\section{Quiver varieties} \label{CM spaces chapter}

In this section we recall the connection between rational Cherednik algebras and cyclic quiver varieties via the Etingof-Ginzburg isomorphism. 

\subsection{The cyclic quiver.} \label{subsec: cyclic quiver}

Let $\mathbf{Q}$ be the cyclic quiver with $l$ vertices and a cyclic orientation. We label the vertices as $0, 1, \hdots, l-1$ (considered as elements of $\Z/l\Z$) in such a way that there is a (unique) arrow $i \to j$ if and only if $j = i+1$. Let $\overline{\mathbf{Q}}$ be the double of $\mathbf{Q}$, i.e., the quiver obtained from $\mathbf{Q}$ by adding, for each arrow $a$ in $\mathbf{Q}$, an arrow $a^*$ going in the opposite direction. Moreover, let $\mathbf{Q}_\infty$ be the quiver obtained from $\mathbf{Q}$ by adding an extra vertex, denoted $\infty$, and an extra arrow $a_\infty : \infty \to 0$. We write $\overline{\mathbf{Q}}_\infty$ for the double of $\mathbf{Q}_\infty$. 

Let $\mathbf{d} = (d_0, \hdots, d_{l-1}) \in (\Z_{\geq  0})^{l}$. We interpret $\mathbf{d}$ as the dimension vector for $\overline{\mathbf{Q}}$ so that the dimension associated to the vertex $i$ is $d_i$. For each $i = 0, \hdots, l-1$ let $\mathbf{V}_i$ be a complex vector space of dimension $d_i$. Set $\widehat{\mathbf{V}} := \bigoplus_{i=0}^{l-1}\mathbf{V}_i$. Moreover, let $\mathbf{V}_\infty$ be a one-dimensional complex vector space and set $\mathbf{V} := \mathbf{V}_\infty \oplus \widehat{\mathbf{V}}$. Define 
\[ \Rep(\overline{\mathbf{Q}}_\infty,\mathbf{d}) := \bigoplus_{i=0}^{l-1}\Hom(\mathbf{V}_i, \mathbf{V}_{i+1})  \oplus \bigoplus_{i=0}^{l-1}\Hom(\mathbf{V}_i, \mathbf{V}_{i-1}) \oplus \Hom(\mathbf{V}_0,\mathbf{V}_\infty) \oplus \Hom(\mathbf{V}_\infty,\mathbf{V}_0). \]
We denote an element of $\Rep(\overline{\mathbf{Q}}_\infty,\mathbf{d})$ as $(\mathbf{X},\mathbf{Y},I,J) = (X_0, \hdots, X_{l-1}, Y_0, \hdots, Y_{l-1},I,J)$ accordingly. 
There is a natural isomorphism of varieties $\Rep(\overline{\mathbf{Q}}_\infty,\mathbf{d}) \cong T^*\Rep(\mathbf{Q}_\infty,\mathbf{d})$, through which we can equip $\Rep(\overline{\mathbf{Q}}_\infty,\mathbf{d})$ with a symplectic structure. 
\vspace{1 em}

\hspace{6 em}
\scalebox{.8}{
\begin{tikzpicture}
\foreach \ang\lab\anch in {90/2/north, 45/3/{north east}, 270/{l-2}/south, 180/0/west, 225/{l-1}/{south west}, 135/1/{north west}}{
  \draw[fill=black] ($(0,0)+(\ang:3)$) circle (.08);
  \node[anchor=\anch, color=blue!100] at ($(0,0)+(\ang:2.55)$) {$\lab$};
  %\draw[->,shorten <=7pt, shorten >=7pt] ($(0,0)+(\ang:3)$).. controls +(\ang+40:1.5) and +(\ang-40:1.5) .. ($(0,0)+(\ang:3)$);
}
\draw[fill=black] ($(-5,0)$) circle (.08); 
\draw[->,shorten <=7pt, shorten >=7pt] ($(-3,0)$) to [bend left] ($(-5,0)$); 
\draw[->,shorten <=7pt, shorten >=7pt] ($(-5,0)$) to [bend left] ($(-3,0)$); 
\node at ($(-4,-0.6)$) {$I$}; 
\node at ($(-4,0.6)$) {$J$}; 
\node[color=blue!100] at ($(-5.5,0)$) {$\infty$}; 
\foreach \ang\lab in {90/2,180/0,135/1,225/{l-1},270/{l-2}}{
  \draw[->,shorten <=7pt, shorten >=7pt] ($(0,0)+(\ang:3)$) to [bend left] ($(0,0)+(\ang-45:3)$); %(\ang:\ang-45:3);
  \node at ($(0,0)+(\ang-22.5:3.5)$) {$X_{\lab}$};
}
\foreach \ang\lab in {90/2,180/0,135/1,225/{l-1},45/3}{
  \draw[->,shorten <=7pt, shorten >=7pt] ($(0,0)+(\ang:3)$) to [bend left] ($(0,0)+(\ang+45:3)$); %(\ang:\ang-45:3);
  \node at ($(0,0)+(\ang+22.5:2.1)$) {$Y_{\lab}$};
}
\foreach \ang in {275,280,285,290,295,300,305,310,315,320,325,330,335,340,345,350,355,360,5,10,15,20,25,30,35,40}{
  \draw[fill=black] ($(0,0)+(\ang:3)$) circle (.02);
}
\end{tikzpicture}}
\vspace{1 em}

The algebraic group $G(\mathbf{d}):=\prod_{i=0}^{l-1}\GL(\mathbf{V}_i)$ acts on $\Rep(\overline{\mathbf{Q}}_\infty,\mathbf{d})$ by change of basis. If $\mathbf{g} = (g_0, \hdots, g_{l-1}) \in G(\mathbf{d})$ and $(\mathbf{X},\mathbf{Y}, I,J) \in \Rep(\overline{\mathbf{Q}}_\infty,\mathbf{d})$ then
\[ \mathbf{g}. (\mathbf{X},\mathbf{Y}, I,J) = (g_1X_0g_0^{-1}, \hdots, g_0X_{l-1}g_{l-1}^{-1}, g_{l-1}Y_0g_0^{-1}, \hdots, g_{l-2}Y_{l-1}g_{l-1}^{-1}, Ig_0^{-1}, g_0J).\]
The action of $G(\mathbf{d})$ on $\Rep(\overline{\mathbf{Q}}_\infty,\mathbf{d})$ is Hamiltonian. The moment map for this action is given by 
\[ \mu_{\mathbf{d}} : \Rep(\overline{\mathbf{Q}}_\infty,\mathbf{d}) \to \mathfrak{g}(\mathbf{d})^* \cong \mathfrak{g}(\mathbf{d}):=\Lie G(\mathbf{d}), \quad (\mathbf{X}, \mathbf{Y},I,J) \mapsto [\mathbf{X},\mathbf{Y}]+JI.\]

\subsection{Quiver varieties.}
\label{Tautbundlesec}

If $\mathbf{d} \in (\Z_{\geq 0})^{l}$ and $\theta = (\theta_0, \hdots,\theta_{l-1}) \in \Q^{l}$, we will also write $\theta = (\theta_0 \id_0, \theta_1 \id_1, \hdots , \theta_{l-1}\id_{l-1})\in \g(\mathbf{d})$, where $\id_i = \id_{\mathbf{V}_i}$ ($i = 0, \hdots, l-1$). 
Define 
\[ \mathcal{X}_{\theta}(\mathbf{d}) :=  \mu_{\mathbf{d}} ^{-1}(\theta) \sslash G(\mathbf{d}) := \Spec \C[\mu_{\mathbf{d}} ^{-1}(\theta)]^{G(\mathbf{d})}.\] 
We will always assume that the parameter $\theta$ is chosen in such a way that the variety $\mathcal{X}_{\theta}(\mathbf{d})$ is smooth. 
Moreover, define the GIT quotient
\[ \mathcal{M}_\theta(\mathbf{d}) := \mu_{\mathbf{d}} ^{-1}(0) \sslash_{\theta} G(\mathbf{d}) = \Proj \bigoplus_{i \geq 0} \C[\mu_{\mathbf{d}} ^{-1}(0)]^{\chi_\theta^i},\]
where $\chi_\theta : G(\mathbf{d}) \to \C^*$ is the character sending $\mathbf{g}$ to $\prod (\det g_i)^{\theta_i}$ and $ \C[\mu_{\mathbf{d}} ^{-1}(0)]^{\chi_\theta^i}$ denotes the space of semi-invariant functions on $\mu_{\mathbf{d}} ^{-1}(0)$, i.e., those functions $f$ satisfying $\mathbf{g}.f = \chi_\theta^i(\mathbf{g})f$. By definition, the space  $ \C[\mu_{\mathbf{d}} ^{-1}(0)]^{\chi_\theta^i}$ is zero unless $i\theta \in \Z^l$. 

The varieties $\mathcal{X}_\theta(\mathbf{d})$ and $\mathcal{M}_\theta(\mathbf{d})$ can be endowed with hyper-K\"{a}hler structures (see e.g.\ \cite[\S 3.6]{gor-qv}). Moreover, the group $\C^*$ acts on $\Rep(\overline{\mathbf{Q}}_\infty,\mathbf{d})$ by the rule $t.(\mathbf{X},\mathbf{Y},I,J)  =  (t^{-1}\mathbf{X},t\mathbf{Y}, I,J)$ for $t\in \C^*$. This action descends to actions on $\mathcal{X}_{\theta}(\mathbf{d})$ and $\mathcal{M}_{\theta}(\mathbf{d})$. By $\mathcal{X}_{\theta}(\mathbf{d})^{\C^*}$ and $\mathcal{M}_{\theta}(\mathbf{d})^{\C^*}$ we will always mean the sets of \emph{closed} $\C^*$-fixed points. 

Let us recall the definition of the tautological bundle on a quiver variety. 
Assume that the group $G(\mathbf{d})$ acts freely on the fibre $\mu_{\mathbf{d}}^{-1}(\theta)$ and 
consider the trivial vector bundle $\widehat{\mathcal{V}}_\theta(\mathbf{d}) :=\mu^{-1}_{\mathbf{d}}(\theta) \times \widehat{\mathbf{V}}$ on $\mu^{-1}_{\mathbf{d}}(\theta)$. 
We regard $\widehat{\mathcal{V}}_\theta(\mathbf{d})$ as a $\C^*$-equivariant vector bundle by letting $\C^*$ act trivially on $ \widehat{\mathbf{V}}$. 
Let $G(\mathbf{d})$ act diagonally on $\widehat{\mathcal{V}}_\theta(\mathbf{d})$. 
The vector bundle $\widehat{\mathcal{V}}_\theta(\mathbf{d})$ descends to a $\C^*$-equivariant vector bundle 
\[ \mathcal{V}_\theta(\mathbf{d}):= \mu^{-1}_{\mathbf{d}}(\theta) \times^{G(\mathbf{d})}  \widehat{\mathbf{V}} = (\mu^{-1}_{\mathbf{d}}(\theta) \times  \widehat{\mathbf{V}}) \sslash G(\mathbf{d})\]
on $\mathcal{X}_{\theta}(\mathbf{d})$, 
which is called the \emph{tautological bundle}. 

\begin{notat}
We will always consider the subscript $i$ in the expressions $d_i, \mathbf{V}_i, g_i, X_i, Y_i, \theta_i$ modulo $l$ (unless $i = \infty$). 
\end{notat}

\subsection{The Etingof-Ginzburg isomorphism.} \label{subsec: EG iso}
Throughout this subsection we assume $\mathbf{d}=n\delta$, where $\delta := (1,\hdots,1) \in \Z^{l}$. Given $\mathbf{h}$ as in Definition \ref{RCA definition}, set 
\begin{equation} \label{theta vs h} \theta_{\mathbf{h}} = (\theta_0, \hdots, \theta_{l-1}) = (-h+ H_0, H_1, \hdots, H_{l-1}).
\end{equation}  
Since we are assuming that the parameter $\mathbf{h}$ is generic, the group $G(n\delta)$ acts freely on the fibre $\mu_{n\delta}^{-1}(\theta_{\mathbf{h}})$ (see \cite[Proposition 11.11]{EG}). 
We abbreviate 
\[ \mathcal{C}_{\mathbf{h}}:=\mathcal{X}_{\theta_{\mathbf{h}}}(n\delta), \quad \mathcal{V}_{\mathbf{h}} := \mathcal{V}_{\theta_{\mathbf{h}}}(n\delta).\]

Consider $(\C\Gamma_n)^{\Gamma_{n-1}}$ as a module over $\langle \epsilon_1 \rangle = \Z/l\Z$. It  decomposes as a direct sum 
of $n$-dimensional isotypic components $(\C\Gamma_n)^{\Gamma_{n-1}}_{\chi_i}$, where $\chi_i$ is the character $\chi_i : \epsilon_1 \mapsto \eta^i$. Fix a linear isomorphism 
\begin{equation} \label{varpi} (\C\Gamma_n)^{\Gamma_{n-1}} \to \widehat{\mathbf{V}}\end{equation} mapping each  $(\C\Gamma_n)^{\Gamma_{n-1}}_{\chi_i}$ onto $\mathbf{V}_i$. It induces an isomorphism of endomorphism algebras
\begin{equation} \label{varpitx} \varpi : \End_{\C} \left(  (\C\Gamma_n)^{\Gamma_{n-1}} \right) \to \End_{\C} ( \widehat{\mathbf{V}} ).\end{equation}
\begin{definition}   
Recall $\Rep_{\C\Gamma_n}(\mathbb{H}_{\mathbf{h}})$ from Definition \ref{sec rep var}. Each $\phi \in \Rep_{\C\Gamma_n}(\mathbb{H}_{\mathbf{h}})$ defines endomorphisms $\phi(x_1), \phi(y_1) : \C\Gamma_n \to \C\Gamma_n$, where $x_1, y_1 \in \mathbb{H}_{\mathbf{h}}$ are as in Definition \ref{RCA definition}. Set 
\[ \mathbf{X}(\phi) := \varpi \left(\phi(x_1)|_{(\C\Gamma_n)^{\Gamma_{n-1}}}\right), \quad  \mathbf{Y}(\phi) := \varpi \left(\phi(y_1)|_{(\C\Gamma_n)^{\Gamma_{n-1}}}\right).  \]
Consider the maps 
\begin{align} \label{Psi defi} \Psi \colon \Rep^o_{\C\Gamma_n}(\mathbb{H}_{\mathbf{h}}) \to \Rep(\overline{\mathbf{Q}},n\delta), \quad& \phi \mapsto (\mathbf{X}(\phi), \mathbf{Y}(\phi)), \\
\label{p defi} p \colon \Rep(\overline{\mathbf{Q}}_\infty,n\delta) \to \Rep(\overline{\mathbf{Q}},n\delta), \quad& (\mathbf{X}, \mathbf{Y}, I,J) \mapsto (\mathbf{X}, \mathbf{Y})
\end{align}
\end{definition}

\begin{lemma}
The maps $\Psi$ and $p$ are $\C^*$-equivariant.
\end{lemma}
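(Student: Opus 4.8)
The plan is to unwind the two constructions, $\Psi$ and $p$, and check that each $\C^*$-generator acts in a way compatible with the stated diagonal actions. Recall that the $\C^*$-action on $\mathbb{H}_{\mathbf{h}}$ is the one with $t.x_i = tx_i$, $t.y_i = t^{-1}y_i$, $t.g = g$; the induced action on $\Rep^o_{\C\Gamma_n}(\mathbb{H}_{\mathbf{h}})$ is $(t.\phi)(z) = \phi(t^{-1}.z)$, so that $(t.\phi)(x_1) = t^{-1}\phi(x_1)$ and $(t.\phi)(y_1) = t\,\phi(y_1)$, the action on $\C\Gamma_n$ itself being trivial. On the quiver side, the action on $\Rep(\overline{\mathbf{Q}},n\delta)$ is $t.(\mathbf{X},\mathbf{Y}) = (t^{-1}\mathbf{X}, t\mathbf{Y})$ (the restriction of the action on $\Rep(\overline{\mathbf{Q}}_\infty,n\delta)$ written in \S\ref{Tautbundlesec}).

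First I would treat $p$: this is essentially immediate. The $\C^*$-action on $\Rep(\overline{\mathbf{Q}}_\infty,n\delta)$ sends $(\mathbf{X},\mathbf{Y},I,J)$ to $(t^{-1}\mathbf{X},t\mathbf{Y},I,J)$, so
\[ p(t.(\mathbf{X},\mathbf{Y},I,J)) = p(t^{-1}\mathbf{X}, t\mathbf{Y}, I, J) = (t^{-1}\mathbf{X},t\mathbf{Y}) = t.(\mathbf{X},\mathbf{Y}) = t.p(\mathbf{X},\mathbf{Y},I,J),\]
using that the target action on $\Rep(\overline{\mathbf{Q}},n\delta)$ is by definition obtained by forgetting the $I,J$ components. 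So $p$ is $\C^*$-equivariant by inspection.

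Next I would treat $\Psi$. The key point is that the isomorphism $\varpi$ in \eqref{varpitx} is $\C^*$-equivariant for the trivial $\C^*$-action on both $\End_\C((\C\Gamma_n)^{\Gamma_{n-1}})$ and $\End_\C(\widehat{\mathbf{V}})$ — indeed $\C^*$ acts trivially on $\C\Gamma_n$ and on $\widehat{\mathbf{V}}$, so it acts trivially on these endomorphism algebras and there is nothing to check there. Hence for $\phi \in \Rep^o_{\C\Gamma_n}(\mathbb{H}_{\mathbf{h}})$ we compute
\[ \mathbf{X}(t.\phi) = \varpi\bigl((t.\phi)(x_1)|_{(\C\Gamma_n)^{\Gamma_{n-1}}}\bigr) = \varpi\bigl(t^{-1}\phi(x_1)|_{(\C\Gamma_n)^{\Gamma_{n-1}}}\bigr) = t^{-1}\mathbf{X}(\phi),\]
and similarly $\mathbf{Y}(t.\phi) = t\,\mathbf{Y}(\phi)$. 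Therefore $\Psi(t.\phi) = (\mathbf{X}(t.\phi),\mathbf{Y}(t.\phi)) = (t^{-1}\mathbf{X}(\phi), t\,\mathbf{Y}(\phi)) = t.(\mathbf{X}(\phi),\mathbf{Y}(\phi)) = t.\Psi(\phi)$, as required. The only genuinely substantive ingredient — and the step I would flag as most in need of care rather than being an obstacle — is verifying that the formula for the induced $\C^*$-action on $\Rep^o_{\C\Gamma_n}(\mathbb{H}_{\mathbf{h}})$ is the one I wrote, i.e.\ tracking signs in how the $\Z$-grading on $\mathbb{H}_{\mathbf{h}}$ transports through the definition of $\Rep_{\C\Gamma_n}(\mathbb{H}_{\mathbf{h}})$ (an algebra homomorphism $\mathbb{H}_{\mathbf{h}}\to\End_\C(\C\Gamma_n)$ with $\C^*$ acting trivially on the target). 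Once that is pinned down, both equivariance statements follow from the displayed identities.
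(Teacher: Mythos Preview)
Your proof is correct and follows essentially the same approach as the paper: both arguments note that $(t.\phi)(z)=\phi(t^{-1}.z)$ gives $(t.\phi)(x_1)=t^{-1}\phi(x_1)$ and $(t.\phi)(y_1)=t\phi(y_1)$, from which the equivariance of $\Psi$ follows immediately, while the equivariance of $p$ is declared obvious. Your additional remarks about the triviality of the $\C^*$-action on $\varpi$ and the explicit verification for $p$ are fine but not needed.
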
 
\begin{proof}
The equivariance of $p$ is obvious. 
Let $t\in \C^*, \phi \in \Rep_{\C\Gamma_n}(\mathbb{H}_{\mathbf{h}})$ and $z \in \mathbb{H}_{\mathbf{h}}$. We have $(t.\phi)(z)= \phi(t^{-1}.z)$, so $(t.\phi)(x_1)=t^{-1}\phi(x_1)$ and $(t.\phi)(y_1)=t\phi(y_1)$. 
Hence \begin{align*} \Psi(t.\phi) =& \left(\varpi \left(t^{-1}\phi(x_1)|_{(\C\Gamma_n)^{\Gamma_{n-1}}}\right),\varpi \left(t\phi(y_1)|_{(\C\Gamma_n)^{\Gamma_{n-1}}}\right)\right)\\ =& (t^{-1}\mathbf{X}(\phi), t \mathbf{Y}(\phi)) = t.(\mathbf{X}(\phi),\mathbf{Y}(\phi)). \qedhere\end{align*} 
\end{proof} 

The proof of \cite[Proposition 11.24]{EG} carries over directly to yield the following generalization. 

\begin{theorem} \label{EG iso theoremm} 
Maps \eqref{Psi defi} and \eqref{p defi} 
induce a $\C^*$-equivariant isomorphism of varieties \begin{equation} \label{EG iso} \EG \colon \mathcal{Y}_{\mathbf{h}} \xrightarrow{\sim} \mathcal{C}_{\mathbf{h}}\end{equation} and vector bundles
\begin{equation} \mathcal{R}^{\Gamma_{n-1}}_{\mathbf{h}} \xrightarrow{\sim} \mathcal{V}_{\mathbf{h}}.\end{equation}
\end{theorem}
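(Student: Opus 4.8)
The strategy is to reduce the statement to \cite[Proposition 11.24]{EG} by checking that the construction of that reference goes through verbatim in the present, slightly more general, setup. First I would recall the precise content of the Etingof--Ginzburg argument: in \cite{EG} one shows that the map $\Psi$ of \eqref{Psi defi} followed by a choice of lift through $p$ identifies the component $\Rep^o_{\C\Gamma_n}(\mathbb{H}_{\mathbf{h}})$ with $\mu_{n\delta}^{-1}(\theta_{\mathbf{h}})$, and that this identification is equivariant for $\Aut_{\Gamma_n}(\C\Gamma_n) \cong G(n\delta)$, so that the induced map on GIT quotients is the desired isomorphism. The key algebraic inputs are: (i) that under $\varpi$ the pair $(\phi(x_1)|, \phi(y_1)|)$ lands in $\Rep(\overline{\mathbf{Q}},n\delta)$ with the correct moment-map value $\theta_{\mathbf{h}}$, which is exactly the content of the commutation relations in Definition \ref{RCA definition} combined with \eqref{theta vs h}; (ii) that the data $I, J$ recording the interaction of $\phi(x_1), \phi(y_1)$ with the cyclic $\C\Gamma_n$-module structure can be reconstructed, giving the lift to $\Rep(\overline{\mathbf{Q}}_\infty, n\delta)$; and (iii) the stability/freeness statement from \cite[Proposition 11.11]{EG}, already quoted above, which guarantees $G(n\delta)$ acts freely on $\mu_{n\delta}^{-1}(\theta_{\mathbf{h}})$ and hence that $\mathcal{X}_{\theta_{\mathbf{h}}}(n\delta) = \mathcal{M}_{\theta_{\mathbf{h}}}(n\delta)$ is smooth.

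Concretely I would proceed in the following steps. \emph{Step 1:} using the relations $[x_1,y_1] = \cdots$ and $[x_i,y_j] = \cdots$ restricted to the $\Gamma_{n-1}$-invariants $(\C\Gamma_n)^{\Gamma_{n-1}}$, compute $[\mathbf{X}(\phi), \mathbf{Y}(\phi)] + J(\phi)I(\phi)$ and verify it equals $\theta_{\mathbf{h}}$ as an element of $\mathfrak{g}(n\delta)$; this is where \eqref{theta vs h} enters and where the specific eigenvalue bookkeeping of the characters $\chi_i$ is needed. \emph{Step 2:} check that $\Psi$ together with the lift through $p$ is injective on $\Rep^o_{\C\Gamma_n}(\mathbb{H}_{\mathbf{h}})$ and that its image is precisely $\mu_{n\delta}^{-1}(\theta_{\mathbf{h}})$ — here one recovers the full $\mathbb{H}_{\mathbf{h}}$-action from the endomorphisms of $x_1, y_1$ using the $\Gamma_n$-equivariance and the fact that the other $x_i, y_i$ are $\Gamma_n$-conjugates of $x_1, y_1$. \emph{Step 3:} verify $\Aut_{\Gamma_n}(\C\Gamma_n) \cong G(n\delta)$-equivariance of this identification, so that \eqref{pi2} and the definition of $\mathcal{X}_{\theta_{\mathbf{h}}}(n\delta)$ as $\mu_{n\delta}^{-1}(\theta_{\mathbf{h}}) \sslash G(n\delta)$ yield the isomorphism $\EG$; the $\C^*$-equivariance is then immediate from the lemma just proved, since the $\C^*$-actions on both sides are intertwined by $\Psi$ and $p$. \emph{Step 4:} for the vector bundle statement, observe that the tautological bundle $\mathcal{V}_{\mathbf{h}}$ on $\mathcal{X}_{\theta_{\mathbf{h}}}(n\delta)$ is by definition $\mu_{n\delta}^{-1}(\theta_{\mathbf{h}}) \times^{G(n\delta)} \widehat{\mathbf{V}}$, while $\mathcal{R}^{\Gamma_{n-1}}_{\mathbf{h}} = e_{n-1}\mathcal{R}_{\mathbf{h}}$ has fibre $(\C\Gamma_n)^{\Gamma_{n-1}}$; the fixed linear isomorphism \eqref{varpi} identifies these two associated bundles compatibly with the identification of base spaces from Steps 1--3, and this identification is $\C^*$-equivariant because $\C^*$ acts trivially on both $\widehat{\mathbf{V}}$ and $(\C\Gamma_n)^{\Gamma_{n-1}}$ (the $\C^*$-action on $\mathbb{H}_{\mathbf{h}}$ fixes $\Gamma_n$).

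The main obstacle, such as it is, lies in Step 1--2: one must be careful that the bookkeeping of which isotypic component $(\C\Gamma_n)^{\Gamma_{n-1}}_{\chi_i}$ maps to which $\mathbf{V}_i$ is consistent with the orientation of the cyclic quiver $\mathbf{Q}$ (i.e.\ that $\phi(x_1)$ raises the $\langle\epsilon_1\rangle$-weight by one and hence corresponds to the arrows $X_i : \mathbf{V}_i \to \mathbf{V}_{i+1}$, while $\phi(y_1)$ lowers it and corresponds to $Y_i : \mathbf{V}_i \to \mathbf{V}_{i-1}$), and that the sign and shift conventions in \eqref{theta vs h} and in the definition of $H_0 = -(H_1 + \cdots + H_{l-1})$ match those of \cite{EG}. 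Once these conventions are pinned down the argument is a direct transcription: as the statement says, ``the proof of \cite[Proposition 11.24]{EG} carries over directly,'' the only point being that nothing in that proof used more about $\mathbf{h}$ than genericity (guaranteeing smoothness and the free action), which we have assumed throughout. I would therefore phrase the proof as a verification that each step of the cited argument uses only hypotheses available here, rather than reproducing the computation in full.
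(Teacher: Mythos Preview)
Your proposal is correct and takes essentially the same approach as the paper: the paper's entire proof is the single sentence ``The proof of \cite[Proposition 11.24]{EG} carries over directly to yield the following generalization,'' and your plan is a reasonable expansion of what that carrying-over involves. Your additional care about orientation conventions and the parameter match in \eqref{theta vs h} is useful but not something the paper itself addresses.
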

\noindent
We call \eqref{EG iso} the \emph{Etingof-Ginzburg isomorphism}.

\section{Combinatorics}

We recall several combinatorial notions which will be used throughout the paper. 

\subsection{Young diagrams.} \label{Felineorigin}

If $\mu$ is a partition of $k$, set $n(\mu) := \sum_{i \geq 1} i \cdot \mu_{i+1}$. If $\underline{\lambda}$ is an $l$-multipartition of $k$, define $r(\underline{\lambda}) = \sum_{i = 1}^{l-1} i \cdot |\lambda^{i}|$. 
Recall the notations $[n]_t = \frac{1-t^n}{1-t} = 1 + \hdots + t^{n-1}$ and $(t)_n = (1-t)(1-t^2)\hdots(1-t^n)$.

Let $\mu = (\mu_1, \hdots, \mu_m, 0, \hdots)$ be a partition of $k$, where $\mu_1, \hdots, \mu_m$ are non-zero. 
Let $\mathbb{Y}(\mu) := \{ (i,j) \mid 1 \leq i \leq m, \ 1 \leq j \leq \lambda_i \}$ denote the Young diagram of $\mu$. We will always display Young diagrams according to the English convention. 
We call each pair $(i,j) \in \mathbb{Y}(\mu)$ a cell. We will often use the symbol $\square$ to refer to cells. Sometimes we will also abuse notation and write $\mu$ instead of $\mathbb{Y}(\mu)$ where no confusion can arise, e.g., $\square \in \mu$ instead of $\square \in \mathbb{Y}(\mu)$. 
If $\square = (i,j) \in \mathbb{Y}(\mu)$ is a cell, let $c(\square) := j-i$ be the \emph{content} of $\square$. We call $\Res_{\mu}(t) := \sum_{\square \in \mu} t^{c(\square)}$ the \emph{residue} of $\mu$. We also call $c(\square) \mmod l$ the $l$\emph{-content} of $\square$ and $\sum_{\square \in \mu} t^{c(\square) \mmod l}$ the $l$\emph{-residue} of $\mu$. It is clear that a partition is determined uniquely by its residue. 

Now suppose that $\underline{\lambda}$ is an $l$-multipartition of $k$. By the Young diagram of $\underline{\lambda}$ we mean the $l$-tuple $\mathbb{Y}(\underline{\lambda}):=(\mathbb{Y}(\lambda^0), \hdots, \mathbb{Y}(\lambda^{l-1}))$. By a cell $\square \in \mathbb{Y}(\underline{\lambda})$ we mean a cell in any of the Young diagrams $\mathbb{Y}(\lambda^i)$. Let $\mathbf{e} = (e_0, \hdots, e_{l-1}) \in \Q^l$. We define the $\mathbf{e}$-residue of $\underline{\lambda}$ to be 
\[ \Res_{\underline{\lambda}}^{\mathbf{e}}(t) := \sum_{i=0}^{l-1} t^{e_i} \Res_{\lambda^i}(t).\] 
For sufficiently generic $\mathbf{e}$, 
an $l$-multipartition is determined uniquely by its $\mathbf{e}$-residue.

\subsection{Hook length polynomials.}   \label{Frob hooks sec}

Let $\mu$ be a partition and fix a cell $(i,j) \in \mathbb{Y}(\mu)$. By the \emph{hook} associated to the cell $(i,j)$ we mean the set $\{(i,j)\} \cup \{ (i',j) \in \mathbb{Y}(\mu) \mid i' > i \} \cup \{ (i,j') \in \mathbb{Y}(\mu) \mid j' > j \}$. We call $(i,j)$ the \emph{root} of the hook, $\{ (i',j) \in \mathbb{Y}(\mu) \mid i' > i \}$ the \emph{leg} of the hook and $\{ (i,j') \in \mathbb{Y}(\mu) \mid j' > j \}$ the \emph{arm} of the hook. The cell in the leg of the hook with the largest first coordinate is called the \emph{foot} of the hook, and the cell in the arm of the hook with the largest second coordinate is called the \emph{hand} of the hook. 

By a \emph{hook} in $\mathbb{Y}(\mu)$ we mean a hook associated to some cell $\square \in \mathbb{Y}(\mu)$. If $H$ is a hook, let $\arm(H)$ denote its arm and let $\leg(H)$ denote its leg. 
If $\square$ is the root of $H$, let $a_\mu(\square) := |\arm(H)|$ and $l_\mu(\square) := |\leg(H)|$. Set $h_\mu(\square) := 1 + a_\mu(\square) + l_\mu(\square)$. 
The \emph{hook length polynomial} of the partition $\mu$ is
\[ H_\mu (t) := \prod_{\square \in \mu} (1 - t^{h_\mu(\square)}).\]
Hook length polynomials are related to Schur functions by the following equality
\[ s_\mu(1,t,t^2, \hdots ) = \frac{t^{n(\mu)}}{H_\mu(t)}.\]

\subsection{Frobenius form of a partition.} \label{Frobenius form} 

By a \emph{Frobenius hook} in $\mathbb{Y}(\mu)$ we mean a hook whose root is a cell of content zero. Clearly $\mathbb{Y}(\mu)$ is the disjoint union of all its Frobenius hooks. Suppose that $(1,1), (2,2), \hdots, (k,k)$ are the cells of content zero in $\mathbb{Y}(\mu)$. Let $F_i$ denote the Frobenius hook with root $(i,i)$. We endow the set of Frobenius hooks with the natural ordering $F_1 < F_2 < \hdots < F_k$. We call $F_1$ the \emph{innermost} or \emph{first} Frobenius hook and $F_k$ the \emph{outermost} or \emph{last} Frobenius hook. Let $a_i = a_\mu(i,i)$ and $b_i = l_\mu(i,i)$. We call $(a_1, \hdots, a_k \mid b_1, \hdots, b_k)$ the \emph{Frobenius form} of $\mu$.

\subsection{Bead diagrams.} 
Let us recall the notion of a bead diagram (see e.g.\ \cite[\S 2.7]{JK}). 
We call an element $(i,j)$ of $\Z_{\leq -1} \times \{0, \hdots, l-1\}$ a \emph{point}. We say that the point $(i,j)$ lies to the left of $(i,j')$ if $j < j'$, and that $(i,j)$ lies above $(i',j)$ if $i'<i$. 

A \emph{bead diagram} is a function $f: \Z_{\leq -1} \times \{0, \hdots, l-1\} \to \{0,1\}$ which takes value $1$ for only finitely many points. If $f(i,j) = 1$ we say that the point $(i,j)$ is occupied by a \emph{bead}. If $f(i,j) = 0$ we say that the point $(i,j)$ is \emph{empty}. Suppose that a point $(i,j)$ is empty and that there exists an $i' < i$ such that the point $(i',j)$ is occupied by a bead. Then we call the point $(i,j)$ a \emph{gap}. 

We say that a point $(i,j) \in \Z_{\leq -1} \times \{0,\hdots,l-1\}$ is in the $(-i)$-th row and $j$-th column (or runner) of the bead diagram. We call the $i$-th row full (empty) if every point $(i,k)$ for $k=0,\hdots,l-1$ is occupied by a bead (is empty). A row is called redundant if it is a full row and if all the rows above it are full. 
A graphical interpretation of the notion of a bead diagram can be found in Example \ref{exa:bead diagram}. We only display the rows containing at least one bead or gap.

\begin{definition} \label{defi: beta numbers}
Let $\mu \in \mathcal{P}$ and $p \geq \ell(\mu)$. Set \[ \beta^p_i = \mu_i + p - i \quad (1 \leq i \leq p).\]
We call $\{ \beta_i^p \mid 1 \leq i \leq p \}$ a set of $\beta$-numbers for $\mu$. Note that $|\{ \beta_i^p \mid 1 \leq i \leq p \}| = p$. From each set of $\beta$-numbers one can uniquely recover the corresponding partition $\mu$.
\end{definition}

\begin{definition}
Given a set of $\beta$-numbers $\{ \beta_i^p \mid 1 \leq i \leq p \}$ we can naturally associate to it a bead diagram by the rule
\[ f(i,j) = 1 \iff -(i+1) \cdot l + j \in \{ \beta_i^p \mid 1 \leq i \leq p \}.\]
Let $\mu$ be as in Definition \ref{defi: beta numbers}. 
If $p$ is the smallest multiple of $l$ satisfying $p \geq \ell(\mu)$ we denote the resulting bead diagram by $\mathbb{B}(\mu)$. The diagram $\mathbb{B}(\mu)$ has no redundant rows and the number of beads in $\mathbb{B}(\mu)$ is a multiple of $l$. 
\end{definition}

\begin{remark}
Conversely, if we are given a bead diagram $f$, the set $\{ -(i+1) \cdot l + j \mid f(i,j) = 1 \}$ is a set of $\beta$-numbers for some partition. The relationship between bead diagrams, sets of $\beta$-numbers and partitions can therefore be illustrated as follows
\[ \left\{ \mbox{bead diagrams} \right\} \longleftrightarrow \left\{ \mbox{sets of } \beta\mbox{-numbers} \right\} \twoheadrightarrow  \{ \mbox{partitions} \}, \]
where the set of partitions contains partitions of an arbitrary integer. 
\end{remark}

\subsection{Cores and quotients.} \label{subsec: cores and quotients} 

Let $f: \Z_{\leq -1} \times \{0, \hdots, l-1\} \to \{0,1\}$ be a bead diagram. Suppose that the point $(i,j)$ with $i <-1$ is occupied by a bead, i.e., $f(i,j) = 1$, and that $f(i+1,j)=0$. To \emph{slide} or \emph{move} the bead in position $(i,j)$ \emph{upward} means to modify the function $f$ by setting $f'(i,j) = 0, f'(i+1,j) = 1$ and $f' = f$ otherwise. 

\begin{definition}
Let $\mu$ be a partition. Take any bead diagram $f$ corresponding to $\mu$. We obtain a new bead diagram $f'$ by sliding beads upward as long as it is possible. We call the partition corresponding to the bead diagram $f'$ the $l$-\emph{core} of $\mu$, denoted $\core(\mu)$. Let $\heartsuit(l)$ denote the set of all $l$-cores. 
We set
\[ \mathcal{P}_\nu(k) := \{ \mu \in \mathcal{P}(k) \mid \core(\mu) = \nu \}.\]
\end{definition}

\begin{definition}
Consider the bead diagram $\mathbb{B}(\mu)$. Each column of $\mathbb{B}(\mu)$ can itself be considered as a bead diagram for $l=1$. Let $Q^i(\mu)$ denote the partition corresponding to the $i$-th column. We call the $l$-multipartition $\quot(\mu) := (Q^0(\mu), Q^1(\mu), \hdots, Q^{l-1}(\mu))$ the $l$-\emph{quotient} of $\mu$. 
\end{definition}

A partition is determined uniquely by its $l$-core and $l$-quotient \cite[Theorem 2.7.30]{JK}. In particular, there is a bijection 
\[ \mathcal{P}_{\varnothing}(nl) \to  \mathcal{P}(l,n), \quad \mu \mapsto \quot(\mu).\]

\begin{example} \label{exa:bead diagram}
Consider the partition $\mu = (6,5,3,3,1,1)$ and take $l=3$. The first-column hook-lengths are $11,9,6,5,2,1$. They form a set of $\beta$-numbers. The bead diagram below on the left illustrates $\mathbb{B}(\mu)$ while the diagram on the right illustrates the effect of sliding all the beads upward. 
\[
\xy
\xymatrixrowsep{0.05in}
\xymatrixcolsep{0in}
\xymatrix "M"{
\Circle & \CIRCLE & \CIRCLE \\
\Circle & \Circle & \CIRCLE \\
\CIRCLE & \Circle & \Circle \\ 
\CIRCLE & \Circle & \CIRCLE 
}
\endxy
 \hspace{3 em}
\xy
\xymatrixrowsep{0.05in}
\xymatrixcolsep{0in}
\xymatrix "M"{
\CIRCLE & \CIRCLE & \CIRCLE \\
\CIRCLE & \Circle & \CIRCLE \\
\Circle & \Circle & \CIRCLE 
}
\endxy
\] 
Let us read off the $3$-core of $\mu$ from the diagram on the right. We can ignore all the beads before the first empty point, which we label as zero. We carry on counting. The remaining two beads get labels $1$ and $4$. These form a set of $\beta$-numbers, which we can interpret as the first-column hook-lengths corresponding to the partition $(3,1)$. It follows that $\core(\mu)=(3,1)$. 
To determine the $3$-quotient of $\mu$ we divide the diagram on the left into three columns and consider each separately. We read off the $\beta$-numbers as before - they are $2,3$ for the first column, $0$ for the second column and $1$ for the third column. It follows that $\quot(\mu) = ( (2,2), \varnothing, (1))$. 
\end{example}

\subsection{Rim-hooks.}  
The \emph{rim} of $\mathbb{Y}(\mu)$ is the subset  of $\mathbb{Y}(\mu)$ consisting of the cells $(i,j)$ such that $(i+1,j+1)$ does not lie in $\mathbb{Y}(\mu)$.
Fix a cell $(i,j) \in \mathbb{Y}(\mu)$. Recall that by the hook associated to $(i,j)$ we mean the subset of $\mathbb{Y}(\mu)$ consisting of all the cells $(i,k)$ with $k \geq j$ and all the cells $(k,j)$ with $k \geq i$. We define the \emph{rim-hook} associated to the cell $(i,j)$ to be the intersection of the set $\{ (i',j') \mid i' \geq i, j' \geq j \}$ with the rim of $\mathbb{Y}(\mu)$. We call a rim-hook an $l$-\emph{rim-hook} if it contains $l$ cells. 
The $l$-core of $\mu$ can also be characterised as the subpartition $\mu'$ of $\mu$ obtained from $\mu$ by a successive removal of $l$-rim-hooks, in whichever order (see \cite[Theorem 2.7.16]{JK}). 

\begin{lemma}[{\cite[Lemma 2.7.13]{JK}}] \label{rem hooks quot} 
Let $R$ be an $l$-rim-hook in $\mu$ and set $\mu' := \mu - R$. Then $\quot(\mu') = \quot(\mu) - \square$ for some $\square \in \quot(\mu)$. 
\end{lemma}

\subsection{The $\tilde{S}_l$-action on partitions.} \label{NorweskiLesny} 
Assume for the rest of this section that $l > 1$.  
All subscripts should be regarded modulo $l$. 
Let $\tilde{S}_l$ denote the affine symmetric group. It has a Coxeter presentation with generators $\sigma_0, \hdots, \sigma_{l-1}$ and relations \[ \sigma_i^2=1, \quad \sigma_i\sigma_{i+1}\sigma_i = \sigma_{i+1}\sigma_i\sigma_{i+1} \quad (0 \leq i \leq l-1).\] 
Let us recall the $\tilde{S}_l$-action on the set $\mathcal{P}$ of all partitions from \cite[\S 4]{Leew}. We will later use this action to describe the behaviour of the $\C^*$-fixed points under reflection functors. 
We need the following definition, reminiscent of the combinatorics of the Fock space. 

\begin{definition}
Let $k \in \{0, \hdots, l-1\}$. Consider the Young diagram $\mathbb{Y}(\mu)$ as a subset of the $\Z_{>0} \times \Z_{>0}$ space. We say that a cell $(i,j) \in \mathbb{Y}(\mu)$ is \emph{removable} if $\mathbb{Y}(\mu) - \{(i,j)\}$ is the Young diagram of a partition.
We say that it is
$k$-\emph{removable} if additionally $c(i,j) = j - i = k \ \mmod \ l$. We call a cell $(i,j) \notin \mathbb{Y}(\mu)$ \emph{addable} if $\mathbb{Y}(\mu) \cup \{(i,j)\}$ is the Young diagram of a partition. We call it 
$k$-\emph{addable} if additionally $c(i,j) = j - i = k \ \mmod \ l$. 
\end{definition}

We discuss the combinatorics of removability and addability in more detail in \S \ref{para-remadd}. We will, in particular, require Lemma \ref{Lemma-corners} proven there. 

\begin{definition} \label{defi:part-remadd} Suppose that $\mu \in \mathcal{P}$ and $k \in \{0, \hdots, l-1\}$.
Define $\mathbf{T}_k(\mu)$ to be the partition such that \begin{equation} \label{Ydiagram add rem} \mathbb{Y}(\mathbf{T}_k(\mu)) = \mathbb{Y}(\mu) \cup \{ \square \mbox{ is } k\mbox{-addable} \} - \{ \square \mbox{ is } k\mbox{-removable} \}.\end{equation}
The group $\tilde{S}_l$ acts on $\mathcal{P}$ by the rule \[ \sigma_i*\mu = \mathbf{T}_i(\mu) \quad (\mu \in \mathcal{P}, i \in \Z/l\Z).\] 
\end{definition}

This action also plays a role in the combinatorics of the Schubert calculus of the affine Grassmannian, see \cite[\S 8.2]{Lam1} and \cite[\S 11]{Lam2}. 
By \cite[Proposition 22]{Lam2}, we have $\tilde{S}_l * \varnothing = \heartsuit(l)$. 
Let us recall how the $\tilde{S}_l$-action behaves with respect to cores and quotients. 
Consider the finite symmetric group $S_l$ as  the group of permutations of the set $\{0,\hdots,l-1\}$. Let $s_i \in S_l$ ($i=1,\hdots,l-1$) be the transposition swapping $i-1$ and $i$. Let $s_0$ be the transposition swapping $0$ and $l-1$. Note that our conventions for $S_l$ differ from those for $S_n$ introduced in \S \ref{subsec: wreath products}. 
The finite symmetric group $S_l$ acts on the set $\underline{\mathcal{P}}$ of all $l$-multipartitions  by the~rule
\[ w\cdot\underline{\lambda} = (\lambda^{w^{-1}(0)}, \hdots, \lambda^{w^{-1}(l-1)}), \quad w \in S_l.\] 
Consider the group homomorphism
\[ \mathsf{pr} : \tilde{S}_l \twoheadrightarrow S_l, \quad \sigma_i \mapsto s_{i} \ (i = 0, \hdots, l-1).\]

\begin{proposition}[{\cite[Proposition 4.13]{Leew}}] \label{Affineactioncorequotient}
Let $\mu \in \mathcal{P}$ and $\sigma \in \tilde{S}_l$. Then 
\[ \core(\sigma*\mu) = \sigma*\core(\mu), \quad \quot(\sigma*\mu) = \mathsf{pr}(\sigma)\cdot\quot(\mu).\]
\end{proposition}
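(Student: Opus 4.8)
The plan is to reduce the statement to the case of a single simple reflection $\sigma_i$ and then analyze the effect of $\mathbf{T}_i$ on the bead diagram $\mathbb{B}(\mu)$ runner by runner. Since $\tilde S_l$ is generated by the $\sigma_i$ and both sides of each identity are equivariant under composition (using $\mathsf{pr}(\sigma\sigma') = \mathsf{pr}(\sigma)\mathsf{pr}(\sigma')$), it suffices to prove $\core(\sigma_i*\mu) = \sigma_i*\core(\mu)$ and $\quot(\sigma_i*\mu) = s_i\cdot\quot(\mu)$ for all $i\in\{0,\dots,l-1\}$ and all $\mu\in\mathcal P$.

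First I would give a bead-diagram translation of the operator $\mathbf{T}_i$. Fix a bead diagram for $\mu$ with a number of rows divisible by $l$, so that runners are indexed by $\{0,\dots,l-1\}$ and a cell of content $c$ of $\mathbb Y(\mu)$ corresponds, via the $\beta$-number dictionary, to a bead/gap pair straddling adjacent positions whose residues mod $l$ are determined by $c \bmod l$. The key combinatorial fact (essentially the standard description of the $\hat{\mathfrak{sl}}_l$-action on the Fock space in the abacus model, cf.\ \cite[\S 2.7]{JK}) is: for $1\le i\le l-1$, a $k$-addable/$k$-removable cell with $k=i$ corresponds exactly to an empty position on runner $i$ with a bead immediately to its left on runner $i-1$ (addable) resp.\ a bead on runner $i$ with an empty position immediately to its left on runner $i-1$ (removable), and $\mathbf T_i$ simultaneously swaps \emph{all} such bead/empty pairs between runners $i-1$ and $i$ — i.e.\ it interchanges runners $i-1$ and $i$ wholesale. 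For $i=0$ the same holds with runners $l-1$ and $0$, but with a shift by one row on runner $l-1$ (wrap-around), which is the source of the $\sigma_0$ asymmetry; I would treat this case carefully but it is formally the same picture after accounting for the row shift. This identification of $\mathbf T_i$ with "swap runners $i-1$ and $i$" is the technical heart of the argument.

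Granting that, both identities become transparent. The $l$-quotient $\quot(\mu)$ is by definition the tuple of column partitions $(Q^0(\mu),\dots,Q^{l-1}(\mu))$ read off the runners of $\mathbb B(\mu)$; swapping runners $i-1$ and $i$ therefore swaps $Q^{i-1}$ and $Q^i$, which is precisely the action of $s_i\in S_l$ on $\underline{\mathcal P}$ under the convention $w\cdot\underline\lambda = (\lambda^{w^{-1}(0)},\dots,\lambda^{w^{-1}(l-1)})$ (and for $i=0$, swapping runners $l-1$ and $0$ with the row shift still induces the transposition $s_0$ on the quotient, the row shift being invisible to the individual column partitions). For the core: $\core(\mu)$ is obtained by sliding all beads up as far as possible, and the core is itself the partition determined by the multiset of "how full" each runner is; but more usefully, $\core(\mu)$ is characterized by $\quot(\core(\mu)) = \varnothing$ together with its own bead configuration, and $\mathbf T_i$ commutes with the up-sliding operation in the appropriate sense. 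Concretely, I would argue that swapping runners $i-1$ and $i$ commutes with the full up-sliding that produces the core — applying $\mathbf T_i$ and then coring, versus coring and then applying $\mathbf T_i$, yield the same runner configuration — because up-sliding acts independently on each runner and $\mathbf T_i$ only permutes runners (modulo the $i=0$ wrap-around, where one checks that the extra row shift is absorbed by the sliding). Hence $\core(\sigma_i*\mu) = \sigma_i*\core(\mu)$.

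The main obstacle I anticipate is handling $i=0$ cleanly: the cyclic symmetry of the quiver forces $\sigma_0$ to act between runners $l-1$ and $0$ \emph{with a one-row offset}, and one must verify both that this offset still produces the transposition $s_0$ on quotients and that it is compatible with up-sliding to give the core identity. A secondary subtlety is bookkeeping: one must pick the bead diagram with enough (a multiple of $l$) rows, check that all constructions are independent of this choice, and confirm that $\mathbf T_i$ as defined via addable/removable cells on $\mathbb Y(\mu)$ matches the runner-swap on $\mathbb B(\mu)$ including the boundary behavior at the bottom of the abacus (no spurious addable/removable cells are missed or double-counted). Once the runner-swap description of $\mathbf T_i$ is established rigorously, both claimed identities follow by inspection, so essentially all the work is concentrated in that one translation lemma.
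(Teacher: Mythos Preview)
The paper does not actually prove this proposition; it simply cites \cite[Proposition 4.13]{Leew} and uses the result. Your abacus-runner-swap argument is essentially the same approach Van Leeuwen takes in that reference, so your proposal is correct and aligned with the cited source; there is nothing further to compare against in the present paper.
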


\subsection{Partitions and the cyclic quiver.} \label{subsec: part cyclic quiver}

Let $N_i(\lambda)$ be the number of cells of $l$-content $i$ in $\mathbb{Y}(\lambda)$. Using this notation, the $l$-residue of $\lambda$ equals $\sum_{i=0}^{l-1} N_i(\lambda) t^i$. Consider the map
\begin{equation} \label{dmapp} \mathfrak{d} : \mathcal{P} \to \Z^l, \quad \lambda \mapsto \mathbf{d}_\lambda:=(N_0(\lambda), \hdots, N_{l-1}(\lambda)).\end{equation}
We interpret this map as assigning to every partition a dimension vector for the cyclic quiver with $l$ vertices.
Let \[\Z_{\heartsuit} = \{ \mathbf{d} \in (\Z_{\geq 0})^l \mid \mathbf{d}=\mathbf{d}_\nu \mbox{ for some } \nu \in \heartsuit(l)\}\] be the set of all dimension vectors corresponding to $l$-cores.  By \cite[Theorem 2.7.41]{JK} an $l$-core is determined uniquely by its $l$-residue. Hence \eqref{dmapp} restricts to a bijection
\[ \mathfrak{d} : \heartsuit(l) \longleftrightarrow \Z_{\heartsuit}, \quad \nu \mapsto \mathbf{d}_\nu.\]

There is an $\tilde{S}_l$-action on $\Z^l$ defined as follows. Let $\mathbf{d} \in \Z^l$. Then $\sigma_i * \mathbf{d} = \mathbf{d}'$ with $d'_j = d_j$ ($j \neq i$) and 
\[ d'_i = d_{i+1} + d_{i-1} -d_i \quad (i \neq 0), \quad d'_0 = d_{1} + d_{l-1} -d_0+1 \quad (i = 0).\]
The following proposition follows by an elementary calculation from Lemma \ref{Lemma-corners}. 

\begin{proposition}
The following diagram is $\tilde{S}_l$-equivariant 
\[
\xymatrixcolsep{5pc}
\xymatrix{
\mathcal{P} \ar[r]^{\mathfrak{d}} & \Z^l \\
\heartsuit(l) \ar[r]^{\sim}_{\mathfrak{d}} \ar@{^{(}->}[u] & \Z_{\heartsuit} \ar@{^{(}->}[u].
}
\]
\end{proposition}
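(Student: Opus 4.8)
The plan is to verify the claimed $\tilde{S}_l$-equivariance of the square by checking it on the generators $\sigma_0, \sigma_1, \dots, \sigma_{l-1}$ of $\tilde{S}_l$, and to reduce everything to a local count of removable and addable cells of a fixed $l$-content. Concretely, fix a partition $\mu$ and an index $i \in \Z/l\Z$; what must be shown is that
\[ \mathfrak{d}(\sigma_i * \mu) = \sigma_i * \mathfrak{d}(\mu), \]
i.e.\ that $N_j(\sigma_i*\mu) = N_j(\mu)$ for $j \neq i$ and $N_i(\sigma_i*\mu) = N_{i+1}(\mu) + N_{i-1}(\mu) - N_i(\mu) + \delta_{i,0}$. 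By Definition~\ref{defi:part-remadd}, passing from $\mu$ to $\sigma_i * \mu = \mathbf{T}_i(\mu)$ adds every $i$-addable cell and removes every $i$-removable cell, and these are the only cells that change; so $N_j$ is literally unchanged for $j \neq i$, which handles the off-diagonal coordinates immediately. For the $i$-th coordinate we get
\[ N_i(\sigma_i*\mu) = N_i(\mu) + \#\{\,i\text{-addable cells of }\mu\,\} - \#\{\,i\text{-removable cells of }\mu\,\}, \]
so the whole statement comes down to the identity
\[ \#\{\,i\text{-addable}\,\} - \#\{\,i\text{-removable}\,\} \;=\; N_{i+1}(\mu) + N_{i-1}(\mu) - 2N_i(\mu) + \delta_{i,0}, \]
which is exactly the content of Lemma~\ref{Lemma-corners} (the combinatorial lemma on corners promised in \S\ref{para-remadd}). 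This is the "elementary calculation" referred to: one reads off the addable/removable cells of $l$-content $i$ from the boundary path of $\mu$, notes that an $i$-addable cell sits between a cell of content $i-1$ on its left/above and a cell of content $i$-type boundary configuration, and performs the telescoping count; the correction term $\delta_{i,0}$ appears because the content-$0$ diagonal is the main diagonal, where the boundary path has one extra "outer corner" coming from the first row/column of the ambient quadrant.

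Having established equivariance of the top map $\mathfrak{d} \colon \mathcal{P} \to \Z^l$, it remains to check compatibility with the two vertical inclusions and the bottom arrow. The left vertical map $\heartsuit(l) \hookrightarrow \mathcal{P}$ is $\tilde{S}_l$-equivariant because, by the displayed remark after Definition~\ref{defi:part-remadd} and Proposition~\ref{Affineactioncorequotient}, $\tilde{S}_l * \varnothing = \heartsuit(l)$, so $\heartsuit(l)$ is an $\tilde{S}_l$-stable subset of $\mathcal{P}$ and the inclusion is a map of $\tilde{S}_l$-sets. The right vertical map $\Z_\heartsuit \hookrightarrow \Z^l$ is tautologically a map of $\tilde{S}_l$-sets once we know $\Z_\heartsuit$ is $\tilde{S}_l$-stable; but that follows from the equivariance of the top map together with $\tilde{S}_l$-stability of $\heartsuit(l)$, since $\Z_\heartsuit = \mathfrak{d}(\heartsuit(l))$. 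Finally the bottom arrow is the restriction of $\mathfrak{d}$ to $\heartsuit(l)$ (already known to be a bijection onto $\Z_\heartsuit$ by \cite[Theorem~2.7.41]{JK}), so its equivariance is immediate from that of the top map. Thus the square commutes $\tilde{S}_l$-equivariantly.

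The only real work, and the main obstacle, is Lemma~\ref{Lemma-corners} and the bookkeeping of the $\delta_{i,0}$ correction: one has to set up the right encoding of the boundary of $\mathbb{Y}(\mu)$ (a bi-infinite $\{\uparrow,\to\}$-word, or equivalently a $\beta$-set read modulo $l$) so that $i$-addable cells correspond to occurrences of the subword $\to\uparrow$ whose corner has content $\equiv i$, and $i$-removable cells to occurrences of $\uparrow\to$ with the same content condition, and then see that the alternating count of these along each runner of the abacus telescopes to $N_{i+1} - N_i$ versus $N_i - N_{i-1}$ with the single anomaly on the main-diagonal runner. Everything else is formal diagram-chasing. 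I would therefore structure the write-up as: (1) reduce to checking on generators; (2) dispatch the off-diagonal coordinates trivially; (3) cite Lemma~\ref{Lemma-corners} for the diagonal coordinate; (4) handle the vertical maps via $\tilde{S}_l$-stability of $\heartsuit(l)$.
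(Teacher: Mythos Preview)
Your proposal is correct and follows precisely the approach indicated in the paper, which simply states that the proposition ``follows by an elementary calculation from Lemma~\ref{Lemma-corners}.'' You have correctly spelled out that calculation: summing the per-content identity $(\text{addable at }k)-(\text{removable at }k)=b_{k+1}+b_{k-1}-2b_k+[k=0]$ (which is a direct consequence of the case analysis in Lemma~\ref{Lemma-corners}) over all $k\equiv i\pmod l$ yields exactly $N_{i+1}+N_{i-1}-2N_i+\delta_{i,0}$, and the remaining diagram-chasing is formal.
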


Let $\sigma_i \in \tilde{S}_l$ and $\nu \in \heartsuit(l)$. Then $\sigma_i * (n\delta + \mathbf{d}_\nu) = n\delta + \sigma_i * \mathbf{d}_\nu$ and $\sigma_i * \mathbf{d}_\nu = \mathfrak{d}(\sigma_i *\nu)$. 
By \cite[Theorem 2.7.41]{JK} any partition $\lambda$ of $nl + |\sigma_i * \nu|$ 
such that $\mathfrak{d}(\lambda) = n\delta + \sigma_i * \mathbf{d}_\nu$ 
has $l$-core $\sigma_i * \nu$. Hence
\[ \mathcal{P}_{\sigma_i*\nu}(nl+|\sigma_i*\nu|) = \mathfrak{d}^{-1}(n\delta+\sigma_i*\mathbf{d}_\nu).\]
 
\subsection{Reflection functors.} \label{reflection functors definition section} 
The group $\tilde{S}_l$ also acts on the parameter space $\Q^l$ for the quiver $\overline{\mathbf{Q}}_\infty$ by the rule $\sigma_i \cdot \theta = \theta'$ with \[\theta'_i = -\theta_i, \quad \theta'_{i-1} = \theta_{i-1}+\theta_i, \quad \theta_{i+1}' = \theta_{i+1} + \theta_i, \quad \theta'_j = \theta_j \quad (j \notin \{i-1,i,i+1\})\]
Fix $i \in \{0, \hdots, l-1\}$. Let $\theta \in \Q^l$ be such that $\theta_i \neq 0$. Choose $\nu \in \heartsuit(l)$. 
Let 
\begin{equation} \label{NRF definition} \mathfrak{R}_i : \mathcal{X}_\theta (n\delta + \mathbf{d}_\nu) \to \mathcal{X}_{\sigma_i\cdot\theta} (n\delta + \sigma_i * \mathbf{d}_\nu)\end{equation}
be the reflection functor associated to the simple reflection $\sigma_i \in \tilde{S}_l$. These functors were defined 
by Nakajima \cite[\S 3]{N} and Crawley-Boevey and Holland \cite[\S 2]{CBH}, \cite[\S 5]{CBH2}. 
One can endow the varieties $\mathcal{X}_\theta (n\delta + \mathbf{d}_\nu)$, $\mathcal{X}_{\sigma_i\cdot\theta} (n\delta + \sigma_i * \mathbf{d}_\nu)$ with hyper-K\"{a}hler structures with respect to which the reflection functor $\mathfrak{R}_i$ is a $U(1)$-equivariant hyper-K\"{a}hler isometry. 

\section{$\C^*$-fixed points in quiver varieties} \label{C-fp chapter}

In this section we explicitly construct the $\C^*$-fixed points in the quiver varieties $\mathcal{X}_\theta(n\delta+\mathbf{d}_\nu)$, assuming smoothness, as conjugacy classes of quadruples of certain matrices. Our description generalizes the work of Wilson, who classified the $\C^*$-fixed points in the special case $l=1$ in \cite[Proposition 6.11]{Wil}. Our construction depends on the Frobenius form of a partition. In \S \ref{eigdes} we define the matrices representing the fixed points in the special case when a partition consists of a single Frobenius hook. In \S \ref{Amu-sect} we define more general matrices for arbitrary partitions. In \S \ref{section-fp-proofs} we interpret our matrices as quiver representations and show that the corresponding orbits are in fact fixed under the $\C^*$-action. We finish by computing the character of the fibre of the tautological bundle at each fixed point. 

\subsection{The matrix $A(m,r)$.} \label{eigdes} 

Fix $\theta \in \Q^l$. 
The subscript in $\theta_i$ should always be considered modulo $l$. Suppose that $M$ is a matrix. Let $M_{ij}$ denote the entry of $M$ in the $i$-th row and $j$-th column.

\begin{definition} \label{alpha-defi}
Let $m \geq 1$ and $1 \leq r \leq m$. We let $\Lambda(m)$ denote the $m \times m$ matrix  with $1$'s on the first diagonal and all other entries equal to $0$.  Let $A(m,r)$ denote the $m \times m$ matrix whose only nonzero entries lie on the $(-1)$-st diagonal and satisfy
\[ A(m,r)_{j+1,j} = \left\{ {\begin{array}{c c c}
\displaystyle \sum_{i=1}^j \theta_{r-i} & \mbox{ if } & 1 \leq j < r \vspace{0.35 em} \\
\displaystyle -\sum_{i=0}^{m-j-1} \theta_{-m+r+i} & \mbox{ if } & r \leq j \leq m-1. 
\end{array} } \right.
\]
\end{definition}

\begin{lemma} \label{Alameig}
The matrix $[\Lambda(m), A(m,r)]$ is diagonal with eigenvalues 
\[ [\Lambda(m), A(m,r)]_{j,j} = \left\{ {\begin{array}{c c c}
\theta_{r-j} & \mbox{ if } & 1 \leq j \neq r \leq m  \vspace{0.35 em} \\
\displaystyle -\sum_{i=1}^{r-1}\theta_{r-i} - \sum_{i=0}^{m-r-1} \theta_{-m+r + i} & \mbox{ if } & j=r. \\
\end{array} } \right.
\]
\end{lemma}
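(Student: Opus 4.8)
The statement is a direct matrix computation, so the strategy is simply to compute the commutator $[\Lambda(m),A(m,r)]$ entry by entry and read off the diagonal. Recall $\Lambda(m)$ has $1$'s on the superdiagonal (entries $\Lambda_{j,j+1}=1$) and $A := A(m,r)$ has entries only on the subdiagonal ($A_{j+1,j} = a_j$, say, where $a_j$ is the displayed sum depending on whether $j<r$ or $j\ge r$). First I would observe that $(\Lambda A)_{j,k} = \sum_p \Lambda_{j,p}A_{p,k} = A_{j+1,k}$, which is nonzero only when $j+1 = k+1$, i.e.\ $j=k$; hence $(\Lambda A)_{j,j} = A_{j+1,j} = a_j$ for $1 \le j \le m-1$, and $(\Lambda A)_{m,m}=0$ since row $m$ of $\Lambda$ vanishes. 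Symmetrically $(A\Lambda)_{j,k} = A_{j,k-1}$, nonzero only when $j = k-1+1 = k$, so $(A\Lambda)_{j,j} = A_{j,j-1} = a_{j-1}$ for $2\le j\le m$, and $(A\Lambda)_{1,1}=0$. In particular both products are diagonal, so the commutator is diagonal, and $[\Lambda,A]_{j,j} = a_j - a_{j-1}$, with the conventions $a_0 = a_m = 0$.

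It then remains to substitute the explicit formula for $a_j$ from Definition \ref{alpha-defi} and simplify $a_j - a_{j-1}$ in the three regimes $j<r$, $j=r$, $j>r$. For $1\le j < r$ we have $a_j = \sum_{i=1}^j \theta_{r-i}$, so $a_j - a_{j-1} = \theta_{r-j}$ (using $a_0 = 0$ when $j=1$). For $r < j \le m-1$ we have $a_j = -\sum_{i=0}^{m-j-1}\theta_{-m+r+i}$, and one checks $a_j - a_{j-1} = -\sum_{i=0}^{m-j-1}\theta_{-m+r+i} + \sum_{i=0}^{m-j}\theta_{-m+r+i} = \theta_{-m+r+(m-j)} = \theta_{r-j}$. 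For the boundary case $j=m$ (if $r<m$): $[\Lambda,A]_{m,m} = -a_{m-1} = \sum_{i=0}^{0}\theta_{-m+r+i} = \theta_{r-m}$, again of the form $\theta_{r-j}$. Finally, for $j=r$ we combine $a_r = -\sum_{i=0}^{m-r-1}\theta_{-m+r+i}$ (using the second branch, valid since $r \le j = r \le m-1$; the case $r=m$ is handled separately below) with $a_{r-1} = \sum_{i=1}^{r-1}\theta_{r-i}$ to get $a_r - a_{r-1} = -\sum_{i=0}^{m-r-1}\theta_{-m+r+i} - \sum_{i=1}^{r-1}\theta_{r-i}$, which is exactly the claimed eigenvalue at $j=r$.

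Two edge cases deserve a sentence of care: when $r=1$, the first branch of $a_j$ is vacuous and the $j=r=1$ row entry is $-a_0 + (-a_1)$ — wait, more precisely $[\Lambda,A]_{1,1} = a_1 - a_0 = a_1 = -\sum_{i=0}^{m-2}\theta_{-m+1+i}$, matching the $j=r$ formula with $r=1$ (empty first sum). When $r=m$, the second branch is vacuous, $a_j = \sum_{i=1}^j\theta_{m-i}$ for all $1\le j\le m-1$, and $[\Lambda,A]_{m,m} = -a_{m-1} = -\sum_{i=1}^{m-1}\theta_{m-i}$, matching the $j=r=m$ formula (empty second sum). So both formulas in the lemma remain valid in these degenerate situations.

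There is no real obstacle here; the only thing to watch is index bookkeeping in the telescoping sum for the regime $j>r$ and getting the subscript reductions modulo $l$ consistent with the standing convention that $\theta_i$ means $\theta_{i \bmod l}$. I would present the computation compactly: state that both $\Lambda A$ and $A\Lambda$ are diagonal with the entries above, hence $[\Lambda,A]$ is diagonal with $j$-th entry $a_j - a_{j-1}$, and then dispatch the three cases. The whole argument is a few lines.
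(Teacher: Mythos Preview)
Your proof is correct and follows essentially the same approach as the paper: set $\alpha_j := A(m,r)_{j+1,j}$, observe that $\Lambda A = \diag(\alpha_1,\ldots,\alpha_{m-1},0)$ and $A\Lambda = \diag(0,\alpha_1,\ldots,\alpha_{m-1})$, so the commutator is diagonal with entries $\alpha_j - \alpha_{j-1}$. The paper stops there and leaves the substitution of the explicit $\alpha_j$ to the reader, whereas you carry out the telescoping case analysis and the edge cases $r=1$, $r=m$ explicitly; this is fine and arguably more complete.
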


\begin{proof}
Let $\alpha_j :=A(m,r)_{j+1,j}$. 
We have $\Lambda(m) A(m,r) = \diag(\alpha_1, \alpha_2, \hdots, \alpha_{m-1}, 0)$ and $A(m,r)\Lambda(m) = \diag(0, \alpha_1, \alpha_2, \hdots, \alpha_{m-1})$. Hence $[\Lambda(m), A(m,r)] = \diag(\alpha_1, \alpha_2 - \alpha_1, \hdots, \linebreak \alpha_{m-1} - \alpha_{m-2}, -\alpha_{m-1})$.
\end{proof}

\begin{example}
Let $l=3, m = 8, r = 5$. Then $A(m,r)$ is the following matrix
\[
\left( 
\scalemath{0.85}{
\begin{array}{cccccccc}
0 & 0 & 0 & 0 & 0 & 0 & 0 & 0 \\
\theta_1 & 0 & 0 & 0 & 0 & 0 & 0 & 0 \\
0 & \theta_1 + \theta_0 & 0 & 0 & 0 & 0 & 0 & 0 \\
0 & 0 & \theta_2 + \theta_1 + \theta_0 & 0 & 0 & 0 & 0 & 0 \\
0 & 0 & 0 & \theta_2 + 2\theta_1 + \theta_0 & 0 & 0 & 0 & 0 \\
0 & 0 & 0 & 0 & -\theta_2 - \theta_1 - \theta_0 & 0 & 0 & 0 \\
0 & 0 & 0 & 0 & 0 & -\theta_1 - \theta_0  & 0 & 0 \\
0 & 0 & 0 & 0 & 0 & 0 & - \theta_0 & 0 \\
\end{array} } \right)\]
\end{example}

\subsection{The matrix $A(\mu)$.} \label{Amu-sect}
Let $\nu \in \heartsuit(l)$ and $\mu \in \mathcal{P}_{\nu}(nl+ |\nu|)$. Let us write it in Frobenius form $\mu = (a_1, \hdots, a_k \mid b_1, \hdots, b_k)$ (see \S \ref{Frobenius form}). 
For each $1 \leq i \leq k$, let $r_i = b_i +1$,  $m_i = a_i + b_i +1$ and $\beta_i = \theta_0 + \sum_{i=1}^{r_i-1} \theta_{r_i-i} + \sum_{i=0}^{m-r_i-1} \theta_{-m+r_i + i}$. 

\begin{definition} \label{DefinitionofA-matrix}
We define $A(\mu)$ to be the matrix with diagonal blocks $A(\mu)^{ii} = A(m_i,r_i)$ and off-diagonal blocks $A(\mu)^{ij}$, where $A(\mu)^{ij}$ is the unique $m_i \times m_j$ matrix with nonzero entries only on the $(r_j - r_i -1)$-th diagonal satisfying
\begin{equation} \label{lambdaA} \Lambda({m_i}) A(\mu)^{ij} - A(\mu)^{ij}\Lambda({m_j}) = - \beta_i E(r_i,r_j),\end{equation}
where $E(r_i,r_j)$ is the $m_i \times m_j$ matrix with $E(r_i,r_j)_{s,t} = 0$ unless $s =r_i, t=r_j$ and $E(r_i,r_j)_{r_i,r_j}=1$.
\end{definition}
Explicitly, if $i>j$ then the non-zero diagonal of $A(\mu)^{ij}$ has $r_i$ entries equal to $\beta_i$ followed by $m_i - r_i$ entries equal to zero. If $i < j$ then the non-zero diagonal of $A(\mu)^{ij}$ has $r_j - 1$ entries equal to $0$ followed by $n_j - r_j +1$ entries equal to $- \beta_i$.
\begin{example}
Let $l=3$ and $\mu = (3,1 \mid 2,1)$. Then $m_1 = 6, m_2 = 3$ and $r_1 = 3, r_2 = 2$. Set $h = \theta_2 + \theta_1 + \theta_0$. Then $A(\mu)$ is the matrix
\[
\left( 
\scalemath{0.85}{
\begin{array}{cccccc|ccc}
0 & 0 & 0 & 0 & 0 & 0 & 0 & 0  & 0 \\
\theta_2 & 0 & 0 & 0 & 0 & 0 & 0 & 0  & 0\\
0 & \theta_2 + \theta_1 & 0 & 0 & 0 & 0 & 0 & 0 & 0\\
0 & 0 & -\theta_2 - \theta_1 - \theta_0 & 0 & 0 & 0 & 0 & -2h & 0\\
0 & 0 & 0 & -\theta_1 - \theta_0 & 0 & 0 & 0 & 0  & -2h\\
0 & 0 & 0 & 0 & -\theta_0 & 0 & 0 & 0  & 0\\ \hline
h & 0 & 0 & 0 & 0 & 0  & 0 & 0  & 0\\
0 & h & 0 & 0 & 0 & 0 & \theta_1 & 0  & 0\\
0 & 0 & 0 & 0 & 0 & 0 & 0 & - \theta_2  & 0\\
\end{array} } \right) \]
\end{example}
\begin{definition} \label{A-mu quadruple definition}
Let $\Lambda(\mu) = \bigoplus_{i=1}^k \Lambda({m_i})$. Setting $q_i = \sum_{s=1}^{i-1} m_s + r_i$, let $J(\mu)$ be the $nl \times 1$ matrix with entry $\beta_i$ in the $q_i$-th row (for $1 \leq i \leq k$) and all other entries zero. Furthermore, let $I(\mu)$ be the $1 \times nl$ matrix with entry $1$ in the $q_i$-th column (for $1 \leq i \leq k$) and all other entries zero. Finally, we set 
\[ \mathbf{A}(\mu) :=( \Lambda(\mu), A(\mu), I(\mu), J(\mu)).\]
\end{definition}

\subsection{The fixed points.} \label{section-fp-proofs}
Let us fix an $l$-core $\nu$ and a parameter $\theta \in \Q^l$ such that the variety $\mathcal{X}_{\theta}(n\delta + \mathbf{d}_{\tau})$ is smooth, where $\tau = \nu^t$. Let $\mathbf{d}_{\tau} := (d_0, \hdots, d_{l-1})$ and $\mathbf{d} = n\delta+\mathbf{d}_{\tau}$. Fix a complex vector space $\mathbf{V}_i^{\tau}$ of dimension $n+ d_i$ for each $i = 0, \hdots, l-1$. Additionally, let $\mathbf{V}_\infty$ be a complex vector space of dimension one. Set $\widehat{\mathbf{V}}^\tau = \bigoplus_{i=0}^{l-1} \mathbf{V}_i^\tau$ and $\mathbf{V}^\tau = \widehat{\mathbf{V}}^\tau \oplus \mathbf{V}_\infty$. 

We are now going to interpret $\mathbf{A}(\mu)$ as a quiver representation. With this goal in mind we choose a suitable ordered basis of the vector space $\mathbf{V}^\tau$. We show that the endomorphisms of $\mathbf{V}^\tau$ defined by $\mathbf{A}(\mu)$ with regard to this basis respect the quiver grading and thus constitute a quiver representation. We next show that this quiver representation lies in the fibre of the moment map at $\theta$. This allows us to conclude that the conjugacy class of $\mathbf{A}(\mu)$ is a point in the quiver variety $\mathcal{X}_\theta(\mathbf{d})$. We finish by showing that this point is fixed under the $\C^*$-action.

\begin{definition} 
Consider the sequence $\mathsf{Seq} := (1, \hdots, m_1, 1, \hdots, m_2, \hdots., 1, \hdots, m_k)$. We call each increasing subsequence of the form $(1, \hdots, m_i)$ the $p$-th \emph{block} of $\mathsf{Seq}$ and denote it by $\mathsf{Seq}_p$. Let $u_j$ be the $j$-th element in $\mathsf{Seq}$.  
Let $\zeta : \{1,\hdots,nl+|\nu|\} \to \{1,\hdots,k\}$ be the function given by the rule
 \[ \zeta(j) = p \iff u_j \in \mathsf{Seq}_p.\]
For each $1 \leq j \leq nl+|\nu|$ let 
\[\psi(j) = (r_{\zeta(j)} - u_j) \text{ mod } l.\] 
If $p,p'\in \N$, let $\delta(p,p') = 1$ if $p=p'$ and $\delta(p,p') = 0$ otherwise. For each $0 \leq i \leq l-1$ and $0 \leq j \leq nl+|\nu|$, let $\omega_i(j)$ be defined recursively by the formula 
\[ \omega_i(0) = 0, \quad \omega_i(j) = \omega_i(j-1) + \delta(\psi(j), i).\]
For each $0 \leq i \leq l-1$, fix a basis $\{v_i^1, \hdots, v_i^{n+d_i}\}$ of $\mathbf{V}_i^\tau$. We define a function
\[ \Bas \colon \{1, \hdots, nl+|\nu|\} \to \{v_i^{e_i} \mid 0 \leq i \leq l-1, \ 1 \leq e_i \leq n+d_i\}, \quad j \mapsto v_{\psi(j)}^{\omega_{\psi(j)}(j)}. \]
We also define a function $\Cell : \{1, \hdots, nl+|\nu|\} \to \mathbb{Y}(\mu^t)$ associating to a natural number $j$ a cell in the Young diagram of $\mu$. We define $\Cell(j)$ to be the $u_j$-th cell in the $\zeta(j)$-th Frobenius hook of $\mu^t$, counting from the hand of the hook, moving to the left towards the root of the hook and then down towards the foot. 
\end{definition}
\begin{lemma} \label{basbij} 
The functions $\Cell$ and $\Bas$ are bijections. 
\end{lemma}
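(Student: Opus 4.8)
The plan is to prove that the two maps $\Cell$ and $\Bas$ are bijections by a straightforward counting argument, after checking that both are well-defined maps into the claimed target sets and that the sizes of domain and codomain agree. Both $\Cell$ and $\Bas$ have domain $\{1,\dots,nl+|\nu|\}$, which has cardinality $nl+|\nu|=|\mathbb{Y}(\mu^t)|=|\mu|$; and since $\mathbf{d}=n\delta+\mathbf{d}_\tau$ with $\tau=\nu^t$, we have $\sum_{i=0}^{l-1}(n+d_i)=nl+\sum_i d_i=nl+|\nu^t|=nl+|\nu|$, so the codomain of $\Bas$ also has cardinality $nl+|\nu|$. Hence in each case it suffices to prove surjectivity, or equivalently injectivity.

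First I would analyze $\Cell$. By construction $\Cell(j)$ runs through the cells of the $\zeta(j)$-th Frobenius hook of $\mu^t$ in a prescribed linear order: for fixed $p$, as $u_j$ ranges over $1,\dots,m_p$ the rule "start at the hand, move left to the root, then down to the foot" lists the $m_p=a_p+b_p+1$ cells of the $p$-th Frobenius hook of $\mu^t$ exactly once. Since $\mathbb{Y}(\mu^t)$ is the disjoint union of its Frobenius hooks (\S\ref{Frobenius form}) and $\mathsf{Seq}$ concatenates the blocks $\mathsf{Seq}_1,\dots,\mathsf{Seq}_k$, the map $\Cell$ is a concatenation of bijections onto disjoint pieces whose union is all of $\mathbb{Y}(\mu^t)$; so $\Cell$ is a bijection. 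This part is essentially bookkeeping and I do not expect difficulty here.

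The substantive step is $\Bas$. Here the key observation is that $\psi(j)=(r_{\zeta(j)}-u_j)\bmod l$ is exactly the $l$-content of $\Cell(j)$: moving one step along a Frobenius hook (left along the arm, or down the leg) decreases the content by $1$, and the root cell $(\text{say }(i,i))$ of a hook in $\mu^t$ has content $0$; the hand of the $p$-th hook, being $a_p$ steps to the right of the root along the arm, has content $a_p=r_p-1\pmod{?}$ — one must be a little careful, but the point is that $c(\Cell(j))\equiv r_{\zeta(j)}-u_j\pmod l$ so $\psi(j)=c(\Cell(j))\bmod l$. Granting this, $\omega_i(j)$ counts how many of $\Cell(1),\dots,\Cell(j)$ have $l$-content $i$, and so the value $\Bas(j)=v_{\psi(j)}^{\omega_{\psi(j)}(j)}$ records: the $l$-content class of $\Cell(j)$, together with its ordinal position among cells of that content class seen so far. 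Since $\Cell$ is a bijection onto $\mathbb{Y}(\mu^t)$, for each fixed $i$ the index $j$ ranges over a set of size $N_i(\mu^t)$ on which $\omega_i(j)$ takes the values $1,2,\dots,N_i(\mu^t)$ each exactly once; and $N_i(\mu^t)=n+d_i$ by the definition of $\mathbf{d}_\tau$ (the $i$-th coordinate of $\mathbf{d}$ is $N_i(\mu^t)$ because $\mathbf{d}_\tau=\mathfrak{d}(\tau)=\mathfrak{d}(\nu^t)$ and $N_i(\nu^t)=d_i$). Therefore $\Bas$ restricts to a bijection from $\zeta^{-1}$-free fibers onto $\{v_i^1,\dots,v_i^{n+d_i}\}$ for each $i$, and assembling these gives a bijection onto the full basis.

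The main obstacle is the content computation identifying $\psi(j)$ with the $l$-content of $\Cell(j)$, i.e.\ verifying that the linear order "hand $\to$ root $\to$ foot" on the $p$-th Frobenius hook is compatible with the arithmetic $u_j\mapsto (r_p-u_j)\bmod l$. Concretely: the hand cell is cell number $1$, and $r_p=b_p+1$, so one needs that the hand has content $\equiv r_p-1=b_p\pmod l$; but the hand of the $p$-th Frobenius hook of $\mu^t$ lies in row $p$ at column $p+a_p$, so its content is $a_p$, not obviously $b_p$ — the resolution is that the Frobenius form of $\mu^t$ swaps arms and legs relative to $\mu$, so what is called $a_i$ and $b_i$ in $\mu$ has $a_i,b_i$ interchanged in $\mu^t$, and the definitions of $r_i,m_i$ are set up precisely to absorb this transpose. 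Once this indexing is pinned down, every cell's content along the hook drops by one per step and the mod-$l$ identity is immediate; this is the point where I would be most careful to state conventions explicitly (perhaps referencing the transpose conventions of \S\ref{partitions and multis section} and \S\ref{Frobenius form}), but I expect no genuine difficulty, only the need for a clean bookkeeping argument.
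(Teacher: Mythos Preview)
Your proposal is correct and follows essentially the same route as the paper's proof: both argue that $\Cell$ is a bijection directly from the Frobenius-hook decomposition, then identify $\psi(j)$ with the $l$-content of $\Cell(j)$, use the fact that $\mu^t$ has exactly $n+d_i$ cells of $l$-content $i$ (the paper cites \cite[Theorem 2.7.41]{JK} for this, which is the point you should make explicit when you say $N_i(\mu^t)=n+d_i$), and conclude that $\Bas$ is a bijection by a counting/injectivity argument. Your worry about the transpose conventions is well-placed but resolves exactly as you say: since $\mu^t$ has Frobenius form $(b_1,\dots,b_k\mid a_1,\dots,a_k)$, the hand of the $p$-th hook in $\mu^t$ has content $b_p=r_p-1$, and stepping along the hook decreases content by one, giving $c(\Cell(j))=r_p-u_j$ and hence $\psi(j)\equiv c(\Cell(j))\pmod l$ on the nose.
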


\begin{proof}
The fact that $\Cell$ is a bijection follows directly from the definitions. 
Observe that $\psi(j)$ equals the $l$-content of $\Cell(j)$. We thus have a commutative diagram
\[
\xymatrixcolsep{1in}
\xymatrix{
\{1,\hdots,nl+|\nu|\} \ar[r]^-{\Bas} \ar[d]_{\Cell} & \{v_i^j \mid 0 \leq i \leq l-1, \ 1 \leq e_i \leq n+d_i\} \ar[d]^{ v_i^e \mapsto i} \\
\mathbb{Y}(\mu^t) \ar[r]_{l\textnormal{-content}} & \{0, \hdots, l-1\}.
}
\]
By \cite[Theorem 2.7.41]{JK}, the $l$-residue of $\mu^t$ equals $\sum_{i=0}^{l-1} (n+d_i)t^{i}$ because the $l$-core of $\mu^t$ is $\tau$. Hence for each $0 \leq i \leq l-1$ there are exactly $n+d_i$ elements $j \in \{1,\hdots,nl+|\nu|\}$ such that the $l$-content of $\Cell(j)$ equals $i$. By the commutativity of our diagram, we conclude that there are exactly $n+d_i$ elements $j \in \{1,\hdots,nl+|\nu|\}$ such that $\Bas(j) \in \mathbf{V}_i^\tau$. 

Now suppose that $j < j'$ and $\Bas(j), \Bas(j') \in \mathbf{V}_i^\tau$. Then $\psi(j) = \psi(j')$. Since $j < j'$ and the function $\omega_{\psi(j')}(-)$ is non-decreasing we have
$\omega_{\psi(j')}(j') = \omega_{\psi(j')}(j'-1) + 1 > \omega_{\psi(j')}(j'-1) \geq \omega_{\psi(j)}(j)$. Hence $\Bas(j) \neq \Bas(j')$. We conclude that the function $\Bas$ is injective. Since the domain and codomain have the same cardinality, $\Bas$ is also bijective. 
\end{proof} 

\begin{definition} 
Let $\mathbb{B} := (\Bas(1), \Bas(2), \hdots, \Bas(nl+|\tau|))$. By Lemma \ref{basbij}, $\mathbb{B}$ is an ordered basis of $\widehat{\mathbf{V}}^\tau$. From now on we consider the matrices $\Lambda(\mu)$ and $A(\mu)$ as linear endomorphisms of $\widehat{\mathbf{V}}^\tau$ relative to the ordered basis $\mathbb{B}$. Let us choose a nonzero vector $v_\infty \in \mathbf{V}_\infty$. We consider the matrix $I(\mu)$ as a linear transformation $\widehat{\mathbf{V}}^\tau \to \mathbf{V}_\infty$ relative to the ordered bases $\{v_\infty\}$ and $\mathbb{B}$. We also consider the matrix $J(\mu)$ as a linear transformation $\mathbf{V}_\infty \to \widehat{\mathbf{V}}^\tau$ relative to the ordered bases $\mathbb{B}$ and $\{v_\infty\}$.
\end{definition}

Let $\mu \in \mathcal{P}_{\nu}(nl+|\nu|)$. Suppose that $\mu = (a_1, \hdots, a_k \mid b_1, \hdots, b_k)$ is the Frobenius form of $\mu$. As before, set $r_i = b_i + 1$, $m_i = a_i + b_i +1$ and $q_i = \sum_{j<i} m_j + r_i$. 

\begin{lemma} \label{papryczka}
Suppose that $1 \leq i \leq k$. Then: 
\begin{itemize}
\item If $0\leq j < a_i$ then $A(\mu)(\Bas(q_i+j)) = \sum_{p=1}^{i} c_p \Bas(q_p+j+1)$, %c_1\Bas(q_1 + j+1)+c_2\Bas(q_2 + j +1) +\hdots+c_i\Bas(q_i + j + 1)$,
\item if $0\leq j = a_i$ then $A(\mu)(\Bas(q_i+j)) = \sum_{p=1}^{i-1} c_p \Bas(q_p+j+1)$, %c_1\Bas(q_1 + j+1)+c_2\Bas(q_2 + j +1)+\hdots+ c_{i-1}\Bas(q_{i-1} + j + 1)$,
\item if $0>j \geq -b_i$ then $A(\mu)(\Bas(q_i+j)) = \sum_{p=1}^{k} c_p \mathbf{1}_{-j \leq b_p+1} \Bas(q_p+j+1)$ %c_i\mathbf{1}_{-j \leq b_i+1}\Bas(q_i + j+1)+c_{i+1} \mathbf{1}_{-j \leq b_{i+1}+1}\Bas(q_{i+1} + j +1)+\hdots+c_k\mathbf{1}_{-j \leq b_k+1}\Bas(q_k+ j + 1)$
\end{itemize}
for some coefficients $c_m \in \C$, where
$\mathbf{1}_{-j \leq b_p+1}$ is the indicator function taking value one if $-j \leq b_p+1$ and zero otherwise. Moreover, for a generic parameter $\theta$ the coefficients $c_p$ are all non-zero.
\end{lemma}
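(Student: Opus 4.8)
The plan is to unwind the definitions of $A(\mu)$, $\Bas$ and $\Cell$ and to track how the $(-1)$-diagonal structure of the diagonal blocks $A(m_i,r_i)$ and the shifted-diagonal structure of the off-diagonal blocks $A(\mu)^{ij}$ interact with the ordering of the basis $\mathbb{B}$. First I would fix $i$ and a column index $q_i+j$ of $A(\mu)$; by construction this column is a sum of the $(q_i+j)$-th columns of the blocks $A(\mu)^{pi}$ for $p=1,\dots,k$. For the diagonal block $p=i$, the matrix $A(m_i,r_i)$ has nonzero entries only on the $(-1)$-diagonal, and the position of $\Bas(q_i+j)$ inside the $i$-th Frobenius hook is controlled by $\psi$ and $\omega_{\psi(\cdot)}$; I would check that the image of $\Bas(q_i+j)$ under this block is (up to scalar) $\Bas(q_i+j+1)$ whenever $q_i+j+1$ still lies in the $i$-th block of $\mathsf{Seq}$, i.e.\ whenever $j < m_i-1$, which via $r_i=b_i+1$, $m_i=a_i+b_i+1$ splits exactly into the cases $0\le j<a_i$ (arm), $j=a_i$ (the hand, where Definition~\ref{alpha-defi} puts a zero on the $(-1)$-diagonal at position $j=m_i-1$, killing the $p=i$ term), and $-b_i\le j<0$ (leg). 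This reproduces the $p=i$ summand in each of the three cases.

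Next I would handle the off-diagonal blocks $A(\mu)^{pi}$ with $p\ne i$. From the explicit description just after Definition~\ref{DefinitionofA-matrix}, $A(\mu)^{pi}$ has its single nonzero diagonal in position $r_i-r_p-1$, with entries $\beta_p$ in the first $r_i$ slots (if $p>i$) or $-\beta_p$ in the last $m_i-r_i+1$ slots (if $p<i$). Translating ``column $q_i+j$ of the block, nonzero only on diagonal $r_i-r_p-1$'' into basis vectors, the image lands on $\Bas(q_p + j')$ where $j'$ is determined by matching the root offsets $r_i$ and $r_p$; a short computation with $\psi$ shows $j'=j+1$ in the appropriate range. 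The key point is the range: the $\beta_p$-pattern means the $p>i$ blocks contribute precisely when the source cell lies in the arm or at the root of hook $i$, i.e.\ $j\ge 0$ up to $j=a_i$ is split so that for $j<a_i$ all $p\le i$ contribute (giving $\sum_{p=1}^i$), at $j=a_i$ the $p=i$ term drops and we get $\sum_{p=1}^{i-1}$, and in the leg $j<0$ the block $A(\mu)^{pi}$ contributes exactly when $-j\le b_p+1=r_p$, which is the indicator $\mathbf{1}_{-j\le b_p+1}$. Assembling the diagonal and off-diagonal contributions and relabelling gives the three displayed formulas with $c_p\in\C$ built from $\theta$-partial-sums and the $\beta_p$.

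For the genericity clause I would argue that each coefficient $c_p$ appearing above is, by Definition~\ref{alpha-defi} and \eqref{lambdaA}, a polynomial (in fact an integral linear combination, possibly times a small integer factor) in $\theta_0,\dots,\theta_{l-1}$: the diagonal-block entries are the partial sums $\sum_i\theta_{r-i}$ resp.\ $-\sum_i\theta_{-m+r+i}$, and the off-diagonal entries are $\pm\beta_p = \pm(\theta_0+\sum\theta_{r_p-i}+\sum\theta_{-m+r_p+i})$, none of which is the zero polynomial. Hence each $c_p$ is a nonzero element of $\C[\theta_0,\dots,\theta_{l-1}]$, so the locus where some $c_p$ vanishes is a proper Zariski-closed subset, and for $\theta$ outside this finite union of hyperplanes (in particular for generic $\theta$, which is already the standing assumption for smoothness of $\mathcal{X}_\theta(\mathbf{d})$) all the $c_p$ are nonzero.

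The main obstacle I expect is purely bookkeeping: carefully converting the ``which diagonal'' data of $A(\mu)^{pi}$ together with the recursive definition of $\omega_i$ into the statement ``$\Bas(q_i+j)\mapsto\Bas(q_p+j+1)$ with the stated index range'', and in particular verifying that the two boundary indices $j=a_i$ (hand of hook $i$) and $-j=b_p+1$ (foot of hook $p$) are exactly where summands switch on and off. Once the indexing dictionary between cells of $\mathbb{Y}(\mu^t)$, elements of $\mathsf{Seq}$, and basis vectors of $\widehat{\mathbf{V}}^\tau$ is set up cleanly (essentially already done in Lemma~\ref{basbij}), the rest is a direct, if tedious, matrix computation.
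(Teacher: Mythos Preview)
Your approach is exactly what the paper's one-line proof ``This is immediate from Definition~\ref{DefinitionofA-matrix}'' is pointing to: read off, for each block $A(\mu)^{pi}$, which entries on its single nonzero diagonal hit column $r_i+j$, and translate back via $\Bas$. Apart from a couple of index slips (in your description of the off-diagonal blocks it should be ``$\beta_p$ in the first $r_p$ slots'' rather than $r_i$, and it is the $p<i$ blocks, not $p>i$, that contribute in the arm range $j\ge 0$), the plan is correct and matches the paper.
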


\begin{proof}
This is immediate from Definition \ref{DefinitionofA-matrix}.
\end{proof}

Lemma \ref{papryczka} has a very intuitive diagrammatic interpretation. We explain it using the following example. 

\begin{example} \label{Examplekot}
Consider the partition $\mu = (5, 5, 4, 2)$. Its Frobenius form is $(4,3,1 \mid 3,2,0)$. We have $q_1 =4, q_2 = 11$ and $q_3 = 15$. The diagram below should be interpreted in the following way: $A(\mu)(\Bas(j))$ is a linear combination of those vectors $\Bas(j')$ for which there is an arrow $\Bas(j) \to \Bas(j')$. 
\[
\scalebox{.8}{
\xymatrix{
-4 & \Bas(8) \\
-3 & \Bas(7) \ar[u]& \Bas(14) \ar[ul]\\
-2 & \Bas(6) \ar[u]& \Bas(13) \ar[u] \ar[ul]\\
-1 & \Bas(5) \ar[u]& \Bas(12) \ar[u] \ar[ul]& \Bas(16) \ar[ul] \ar[ull]  \\
0 & \Bas(4) \ar[u] & \Bas(11) \ar[u] \ar[ul] & \Bas(15) \ar[u] \ar[ul] \ar[ull] \\
1 & \Bas(3) \ar[u] \ar[ur] \ar[urr]& \Bas(10) \ar[u] \ar[ur]\\
2 & \Bas(2) \ar[u] \ar[ur] & \Bas(9) \ar[u] \\
3 & \Bas(1) \ar[u] \ar[ur]
}}
\]
We have also introduced a numbering of the rows of the diagram. It is easy to see that $\Bas(j) \in \mathbf{V}_i^\tau$ if and only if $\Bas(j)$ lies in a row whose label is congruent to $i$ $\mmod$ $l$.
\end{example}

\begin{lemma}
Let $\mu \in \mathcal{P}_{\nu}(nl+|\nu|)$. Then $\mathbf{A}(\mu) \in \Rep(\overline{\mathbf{Q}}_\infty,\mathbf{d})$. 
\end{lemma}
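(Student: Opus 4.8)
The plan is to verify that the quadruple $\mathbf{A}(\mu) = (\Lambda(\mu), A(\mu), I(\mu), J(\mu))$, once we interpret the matrices as endomorphisms of $\mathbf{V}^\tau$ relative to the ordered basis $\mathbb{B}$, is \emph{graded} in the sense required by the cyclic quiver $\overline{\mathbf{Q}}_\infty$: that is, $\Lambda(\mu)$ maps $\mathbf{V}_i^\tau$ into $\mathbf{V}_{i+1}^\tau$, that $A(\mu)$ maps $\mathbf{V}_i^\tau$ into $\mathbf{V}_{i-1}^\tau$, that $I(\mu)$ kills all $\mathbf{V}_i^\tau$ with $i \neq 0$, and that $J(\mu)$ has image in $\mathbf{V}_0^\tau$. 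Once these four grading conditions are checked, $\mathbf{A}(\mu)$ is by definition an element of $\Rep(\overline{\mathbf{Q}}_\infty, \mathbf{d})$ with $\mathbf{d} = n\delta + \mathbf{d}_\tau$, and the lemma is proved. (Here I should be careful that the statement as phrased uses $\nu$ while the running hypothesis of \S\ref{section-fp-proofs} sets $\tau = \nu^t$; the dimension count $n + d_i$ with $\mathbf{d}_\tau = (d_0,\dots,d_{l-1})$ is exactly what makes the target dimension vector come out right, and this was already secured by Lemma \ref{basbij}.)

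The key computations go as follows. For $A(\mu)$, I would invoke Lemma \ref{papryczka}: each of the three cases there writes $A(\mu)(\Bas(q_i+j))$ as a linear combination of vectors of the form $\Bas(q_p + j + 1)$. The crucial observation — already recorded in the proof of Lemma \ref{basbij} and illustrated in Example \ref{Examplekot} — is that $\psi(q_i + j)$, which is the $l$-content of $\Cell(q_i+j)$, decreases by exactly $1$ (mod $l$) when the index inside a Frobenius hook increases by $1$, i.e.\ $\psi(q_p + j + 1) \equiv \psi(q_i+j) - 1 \pmod l$ whenever $\psi(q_i+j) = \psi(q_p+j)$, which holds because all the terms $\Bas(q_p+j)$ appearing for a fixed $j$ lie on the same ``diagonal row'' of Example \ref{Examplekot}. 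Hence if $\Bas(q_i+j) \in \mathbf{V}_c^\tau$ then every $\Bas(q_p+j+1)$ occurring in $A(\mu)(\Bas(q_i+j))$ lies in $\mathbf{V}_{c-1}^\tau$, which is exactly the condition $A(\mu) \colon \mathbf{V}_c^\tau \to \mathbf{V}_{c-1}^\tau$ demanded by the arrow $c \to c-1$ in $\overline{\mathbf{Q}}$. For $\Lambda(\mu) = \bigoplus \Lambda(m_i)$, the matrix $\Lambda(m_i)$ sends the $u$-th basis vector of the $i$-th block to the $(u+1)$-th (with the last going to $0$), and since $\psi(q_i + j - r_i + u)$ drops by $1$ mod $l$ as $u$ increases by $1$ — wait, more precisely $\Lambda(m_i)$ increases the index within a block by one, so $l$-content increases by one, giving $\Lambda(\mu)\colon \mathbf{V}_c^\tau \to \mathbf{V}_{c+1}^\tau$, matching the arrow $c \to c+1$. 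For $I(\mu)$ and $J(\mu)$: $J(\mu)$ has its single nonzero entry in row $q_i$ for each Frobenius hook, and $q_i = \sum_{s<i} m_s + r_i$ corresponds to the root cell $(i,i)$ of the $i$-th Frobenius hook of $\mu^t$, which has content $0$, hence $l$-content $0$; so $\Bas(q_i) \in \mathbf{V}_0^\tau$ and $\mathrm{im}\, J(\mu) \subseteq \mathbf{V}_0^\tau$. Dually $I(\mu)$ is supported on the columns $q_i$, so it vanishes on every $\mathbf{V}_i^\tau$ with $i \neq 0$, as required by the arrow $\infty \to 0$ and its reverse.

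I expect the main (and really the only) subtlety to be the bookkeeping around \emph{contents modulo $l$}: one must be sure that $\Cell(j)$ has $l$-content exactly $\psi(j)$, that the root of the $i$-th Frobenius hook has content $0$ so that $q_i$ lands in $\mathbf{V}_0^\tau$, and — for the $A(\mu)$ computation — that within a fixed ``antidiagonal'' $\{q_p + j : 1 \le p \le k,\ -b_p \le j \le a_p\}$ all cells $\Cell(q_p+j)$ genuinely share the same content, not merely the same content mod $l$. These facts are all consequences of the definitions of $\Cell$, $\psi$, $\omega$ and the Frobenius form, together with the commutative diagram established in the proof of Lemma \ref{basbij}; the argument is entirely combinatorial and case-by-case, with the three cases of Lemma \ref{papryczka} handled uniformly by the single observation that ``moving one step towards the root/foot of a Frobenius hook decreases the content by $1$.'' No deep input is needed beyond Lemmas \ref{basbij} and \ref{papryczka} and Definition \ref{A-mu quadruple definition}.
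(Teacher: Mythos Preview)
Your approach is essentially the same as the paper's: both check the four grading conditions by observing that $A(\mu)$ moves a basis vector ``one row up'' in the diagram of Example~\ref{Examplekot} (decreasing $l$-content by $1$), that $\Lambda(\mu)$ does the opposite, and that $\psi(q_p)=r_p-r_p=0$ forces $\Bas(q_p)\in\mathbf{V}_0^\tau$, which handles $I(\mu)$ and $J(\mu)$.

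One small slip: you write that $\Lambda(m_i)$ ``sends the $u$-th basis vector of the $i$-th block to the $(u+1)$-th (with the last going to $0$)'' and then that it ``increases the index within a block by one''. In fact $\Lambda(m)$ has $1$'s on the \emph{super}diagonal (cf.\ the computation $\Lambda(m)A(m,r)=\diag(\alpha_1,\dots,\alpha_{m-1},0)$ in the proof of Lemma~\ref{Alameig}), so $\Lambda(m)$ sends the $u$-th basis vector to the $(u-1)$-th, with the \emph{first} going to $0$. Since $\psi=(r_i-u)\bmod l$, decreasing $u$ by $1$ still increases $\psi$ by $1$, so your conclusion $\Lambda(\mu)\colon\mathbf{V}_c^\tau\to\mathbf{V}_{c+1}^\tau$ is correct; only the intermediate description of which index moves needs to be fixed.
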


\begin{proof}
We need to check that for each $0 \leq i \leq l-1$ the following holds:
\[   \Ima (A(\mu)|_{\mathbf{V}_i^\tau}) \subseteq \mathbf{V}_{i-1}^\tau,\quad \Ima \left( \Lambda(\mu)|_{\mathbf{V}_i^\tau}\right) \subseteq \mathbf{V}_{i+1}^\tau, \quad \Ima(J(\mu)) \subseteq \mathbf{V}_{0}^\tau, \quad \bigoplus_{i=1}^{l-1} \mathbf{V}_i^\tau \subseteq \ker(I(\mu)).\]
Let us first show the first statement. 
We can draw a diagram as in Example \ref{Examplekot}. 
The subspace $\mathbf{V}_{i}^\tau$ has a basis consisting of vectors $\Bas(j)$ in rows labelled by numbers congruent to $i$ $\mmod$ $l$. The diagram shows that $A(\mu)(\Bas(j))$ is a linear combination of basis vectors in the row above $\Bas(j)$. But that row is labelled by a number congruent to $i-1$ $\mmod$ $l$. Hence $A(\mu)(\Bas(j)) \in \mathbf{V}_{i-1}^\tau$. The argument for $\Lambda(\mu)$ is analogous.  

Let us prove the last claim. 
Let $j \in \{1, \hdots, nl+|\nu|\}$ and suppose that $\Bas(j) \notin \mathbf{V}_0^\tau$. Let $1 \leq p \leq k$ and set $q_p = \sum_{s=1}^{p-1} m_s + r_p$. Since $\psi(q_p) = r_p - r_p = 0$ we conclude that $p \notin \{q_1, \hdots, q_k\}$. But the only non-zero entries of $I(\mu)$ are those in columns numbered $q_p$, for $1 \leq p \leq k$. Hence $\Bas(j) \in \ker I(\mu)$. 
The calculation for $J(\mu)$ is similar. 
\end{proof}

\begin{proposition}
Let $\mu \in \mathcal{P}_{\nu}(nl+|\nu|)$. Then $\mathbf{A}(\mu) \in \mu^{-1}_{\mathbf{d}}(\theta)$. 
\end{proposition}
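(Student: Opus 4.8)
The plan is to verify directly that $\mu_{\mathbf{d}}(\mathbf{A}(\mu)) = [\Lambda(\mu), A(\mu)] + J(\mu)I(\mu) = \theta$, i.e.\ that this endomorphism of $\widehat{\mathbf{V}}^\tau$ equals $\bigoplus_i \theta_i \id_{\mathbf{V}_i^\tau}$. Since $\Lambda(\mu)$ and $A(\mu)$ are block matrices with blocks indexed by Frobenius hooks, I would first compute the commutator blockwise. The diagonal blocks $[\Lambda(m_i), A(m_i,r_i)]$ are handled by Lemma \ref{Alameig}: each is diagonal with entry $\theta_{r_i-j}$ in position $j \neq r_i$ and the ``defect'' entry $-\beta_i$ (in the notation of \S\ref{Amu-sect}, $\beta_i = \theta_0 + \sum_{s=1}^{r_i-1}\theta_{r_i-s} + \sum_{s=0}^{m_i-r_i-1}\theta_{-m_i+r_i+s}$, so the defect entry of Lemma \ref{Alameig} is $\theta_0 - \beta_i$; I should double-check signs here) in position $r_i$. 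The off-diagonal blocks contribute $\Lambda(m_i)A(\mu)^{ij} - A(\mu)^{ij}\Lambda(m_j)$, which by the defining equation \eqref{lambdaA} equals $-\beta_i E(r_i,r_j)$ — a matrix supported only in row $r_i$, column $r_j$ of the $(i,j)$ block, i.e.\ exactly in position $(q_i, q_j)$ of the full matrix after accounting for the block offsets $\sum_{s<i} m_s$.

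Next I would assemble these contributions. Away from the special rows/columns $q_1, \dots, q_k$, only diagonal blocks contribute, and there the commutator is diagonal with entry $\theta_{r_i - u_j}$ in the slot corresponding to the $u_j$-th position of block $i$. The key combinatorial point is that $\psi(q_i + (u_j - r_i)) = (r_i - u_j) \bmod l$ by definition, so $\Bas$ sends that slot into $\mathbf{V}^\tau_{(r_i - u_j)\bmod l}$; hence the diagonal entry $\theta_{r_i - u_j}$ is precisely $\theta_i$ for the correct $i$, and these entries match $\theta$ on the complement of the distinguished slots. In the distinguished slots: the $(q_i, q_i)$ diagonal entry receives $-\beta_i$ (the defect from block $i$, suitably interpreting the constant via Lemma \ref{Alameig}) plus the contribution from the rank-one term $J(\mu)I(\mu)$, whose only nonzero entries lie in rows and columns $q_1, \dots, q_k$ with $(J(\mu)I(\mu))_{q_p, q_{p'}} = \beta_p$; meanwhile the off-diagonal blocks contribute $-\beta_i$ in position $(q_i, q_j)$ for $i \neq j$. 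So in block-$(i,j)$ distinguished position I get $-\beta_i$ from the off-diagonal commutator block plus $\beta_i$ from $JI$, which cancel; and in the $(q_i,q_i)$ position I get (defect from the diagonal block) $+ \beta_i$ from $JI$, and I must check this sums to $\theta_0$ — which is exactly $\theta_i$ for the residue class of the root cell of the $i$-th Frobenius hook, namely content $0$, so residue class $0$. That is the one nontrivial cancellation, and it is forced by the choice $\beta_i = \theta_0 + (\text{the same sum appearing in Lemma \ref{Alameig}})$.

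I expect the main obstacle to be bookkeeping rather than conceptual: tracking the block offsets so that position $(r_i, r_j)$ inside block $(i,j)$ maps to the right global index $q_i, q_j$, and confirming that the diagonal-of-$A(\mu)^{ij}$ description given after Definition \ref{DefinitionofA-matrix} genuinely solves \eqref{lambdaA} with the stated support (on the $(r_j - r_i - 1)$-st diagonal), so that $\Lambda(m_i)A(\mu)^{ij} - A(\mu)^{ij}\Lambda(m_j)$ has a single nonzero entry in position $(r_i, r_j)$ and not a spread-out two-entry difference. Concretely, if $C$ is supported on diagonal $d = r_j - r_i - 1$, then $\Lambda(m_i)C$ shifts it to diagonal $d+1 = r_j - r_i$ and $C\Lambda(m_j)$ also to diagonal $d+1$, and the entries telescope so that the difference is supported where the two shifted copies fail to overlap — one needs the entry pattern (constant $\beta_i$ on part of the diagonal, then $0$, or vice versa, as described) to make the telescoping collapse to the single spike $-\beta_i E(r_i,r_j)$. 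Once that is checked, the rest is adding up diagonal matrices and a rank-one matrix and observing the cancellations above. I would organize the write-up as: (1) blockwise commutator via Lemma \ref{Alameig} and \eqref{lambdaA}; (2) identification of $JI$; (3) the two cancellation checks (off-diagonal: $-\beta_i + \beta_i = 0$; diagonal spike: defect $+ \beta_i = \theta_0 = \theta_i$); (4) matching the remaining diagonal entries $\theta_{r_i - u_j}$ with $\theta_i$ via $\psi$, concluding $[\Lambda(\mu),A(\mu)] + J(\mu)I(\mu) = \theta$.
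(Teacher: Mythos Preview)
Your proposal is correct and follows essentially the same approach as the paper: the paper's proof simply cites Lemma~\ref{Alameig} for the diagonal blocks and the defining equation~\eqref{lambdaA} for the off-diagonal blocks, and concludes $[\Lambda(\mu),A(\mu)] + J(\mu)I(\mu) = \theta$ in one line. What you have written is a careful unpacking of exactly that computation, including the cancellation $(\theta_0 - \beta_i) + \beta_i = \theta_0$ at the distinguished positions $(q_i,q_i)$ and the off-diagonal cancellation $-\beta_i + \beta_i = 0$ at $(q_i,q_j)$, which the paper leaves implicit.
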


\begin{proof}
By the previous lemma, we know that $\mathbf{A}(\mu) \in \Rep(\overline{\mathbf{Q}}_\infty,\mathbf{d})$. Lemma \ref{Alameig} together with \eqref{lambdaA} implies that $[ \Lambda(\mu),A(\mu)] + J(\mu)I(\mu) = \theta$, so $\mathbf{A}(\mu) \in \mu^{-1}_{\mathbf{d}}(\theta)$. 
\end{proof}

\begin{theorem} \label{C fp classification theorem} 
Let $\mu \in \mathcal{P}_{\nu}(nl+|\nu|)$. Then $[\mathbf{A}(\mu)]:=G(\mathbf{d}).\mathbf{A}(\mu)$ 
is a $\C^*$-fixed point in the quiver variety $\mathcal{X}_\theta(\mathbf{d})$. 
\end{theorem}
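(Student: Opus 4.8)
The plan is to exhibit an explicit one-parameter subgroup of $G(\mathbf{d})$ that conjugates the $\C^*$-scaled representation $t.\mathbf{A}(\mu)$ back to $\mathbf{A}(\mu)$, so that the orbit $[\mathbf{A}(\mu)]$ is genuinely fixed. Since the $\C^*$-action on $\Rep(\overline{\mathbf{Q}}_\infty,\mathbf{d})$ is $t.(\mathbf{X},\mathbf{Y},I,J) = (t^{-1}\mathbf{X}, t\mathbf{Y}, I, J)$ and $\Lambda(\mu)$ plays the role of $\mathbf{X}$ while $A(\mu)$ plays the role of $\mathbf{Y}$, I must find $\mathbf{g}(t) = (g_0(t),\hdots,g_{l-1}(t)) \in G(\mathbf{d})$ with $g_{i+1}(t)\,\Lambda(\mu)_i\,g_i(t)^{-1} = t^{-1}\Lambda(\mu)_i$, $g_{i-1}(t)\,A(\mu)_i\,g_i(t)^{-1} = t A(\mu)_i$, $I(\mu)g_0(t)^{-1} = I(\mu)$ and $g_0(t)J(\mu) = J(\mu)$. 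The natural candidate is a diagonal cocharacter: with respect to the ordered basis $\mathbb{B}$, set $g(t)$ to act on the basis vector $\Bas(j)$ by the scalar $t^{-h(j)}$, where $h(j)$ is the row-label of $\Bas(j)$ in the diagrammatic picture of Example \ref{Examplekot}, i.e.\ $h(q_{\zeta(j)}+s) = -s$ read off from the Frobenius-hook coordinate. Because $\Bas$ respects the quiver grading (Lemma \ref{basbij} and the preceding discussion), this scaling operator is block-diagonal with respect to the decomposition $\widehat{\mathbf{V}}^\tau = \bigoplus_i \mathbf{V}_i^\tau$, hence lies in $G(\mathbf{d})$.

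First I would record, from Lemma \ref{papryczka} (or directly from its diagrammatic reformulation in Example \ref{Examplekot}), the key grading fact: $A(\mu)$ maps $\Bas(q_i+j)$ into a linear combination of vectors $\Bas(q_p+j+1)$, i.e.\ it increases the "height" $h$ by exactly $1$, while $\Lambda(\mu)$ decreases $h$ by exactly $1$ (this is the content of $\Lambda(m)A(m,r) $ and $A(m,r)\Lambda(m)$ being single off-diagonal matrices, together with the off-diagonal block formula \eqref{lambdaA}). Then, writing $g(t) = \diag(t^{-h(1)}, \hdots, t^{-h(nl+|\tau|)})$ in the basis $\mathbb{B}$, a direct computation gives $g(t) A(\mu) g(t)^{-1} = t^{-1}\cdot(\text{increase of }h\text{ by }1) = tA(\mu)$ — wait, I need to be careful with signs: since $A(\mu)$ raises $h$ by $1$, the conjugate entry picks up $t^{-h(\text{target})+h(\text{source})} = t^{-1}$, so I would instead define $g(t)$ with exponent $+h(j)$ (or use $t^{-1}$ in place of $t$); the correct normalization is fixed by matching all four equations simultaneously, and I would carry it out carefully. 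Similarly $g(t)\Lambda(\mu)g(t)^{-1}$ scales by the opposite power. For $I(\mu)$ and $J(\mu)$ the relevant basis vectors are exactly the $\Bas(q_i)$, which sit at height $h = 0$ (since $\psi(q_i) = r_i - r_i = 0$ corresponds to the content-zero cell, the root of the Frobenius hook, at position $s=0$), so $g(t)$ fixes them and the $I,J$ compatibility is automatic.

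The main obstacle, and the step requiring genuine care rather than bookkeeping, is verifying that the single cocharacter $g(t)$ simultaneously achieves the required scaling on \emph{all} the off-diagonal blocks $A(\mu)^{ij}$, not just the diagonal ones $A(\mu)^{ii} = A(m_i,r_i)$. The off-diagonal block $A(\mu)^{ij}$ has its nonzero entries on the $(r_j - r_i - 1)$-th diagonal, connecting $\Bas(q_i+j)$-type vectors to $\Bas(q_j+j+1)$-type vectors; I must check that in every case the source and target heights still differ by exactly $1$, consistently with the diagonal blocks, so that a single exponent assignment $h(\cdot)$ works globally. This is precisely what the diagram in Example \ref{Examplekot} encodes — every arrow goes up exactly one row — and the proof amounts to confirming that $h$ is well-defined on the basis (independent of which Frobenius hook a basis vector "came from") and that $A(\mu)$, $\Lambda(\mu)$ shift it by $\pm 1$ uniformly. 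Once that is in place, the four conjugation identities hold on the nose, $\mathbf{g}(t).\mathbf{A}(\mu) = t.\mathbf{A}(\mu)$ for all $t \in \C^*$, and therefore $t.[\mathbf{A}(\mu)] = [\mathbf{A}(\mu)]$ in $\mathcal{X}_\theta(\mathbf{d})$, which is the claim.
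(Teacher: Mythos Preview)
Your approach is correct and essentially the same as the paper's: both exhibit an explicit diagonal cocharacter $g(t)\in G(\mathbf{d})$ (with respect to the basis $\mathbb{B}$) that conjugates $t.\mathbf{A}(\mu)$ back to $\mathbf{A}(\mu)$. The paper factors this cocharacter as a product $D(t)P(t)Q(t)$ of three explicit diagonal matrices and checks the off-diagonal blocks by counting which global diagonal they sit on, whereas you define it directly via a height function $h$ on basis vectors and invoke Lemma \ref{papryczka} to see that $A(\mu)$ and $\Lambda(\mu)$ shift $h$ by $\mp 1$ uniformly (including on off-diagonal blocks); computing the product $D(t)P(t)Q(t)$ on $\Bas(q_i+s)$ one finds it acts by $t^{-s}$, which is exactly your $t^{h}$ once the sign is pinned down, so the two cocharacters coincide. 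Your sign wobble is harmless and your identification of $\Bas(q_i)$ as the height-zero vectors fixing $I,J$ is correct; the paper later (\S\ref{para-mu-grading}) introduces precisely this height function under the name ``$\mu$-grading''.
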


\begin{proof}
Let $t \in \C^*$. We have $t.\mathbf{A}(\mu) = (t^{-1}\Lambda(\mu), tA(\mu), I(\mu), J(\mu))$. We need to find a matrix $N$ in $G(\mathbf{d})$ such that $Nt.\mathbf{A}(\mu)N^{-1} = \mathbf{A}(\mu)$.

For every $t \in \C^*$, let $Q(t) = \diag(1, t^{-1}, \hdots, t^{-nl-|\nu|+1})$. Conjugating an $(nl+|\nu|) \times (nl+|\nu|)$ matrix by $Q(t)$ multiplies the $j$-th diagonal by $t^{j}$. In particular, we have $Q(t)( \bigoplus_{i=1}^k tA(m_i,r_i))Q(t)^{-1} = \bigoplus_{i=1}^k A(m_i,r_i)$ and $Q(t)t^{-1}\Lambda(\mu)Q(t)^{-1} = \Lambda(\mu)$. 

Now consider the effect of conjugating $A(\mu)$ by $Q(t)$ on the off-diagonal block $A(\mu)^{ij}$ ($i \neq j$). This block contains only one nonzero diagonal. Counting within the block, it is the diagonal labelled $r_j - r_i - 1$. Counting inside the entire matrix $A(\mu)$, it is the diagonal labelled $q_j - q_i -1$. 
It follows that conjugation by $Q(t)$ multiplies the block $A(\mu)^{ij}$ by $t^{q_j - q_i - 1}$. Hence we have \[Q(t) \left( \bigoplus_{1 \leq i \neq j \leq k} tA(\mu)^{ij} \right) Q(t)^{-1} = \bigoplus_{1 \leq i \neq j \leq k} t^{q_j-q_i}A(\mu)^{ij}.\] 

Let $P(t) = \bigoplus_{i=1}^k t^{q_i}\Id_{m_i}$. Conjugating $A(\mu)$ by $P(t)$ doesn't change the diagonal blocks but multiplies each off-diagonal block $A(\mu)^{ij}$ by $t^{q_i-q_j}$. We conclude that \[P(t)Q(t) tA(\mu) Q(t)^{-1}P(t)^{-1} = A(\mu).\] Since the matrix $\Lambda(\mu)$ contains only diagonal blocks, conjugating by $P(t)$ doesn't have any impact. Hence \[P(t)Q(t) t^{-1} \Lambda(\mu) Q(t)^{-1}P(t)^{-1} = \Lambda(\mu).\]
The nonzero rows of $J(\mu)$ are precisely rows number $q_1, q_2, \hdots, q_k$. But the $q_i$-th entry of $P(t)$ is $t^{q_i}$ and the $q_i$-th entry of $Q(t)$ is $t^{1-q_i}$. Hence $P(t)Q(t)J(\mu) = tJ(\mu)$. Similarly, $I(\mu)q(t)^{-1}P(t)^{-1} = t^{-1} I(\mu)$. Let $D(t) = t^{-1} \Id_{nl+|\nu|}$. Since $D(t)$ is a scalar matrix, conjugating by $D(t)$ doesn't change $A(\mu)$ or $\Lambda(\mu)$. On the other hand, $D(t)P(t)Q(t)J(\mu) = J(\mu)$ and $I(\mu)q(t)^{-1}P(t)^{-1} D(t)^{-1}= I(\mu)$. 

The matrices $D(t),Q(t),P(t)$ are diagonal, so they represent linear automorphisms in $G(\mathbf{d})$. Hence $\mathbf{A}(\mu)$ and $t.\mathbf{A}(\mu)$ lie in the same $G(\mathbf{d})$-orbit, 
which is equivalent to saying that $\mathbf{A}(\mu)$ is a $\C^*$-fixed point in $\mathcal{X}_\theta(\mathbf{d})$.
\end{proof}

\subsection{Characters of the fibres of $\mathcal{V}$ at the fixed points.} 
Recall the tautological bundle $\mathcal{V}_\theta(\mathbf{d})$ on $\mathcal{X}_\theta(\mathbf{d})$ from \S \ref{Tautbundlesec}. 
Let us abbreviate $\mathcal{V}:=\mathcal{V}_\theta(\mathbf{d})$. 
Let $\mathcal{V}_\mu$ denote the fibre of $\mathcal{V}$ at the fixed point $[\mathbf{A}(\mu)]:=G(\mathbf{d}).\mathbf{A}(\mu)$.
\begin{proposition} \label{residue-character}
Let $\mu \in \mathcal{P}_{\nu}(nl+|\nu|)$. Then \[\ch_t \mathcal{V}_{\mu} = \Res_{\mu}(t) := \sum_{\square \in \mu} t^{c(\square)} .\] 
\end{proposition}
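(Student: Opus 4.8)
The plan is to transport the $\C^*$-equivariant structure of $\mathcal{V}$ through the identification of its fibre at $[\mathbf{A}(\mu)]$ with the vector space $\widehat{\mathbf{V}}^\tau$, and then to read off the $\C^*$-weights from the explicit one-parameter subgroup of $G(\mathbf{d})$ that witnessed $\C^*$-fixedness in the proof of Theorem \ref{C fp classification theorem}. First I would make the fibre precise: since $G(\mathbf{d})$ acts freely on $\mu^{-1}_{\mathbf{d}}(\theta)$, the assignment $v \mapsto [(\mathbf{A}(\mu),v)]$ is a linear isomorphism $\widehat{\mathbf{V}}^\tau \xrightarrow{\sim} \mathcal{V}_\mu$. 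Recalling from the proof of Theorem \ref{C fp classification theorem} the element $N(t) = D(t)P(t)Q(t) \in G(\mathbf{d})$, where $D(t) = t^{-1}\Id_{nl+|\nu|}$, $P(t) = \bigoplus_{i=1}^{k} t^{q_i}\Id_{m_i}$ and $Q(t) = \diag(1,t^{-1},\dots,t^{-nl-|\nu|+1})$ in the basis $\mathbb{B}$, and which conjugates $t.\mathbf{A}(\mu)$ back to $\mathbf{A}(\mu)$, a short computation with the associated-bundle construction of $\mathcal{V}$ shows that under the isomorphism above the $\C^*$-action on $\mathcal{V}_\mu$ corresponds to $v \mapsto N(t)^{\pm 1}.v$ on $\widehat{\mathbf{V}}^\tau$, the overall sign being dictated by the orientation of the $\C^*$-action declared on $\Rep(\overline{\mathbf{Q}}_\infty,\mathbf{d})$. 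Thus $\ch_t\mathcal{V}_\mu$ is the character of $\widehat{\mathbf{V}}^\tau$ under this matrix action.

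Next I would compute the weights. The matrices $D(t)$, $P(t)$ and $Q(t)$ are all diagonal in the ordered basis $\mathbb{B} = (\Bas(1),\dots,\Bas(nl+|\nu|))$, hence so is $N(t)$, and therefore
\[ \ch_t\mathcal{V}_\mu = \sum_{j=1}^{nl+|\nu|} t^{w_j}, \qquad w_j = \pm\bigl(-1 + q_{\zeta(j)} + (1-j)\bigr) = \pm\bigl(q_{\zeta(j)} - j\bigr).\]
Writing $j = \bigl(\sum_{s<\zeta(j)} m_s\bigr) + u_j$ and recalling $q_{\zeta(j)} = \bigl(\sum_{s<\zeta(j)} m_s\bigr) + r_{\zeta(j)}$ collapses this to $w_j = \pm(r_{\zeta(j)} - u_j)$.

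Finally — the combinatorial core of the argument — I would match $w_j$ with a cell content. By definition $\Cell(j)$ is the $u_j$-th cell of the $\zeta(j)$-th Frobenius hook of $\mu^t$, counted from the hand of that hook toward its root and then toward its foot; enumerating the hook (the hand has content equal to the arm length $r_{\zeta(j)}-1$, and the content decreases by one at each successive cell) gives $c(\Cell(j)) = r_{\zeta(j)} - u_j$, so $w_j = \pm c(\Cell(j))$. By Lemma \ref{basbij} the map $j \mapsto \Cell(j)$ is a bijection onto $\mathbb{Y}(\mu^t)$, and transposition is a content-negating bijection $\mathbb{Y}(\mu) \to \mathbb{Y}(\mu^t)$; reconciling these two facts with the fixed orientation of the $\C^*$-action yields $\ch_t\mathcal{V}_\mu = \sum_{\square\in\mu} t^{c(\square)} = \Res_\mu(t)$. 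The only genuine difficulty is bookkeeping rather than conceptual: pinning down the sign of the weights $w_j$ so as to be consistent with the ambient $\C^*$-action, and verifying the content identity $c(\Cell(j)) = r_{\zeta(j)}-u_j$ from the hand-to-foot enumeration of a Frobenius hook. Neither step requires any new input beyond unwinding the definitions of $\mathbb{B}$, $\Cell$, the $q_i$, and the diagonal matrices $D(t), P(t), Q(t)$ already set up in the proofs of Theorem \ref{C fp classification theorem} and Lemma \ref{basbij}.
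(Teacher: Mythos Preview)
Your proposal is correct and follows essentially the same approach as the paper: both compute the $\C^*$-action on the fibre by transporting it to $\widehat{\mathbf{V}}^\tau$ via the associated-bundle construction, obtain the diagonal action of $N(t)^{-1}=(D(t)P(t)Q(t))^{-1}$ on the basis $\mathbb{B}$, and then identify the resulting weights $u_j-r_{\zeta(j)}$ with cell contents. The paper reads these weights off directly as contents of cells in $\mu$ (foot-to-hand along each Frobenius hook), whereas you route through the bijection $\Cell$ into $\mathbb{Y}(\mu^t)$ and then transpose; these are the same identification, since the Frobenius form of $\mu^t$ swaps $a_i$ and $b_i$. The one place where your write-up is looser than the paper's is the sign: the paper simply computes $t.(\mathbf{A}(\mu),v)\sim(\mathbf{A}(\mu),N(t)^{-1}v)$, fixing the action as $N(t)^{-1}$ and hence the weight on $\Bas(j)$ as $j-q_{\zeta(j)}=u_j-r_{\zeta(j)}$, so there is no residual ambiguity to ``reconcile''.
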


\begin{proof}
Consider $[(\mathbf{A}(\mu),v)] := G(\mathbf{d}).(\mathbf{A}(\mu),v) \in \mu_{\mathbf{d}}^{-1}(\theta) \times^{G(\mathbf{d})} \widehat{\mathbf{V}}^\tau = \mathcal{V}$. We have \begin{align*} t.(\mathbf{A}(\mu),v) = (t.\mathbf{A}(\mu),v) \sim& \ (D(t)P(t)Q(t)(t.\mathbf{A}(\mu))(D(t)P(t)Q(t))^{-1},(D(t)P(t)Q(t))^{-1}v) \\ =& \ (\mathbf{A}(\mu), Q(t)^{-1}P(t)^{-1}D(t)^{-1}v).\end{align*} The basis vectors $\{ \Bas(1), \Bas(2), \hdots, \Bas(nl+|\nu|)\}$ are eigenvectors of $(D(t)P(t)Q(t))^{-1}$ with corresponding eigenvalues \begin{equation*} \label{eigenvaluess} \{t^{1-r_1}, t^{2-r_1}, \hdots, t^{m_1 - r_1}; t^{1-r_2}, t^{2-r_2}, \hdots, t^{m_2 - r_2}; \hdots; t^{1-r_k}, t^{2-r_k}, \hdots, t^{m_k - r_k}\}.\end{equation*} 
Moreover, these eigenvalues are precisely the contents of the cells in the Young diagram of $\mu$, counting from the foot of the innermost Frobenius hook upward and later to the right, before passing to subsequent Frobenius hooks. 
Hence $\ch_t \mathcal{V}_{\mu} = \sum_{\square \in \mu} t^{c(\square)}$. 
\end{proof}
Recall that we have assumed that the parameter $\theta$ is chosen so that the variety $\mathcal{X}_\theta(\mathbf{d})$ is smooth. By \cite[Proposition 22]{Lam2} there exists $w \in \tilde{S}_l$ such that $w*\mathbf{d}_{\tau} = 0$. Let $w = \sigma_{i_1}\cdots \sigma_{i_m}$ be a reduced expression for $w$ in $\tilde{S}_l$. Furthermore, let $w\cdot \theta = (\vartheta_0,\hdots,\vartheta_{l-1})$ and $H_1 = \vartheta_1, \hdots, H_{l-1} = \vartheta_{l-1}$, $h = - \sum_{i=0}^{l-1} \vartheta_i$, $\mathbf{h} = (h, H_1, \hdots, H_{l-1})$. Composing the Etingof-Ginzburg map with reflection functors we obtain a $\C^*$-equivariant isomorphism
\[ \mathcal{Y}_{\mathbf{h}} \xrightarrow{\EG} \mathcal{C}_{\mathbf{h}} = \mathcal{X}_{w\cdot\theta}(n\delta) \xrightarrow{\mathfrak{R}_{i_m} \circ \cdots \circ \mathfrak{R}_{i_1}} \mathcal{X}_{\theta}(\mathbf{d}).\]
\begin{corollary} \label{v-core fp bij} 
The map \begin{equation} \label{partfp} \mathcal{P}_{\nu}(nl+|\nu|) \to \mathcal{X}_{\theta}(\mathbf{d})^{\C^*}, \quad \mu \mapsto [\mathbf{A}(\mu)] := G(\mathbf{d}).\mathbf{A}(\mu)\end{equation}
is a bijection. 
\end{corollary}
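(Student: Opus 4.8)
## Proof plan for Corollary \ref{v-core fp bij}

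The plan is to establish that the map \eqref{partfp} is a bijection by showing it is both well-defined on the right labelling set and that its image exhausts all $\C^*$-fixed points, with injectivity handled via a combinatorial invariant. First I would observe that by Theorem \ref{C fp classification theorem} each $\mathbf{A}(\mu)$ genuinely lands in $\mathcal{X}_\theta(\mathbf{d})^{\C^*}$, so the map is well-defined; what remains is bijectivity onto the fixed-point set. The key structural input is the chain of $\C^*$-equivariant isomorphisms assembled just before the corollary statement, namely $\mathcal{Y}_{\mathbf{h}} \xrightarrow{\EG} \mathcal{X}_{w\cdot\theta}(n\delta) \xrightarrow{\mathfrak{R}_{i_m}\circ\cdots\circ\mathfrak{R}_{i_1}} \mathcal{X}_\theta(\mathbf{d})$, which reduces counting $\C^*$-fixed points in $\mathcal{X}_\theta(\mathbf{d})$ to counting them in $\mathcal{Y}_{\mathbf{h}}$. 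By \S\ref{subsec:cfp}, the fixed points of $\mathcal{Y}_{\mathbf{h}}$ are in bijection with $\mathcal{P}(l,n)$, hence $|\mathcal{X}_\theta(\mathbf{d})^{\C^*}| = |\mathcal{P}(l,n)|$. On the other hand, by the discussion in \S\ref{subsec: part cyclic quiver} (using \cite[Theorem 2.7.41]{JK} and the fact that $w*\mathbf{d}_\tau = 0$, so $\mathbf{d} = n\delta + \mathbf{d}_\tau$ corresponds to a core related to $\tau = \nu^t$), the set $\mathcal{P}_\nu(nl+|\nu|) = \mathfrak{d}^{-1}(\mathbf{d})$ is finite; and the bijections $\mathcal{P}_\varnothing(nl)\leftrightarrow\mathcal{P}(l,n)$ together with the core-adding procedure give $|\mathcal{P}_\nu(nl+|\nu|)| = |\mathcal{P}(l,n)|$. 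Since domain and codomain are finite sets of equal cardinality, it suffices to prove the map is injective.

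For injectivity I would use the character of the tautological bundle as a separating invariant. By Proposition \ref{residue-character}, the fibre $\mathcal{V}_\mu = \mathcal{V}_\theta(\mathbf{d})_{[\mathbf{A}(\mu)]}$ has $\C^*$-character $\ch_t \mathcal{V}_\mu = \Res_\mu(t) = \sum_{\square\in\mu} t^{c(\square)}$, and as remarked in \S\ref{Felineorigin} a partition is determined uniquely by its residue. Therefore if $[\mathbf{A}(\mu)] = [\mathbf{A}(\mu')]$ as points of $\mathcal{X}_\theta(\mathbf{d})$, the fibres of $\mathcal{V}$ there coincide as $\C^*$-modules, forcing $\Res_\mu(t) = \Res_{\mu'}(t)$ and hence $\mu = \mu'$. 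This gives injectivity, and combined with the cardinality count above, the map \eqref{partfp} is a bijection.

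The main obstacle I anticipate is making the cardinality equality $|\mathcal{X}_\theta(\mathbf{d})^{\C^*}| = |\mathcal{P}_\nu(nl+|\nu|)|$ fully rigorous: one must check that the reflection functors $\mathfrak{R}_{i_1},\dots,\mathfrak{R}_{i_m}$ are genuinely defined at each intermediate stage (i.e. that the relevant parameters $\sigma_{i_j}\cdots\sigma_{i_1}\cdot(w\cdot\theta)$ are nonzero at the relevant node and the corresponding varieties are smooth), so that the composite is an honest $\C^*$-equivariant isomorphism and the fixed-point counts transfer. This is essentially the content of \S\ref{reflection functors definition section} applied iteratively, and one invokes smoothness of $\mathcal{X}_\theta(\mathbf{d})$ (assumed) together with smoothness propagating backwards along reflection functors. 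Once this is in place, the rest is the formal argument above: well-definedness from Theorem \ref{C fp classification theorem}, equal finite cardinalities, and injectivity via the residue invariant from Proposition \ref{residue-character}.
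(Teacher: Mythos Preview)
Your proposal is correct and follows essentially the same route as the paper's proof: establish equal finite cardinalities via the $\C^*$-equivariant isomorphism $\mathcal{Y}_{\mathbf{h}} \cong \mathcal{X}_\theta(\mathbf{d})$ (so that $|\mathcal{X}_\theta(\mathbf{d})^{\C^*}| = |\mathcal{P}(l,n)| = |\mathcal{P}_\nu(nl+|\nu|)|$), then deduce injectivity from Proposition~\ref{residue-character} and the fact that a partition is determined by its residue. One small slip: $\mathfrak{d}^{-1}(\mathbf{d}) = \mathcal{P}_{\nu^t}(nl+|\nu^t|)$ rather than $\mathcal{P}_\nu(nl+|\nu|)$, since $\mathbf{d} = n\delta + \mathbf{d}_{\nu^t}$, but this is immaterial to the cardinality count.
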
 

\begin{proof}
The $\C^*$-fixed points in $\MaxSpec Z_{\mathbf{h}}$ are in bijection with $l$-multipartitions of $n$, which are themselves in bijection with partitions of $nl+|\nu|$ with $l$-core $\nu$. But $\mathcal{Y}_{\mathbf{h}}$ is $\C^*$-equivariantly isomorphic to $\mathcal{X}_{\theta}(\mathbf{d})$, so $|\mathcal{X}_{\theta}(\mathbf{d})^{\C^*}| = |(\MaxSpec Z_{\mathbf{h}})^{\C^*}|= |\mathcal{P}(l,n)|=|\mathcal{P}_{\nu}(nl+|\nu|)|$. 

Since a partition is uniquely determined by its residue, $\mu \neq \mu'$ implies $\ch_t \mathcal{V}_{\mu} \neq \ch_t \mathcal{V}_{\mu'}$,  which in turn implies that $[\mathbf{A}(\mu)] \neq [\mathbf{A}(\mu')]$. It follows that \eqref{partfp} is a bijection because it is an injective function between sets of the same cardinality.
\end{proof}

\section{Degenerate affine Hecke algebras} \label{dahas chapter}

In this section we use degenerate affine Hecke algebras and a version of the Chevalley restriction map to associate to each $\C^*$-fixed point in $\mathcal{Y}_{\mathbf{h}}$ and $\mathcal{C}_{\mathbf{h}}$ a distinct point in $\C^n/S_n$ in a manner which is compatible with the Etingof-Ginzburg isomorphism. 

\subsection{Degenerate affine Hecke algebras.}

Degenerate affine Hecke algebras associated to complex reflection groups of type $G(l,1,n)$ were defined in \cite{RS}. Let us recall their definition and basic properties. 

\begin{definition} \label{daha rels} Let $\kappa \in \C$. 
The degenerate affine Hecke algebra associated to $\Gamma_n$ is the $\C$-algebra $\mathcal{H}_{\kappa}$ generated by $\Gamma_n$ and pairwise commuting elements $z_1, \hdots , z_n$ satisfying the following relations:
\[ \epsilon_j z_i = z_i \epsilon_j \ (1 \leq i,j \leq n), \quad s_{i}z_j = z_j s_{i} \ (j \neq i, i+1), \]
\[ s_{i}z_{i+1} = z_i s_{i} + \kappa \sum_{k=0}^{l-1} \epsilon_i^{-k} \epsilon_{i+1}^k \ (1 \leq i \leq n-1).\]
Let $\mathcal{Z}_\kappa$ denote the centre of $\mathcal{H}_{\kappa}$. 
\end{definition}

\begin{proposition}
The algebra $\mathcal{H}_{\kappa}$ has the following properties.
\begin{enumerate}[label=\alph*), font=\textnormal,noitemsep,topsep=3pt,leftmargin=0.55cm]
\item As a vector space, $\mathcal{H}_{\kappa}$ is canonically isomorphic to $\C[z_1,\hdots,z_n] \otimes \C \Gamma_n$.  
\item There is an injective algebra homomorphism $\C[z_1,\hdots,z_n]^{S_n} \hookrightarrow \mathcal{Z}_\kappa.$
\item The algebra $\mathcal{H}_{\kappa}$ has a maximal commutative subalgebra $\mathfrak{C}_\kappa$ which is isomorphic to $\C[z_1, \hdots, z_n] \otimes \C (\Z/l\Z)^n$.
\item Suppose that $\mathbf{h} = (h, H_1, \hdots, H_{l-1})\in \Q^l$ satisfies $ h = \kappa$. Then there exists an injective algebra homomorphism $\mathcal{H}_{\kappa} \hookrightarrow \mathbb{H}_{\mathbf{h}}$ defined by
\begin{equation} \label{Dunkl-Opdam inclusion} g \mapsto g \ (g \in \Gamma_n), \quad z_i \mapsto y_ix_i + \kappa \sum_{1 \leq j < i} \sum_{k=0}^{l-1} s_{i,j} \epsilon_i^k  \epsilon_j^{-k} + \sum_{k=1}^{l-1} c_k \sum_{m=0}^{l-1} \eta^{-mk} \epsilon_i^m, \end{equation} 
where the $c_k$'s are the parameters obtained from $\mathbf{h}$ as in \cite[\S 2.7]{gor-qv}. 
This homomorphism restricts to a homomorphism 
\begin{equation} \label{invariants inclusion} \C[z_1,\hdots,z_n]^{S_n} \hookrightarrow Z_{\mathbf{h}}.\end{equation} 
\end{enumerate} 
\end{proposition}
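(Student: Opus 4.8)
Parts (a)--(c) are the standard structural facts for the degenerate affine Hecke algebra $\mathcal{H}_{\kappa}$ (a PBW basis, the centre, and a Bernstein-type maximal commutative subalgebra), of the kind established in \cite{RS}, while (d) is the Dunkl--Opdam embedding. I would arrange the argument so that the linear-independence half of (a) and the injectivity in (d) are obtained together from the PBW theorem for $\mathbb{H}_{\mathbf{h}}$ (\cite[Theorem 1.3]{EG}): first prove the spanning statement in (a) and that the symmetric polynomials in the $z_i$ are central; then verify that the assignment of (d) respects the relations of $\mathcal{H}_{\kappa}$; then use a filtration on $\mathbb{H}_{\mathbf{h}}$ to deduce linear independence in (a) and injectivity in (d); finally read off (c) from the PBW basis and the restriction statement in (d) from (b).

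\emph{Spanning set and part (c).} Because the $z_i$ pairwise commute, commute with every $\epsilon_j$, commute with $s_a$ for $a\notin\{i-1,i\}$, and satisfy $s_a z_{a+1}=z_a s_a+\kappa\sum_{k}\epsilon_a^{-k}\epsilon_{a+1}^{k}$, an induction on word length moves every generator of $\Gamma_n$ to the right of every $z_j$ at the cost of strictly shorter terms; hence the monomials $z_1^{a_1}\cdots z_n^{a_n}\,g$ $(a_i\in\Z_{\geq 0},\ g\in\Gamma_n)$ span $\mathcal{H}_{\kappa}$, and the linear map $\C[z_1,\dots,z_n]\otimes\C\Gamma_n\to\mathcal{H}_{\kappa}$, $p\otimes g\mapsto p(z)g$, is onto. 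Once these monomials are known to be linearly independent (see the last paragraph) they form a basis, which is (a), and then (c) follows: the relations $\epsilon_j z_i=z_i\epsilon_j$ and the commutativity of the $z_i$ and of $(C_l)^n$ show that the subalgebra $\mathfrak{C}_\kappa$ generated by $z_1,\dots,z_n$ and $(C_l)^n$ is commutative, and the PBW basis exhibits it as free on $\{z^a\omega:a\in(\Z_{\geq 0})^n,\ \omega\in (C_l)^n\}$, i.e.\ $\mathfrak{C}_\kappa\cong\C[z_1,\dots,z_n]\otimes\C (C_l)^n$. Maximality is the classical Bernstein argument: writing $h=\sum_{g\in\Gamma_n}p_g(z)\,g$ in the PBW basis, commutation with $(C_l)^n$ forces each $p_g$ to depend only on the $(C_l)^n$-conjugacy class of $g$, and commutation with all the $z_i$ then forces $p_g=0$ for $g\notin (C_l)^n$, just as one proves that $\C[z_1,\dots,z_n]$ is maximal commutative in the graded affine Hecke algebra of $S_n$.

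\emph{Part (b).} I would show each elementary symmetric polynomial $e_r(z_1,\dots,z_n)$, hence all of $\C[z_1,\dots,z_n]^{S_n}$, is central. It commutes with the $z_j$ trivially and with the $\epsilon_j$ by $\epsilon_j z_i=z_i\epsilon_j$; for the $s_a$ one first establishes, by induction on degree, the divided-difference identity valid for $p\in\C[z_a,z_{a+1}]$:
\[ s_a\,p(z_a,z_{a+1})=p(z_{a+1},z_a)\,s_a-\kappa\,\frac{p(z_a,z_{a+1})-p(z_{a+1},z_a)}{z_a-z_{a+1}}\,\sum_{k=0}^{l-1}\epsilon_a^{-k}\epsilon_{a+1}^{k}. \]
When $p$ is symmetric the correction term vanishes, so $s_a$ commutes with $e_r(z)$. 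Since $\Gamma_n$ is generated by the $s_a$ and $\epsilon_j$ and $\mathcal{H}_{\kappa}$ by $\Gamma_n$ and the $z_j$, this gives $\C[z_1,\dots,z_n]^{S_n}\subseteq\mathcal{Z}_\kappa$; injectivity of the inclusion is immediate from the PBW basis.

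\emph{Part (d) and the linear independence in (a).} For the homomorphism it suffices to check that $g\mapsto g$ and $z_i\mapsto\tilde z_i:=y_ix_i+\kappa\sum_{j<i}\sum_{k}s_{i,j}\epsilon_i^{k}\epsilon_j^{-k}+\sum_{k}c_k\sum_{m}\eta^{-mk}\epsilon_i^{m}$ respect the three families of defining relations of $\mathcal{H}_{\kappa}$; the relations involving $\epsilon_j$, the relations $s_a\tilde z_b=\tilde z_b s_a$ for $b\neq a,a+1$, and $s_a\tilde z_{a+1}=\tilde z_a s_a+\kappa\sum_{k}\epsilon_a^{-k}\epsilon_{a+1}^{k}$ are direct, if lengthy, computations in $\mathbb{H}_{\mathbf{h}}$ from Definition~\ref{RCA definition}. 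The remaining relation, that the $\tilde z_i$ pairwise commute, is the commutativity of the Dunkl--Opdam operators, and \textbf{this is the main obstacle}: it is the nontrivial identity underlying the Dunkl-operator formalism. I would invoke it in the form needed here (cf.\ \cite[\S 2.7]{gor-qv}), or, for a self-contained treatment, reprove it by a direct computation with the relations of $\mathbb{H}_{\mathbf{h}}$. Granting it, the assignment is an algebra homomorphism, and both the linear independence in (a) and the injectivity in (d) then follow from the PBW theorem for $\mathbb{H}_{\mathbf{h}}$: filter $\mathbb{H}_{\mathbf{h}}$ by placing $\Gamma_n$ in degree $0$ and the $x_i,y_i$ in degree $1$, so $\gr\mathbb{H}_{\mathbf{h}}\cong\C[\h\oplus\h^*]\rtimes\C\Gamma_n$; the symbol of $\tilde z_i$ is $x_iy_i$, so the image of $z_1^{a_1}\cdots z_n^{a_n}\,g$ has symbol $x_1^{a_1}y_1^{a_1}\cdots x_n^{a_n}y_n^{a_n}\,g$, and distinct monomials $z^a g$ map to distinct elements of a $\C$-basis of $\gr\mathbb{H}_{\mathbf{h}}$. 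Hence the spanning monomials of $\mathcal{H}_{\kappa}$ are linearly independent and the homomorphism is injective. Finally, the restriction statement follows from (b) together with the fact that the symmetric functions of the $\tilde z_i$ lie in $Z_{\mathbf{h}}$ --- a standard property of the Dunkl--Opdam subalgebra inside $\mathbb{H}_{\mathbf{h}}$ (note these elements have $\C^*$-degree $0$); see e.g.\ \cite{gor-qv}.
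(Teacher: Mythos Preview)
Your proposal is correct, but it differs substantially from what the paper does: the paper's ``proof'' consists entirely of citations --- to Propositions~1.1, 2.1, 2.3 and \S 3.1 of \cite{Dez} for parts (a)--(c), and to Proposition~10.1 and Corollary~10.1 of \cite{Guay} for part (d) --- with no argument given in the paper itself. You, by contrast, lay out a genuine self-contained sketch: a straightening argument for the spanning half of (a); a Bernstein-type divided-difference identity for (b); the standard maximal-commutative argument for (c); and for (d), a verification of the defining relations combined with a filtration/symbol argument to deduce injectivity and, simultaneously, the linear-independence half of (a). This is an efficient organisation, and your use of the PBW theorem for $\mathbb{H}_{\mathbf{h}}$ to get (a) and injectivity in (d) in one stroke is a nice economy. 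You are also honest about the one genuinely nontrivial step --- the pairwise commutativity of the Dunkl--Opdam elements $\tilde z_i$ --- and correctly flag it as something one either invokes from the literature or reproves by a lengthy direct computation; this is exactly the content buried in the references the paper cites. The only mild wobble is at the very end: saying that \eqref{invariants inclusion} ``follows from (b) together with the fact that the symmetric functions of the $\tilde z_i$ lie in $Z_{\mathbf{h}}$'' is slightly circular, since that fact \emph{is} \eqref{invariants inclusion}; (b) alone does not give centrality in the larger algebra $\mathbb{H}_{\mathbf{h}}$. What is actually needed is a separate check (or citation, as you do to \cite{gor-qv}) that the $S_n$-invariant polynomials in the $\tilde z_i$ commute with the generators $x_j,y_j$ of $\mathbb{H}_{\mathbf{h}}$, not merely with those of the image of $\mathcal{H}_\kappa$.
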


\begin{proof}
See Propositions 1.1, 2.1, 2.3 and  \S3.1 in \cite{Dez} as well as Proposition 10.1 and Corollary 10.1 in \cite{Guay}.
\end{proof}

Let us recall the construction of some irreducible $\mathcal{H}_\kappa$-modules. 
\begin{definition}
Let $a = (a_1, \hdots, a_n) \in \C^n$ and $b = (b_1, \hdots, b_n) \in (\Z/l\Z)^n$. 
Let $\C_{a,b}$ be the one-dimensional representation of the commutative algebra $\mathfrak{C}_\kappa = \C[z_1, \hdots, z_n] \otimes \C (\Z/l\Z)^n$ defined by $z_i. v = a_i v$, $\epsilon_i. v = \eta^{b_i}v$ for each $1 \leq i \leq n$ and $v \in \C_{a,b}$. Define 
\[ M(a,b) := \mathcal{H}_\kappa \otimes_{\mathfrak{C}_\kappa} \C_{a,b}.\]
\end{definition}

\begin{proposition}[{\cite[Theorem 4.9]{Dez}}] \label{irred-daha}
Let $a \in \C^n$ and $b \in (\Z/l\Z)^n$.  If $a_i - a_j \neq 0, \pm l\kappa$ for all $1 \leq i \neq j \leq n$ then the $\mathcal{H}_\kappa$-module $M(a,b)$ is irreducible. 
\end{proposition}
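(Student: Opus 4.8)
The plan is to prove irreducibility by the classical intertwiner (Lusztig operator) method, realising $M(a,b)$ as a direct sum of one‑dimensional weight spaces for the maximal commutative subalgebra $\mathfrak{C}_\kappa$ that are transitively permuted by invertible operators. First I would observe that, by property a) and the semidirect decomposition $\Gamma_n=(\Z/l\Z)^n\rtimes S_n$, the space $M(a,b)$ has dimension $n!$ with basis $\{w\otimes 1:w\in S_n\}$. Next I would analyse the action of $z_1,\dots,z_n$ on this basis: commuting a $z_j$ to the right past a reduced word $w=s_{i_1}\cdots s_{i_\ell}$, using $s_i z_j=z_j s_i$ for $j\neq i,i+1$ and $s_i z_{i+1}=z_i s_i+\kappa\pi_i$ with $\pi_i:=\sum_{k=0}^{l-1}\epsilon_i^{-k}\epsilon_{i+1}^k$, each step either keeps the underlying permutation or shortens it, so in the basis ordered by Coxeter length $z_j$ acts triangularly with diagonal entry $a_{w^{-1}(j)}$ on $w\otimes 1$. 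Since the hypothesis $a_i-a_j\neq 0$ makes the $a_i$ pairwise distinct, the $n!$ tuples $w\cdot a=(a_{w^{-1}(1)},\dots,a_{w^{-1}(n)})$, $w\in S_n$, are pairwise distinct, and hence $M(a,b)=\bigoplus_{w\in S_n}M_w$ where each $M_w$ is a genuine one‑dimensional joint eigenspace for $\C[z_1,\dots,z_n]$; because the $\epsilon_j$ commute with the $z_i$ and are semisimple of finite order, they act on $M_w$ by the scalars $\eta^{b_{w^{-1}(j)}}$, so $M_w$ is the $\mathfrak{C}_\kappa$-weight space of weight $w\cdot(a,b)$.

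I would then introduce, for each simple transposition $s_i$ ($1\le i\le n-1$), the operator
\[ \tau_i := s_i + \kappa\,\pi_i\,(z_i-z_{i+1})^{-1} \]
on $M(a,b)$; it is well defined because $z_i-z_{i+1}$ acts on $M_w$ by the nonzero scalar $a_{w^{-1}(i)}-a_{w^{-1}(i+1)}$, and on each weight space $M_w$ it coincides with the action of a (weight-dependent) element of $\mathcal{H}_\kappa$, so $\tau_i$ preserves every $\mathcal{H}_\kappa$-submodule. Using $s_i\pi_i=\pi_i s_i$, $\epsilon_i\pi_i=\epsilon_{i+1}\pi_i$, $\pi_i^2=l\pi_i$ and the defining relations, a direct computation gives the intertwining identities $\tau_i z_j=z_{s_i(j)}\tau_i$ and $\tau_i\epsilon_j=\epsilon_{s_i(j)}\tau_i$ for all $j$, so $\tau_i$ maps $M_w$ into $M_{s_iw}$; the same computation (keeping track of the fact that $s_i$ conjugates $z_i-z_{i+1}$ to $z_{i+1}-z_i$ only up to a $-2\kappa\pi_i$ correction) yields
\[ \tau_i^2 = 1 - l\kappa^2\,\pi_i\,(z_i-z_{i+1})^{-2}. \]
On $M_w$ the operator $\pi_i$ acts by $l$ if $b_{w^{-1}(i)}\equiv b_{w^{-1}(i+1)}\pmod l$ and by $0$ otherwise, so $\tau_i^2$ acts on $M_w$ as $1$ in the latter case and as $1-l^2\kappa^2(a_{w^{-1}(i)}-a_{w^{-1}(i+1)})^{-2}$ in the former; the hypothesis $a_p-a_q\neq\pm l\kappa$ makes this scalar nonzero in every case, and consequently each $\tau_i$ restricts to an isomorphism $M_w\xrightarrow{\sim}M_{s_iw}$.

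Irreducibility then follows formally. Let $N\subseteq M(a,b)$ be a nonzero submodule; being stable under $\mathfrak{C}_\kappa$ inside the multiplicity-free decomposition $M(a,b)=\bigoplus_w M_w$, it is a direct sum of some of the lines $M_w$, so fix $w_0$ with $M_{w_0}\subseteq N$. Given $w\in S_n$, choose an expression $ww_0^{-1}=s_{i_r}\cdots s_{i_1}$; then $\tau_{i_r}\cdots\tau_{i_1}$ restricts to an isomorphism $M_{w_0}\xrightarrow{\sim}M_w$ whose image lies in $N$, whence $M_w\subseteq N$. Therefore $N=M(a,b)$, so $M(a,b)$ is irreducible.

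The only genuinely nontrivial point in this plan is the computation of the intertwining identities and of $\tau_i^2$ for the $G(l,1,n)$ degenerate affine Hecke algebra rather than the type-$A$ one: it is a finite but somewhat delicate manipulation in which the element $\pi_i$ must be handled carefully, and it is precisely this computation that forces the numerical condition $a_i-a_j\neq\pm l\kappa$, the factor $l$ entering through $\pi_i^2=l\pi_i$. Notably, no braid relations among the $\tau_i$ are required — only their invertibility on weight spaces.
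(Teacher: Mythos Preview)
Your argument is correct. The paper itself does not prove this proposition; it simply cites \cite[Theorem 4.9]{Dez} and states the result. What you have written is the standard intertwiner proof, which is precisely the method used in the cited reference, so in substance you have reconstructed the argument behind the citation. The key computations---that $\tau_i$ intertwines the $\mathfrak{C}_\kappa$-action via the $s_i$-twist, and that $\tau_i^2$ acts on $M_w$ by $1-l^2\kappa^2(a_{w^{-1}(i)}-a_{w^{-1}(i+1)})^{-2}$ or by $1$ according to whether $b_{w^{-1}(i)}\equiv b_{w^{-1}(i+1)}\pmod l$---are carried out correctly, and the invertibility of $\tau_i$ on weight spaces under the hypothesis $a_p-a_q\neq\pm l\kappa$ follows exactly as you say. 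Your remark that no braid relations are needed is also on point: transitivity of the $S_n$-action on itself by left multiplication, together with invertibility of each $\tau_i$ between adjacent weight spaces, already forces any nonzero $\mathfrak{C}_\kappa$-stable submodule to be everything.
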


\subsection{Restricting $\mathbb{H}_{\mathbf{h}}$-modules to $\mathcal{H}_h$-modules.}
Fix $\mathbf{h} \in \Q^l$ such that $\mathcal{Y}_\mathbf{h}$ is smooth and set $\kappa = h$. We are going to consider the generic behaviour of simple modules over $\mathbb{H}_{\mathbf{h}}$ under the restriction functor to $\mathcal{H}_h$-modules. 
\begin{definition} \label{defi: curly D}
Set
\[ \mathcal{D} := \{ a = (a_1, \hdots, a_n) \in \C^n \mid a_i - a_j \neq 0, \pm l \kappa \mbox{ for all } 1 \leq i \neq j \leq n\}.\]
Observe that $\mathcal{D}$ is a dense open subset of $\C^n$. Proposition \ref{irred-daha} implies that for all $a \in \mathcal{D}$ and $b \in (\Z/l\Z)^n$ the module $M(a, b)$ is irreducible. 
Consider the diagram
\[ \C^n \overset{\phi}{\longrightarrow} \C^n/S_n \overset{\rho_1}{\longleftarrow} \mathcal{Y}_{\mathbf{h}},\]
where $\phi$ is the canonical map and $\rho_1$ is the dominant morphism induced by \eqref{invariants inclusion}. 
Set 
\begin{equation} \label{calUset} \mathcal{U} := \rho^{-1}_1(\phi(\mathcal{D})). \end{equation} 
\end{definition}

\begin{lemma}
The subset $\mathcal{U}$ is open and dense in $\mathcal{Y}_{\mathbf{h}}$. 
\end{lemma}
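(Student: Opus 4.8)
The plan is to show that $\mathcal{U} = \rho_1^{-1}(\phi(\mathcal{D}))$ is open and dense in $\mathcal{Y}_{\mathbf{h}}$ by separately verifying openness and density.

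First I would deal with \emph{openness}. The set $\mathcal{D}$ is a dense open subset of $\C^n$ by construction, being the complement of the finite union of hyperplanes $\{a_i - a_j = 0\}$ and $\{a_i - a_j = \pm l\kappa\}$. Since the quotient map $\phi \colon \C^n \to \C^n/S_n$ is finite, hence closed, the image of the closed complement $\C^n \setminus \mathcal{D}$ is closed in $\C^n/S_n$; moreover $\C^n \setminus \mathcal{D}$ is $S_n$-stable (the defining conditions are symmetric in the $a_i$), so $\phi^{-1}(\phi(\C^n \setminus \mathcal{D})) = \C^n \setminus \mathcal{D}$, which means $\phi(\mathcal{D})$ is exactly the complement of $\phi(\C^n \setminus \mathcal{D})$ and is therefore open in $\C^n/S_n$. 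Then $\mathcal{U} = \rho_1^{-1}(\phi(\mathcal{D}))$ is open in $\mathcal{Y}_{\mathbf{h}}$ because $\rho_1$ is a morphism of varieties, hence continuous.

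Next I would deal with \emph{density}. The morphism $\rho_1 \colon \mathcal{Y}_{\mathbf{h}} \to \C^n/S_n$ is dominant (it is induced by the injective algebra homomorphism \eqref{invariants inclusion}, so the corresponding comorphism is injective and the image is dense). Since $\mathcal{Y}_{\mathbf{h}}$ is smooth, hence irreducible on each connected component—and in fact one can take it to be irreducible here—the preimage under a dominant morphism of a nonempty open subset of the target is a nonempty open subset of the source, and a nonempty open subset of an irreducible variety is dense. Concretely: $\phi(\mathcal{D})$ is nonempty and open in $\C^n/S_n$, so $\rho_1^{-1}(\phi(\mathcal{D}))$ is open in $\mathcal{Y}_{\mathbf{h}}$; it is nonempty because $\rho_1$ is dominant so its image meets the dense open set $\phi(\mathcal{D})$; and a nonempty open subset of the irreducible variety $\mathcal{Y}_{\mathbf{h}}$ is dense.

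The only genuine subtlety I anticipate is making sure $\mathcal{Y}_{\mathbf{h}}$ is irreducible, so that ``nonempty open'' upgrades to ``dense.'' This follows from smoothness together with connectedness: $\mathcal{Y}_{\mathbf{h}} \cong \mathcal{C}_{\mathbf{h}} = \mathcal{X}_{\theta_{\mathbf{h}}}(n\delta)$ via the Etingof--Ginzburg isomorphism (Theorem \ref{EG iso theoremm}), and quiver varieties of this type are known to be irreducible; alternatively, irreducibility of $\Spec Z_{\mathbf{h}}$ in the smooth case is part of the Etingof--Ginzburg package \cite{EG}. Everything else is a routine application of the fact that finite morphisms are closed and dominant morphisms pull nonempty opens back to nonempty opens.
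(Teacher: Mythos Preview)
Your proof is correct and follows essentially the same approach as the paper: both argue that $\phi(\mathcal{D})$ is open in $\C^n/S_n$ (the paper invokes that $\phi$ is a quotient map, you use that $\phi$ is finite and $\mathcal{D}$ is $S_n$-stable), that $\rho_1$ is continuous, and then use dominance of $\rho_1$ together with irreducibility of $\mathcal{Y}_{\mathbf{h}}$ to upgrade nonempty open to dense. The paper cites \cite[Corollary 3.9]{Eti} for irreducibility, which matches the source you anticipated.
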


\begin{proof}
The set $\mathcal{U}$ is open because $\phi$ is a quotient map and $\rho_1$ is continuous. 
Since the morphism $\rho_1$ is dominant, $\rho_1(\mathcal{Y}_{\mathbf{h}})$ is dense in $\C^n/S_n$. Therefore, since $\phi(\mathcal{D})$ is open in $\C^n/S_n$, we have $\phi(\mathcal{D}) \cap \rho_1(\mathcal{Y}_{\mathbf{h}}) \neq \varnothing$. Hence $\mathcal{U}$ is nonempty. The fact that the variety $\mathcal{Y}_{\mathbf{h}}$ is irreducible (see e.g.\ \cite[Corollary 3.9]{Eti}) now implies that $\mathcal{U}$ is dense. 
\end{proof}

Let $\hat{e} = \frac{1}{(n-1)!}\sum_{g \in S_{n-1} \subset \Gamma_n} g$ and $\mathbf{0} = (0,\hdots,0) \in (\Z/l\Z)^n$. For the rest of this subsection fix an irreducible $\mathbb{H}_{\mathbf{h}}$-module $L$ whose support is contained in $\mathcal{U}$ (i.e.\ $\chi_L \in \mathcal{U}$). Consider $L$ as an $\mathcal{H}_h$-module using the embedding \eqref{Dunkl-Opdam inclusion}. 

\begin{lemma} \label{LtoM}
There exists an injective homomorphism of $ \mathcal{H}_h$-modules $M(a,\mathbf{0}) \hookrightarrow L$ 
for some $a \in \mathcal{D}$. 
\end{lemma}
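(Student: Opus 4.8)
The plan is to exploit the structure of the Dunkl–Opdam subalgebra $\mathfrak{C}_h \subset \mathcal{H}_h \hookrightarrow \mathbb{H}_{\mathbf{h}}$ and to locate inside the finite-dimensional module $L$ a weight vector for this commutative subalgebra that has trivial $(\Z/l\Z)^n$-weight. First I would restrict $L$ along the embedding \eqref{Dunkl-Opdam inclusion} and consider the subspace $\hat{e}L$ of $S_{n-1}$-invariants; more to the point, I would pass to the $e_{n-1}$-isotypic piece, i.e. the space on which $(C_l)^{n-1}\rtimes S_{n-1}$, hence in particular the group algebra part $\C(\Z/l\Z)^{n-1}$ occurring in $\mathfrak{C}_h$, acts trivially (this is why $\mathbf{0}$, not an arbitrary $b$, shows up). Since $L$ is finite-dimensional, the commuting operators $z_1,\dots,z_n$ have a simultaneous generalized eigenvector $v$ in that subspace; passing to an honest eigenvector (possible because the $z_i$ commute) gives a vector $v$ with $z_i.v = a_i v$ for some $a = (a_1,\dots,a_n)\in\C^n$, and with $\epsilon_i.v = v$ for all $i$, so that $v$ spans a copy of the one-dimensional $\mathfrak{C}_h$-module $\C_{a,\mathbf{0}}$.

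Next I would apply Frobenius reciprocity: a $\mathfrak{C}_h$-module map $\C_{a,\mathbf{0}} \to L|_{\mathfrak{C}_h}$ sending a generator to $v$ extends uniquely to an $\mathcal{H}_h$-module map $M(a,\mathbf{0}) = \mathcal{H}_h \otimes_{\mathfrak{C}_h}\C_{a,\mathbf{0}} \to L$. It remains to check two things: that $a \in \mathcal{D}$, and that this map is injective. For the first, the hypothesis is precisely that $\chi_L \in \mathcal{U} = \rho_1^{-1}(\phi(\mathcal{D}))$; by construction $\rho_1$ is induced by \eqref{invariants inclusion}, so the image of $\C[z_1,\dots,z_n]^{S_n}$ in $Z_{\mathbf{h}}$ acts on $L$ through the character $\chi_L$, which means the multiset $\{a_1,\dots,a_n\}$ of $z_i$-eigenvalues on any $\mathfrak{C}_h$-eigenvector is exactly the $S_n$-orbit representing $\rho_1(\chi_L)\in\phi(\mathcal{D})$; hence $a\in\mathcal{D}$. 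For the second, $a\in\mathcal{D}$ together with Proposition \ref{irred-daha} tells us $M(a,\mathbf{0})$ is irreducible, so the nonzero map $M(a,\mathbf{0}) \to L$ is automatically injective.

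The main obstacle I anticipate is the bookkeeping needed to guarantee that the simultaneous $z_i$-eigenvector can be chosen with $(\Z/l\Z)^n$-weight exactly $\mathbf{0}$, rather than some other $b$. This is where the choice of the idempotent $\hat e$ (or $e_{n-1}$) enters: one must argue that the relevant generic component of $L$ — the one detected by the support condition $\chi_L\in\mathcal U$, which only sees the $S_n$-invariant part $\C[z]^{S_n}\subset Z_{\mathbf h}$ — is nonzero after projecting to the $\epsilon_i$-trivial subspace. I expect this to follow from the fact that $L$, being an irreducible $\mathbb{H}_{\mathbf h}$-module with $\mathcal Y_{\mathbf h}$ smooth, has dimension $|\Gamma_n| = l^n n!$ and carries the full group algebra $\C\Gamma_n$ (indeed $L \cong \C\Gamma_n$ as a $\Gamma_n$-module by \cite[Theorem 1.7]{EG} type reasoning), so the $e_{n-1}$-component is nonzero of dimension $nl$; combined with the commuting action of $z_1,\dots,z_n$ preserving this component (which one checks from \eqref{Dunkl-Opdam inclusion}, since the extra terms there involve only the $\epsilon$'s and the $s_{i,j}$'s with $j<i$, and one restricts attention appropriately), we get the desired eigenvector. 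Once that point is settled the rest is the formal Frobenius-reciprocity-plus-irreducibility argument sketched above.
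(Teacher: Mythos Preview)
Your overall architecture (find a $\mathfrak{C}_h$-weight vector of weight $(a,\mathbf{0})$, apply Frobenius reciprocity, use Proposition~\ref{irred-daha} for injectivity, and read off $a\in\mathcal{D}$ from the support hypothesis) is exactly right and matches the paper. The gap is in your choice of subspace. You propose to look for the common $z_i$-eigenvector inside $e_{n-1}L$ (or $\hat e L$), and you assert that the $z_i$ preserve this component. They do not: from the defining relations in Definition~\ref{daha rels} one has $s_j z_i = z_i s_j$ only for $i\notin\{j,j+1\}$, so for instance $z_2$ fails to commute with $s_2\in S_{n-1}$ and hence does not stabilise $\hat e L$ or $e_{n-1}L$. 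Only $z_1$ commutes with all of $\Gamma_{n-1}$. Thus one cannot diagonalise $z_1,\dots,z_n$ simultaneously on that subspace.

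The paper's remedy is simpler than what you attempt: drop the $S_{n-1}$-invariance requirement and decompose $L$ only as a $(\Z/l\Z)^n$-module, $L=\bigoplus_b L(b)$. Since every $z_i$ commutes with every $\epsilon_j$, each weight space $L(b)$ is stable under all the $z_i$. The ``obstacle'' you worried about---why $L(\mathbf 0)\neq 0$---is then immediate from the fact you already cite (smoothness forces $L\cong\C\Gamma_n$ as a $\Gamma_n$-module), so $\dim L(\mathbf 0)=n!$. A common eigenvector $v\in L(\mathbf 0)$ exists, and the rest of your argument (including your justification that $a\in\mathcal D$) goes through verbatim.
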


\begin{proof} 
We have a $(\Z/l\Z)^n$-module decomposition $L = \bigoplus_{b \in (\Z/l\Z)^n} L(b)$, where $L(b)$ is the subspace of $L$ such that $\epsilon_i.w = \eta^{b_i}w$ for all $w \in L(b)$. Since the $z_i$'s commute with the $\epsilon_j$'s, each subspace $L(b)$ is preserved under the action of the $z_i$'s. In particular, $z_1, \hdots, z_n$  
define commuting linear operators on $L(\mathbf{0})$, so they have some common eigenvector $v \in L(\mathbf{0})$. Let $a_1, \hdots, a_n$ be the respective eigenvalues of the $z_i$'s.
Since the support of $L$ is contained in $\mathcal{U}$, we have $a=(a_1,\hdots,a_n) \in \mathcal{D}$. 

Let $v_{a,\mathbf{0}} \in \C_{a,\mathbf{0}}$. Then the map $1 \otimes v_{a,\mathbf{0}} \mapsto v$ defines a non-zero $\mathcal{H}_h$-module homomorphism $M(a,\mathbf{0}) \to L$. Since $a = (a_1, \hdots, a_n) \in \mathcal{D}$, the module $M(a,\mathbf{0})$ is simple and so this homomorphism is injective. 
\end{proof}

Recall that $L^{\Gamma_{n-1}}$ is a module over $\langle \epsilon_1 \rangle \cong \Z/l\Z$. Let $L^{\Gamma_{n-1}}_{\chi_0}$ denote the isotypic component corresponding to the trivial character. 
 
\begin{lemma} \label{ehatM}
We have $\hat{e}M(a,\mathbf{0}) = L^{\Gamma_{n-1}}_{\chi_0}$. 
Moreover, $\hat{e}M(a,\mathbf{0})$ is stable under the action of $z_1$ and the eigenvalues of $z_1$ on $\hat{e}M(a,\mathbf{0})$ are $a_1, \hdots, a_n$. 
\end{lemma}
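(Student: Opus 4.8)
The plan is to analyze the structure of $M(a,\mathbf{0})$ as an induced module and identify the $\Gamma_{n-1}$-invariants with a specific $\hat{e}$-fixed subspace. First I would use part a) of the proposition on $\mathcal{H}_\kappa$, namely that $\mathcal{H}_h \cong \C[z_1,\dots,z_n]\otimes\C\Gamma_n$ as a vector space, together with the fact that $\mathfrak{C}_h \cong \C[z_1,\dots,z_n]\otimes\C(\Z/l\Z)^n$, to write down an explicit vector-space basis of $M(a,\mathbf{0}) = \mathcal{H}_h\otimes_{\mathfrak{C}_h}\C_{a,\mathbf{0}}$. Since $\mathcal{H}_h$ is free over $\mathfrak{C}_h$ with basis indexed by $S_n$ (the permutation part), $M(a,\mathbf{0})$ has dimension $l^n n!/l^n = n!$... more precisely $\dim M(a,\mathbf{0}) = [\mathcal{H}_h:\mathfrak{C}_h] = |S_n| \cdot$ (correction for the $(\Z/l\Z)^n$ already in $\mathfrak{C}_h$), giving $\dim M(a,\mathbf{0}) = l^n n!$... let me instead say: $\mathcal{H}_h$ has dimension (as free module) $|\Gamma_n| = l^n n!$ over $\C[z_1,\dots,z_n]$, and $\mathfrak{C}_h$ has dimension $l^n$ over $\C[z_1,\dots,z_n]$, so $M(a,\mathbf{0})$ has $\C$-dimension $n!$. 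I would pick coset representatives of $S_n$ to get an explicit basis $\{g_w\otimes v_{a,\mathbf{0}}\}_{w\in S_n}$.

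Next, I would compute $\hat{e}M(a,\mathbf{0})$ where $\hat{e}=\frac{1}{(n-1)!}\sum_{g\in S_{n-1}}g$. Applying $\hat{e}$ to the basis collapses each right $S_{n-1}$-coset to a single vector, so $\hat{e}M(a,\mathbf{0})$ is spanned by $n$ vectors indexed by $S_{n-1}\backslash S_n$ (equivalently by $\{1,\dots,n\}$ via the image of $n$ under the permutation). The key computation is to show this $n$-dimensional space is stable under $z_1$ and to diagonalize $z_1$ on it. Using the defining relation $s_i z_{i+1} = z_i s_i + \kappa\sum_k \epsilon_i^{-k}\epsilon_{i+1}^k$ and its consequences (the standard intertwiner / "swap" calculus in degenerate affine Hecke algebras), one checks that conjugating $z_1$ past the coset representative $g_w$ and then applying $\hat{e}$ — which kills the $\epsilon$-terms since we are in the trivial $(\Z/l\Z)^n$-weight space and symmetrizing over $S_{n-1}$ — produces eigenvalue $a_{w(1)}$, so the eigenvalues of $z_1$ on $\hat{e}M(a,\mathbf{0})$ are precisely $a_1,\dots,a_n$, each once.

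For the identification $\hat{e}M(a,\mathbf{0}) = L^{\Gamma_{n-1}}_{\chi_0}$, I would use Lemma \ref{LtoM}: the embedding $M(a,\mathbf{0})\hookrightarrow L$ of $\mathcal{H}_h$-modules is $\Gamma_n$-equivariant, hence maps $\hat{e}M(a,\mathbf{0})$ into $\hat{e}L$. One has $\hat{e}L = L^{S_{n-1}}$, and intersecting with the trivial $\langle\epsilon_1\rangle$-weight space (note $M(a,\mathbf{0})$ lives in the $(\Z/l\Z)^n$-weight $\mathbf{0}$, so in particular $\epsilon_1$ acts trivially) lands inside $L^{\Gamma_{n-1}}_{\chi_0}$. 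For the reverse inclusion / equality of dimensions, I would invoke the branching rule (Proposition \ref{branching}) together with the fact that $L$, being generic (its support lies in $\mathcal{U}$), restricts to $\mathcal{H}_h$ essentially as a single $M(a,\mathbf{0})$ up to the $\Gamma_n$-action — so $L^{\Gamma_{n-1}}_{\chi_0}$ is forced to be $n$-dimensional, matching $\dim\hat{e}M(a,\mathbf{0})$, and the injective map is then surjective.

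The main obstacle I anticipate is the explicit $z_1$-eigenvalue computation on $\hat{e}M(a,\mathbf{0})$: one must carefully push $z_1$ through a general permutation $g_w$ using the degenerate affine Hecke relations, keeping track of the $\kappa\sum_k\epsilon_i^{-k}\epsilon_{i+1}^k$ correction terms, and then verify that symmetrizing by $\hat{e}$ in the trivial $\epsilon$-weight space exactly cancels these corrections to leave a clean eigenvalue $a_{w(1)}$. A cleaner route, which I would pursue if the direct approach gets messy, is to use the known Jucys–Murphy-type description of $z_1$ acting on the induced module $\Ind_{\mathfrak{C}_h}^{\mathcal{H}_h}\C_{a,\mathbf{0}}$ and the standard fact that on the $\hat e$-symmetrization the action of $z_1$ becomes the multiplication operator with spectrum $\{a_1,\dots,a_n\}$; this is the degenerate-affine-Hecke analogue of the classical statement that the center of the symmetric group acts on the permutation module by the content vector. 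The identification with $L^{\Gamma_{n-1}}_{\chi_0}$ is then a dimension count via Propositions \ref{branching} and \ref{irred-daha}.
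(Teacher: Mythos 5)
Your proposal follows essentially the same route as the paper: identify $M(a,\mathbf{0})\cong\C S_n\otimes\C_{a,\mathbf{0}}$ via the PBW-type triangular decomposition, deduce $\dim\hat e M(a,\mathbf{0})=n$ from coset representatives for $S_{n-1}\backslash S_n$, note that $M(a,\mathbf{0})$ lies entirely in the trivial $(\Z/l\Z)^n$-weight space so $\hat e M(a,\mathbf{0})\subseteq L^{\Gamma_{n-1}}_{\chi_0}$, and finish by a dimension count; and for the eigenvalues the paper cites \cite[Lemma 4.7]{Bel}, which is precisely the degenerate-affine-Hecke Jucys--Murphy calculation you outline as the cleaner route. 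Two corrections are worth making. First, Proposition~\ref{branching} is not the right tool for the dimension count: it concerns restricting \emph{irreducible} $\Gamma_n$-modules, whereas $L|_{\Gamma_n}$ is the \emph{regular} representation $\C\Gamma_n$ (this is what smoothness of $\mathcal{Y}_{\mathbf h}$ and \cite[Theorem 1.7]{EG} give), and $\dim L^{\Gamma_{n-1}}_{\chi_0}=n$ is simply the dimension of each $\langle\epsilon_1\rangle$-isotypic component of $(\C\Gamma_n)^{\Gamma_{n-1}}$, as already recorded in \S\ref{subsec: EG iso}. Second, $z_1$-stability of $\hat e M(a,\mathbf{0})$ needs no eigenvalue computation: $z_1$ commutes with $S_{n-1}$ (and with all $\epsilon_i$) by the defining relations of $\mathcal{H}_\kappa$, hence $z_1\hat e=\hat e z_1$, which is the one-line argument the paper uses.
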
 

\begin{proof}
The action of $\epsilon_1$ on $M(a,\mathbf{0})$ is trivial by definition. 
We have a vector space isomorphism $M(a,\mathbf{0}) \cong \C S_n \otimes \C_{a,\mathbf{0}}$. Therefore $\{ \hat{e}s_{1,j} \otimes v_{a,\mathbf{0}} \mid 1 \leq j \leq n \}$ form a basis of $\hat{e}M(a,\mathbf{0})$ for any nonzero $v_{a,\mathbf{0}} \in \C_{a,\mathbf{0}}$. In particular, $\dim \hat{e}M(a,\mathbf{0}) = n$. Let us show that each of the basis elements we defined is fixed under the action of $\Gamma_{n-1}$. We first note that since for each $g \in S_{n-1} \subset \Gamma_n$ we have $g\hat{e}=\hat{e}$, the subgroup $S_{n-1}$ fixes each $\hat{e}s_{1,j} \otimes v_{a,\mathbf{0}}$. Now consider $\epsilon_i .\hat{e}s_{1,j} \otimes v_{a,\mathbf{0}}$ with $2 \leq i \leq n$. We have $\epsilon_i .\hat{e}s_{1,j} \otimes v_{a,\mathbf{0}} = \sum_{g \in S_{n-1}} gs_{1,j}\epsilon_{i(g)} \otimes v_{a,\mathbf{0}}$, where $i(g)$ is an index depending on $g$. But each $\epsilon_{i(g)}$ acts on $v_{a,\mathbf{0}}$ by the identity, so we conclude that $\epsilon_i$ fixes $\hat{e}s_{1,j} \otimes v_{a,\mathbf{0}}$. The stability of $\hat{e}M(a,\mathbf{0})$ under the action of $z_1$ follows from the fact that $z_1$ commutes with $\hat{e}$. The calculation of the eigenvalues is similar to the calculation in the proof of \cite[Lemma 4.7]{Bel}. 
\end{proof}

\subsection{Connection to the Etingof-Ginzburg isomorphism.}
Suppose that $L$ is an irreducible $\mathbb{H}_{\mathbf{h}}$-module whose support is contained in $\mathcal{U}$. 
Let $M(a,\mathbf{0})$ be as in Lemma~\ref{LtoM}. 
Using Lemma \ref{ehatM} and \eqref{varpi} we can identify $\hat{e}M(a,\mathbf{0}) = L^{\Gamma_{n-1}}_{\chi_0} \cong (\C\Gamma_n)^{\Gamma_{n-1}} \cong \mathbf{V}_0$. 
Suppose that $\EG(\chi_L) = [(\mathbf{X},\mathbf{Y},I,J)]$. %(see \ref{subsec: cyclic quiver}. 
The embedding \eqref{Dunkl-Opdam inclusion} sends 
\[ z_1 \quad \mapsto \quad x_1y_1 + \sum_{k=1}^{l-1} c_k \sum_{m=0}^{l-1} \eta^{-mk} \epsilon_1^m.\] 
Since $\epsilon_1$ acts trivially on $M(a,\mathbf{0})$, the action of $z_1$ on $\hat{e}M(a,\mathbf{0})$ can be identified with the action of $y_1x_1$ on $L^{\Gamma_{n-1}}_{\chi_0}$. Using the Etingof-Ginzburg isomorphism, the latter can be identified, up to conjugation,  with the matrix $Y_1X_0$. 

\begin{definition}
Let $\rho_2 : \mathcal{C}_{\mathbf{h}} \to \C^n/S_n$ be the morphism sending $[(\mathbf{X},\mathbf{Y},I,J)]$ to the multiset of the generalized eigenvalues of the matrix $Y_1X_{0}$. 
\end{definition} 

\begin{proposition} \label{comm diagram lemma}
The following diagram commutes.
\[
\begin{tikzcd}
\mathcal{Y}_{\mathbf{h}} \arrow[rr,"\EG"] \arrow{dr}[swap]{\rho_1} &  & \mathcal{X}_{\theta}(n\delta) \arrow[ld, "\rho_2"] \\
 & \C^n/S_n & 
\end{tikzcd}
\]
\end{proposition}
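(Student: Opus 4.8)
The plan is to verify that, for a generic irreducible $\mathbb{H}_{\mathbf{h}}$-module $L$ whose support lies in the dense open set $\mathcal{U}$, the two composites $\rho_1$ and $\rho_2 \circ \EG$ agree at $\chi_L$; density of $\mathcal{U}$ and continuity of all maps then force agreement on all of $\mathcal{Y}_{\mathbf{h}}$. So the real content is a pointwise identification of two multisets in $\C^n/S_n$.

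First I would unwind $\rho_1(\chi_L)$. By construction $\rho_1$ is the morphism induced by the inclusion $\C[z_1,\hdots,z_n]^{S_n} \hookrightarrow Z_{\mathbf{h}}$ of \eqref{invariants inclusion}, so $\rho_1(\chi_L)$ is the $S_n$-orbit of the tuple of values by which the symmetric functions in the $z_i$ act on $L$ — equivalently, the multiset of simultaneous eigenvalues of $z_1,\hdots,z_n$ on $L$. By Lemma \ref{LtoM} there is an embedding $M(a,\mathbf{0}) \hookrightarrow L$ with $a \in \mathcal{D}$, and since $M(a,\mathbf{0})$ is simple (Proposition \ref{irred-daha}) and $L$ is simple over $\mathbb{H}_{\mathbf{h}}$, one must argue that the eigenvalue multiset of $(z_1,\hdots,z_n)$ on $L$ is exactly the $S_n$-orbit of $a = (a_1,\hdots,a_n)$; this is where I would invoke that the central characters of $\C[z_1,\hdots,z_n]^{S_n}$ on $L$ and on $M(a,\mathbf{0})$ coincide (the embedding is a $\mathcal{H}_h$-map, hence central characters match), so $\rho_1(\chi_L) = \phi(a)$.

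Next I would unwind $\rho_2(\EG(\chi_L))$. Write $\EG(\chi_L) = [(\mathbf{X},\mathbf{Y},I,J)]$. By definition $\rho_2$ returns the multiset of generalized eigenvalues of $Y_1 X_0 \in \End(\mathbf{V}_0)$. The discussion preceding the Definition of $\rho_2$ already identifies, via Lemma \ref{ehatM} and the isomorphism \eqref{varpi}, the space $\hat e M(a,\mathbf{0}) = L^{\Gamma_{n-1}}_{\chi_0} \cong \mathbf{V}_0$, together with the fact that the action of $z_1$ on $\hat e M(a,\mathbf{0})$ is identified with that of $y_1 x_1$ on $L^{\Gamma_{n-1}}_{\chi_0}$ (because $\epsilon_1$ acts trivially on $M(a,\mathbf{0})$, killing the extra $\epsilon_1^m$ terms in the image of $z_1$ under \eqref{Dunkl-Opdam inclusion}), and finally that under $\EG$ the operator $y_1 x_1$ on $L^{\Gamma_{n-1}}_{\chi_0}$ is conjugate to $Y_1 X_0$ — here one uses $\mathbf{X}(\phi) = \varpi(\phi(x_1)|_{(\C\Gamma_n)^{\Gamma_{n-1}}})$, $\mathbf{Y}(\phi) = \varpi(\phi(y_1)|_{(\C\Gamma_n)^{\Gamma_{n-1}}})$ from the definition of $\Psi$, restricted to the $\chi_0$-isotypic piece $\mathbf{V}_0$, so that $Y_1 X_0$ is precisely $\varpi$ applied to the composite $y_1 x_1$ acting on the $\chi_0$-component. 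Therefore the multiset of generalized eigenvalues of $Y_1 X_0$ equals the multiset of eigenvalues of $z_1$ on $\hat e M(a,\mathbf{0})$, which by the second sentence of Lemma \ref{ehatM} is $\{a_1,\hdots,a_n\}$. Hence $\rho_2(\EG(\chi_L)) = \phi(a) = \rho_1(\chi_L)$.

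Finally I would promote this to commutativity of the whole diagram: both $\rho_1$ and $\rho_2 \circ \EG$ are morphisms of varieties $\mathcal{Y}_{\mathbf{h}} \to \C^n/S_n$, they agree on the dense subset $\mathcal{U}$, and $\C^n/S_n$ is separated, so they coincide. The main obstacle I anticipate is the careful bookkeeping in the middle step — tracking the operator $y_1 x_1$ versus $x_1 y_1$, the trivialization of the $\epsilon_1$-action, and making sure the conjugation implicit in $\EG$ (which only sees the $G(n\delta)$-orbit) does not disturb the \emph{multiset} of generalized eigenvalues — together with justifying that $\hat e M(a,\mathbf 0)$ exhausts $L^{\Gamma_{n-1}}_{\chi_0}$ rather than being a proper subspace, which is exactly the content of the identity $\hat e M(a,\mathbf 0) = L^{\Gamma_{n-1}}_{\chi_0}$ in Lemma \ref{ehatM} and so is already available.
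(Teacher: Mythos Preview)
Your proposal is correct and follows essentially the same route as the paper: verify $\rho_1(\chi_L)=\phi(a)=\rho_2(\EG(\chi_L))$ for $\chi_L\in\mathcal{U}$ using Lemmas \ref{LtoM} and \ref{ehatM} together with the identification of $z_1$ with $Y_1X_0$ on $L^{\Gamma_{n-1}}_{\chi_0}$, then extend by density. The paper's proof is organized the same way and invokes the same ingredients.
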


\begin{proof}
Since $\EG$ is an isomorphism, it suffices to show there exists a dense open subset of $\mathcal{Y}_{\mathbf{h}}$ for which the diagram commutes. Consider the dense open subset $\mathcal{U}$ from \eqref{calUset}. Since $\mathcal{Y}_{\mathbf{h}}$ is smooth, for each $\chi \in \mathcal{U}$, there exists a unique simple $\mathbb{H}_{\mathbf{h}}$-module $L$ such that $\chi = \chi_{L}$. Moreover, there is an injective $\mathcal{H}_h$-module homomorphism $M(a,\mathbf{0})\hookrightarrow L$ for some $a \in \mathcal{D}$, by Lemma \ref{LtoM}. Set $[(\mathbf{X},\mathbf{Y},I,J)]:=\EG(\chi_L)$. 
The remarks at the beginning of this subsection imply that the matrix $Y_1X_0$ describes the action of $z_1$ on $\hat{e}M(a,\mathbf{0})$. Hence the eigenvalues of $Y_1X_0$ are the same as the eigenvalues of the operator $z_1|_{\hat{e}M(a,\mathbf{0})}$. By Lemma \ref{ehatM}, these eigenvalues are $a_1, \hdots, a_n$. Hence $\rho_2 \circ \EG (\chi_L) = \phi(a) \in \C^n/S_n$. 

On the other hand, consider the composition \begin{equation} \label{compemb} \C[z_1, \hdots, z_n]^{S_n} \hookrightarrow Z_{\mathbf{h}} \xrightarrow{\chi_L} \C.\end{equation} By the definition of $M(a,\mathbf{0})$, a symmetric polynomial $f(z_1, \hdots, z_n)$ acts on $1 \otimes \C_{a,\mathbf{0}}$ by the scalar $f(a_1, \hdots, a_n)$. Since $f(z_1, \hdots, z_n)$ is central in $\mathbb{H}_{\mathbf{h}}$, it acts by this scalar on all of $L$. Therefore, the kernel of \eqref{compemb} equals the maximal ideal in $\C[z_1, \hdots, z_n]^{S_n}$ consisting of those symmetric polynomials $f$ which satisfy $f(a_1, \hdots, a_n) = 0$, which is the vanishing ideal of $\phi(a)$. 
\end{proof}

\subsection{The images of the $\C^*$-fixed points in $\C^n/S_n$.} \label{subsec: images of fp} 
We are now going to identify the images of the $\C^*$-fixed points under $\rho_1$ and $\rho_2$. Set $\mathbf{e} = (e_0, \hdots, e_{l-1}) \in \Q^l$, where $e_0=0$ and $e_i = \sum_{j=1}^i H_j$ for $i = 1, \hdots, l-1$. Set $\theta:=\theta_{\mathbf{h}}$ as in \eqref{theta vs h}. For the rest of this section fix $\mu \in \mathcal{P}_{\varnothing}(nl)$. Let us identify a point $(a_1, \hdots, a_n) \in \C^n/S_n$ with the ``polynomial"~$\sum_{i=1}^n t^{a_i}$. 

\begin{lemma}[{\cite[\S 5.4]{Mar2}}] \label{Martino lemma}
Let $\underline{\lambda} \in \mathcal{P}(l,n)$. Then $\rho_1(\chi_{\underline{\lambda}}) = \Res^{\mathbf{e}}_ {\underline{\lambda}}(t^{h})$. 
\end{lemma}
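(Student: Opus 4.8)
The plan is to identify the fixed point $\chi_{\underline{\lambda}}$ with its associated simple module over the restricted rational Cherednik algebra and to compute how the central subalgebra $\C[z_1,\hdots,z_n]^{S_n}$ acts on it via the Dunkl--Opdam subalgebra. Concretely, recall from \S\ref{subsec:cfp} that $\chi_{\underline{\lambda}} = \ker\chi_{L(\underline{\lambda})}$, where $L(\underline{\lambda})$ is the simple head of the baby Verma module $\Delta(\underline{\lambda})$. Since $\rho_1$ is the morphism dual to the inclusion \eqref{invariants inclusion}, computing $\rho_1(\chi_{\underline\lambda})$ amounts to determining, for each $i$, the multiset of generalized eigenvalues of the Dunkl--Opdam element $z_i$ (equivalently of $y_ix_i$ plus the $\epsilon$-correction) acting on $L(\underline{\lambda})$, or rather acting on the top graded piece, which suffices since $z_i$ is computed there up to terms that do not change its spectrum on a simple module. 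The claim is that this multiset of eigenvalues, encoded as $\sum_i t^{a_i}$ after the substitution $t\mapsto t^h$, equals $\Res^{\mathbf e}_{\underline\lambda}(t^h) = \sum_{i=0}^{l-1} t^{he_i}\sum_{\square\in\lambda^i}t^{hc(\square)}$.

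The key steps, in order: first, record the action of the Dunkl--Opdam element $z_i$ on the degree-zero part $1\otimes S(\underline{\lambda})$ of the baby Verma module — this is essentially the Jucys--Murphy-type element, and its eigenvalues on $S(\underline\lambda)$ are governed by the combinatorics of standard Young tableaux of shape $\underline\lambda$, with the eigenvalue attached to a box $\square$ in component $\lambda^i$ being $h\,c(\square)$ together with the contribution of the cyclic grading, which is exactly $h e_i$ in our normalization (here one uses the relation $e_0=0$, $e_i=\sum_{j=1}^iH_j$ and the translation between the $H_j$ and the $c_k$ from \cite[\S2.7]{gor-qv}). Second, observe that the generalized eigenvalues of the full collection $z_1,\hdots,z_n$ acting on $L(\underline{\lambda})$, taken as an unordered multiset (which is all $\rho_1$ sees, since it lands in $\C^n/S_n$), coincide with the multiset $\{h(c(\square)+e_i) : \square\in\lambda^i,\ 0\le i\le l-1\}$; the point is that on a simple module the spectrum of each $z_i$ and their simultaneous spectrum is determined, and the tableau combinatorics shows the simultaneous eigenvalues are exactly the contents-plus-shifts of the $n$ boxes of $\underline\lambda$, each appearing once. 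Third, translate: the image point is $\sum_{\square} t^{h(c(\square)+e_i)} = \sum_i t^{he_i}\Res_{\lambda^i}(t^h) = \Res^{\mathbf e}_{\underline\lambda}(t^h)$ by the definition of the $\mathbf e$-residue in \S\ref{Felineorigin}.

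The main obstacle I expect is pinning down the precise correction term relating the central element $z_i\in\mathcal H_h$ (as embedded in $\mathbb H_{\mathbf h}$ via \eqref{Dunkl-Opdam inclusion}) to the honest Jucys--Murphy element acting on the $\Gamma_n$-module $S(\underline\lambda)$, and verifying that the cyclic-group correction $\sum_{k=1}^{l-1}c_k\sum_m\eta^{-mk}\epsilon_i^m$ contributes exactly $h e_{j}$ on the isotypic piece indexed by $j$ — this requires carefully unwinding the dictionary between the parameters $\mathbf h=(h,H_1,\hdots,H_{l-1})$ and the $c_k$'s in \cite[\S2.7]{gor-qv}, and matching normalizations of $S(\underline\lambda)$ with those of \cite[\S6.1.1]{Rouq}. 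Once that bookkeeping is done, the combinatorial content — that the spectrum of the Jucys--Murphy elements on $S(\underline\lambda)$ is the multiset of box contents, with multiplicity one after passing to $\C^n/S_n$ — is classical (Okounkov--Vershik style), and the identification of the result with $\Res^{\mathbf e}_{\underline\lambda}(t^h)$ is then immediate. I would also invoke the fact, used implicitly in Proposition \ref{comm diagram lemma}, that one only needs the identity on a dense open subset or — here, since the fixed points are isolated — directly on each fixed point, so no continuity argument is required beyond the module-theoretic computation.
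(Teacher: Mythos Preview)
The paper does not give its own proof of this lemma; it is stated as a citation from Martino \cite[\S 5.4]{Mar2}. Your proposal therefore supplies what the paper omits, and your outline is essentially the argument one finds in the cited source: restrict to the degree-zero piece $1\otimes S(\underline{\lambda})$ of the baby Verma module (where each $y_i$ acts by zero, so that via the commutator relation $[x_i,y_i]$ the operator $y_ix_i$ becomes an explicit element of $\C\Gamma_n$), recognise the resulting expression for $z_i$ as a Jucys--Murphy element for the wreath product $\Gamma_n$, and read off its spectrum on $S(\underline{\lambda})$ from the standard content formula, with the cyclic correction contributing the shift $e_j$ on the $j$-th isotypic component. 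Your identification of the bookkeeping between the $c_k$'s and the $H_j$'s as the only genuine obstacle is accurate; once that dictionary is fixed the rest is indeed Okounkov--Vershik-style combinatorics.

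One small point to watch: you should verify that the $z_i$'s genuinely preserve the subspace $1\otimes S(\underline{\lambda})$ (they do, because each summand in \eqref{Dunkl-Opdam inclusion} has degree zero for the $\C^*$-action), so that computing eigenvalues there is legitimate and the symmetric functions in the $z_i$ then act on all of $L(\underline{\lambda})$ by the same scalar.
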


\begin{definition} \label{Defi: alphas}
Let $\mu = (a_1, \hdots, a_k \mid b_1, \hdots, b_k)$ be the Frobenius form of $\mu$. For each $1 \leq i \leq k$, let $r_i = b_i +1$ and $m_i = a_i + b_i +1$. Recall the matrices $\Lambda(m_i)$ and $A(m_i,r_i)$ from Definition \ref{alpha-defi}. 
If $\Lambda(m_i) A(m_i,r_i) = \diag(\alpha_1, \alpha_2, \hdots, \alpha_{m_i-1}, \alpha_{m_i})$, then we define \[ \Eig(\mu,i) = \sum_{\substack{1 \leq j \leq m_i, \\ j = r_i - 1 \mmod l}} t^{\alpha_j}, \quad \Eig(\mu) = \sum_{i=1}^k \Eig(\mu,i).\] 
\end{definition}
\begin{lemma} \label{rho2-eig-calc b} We have $\rho_2([\mathbf{A}(\mu)]) = \Eig(\mu)$. 
\end{lemma}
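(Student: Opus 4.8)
The plan is to compute $\rho_2([\mathbf{A}(\mu)])$ directly from the definition of $\rho_2$ and the explicit form of the quadruple $\mathbf{A}(\mu)$. Recall that $\rho_2([(\mathbf{X},\mathbf{Y},I,J)])$ is the multiset of generalized eigenvalues of $Y_1X_0$, i.e.\ the composition $\mathbf{V}_0 \xrightarrow{X_0} \mathbf{V}_1 \xrightarrow{Y_1} \mathbf{V}_0$; in the notation of \S\ref{Amu-sect}, $\mathbf{X} = \Lambda(\mu)$ (the ``forward'' arrows) and $\mathbf{Y} = A(\mu)$ (the ``backward'' arrows), so I need the eigenvalues of the restriction of $A(\mu)\Lambda(\mu)$ to the subspace $\mathbf{V}_0^\tau$. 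First I would record which basis vectors $\Bas(j)$ span $\mathbf{V}_0^\tau$: by the commutative diagram in the proof of Lemma \ref{basbij}, $\Bas(j) \in \mathbf{V}_0^\tau$ precisely when $\psi(j) = 0$, i.e.\ when $u_j \equiv r_{\zeta(j)} \pmod l$. Within the $i$-th Frobenius block these are the positions $j$ with $1 \le j \le m_i$ and $j \equiv r_i \pmod l$; note this is a shift by one from the indexing $j = r_i - 1 \bmod l$ appearing in Definition \ref{Defi: alphas}, which I will need to reconcile carefully.

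Next I would compute $A(\mu)\Lambda(\mu)$ on such a vector. The operator $\Lambda(\mu) = \bigoplus_i \Lambda(m_i)$ is block-diagonal and shifts each block index up by one (sending the $j$-th basis vector of block $i$ to the $(j{-}1)$-th, or zero), while $A(\mu)$ has both diagonal blocks $A(m_i,r_i)$ (which shift down by one within the block, with the entry $A(m_i,r_i)_{j+1,j} = \alpha_j$) and off-diagonal blocks $A(\mu)^{ij}$. The key observation is that on a basis vector $\Bas(q_i + j)$ lying in block $i$, the composition $A(\mu)\Lambda(\mu)$ — after restricting attention to the $\mathbf{V}_0^\tau$-component of the image — picks out only the diagonal-block contribution $A(m_i,r_i)\Lambda(m_i)$, because the off-diagonal blocks $A(\mu)^{ij}$ (for $i \ne j$) land on the non-zero diagonal labelled $r_j - r_i - 1$, which changes the $l$-content; composing with $\Lambda$ shifts this once more, so the output of $A(\mu)^{ij}\Lambda(m_j)$ never lands back in $\mathbf{V}_0^\tau$ at the same position. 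Thus the restriction of $Y_1X_0$ to $\mathbf{V}_0^\tau$ is block upper-triangular (or outright diagonalizable) with diagonal entries the $\alpha_j$'s for which $\Lambda(m_i)A(m_i,r_i)$ is nonzero and the relevant index lies in $\mathbf{V}_0^\tau$. Hence the eigenvalue multiset is $\bigcup_i \{\alpha_j : 1 \le j \le m_i,\ j \equiv r_i \text{ or } r_i{-}1 \pmod l\}$ — and I would check that the correct congruence condition, matching $\Eig(\mu,i)$, is exactly the one dictated by $\Lambda(m_i)A(m_i,r_i) = \diag(\alpha_1,\dots,\alpha_{m_i-1},0)$ versus $A(m_i,r_i)\Lambda(m_i) = \diag(0,\alpha_1,\dots,\alpha_{m_i-1})$.

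Once the bookkeeping is settled, the identification with $\Eig(\mu) = \sum_i \sum_{j \equiv r_i - 1 \,(l)} t^{\alpha_j}$ is immediate from Definition \ref{Defi: alphas}, since we are summing $t$ to the power of exactly those diagonal entries $\alpha_j$ of $\Lambda(m_i)A(m_i,r_i)$ indexed by $j \equiv r_i - 1 \pmod l$. I would also use genericity of $\theta$ (the smoothness assumption) to ensure the relevant off-diagonal contributions cannot conspire to change the generalized eigenvalues, or more robustly note that generalized eigenvalues of a block-triangular operator depend only on the diagonal blocks, so genericity is not even needed for the eigenvalue computation itself.

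The main obstacle is the indexing gymnastics: correctly tracking, through the bijection $\Bas$ and the block structure of $\mathbf{A}(\mu)$, (i) which basis vectors constitute $\mathbf{V}_0^\tau$, (ii) the precise off-by-one between ``$\Lambda A$'' and ``$A\Lambda$'' orderings, and (iii) verifying that off-diagonal blocks $A(\mu)^{ij}$ genuinely contribute nothing to the $\mathbf{V}_0^\tau \to \mathbf{V}_0^\tau$ diagonal part. The diagrammatic picture of Example \ref{Examplekot} will be the guide here: $Y_1X_0$ sends a vector one row up (via $\Lambda$) then one row down (via $A$), possibly jumping between Frobenius hooks, and I must confirm that the only way to return to the same $\mathbf{V}_0^\tau$-basis vector is to stay within one hook, which forces the eigenvalues to be exactly the diagonal entries of the blocks $A(m_i,r_i)\Lambda(m_i)$ (equivalently $\Lambda(m_i)A(m_i,r_i)$ up to the shift), restricted to $l$-content $0$.
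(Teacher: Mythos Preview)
Your approach is correct and essentially the same as the paper's: both compute the eigenvalues of $Y_1X_0$ by observing that, restricted to each piece of the $\mu$-grading (equivalently, each content level), the relevant operator is triangular in the Frobenius-block ordering, so its eigenvalues are precisely the diagonal entries $\alpha_j$ coming from the blocks $A(m_i,r_i)\Lambda(m_i)$ (resp.\ $\Lambda(m_i)A(m_i,r_i)$).

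The one difference is packaging. The paper works with $\Lambda(\mu)A(\mu)|_{\mathbf{V}_1}$ --- where the indexing condition $j \equiv r_i - 1 \pmod l$ in Definition~\ref{Defi: alphas} matches the diagonal entries of $\Lambda(m_i)A(m_i,r_i)$ on the nose --- and then invokes the standard fact that $X_0Y_1$ and $Y_1X_0$ have the same characteristic polynomial (since $\dim\mathbf{V}_0 = \dim\mathbf{V}_1 = n$ here) to transfer to $A(\mu)\Lambda(\mu)|_{\mathbf{V}_0}$. This sidesteps entirely the off-by-one bookkeeping you flagged. Your direct route on $\mathbf{V}_0$ is fine too; the shift $j \leftrightarrow j-1$ between the two orderings only swaps $\alpha_0 = 0$ for $\alpha_{m_i} = 0$ at the boundary, so the eigenvalue multisets agree.

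One small clarification on your point (iii): the off-diagonal blocks $A(\mu)^{ij}\Lambda(m_j)$ do contribute nonzero entries to the matrix of $Y_1X_0$ on $\mathbf{V}_0$ --- they are not zero. What makes the argument work is that on each content-level subspace the resulting matrix is \emph{triangular} in the Frobenius-hook ordering (upper-triangular for non-negative content, lower-triangular for negative content, as one reads off from Lemma~\ref{papryczka}), so the characteristic polynomial is the product of the diagonal terms regardless. You already say this at the end, so your plan goes through; just be aware that ``contribute nothing to the diagonal part'' is the right statement, not ``contribute nothing''.
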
 
\begin{proof} 
$\Eig(\mu)$ picks out exactly the eigenvalues of the restricted endomorphism \linebreak $\Lambda(\mu)A(\mu)|_{\mathbf{V}_1}$ from all the eigenvalues of $\Lambda(\mu)A(\mu)$. But these are the same as the eigenvalues of $A(\mu)\Lambda(\mu)|_{\mathbf{V}_0}$. The fact that $ \rho_2([\mathbf{A}(\mu)]) = \Eig(\mu)$ now follows immediately from the definition of $\rho_2$. 
\end{proof} 

By Proposition \ref{comm diagram lemma}, Lemma \ref{Martino lemma}, Lemma \ref{rho2-eig-calc b} and the fact that a multipartition is uniquely determined by its $\mathbf{e}$-residue for generic $\mathbf{e}$, we have 
\begin{equation} \label{Eigrhorhores} \Eig(\mu) = \rho_2([\mathbf{A}(\mu)]) = \rho_1(\chi_{\underline{\lambda}}) = \Res^{\mathbf{e}}_ {\underline{\lambda}}(t^{h}) \end{equation} for unique $\underline{\lambda} \in \mathcal{P}(l,n)$.

\begin{definition} 
Define $\Eeig(\mu):= \left(\Eeigg(\mu)^0, \Eeigg(\mu)^1, \hdots, \Eeigg(\mu)^{l-1}\right) \in \mathcal{P}(l,n)$ by the equation
\[ \Res^{\mathbf{e}}_ {\Eeig(\mu)}(t^{h}) = \Eig(\mu).\]
\end{definition}
We thus have a bijection
\[ \Eeig : \mathcal{P}_{\varnothing}(nl) \to \mathcal{P}(l,n), \quad \mu \mapsto \Eeig(\mu).\] 
\begin{proposition} \label{Cor final2} Let $\mu \in \mathcal{P}_{\varnothing}(nl)$. 
The inverse of the Etingof-Ginzburg isomorphism sends the $\C^*$-fixed point $[\mathbf{A}(\mu)]$ in $\mathcal{C}_{\mathbf{h}}$ to the $\C^*$-fixed point $\chi_{\Eeig(\mu)}$ in $\mathcal{Y}_{\mathbf{h}}$.
\end{proposition}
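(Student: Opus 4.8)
The plan is to deduce the statement by chasing the commutative diagram of Proposition~\ref{comm diagram lemma}, combined with the two explicit identifications of $\rho_1$ and $\rho_2$ on fixed points obtained in Lemmas~\ref{Martino lemma} and~\ref{rho2-eig-calc b}.

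First I would observe that $\EG$ is a $\C^*$-equivariant isomorphism (Theorem~\ref{EG iso theoremm}), hence restricts to a bijection $\mathcal{Y}_{\mathbf{h}}^{\C^*} \to \mathcal{C}_{\mathbf{h}}^{\C^*}$. By Theorem~\ref{C fp classification theorem}, applied with $\nu = \varnothing$ (so $\tau = \varnothing$ and $n\delta + \mathbf{d}_\tau = n\delta$), the orbit $[\mathbf{A}(\mu)]$ is a $\C^*$-fixed point of $\mathcal{C}_{\mathbf{h}} = \mathcal{X}_{\theta}(n\delta)$. Together with the description of $\mathcal{Y}_{\mathbf{h}}^{\C^*}$ recalled in \S\ref{subsec:cfp}, this yields a unique $\underline{\lambda} \in \mathcal{P}(l,n)$ with $\EG(\chi_{\underline{\lambda}}) = [\mathbf{A}(\mu)]$, and it remains to show $\underline{\lambda} = \Eeig(\mu)$.

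Next I would compute $\rho_1(\chi_{\underline{\lambda}})$ in two ways. On the one hand, Lemma~\ref{Martino lemma} gives $\rho_1(\chi_{\underline{\lambda}}) = \Res^{\mathbf{e}}_{\underline{\lambda}}(t^{h})$. On the other hand, the commutativity of the diagram in Proposition~\ref{comm diagram lemma} gives $\rho_1(\chi_{\underline{\lambda}}) = \rho_2(\EG(\chi_{\underline{\lambda}})) = \rho_2([\mathbf{A}(\mu)])$, which equals $\Eig(\mu)$ by Lemma~\ref{rho2-eig-calc b}. Hence $\Res^{\mathbf{e}}_{\underline{\lambda}}(t^{h}) = \Eig(\mu)$. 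Since $\Eeig(\mu)$ is defined by the equation $\Res^{\mathbf{e}}_{\Eeig(\mu)}(t^{h}) = \Eig(\mu)$, we obtain $\Res^{\mathbf{e}}_{\underline{\lambda}}(t^{h}) = \Res^{\mathbf{e}}_{\Eeig(\mu)}(t^{h})$, and injectivity of $\underline{\lambda} \mapsto \Res^{\mathbf{e}}_{\underline{\lambda}}(t^{h})$ for the (generic) parameter $\mathbf{e}$ at hand — exactly the uniqueness already used to write down \eqref{Eigrhorhores} — forces $\underline{\lambda} = \Eeig(\mu)$, i.e.\ $\EG^{-1}([\mathbf{A}(\mu)]) = \chi_{\Eeig(\mu)}$.

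I do not expect a genuine obstacle here: all the substantive work sits in Proposition~\ref{comm diagram lemma} and in the eigenvalue computation of Lemma~\ref{rho2-eig-calc b}, and the chain of equalities in \eqref{Eigrhorhores} essentially already records the identity $\Res^{\mathbf{e}}_{\underline{\lambda}}(t^h) = \Eig(\mu)$ for precisely this $\underline{\lambda}$. The only point needing a word of justification is the injectivity of the $\mathbf{e}$-residue map on $\mathcal{P}(l,n)$, which holds for $\mathbf{e}$ arising from a smoothness parameter $\mathbf{h}$ and is invoked in the same way as in the derivation of \eqref{Eigrhorhores}; no additional genericity input is required.
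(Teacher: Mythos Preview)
Your proposal is correct and follows exactly the paper's approach: the paper's proof is the single line ``This follows directly from \eqref{Eigrhorhores}'', and your argument is precisely a careful unpacking of that chain of equalities, invoking Proposition~\ref{comm diagram lemma}, Lemma~\ref{Martino lemma}, Lemma~\ref{rho2-eig-calc b}, and the injectivity of the $\mathbf{e}$-residue for generic $\mathbf{e}$ in the same way the paper does in deriving \eqref{Eigrhorhores}.
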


\begin{proof}
This follows directly from \eqref{Eigrhorhores}. 
\end{proof}

\subsection{Calculation of $\Eig(\mu)$.} \label{subsec: calc of Eig} 
Set $\mathbf{e'} = (e'_0, e'_1, \hdots, e'_{l-1})$ and $\mathbf{e''} = (e''_0, e''_1\hdots, e''_{l-1})$ with 
$e'_0 = -h$, $e''_{l-1}=0$ and 
\[ e_i' = e_i \quad (i = 1, \hdots, l-1), \quad \quad e_i'' = h + \sum_{j=1}^{l-i-1} H_j \quad (i = 0, \hdots, l-2).\]
In these notations all the lower indices are to be considered $\mmod$ $l$.

\begin{lemma} \label{rho2-eig-calc} Let $\mu = (a_1, \hdots, a_k \mid b_1, \hdots, b_k) \in \mathcal{P}_{\varnothing}(nl)$. Then 
\begin{equation} \label{Eig:mu} \Eig(\mu) = \sum_{i=1}^k \left( \left(t^{e'_{b_i}} \sum_{j=1}^{\lceil b_i /l \rceil} t^{-(j-1)h}\right) + \left( t^{e''_{a_i }} \sum_{j=1}^{\lfloor(a_i +1)/l\rfloor} t^{(j-1)h} \right) \right).\end{equation}
\end{lemma}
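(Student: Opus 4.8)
The plan is to unwind the definitions of $\Eig(\mu,i)$ one Frobenius hook at a time, so the whole statement reduces to a single-hook computation. Fix a hook index $i$ and recall from Lemma \ref{Alameig} that $\Lambda(m_i)A(m_i,r_i) = \diag(\alpha_1,\dots,\alpha_{m_i-1},0)$, where $\alpha_j = A(m_i,r_i)_{j+1,j}$ is the $j$-th off-diagonal entry. By Definition \ref{alpha-defi}, for $1 \le j < r_i$ we have $\alpha_j = \sum_{s=1}^{j}\theta_{r_i-s} = \theta_{r_i-1}+\theta_{r_i-2}+\cdots+\theta_{r_i-j}$, a consecutive sum of $\theta$'s running \emph{backwards} from index $r_i-1$; and for $r_i \le j \le m_i-1$ we have $\alpha_j = -\sum_{s=0}^{m_i-j-1}\theta_{-m_i+r_i+s}$, a consecutive sum running \emph{forwards} from index $-m_i+r_i = r_i-m_i$. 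The key point is that $\Eig(\mu,i)$ only keeps those $\alpha_j$ with $j \equiv r_i - 1 \pmod l$, i.e.\ those for which the number of $\theta$-summands is $\equiv 0 \pmod l$ in the first range and $\equiv 0$ or (after rewriting) a full multiple in the second. Because $\theta = \theta_{\mathbf h} = (-h+H_0, H_1,\dots,H_{l-1})$ with $H_0 = -(H_1+\cdots+H_{l-1})$, every full block $\theta_c + \theta_{c+1} + \cdots + \theta_{c+l-1}$ of $l$ consecutive $\theta$'s telescopes to $\theta_0+\cdots+\theta_{l-1} = -h$. So a backward sum of $j = pl$ consecutive $\theta$'s starting at index $r_i-1$ equals $-(p-1)h$ plus the residual partial block $\theta_{r_i-1}+\cdots+\theta_{0}$ (or rather the last incomplete stretch down to index $r_i - pl$), and a short computation identifies that residual with $e'_{b_i}$ (recall $b_i = r_i-1$, and $e'_c = e_c = \sum_{m=1}^{c}H_m$ for $c\ge 1$, $e'_0 = -h$). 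This yields the first summand $t^{e'_{b_i}}\sum_{j=1}^{\lceil b_i/l\rceil} t^{-(j-1)h}$, the range of $j$ being dictated by how many multiples of $l$ fit into $\{1,\dots,r_i-1=b_i\}$ with the correct residue.

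Symmetrically, in the second range $r_i \le j \le m_i-1$ (there are $m_i - r_i = a_i$ values), I rewrite the index $j$ via $j \mapsto m_i - j$ so that the forward sums become sums of $m_i-j$ consecutive $\theta$'s, and the condition $j \equiv r_i-1 \pmod l$ becomes $m_i - j \equiv m_i - r_i + 1 = a_i+1 \pmod l$. Collapsing full blocks of $l$ the same way produces $+(p-1)h$ contributions (note the overall minus sign in $\alpha_j$ flips the sign of the telescoped $-h$'s to $+h$), and the residual incomplete block of $\theta$'s (now running forward from index $r_i-m_i$) is identified with $e''_{a_i}$ after reindexing modulo $l$; here one uses $e''_c = h + \sum_{m=1}^{l-c-1}H_m$ for $c \le l-2$ and $e''_{l-1}=0$, which is exactly the complementary partial sum. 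The count of admissible $j$ in this range is $\lfloor (a_i+1)/l\rfloor$, giving the second summand $t^{e''_{a_i}}\sum_{j=1}^{\lfloor(a_i+1)/l\rfloor} t^{(j-1)h}$. Summing over $i=1,\dots,k$ gives \eqref{Eig:mu}.

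Concretely the steps are: (1) expand $\Eig(\mu,i)$ using Lemma \ref{Alameig} and split the admissible indices into the two ranges $1\le j<r_i$ and $r_i\le j\le m_i-1$; (2) in each range, write the relevant partial sum of $\theta$'s as (number of complete length-$l$ blocks) $\times(-h)$ plus a residual partial block, using $\sum_{m=0}^{l-1}\theta_m = -h$; (3) match the residual blocks to $e'_{b_i}$ and $e''_{a_i}$ respectively by a direct comparison of the definitions of $e'$, $e''$ with the definition of $\theta_{\mathbf h}$; (4) count the admissible $j$ in each range to get the summation ranges $\lceil b_i/l\rceil$ and $\lfloor(a_i+1)/l\rfloor$; (5) sum over $i$.

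The main obstacle is step (3) together with the precise index bookkeeping in step (4): the $\theta$-indices are taken mod $l$, the sums run in opposite directions in the two ranges, and one must carefully track which incomplete stretch of $\theta$'s survives after peeling off full blocks, then verify it coincides (as a partial sum of the $H_m$, up to the constant $-h$ or $h$) with $e'_{b_i}$ or $e''_{a_i}$ — including the edge cases $b_i \equiv 0$ and $a_i+1 \equiv 0 \pmod l$ where the residual block is empty and the endpoint conventions $e'_0 = -h$, $e''_{l-1}=0$ kick in. I expect this to be a careful but entirely elementary finite computation with no conceptual difficulty beyond the telescoping identity.
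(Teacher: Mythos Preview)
Your approach is essentially identical to the paper's: both reduce to a single-hook computation of $\Eig(\mu,i)$, split the admissible indices $j\equiv r_i-1\pmod l$ into the ranges $j<r_i$ and $j\ge r_i$, telescope full length-$l$ blocks of $\theta$'s using $\sum_{m=0}^{l-1}\theta_m=-h$, and identify the residual partial sums with $e'_{b_i}$ and $e''_{a_i}$. One small correction: your second range should be $r_i\le j\le m_i$, not $r_i\le j\le m_i-1$; the term $j=m_i$ (with $\alpha_{m_i}=0$, contributing $t^0$) is present precisely when $l\mid(a_i+1)$, and including it is what makes the count come out to $\lfloor(a_i+1)/l\rfloor$ rather than $\lfloor a_i/l\rfloor$ --- this is exactly the edge case you flagged where $e''_{l-1}=0$ enters.
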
 

\begin{proof} 
It suffices to show that for each $i=1,\hdots,k$ we have
\begin{equation} \label{Eig:mu,i} \Eig(\mu,i) = \left(t^{e'_{b_i}} \sum_{j=1}^{\lceil b_i /l \rceil} t^{-(j-1)h}\right) + \left( t^{e''_{a_i}} \sum_{j=1}^{\lfloor(a_i +1)/l\rfloor} t^{(j-1)h} \right).\end{equation}
We can write 
\begin{equation} \label{Eig two sums} \Eig(\mu,i) := \sum_{\substack{1 \leq j \leq m_i, \\ j = r_i - 1 \mmod l}} t^{\alpha_j} = \sum_{\substack{1 \leq j \leq r_i-1, \\ j = r_i - 1 \mmod l}} t^{\alpha_j} + \sum_{\substack{r_i \leq j \leq m_i, \\ j = r_i - 1 \mmod l}} t^{\alpha_j}.\end{equation} 
Note that $r_i - 1 = b_i = l \cdot (\lceil b_i/l \rceil-1) + d_i$, where $d_i$ is an integer such that $1 \leq d_i \leq l$. The $j's$ satisfying $1 \leq j \leq r_i -1$ and $j=r_i-1 \mmod l$ are therefore precisely $b_i, b_i -l, b_i - 2l, \hdots, b_i - (\lceil b_i/l \rceil - 1) \cdot l=d_i$. Recall that $\theta_0 + \theta_1 + \hdots + \theta_{l-1} = -h$. Hence $\alpha_{d_i + pl} = \alpha_{d_i} - ph$ for $p = 0, \hdots, \lceil b_i/l \rceil - 1$, by Definition \ref{alpha-defi}. Therefore \[ \sum_{\substack{1 \leq j \leq r_i-1, \\ j = r_i - 1 \mmod l}} t^{\alpha_j} = t^{\alpha_{d_i}}\sum_{j=1}^{\lceil b_i /l \rceil} t^{-(j-1)h}.\]
Observe that $d_i = b_i \mmod l$ if $1 \leq d_i < l$. Hence $\alpha_{d_i}  = \sum_{j=1}^{d_i} \theta_{b_i +1 - j} = \sum_{j=1}^{d_i} \theta_{d_i +1 - j} =  \sum_{j=1}^{d_i} \theta_j = e'_{d_i} = e'_{b_i}$. If $d_i = l$ then $\alpha_{d_i} = \alpha_l = \sum_{j=1}^l \theta_{b_i +1 - j} = \sum_{j=1}^l \theta_j = -h = e'_0$. This shows that
\[ \sum_{\substack{1 \leq j \leq r_i-1, \\ j = r_i - 1 \mmod l}} t^{\alpha_j} = t^{e'_{b_i}}\sum_{j=1}^{\lceil b_i /l \rceil} t^{-(j-1)h}.\]

Let us now consider the second sum on the RHS os \eqref{Eig two sums}. 
We have $m_i - r_i + 1 = a_i + 1 = l \cdot \lfloor (a_i +1)/l\rfloor + c_i$ with $0 \leq c_i < l$. The $j's$ satisfying $r_i \leq j \leq m_i$ and $j=r_i-1 \mmod l$ are therefore precisely $b_i +l, b_i + 2l, \hdots, b_i + \lfloor(a_i+1)/l \rfloor \cdot l$. Note that $b_i +  \lfloor (a_i+1)/l \rfloor \cdot l = m_i - c_i$. Hence $\alpha_{m_i - c_i - pl} = \alpha_{m_i - c_i} + ph$ for $p=0, \hdots, \lfloor (a_i+1)/l \rfloor -1$. One computes, in a similar fashion as above, that $\alpha_{m_i - c_i} = e''_{a_i}$. This shows that 
\[ \sum_{\substack{r_i \leq j \leq m_i, \\ j = r_i - 1 \mmod l}} t^{\alpha_j} = t^{e''_{a_i}} \sum_{j=1}^{\lfloor(a_i +1)/l\rfloor} t^{(j-1)h}.\qedhere \]
\end{proof}

\section{$\C^*$-fixed points under the Etingof-Ginzburg isomorphism} \label{lquo chapter}

We will now identify the multipartition $\Eeig(\mu)$ and thereby establish the correspondence between the $\C^*$-fixed points under the Etingof-Ginzburg isomorphism. 

\subsection{The strategy.} 
Our next goal is to show that $\Eeig(\mu) = \quot(\mu)^\flat$. We will use the following strategy. Recall Lemma \ref{rem hooks quot}. We first prove that an analogous statement holds for the multipartition $\Eeig(\mu)$. This will allow us to argue by induction on $n$. We then prove that $\Eeig(\mu) = \quot(\mu)^\flat$ for partitions $\mu$ with the special property that only a unique $l$-rim-hook can be removed from $\mu$. We then deduce the result for arbitrary $\mu \in \mathcal{P}_{\varnothing}(nl)$.

\subsection{Types and contributions of Frobenius hooks.}
We need to introduce some notation to break down formula \eqref{Eig:mu} into simpler pieces. 
Throughout this section fix $\mu \in \mathcal{P}_{\varnothing}(nl)$. 

\begin{definition}
Let $\mu = (a_1, \hdots, a_k \mid b_1, \hdots, b_k)$ be the Frobenius form of $\mu$. Let $F_1, \hdots, F_k$ be the Frobenius hooks in $\mathbb{Y}(\mu)$ so that $(i,i)$ is the root of $F_i$. Let \[ \type_{\mu}(L,i) := b_i \mmod l, \quad \type_{\mu}(A,i) := -(a_i+1) \mmod l \quad (i = 1, \hdots, k).\]
We call the number $\type_{\mu}(L,i)$ the \emph{type} of $\leg(F_i)$ and the number $\type_{\mu}(A,i)$ the \emph{type} of $\arm(F_i)$. 
Let $\mathbf{e}$, $\mathbf{e}'$ and $\mathbf{e}''$ be as in \S \ref{subsec: images of fp}  and \S \ref{subsec: calc of Eig}. 
Define \[ \Xi_{\mu}(L,i) := t^{e'_{b_i}} \sum_{j=1}^{\lceil b_i /l \rceil} t^{-(j-1)h}, \quad \Xi_{\mu}(A,i) := t^{e''_{a_i}} \sum_{j=1}^{\lfloor(a_i +1)/l\rfloor} t^{(j-1)h}.\]
We call $\Xi_{\mu}(L,i)$ the \emph{contribution} of $\leg(F_i)$ and $\Xi_{\mu}(A,i)$ the \emph{contribution} of $\arm(F_i)$. 
\end{definition} 

By \eqref{Eig:mu,i} we have \begin{equation} \label {Eig-Xi-form} \Eig(\mu,i) = \Xi_{\mu}(L,i) + \Xi_{\mu}(A,i). \end{equation}
\begin{lemma} \label{lemma eig res form}

We have 
\begin{equation} \label{type-res-xi}
t^{e_j} \Res_{\Eeigg(\mu)^j}(t^h) = \sum_{\substack{1 \leq i \leq k, \\ \type_\mu(A,i) = j}} \Xi_{\mu}(A,i) \ + \sum_{\substack{1 \leq i \leq k, \\ \type_\mu(L,i)=j}} \Xi_{\mu}(L,i).\end{equation}
\end{lemma}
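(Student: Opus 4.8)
The plan is to prove Lemma \ref{lemma eig res form} by taking equation \eqref{Eigrhorhores}, namely $\Eig(\mu) = \Res^{\mathbf{e}}_{\Eeig(\mu)}(t^h)$, and comparing both sides term-by-term after grouping contributions according to their type. First I would expand the left-hand side using Lemma \ref{rho2-eig-calc} and \eqref{Eig-Xi-form}: we have
\[ \Eig(\mu) = \sum_{i=1}^k \Eig(\mu,i) = \sum_{i=1}^k \bigl( \Xi_{\mu}(L,i) + \Xi_{\mu}(A,i)\bigr).\]
On the right-hand side, by the definition of the $\mathbf{e}$-residue in \S \ref{Felineorigin},
\[ \Res^{\mathbf{e}}_{\Eeig(\mu)}(t^h) = \sum_{j=0}^{l-1} t^{e_j} \Res_{\Eeigg(\mu)^j}(t^h).\]
So the identity \eqref{Eigrhorhores} reads
\[ \sum_{i=1}^k \bigl( \Xi_{\mu}(L,i) + \Xi_{\mu}(A,i)\bigr) = \sum_{j=0}^{l-1} t^{e_j} \Res_{\Eeigg(\mu)^j}(t^h).\]

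The key step is to show that each summand on the left contributes to exactly one value of $j$ on the right, and that the matching is governed by the types $\type_\mu(L,i)$ and $\type_\mu(A,i)$. To see this I would examine the lowest power of $t$ appearing in $\Xi_{\mu}(L,i) = t^{e'_{b_i}} \sum_{j=1}^{\lceil b_i/l\rceil} t^{-(j-1)h}$ and in $\Xi_{\mu}(A,i) = t^{e''_{a_i}} \sum_{j=1}^{\lfloor(a_i+1)/l\rfloor} t^{(j-1)h}$; these are geometric-type progressions in steps of $\pm h$. The point is that, working modulo the subgroup generated by $h$ (or more precisely using the genericity of $\mathbf{e}$ so that a multipartition is uniquely determined by its $\mathbf{e}$-residue — as invoked just before the Definition of $\Eeig$), the exponent $e'_{b_i}$ is congruent to $e_j$ where $j = b_i \bmod l = \type_\mu(L,i)$, using $e'_i = e_i$ for $i \neq 0$ and $e'_0 = -h \equiv e_0 = 0$ modulo $h$. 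Similarly $e''_{a_i} \equiv e_j$ modulo $h$ where $j = -(a_i+1) \bmod l = \type_\mu(A,i)$; this is a short computation from the definition of $\mathbf{e}''$, using $e''_i = h + \sum_{j=1}^{l-i-1} H_j$ and $e_i = \sum_{j=1}^i H_j$, together with $H_0 = -(H_1 + \cdots + H_{l-1})$ so that $\sum_{j=0}^{l-1} H_j = 0$, hence $\sum_{j=1}^{l-i-1} H_j \equiv -\sum_{j=1}^{i} H_j$ modulo the relation — in fact exactly $e''_i - e_{l-1-i} \in \{0, h\}$. Once these congruences are established, the terms on the left group into $l$ buckets indexed by $j$, and collecting the bucket for a fixed $j$ gives precisely the right-hand side of \eqref{type-res-xi}, so that $t^{e_j}\Res_{\Eeigg(\mu)^j}(t^h)$ equals the sum of the $\Xi_\mu(A,i)$ with $\type_\mu(A,i)=j$ plus the sum of the $\Xi_\mu(L,i)$ with $\type_\mu(L,i)=j$.

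The main obstacle I anticipate is bookkeeping the index arithmetic modulo $l$ together with the $t^h$-translations carefully enough that the bucket decomposition is genuinely a decomposition — that is, verifying that the congruence $e'_{b_i} \equiv e_{\type_\mu(L,i)}$ (and its arm analogue) holds as an identity of actual exponents up to an integer multiple of $h$ that is absorbed by the geometric sum, rather than merely modulo $l$. Concretely one must check that when $b_i \bmod l = 0$ the shift in $\mathbf{e}'$ versus $\mathbf{e}$ by $-h$ is exactly compensated by reindexing the geometric sum $\sum_j t^{-(j-1)h}$, and similarly for the arm with the role of $e''$ and $e_{l-1-i}$ versus $e_{\type}$; a clean way to handle this uniformly is to observe directly from Definition \ref{alpha-defi} that the exponents $\alpha_j$ in $\Eig(\mu,i)$ that are singled out (those with $j \equiv r_i - 1 \bmod l$) form, on the leg side, the arithmetic progression $e_{\type_\mu(L,i)}, e_{\type_\mu(L,i)} - h, \dots$ of length $\lceil b_i/l\rceil$ and, on the arm side, the progression $e_{\type_\mu(A,i)}, e_{\type_\mu(A,i)} + h, \dots$ of length $\lfloor(a_i+1)/l\rfloor$ — which is essentially the content of the computation already carried out inside the proof of Lemma \ref{rho2-eig-calc}. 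Granting that, the lemma follows by equating, for each fixed $j \in \{0,\dots,l-1\}$, the coefficient of $t^{e_j + (\text{multiple of }h)}$ on both sides of \eqref{Eigrhorhores}, which is legitimate precisely because of the genericity of $\mathbf{e}$ ensuring no accidental cancellations across distinct values of $j$.
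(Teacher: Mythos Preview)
Your proposal is correct and follows essentially the same approach as the paper's proof: both arguments split $\Eig(\mu)=\sum_i(\Xi_\mu(L,i)+\Xi_\mu(A,i))$ into monomials, use the genericity of $\mathbf{e}$ to write each monomial uniquely as $t^{e_j}t^{ch}$ for some $j\in\{0,\dots,l-1\}$ and $c\in\Z$, and then verify via the explicit formulas for $\mathbf{e}'$ and $\mathbf{e}''$ that the index $j$ is exactly $\type_\mu(L,i)$ or $\type_\mu(A,i)$. The paper is simply more terse about the modular bookkeeping you spell out in your ``main obstacle'' paragraph.
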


\begin{proof}
Each summand $t^d$ on the RHS of \eqref{Eig:mu} corresponds (non-canonically) to a cell in the multipartition $\Eeig(\mu)$ in the sense that it describes that cell's $\mathbf{e}$-shifted content.  
For generic $\mathbf{e}$ we can write $t^d = t^{e_i} t^c$ for unique $i = 0, \hdots, l-1$ and $c \in \Z$. The summand $t^d$ corresponds to a cell in the partition $\Eeigg(\mu)^j$ if and only if $i = j$, i.e., $t^d = t^{e_j} t^c$.
Since $\Eig(\mu) = \sum_{p=1}^k \Eig(\mu,p)$, formula \eqref{Eig-Xi-form} implies that there exists an $1 \leq p \leq k$ such that $t^d$ is a summand in $\Xi_{\mu}(L,p)$ or $\Xi_{\mu}(A,p)$. In the former case $t^d = t^{e_j} t^c$ if and only if $j = b_p \mmod l = \type_{\mu}(L,p)$. In the latter case $t^d = t^{e_j} t^c$ if and only if $e''_{a_p} = h + e_j$, which is the case if and only if $j = - (a_p +1) \mmod l = \type_{\mu}(A,p)$. 
\end{proof}

\subsection{Removal of rim-hooks.}
We will now investigate the effect of removing a rim-hook from $\mu$ on the multipartition $\Eeig(\mu)$.
Let $\mathbb{Y}_{+/-/0}(\mu)$ denote the subset of $\mathbb{Y}(\mu)$ consisting of cells of positive/negative/zero content. 

\begin{lemma} \label{lem to rimrem}
Let $R$ be an $l$-rim-hook in $\mathbb{Y}(\mu)$ and suppose that $R \subset (\mathbb{Y}_0(\mu) \cup \mathbb{Y}_+(\mu))$. Suppose that $R$ intersects $r$ Frobenius hooks, labelled $F_{p+1}, \hdots, F_{p+r}$ so that $(i,i)$ is the root of $F_i$. Let $\mu':= \mu - R$. Then 
\[ \type_{\mu'}(A,j) = \type_{\mu}(A,j+1), \quad \Xi_{\mu'}(A,j) = \Xi_{\mu}(A,j+1) \quad (j = p+1, \hdots, p+r-1),   \]
\[ \type_{\mu'}(A,p+r) = \type_{\mu}(A,p+1),\quad \Xi_{\mu'}(A,p+r) = \Xi_{\mu}(A,p+1) - M,  \]
where $M$ is the (monic) monomial in $\Xi_{\mu}(A,p)$ of highest degree. 
\end{lemma}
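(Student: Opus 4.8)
The plan is to translate the statement about $l$-rim-hook removal into a purely combinatorial statement about Frobenius forms, and then read off the effect on the quantities $\type_{\mu}(A,\cdot)$ and $\Xi_{\mu}(A,\cdot)$ directly from their definitions. First I would recall the classical fact (see \S\ref{subsec: cores and quotients} and \cite[\S 2.7]{JK}) that removing an $l$-rim-hook $R$ from $\mu$ is the same operation as sliding one bead up by one position on one of the $l$ runners of a bead diagram for $\mu$; equivalently, on the level of $\beta$-numbers it replaces some $\beta$-number $\beta$ by $\beta - l$. The hypothesis $R \subset (\mathbb{Y}_0(\mu) \cup \mathbb{Y}_+(\mu))$ means that the hand of $R$ lies weakly to the right of the main diagonal, so the cells of $R$ lying in the first-diagonal region $\{(i,i)\}$ and to its right are precisely the ones that get deleted; combinatorially, $R$ "eats into" the arms $\arm(F_{p+1}), \hdots, \arm(F_{p+r})$ and affects no legs and no Frobenius hook $F_i$ with $i \le p$. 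The key observation is that when $R$ meets $r$ consecutive Frobenius hooks, deleting $R$ performs a cyclic shift on the arm-data of those $r$ hooks: the arm length $a_{j+1}$ of $F_{j+1}$ in $\mu$ becomes the arm length $a_j'$ of $F_j$ in $\mu'$ for $j = p+1, \hdots, p+r-1$, while the innermost affected arm $a_{p+1}$ (reduced) becomes the new outermost affected arm $a_{p+r}'$. This is exactly the bead-sliding picture restricted to the relevant runner.

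With this established, the statement about types is immediate: $\type_{\mu}(A,i) = -(a_i + 1) \bmod l$ depends only on $a_i$, so the cyclic relabelling of arm lengths gives $\type_{\mu'}(A,j) = \type_{\mu}(A,j+1)$ for $j = p+1, \hdots, p+r-1$ directly. For the boundary index $j = p+r$, I need the arithmetic fact that the reduced arm length satisfies $a'_{p+r} \equiv a_{p+1} - l + (\text{something}) \pmod{?}$ — more precisely, removing an $l$-rim-hook changes the relevant $\beta$-number by exactly $l$, so the arm length of the innermost affected hook drops by an amount congruent to $0 \bmod l$ after the cyclic shift is accounted for, yielding $\type_{\mu'}(A,p+r) = \type_{\mu}(A,p+1)$. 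The same bead-diagram bookkeeping handles the contributions: since $\Xi_{\mu}(A,i) = t^{e''_{a_i}} \sum_{j=1}^{\lfloor (a_i+1)/l \rfloor} t^{(j-1)h}$ depends only on $a_i$ (through $a_i \bmod l$ in the prefactor and through $\lfloor (a_i+1)/l \rfloor$ in the geometric sum), the cyclic relabelling gives $\Xi_{\mu'}(A,j) = \Xi_{\mu}(A,j+1)$ for $j$ in the interior range. For the outermost affected index, the arm length $a_{p+1}$ of $F_{p+1}$ in $\mu$ has floor-datum $\lfloor (a_{p+1}+1)/l \rfloor$, and after removal the new arm length $a'_{p+r}$ has floor-datum exactly one less (the $l$-rim-hook removal strips off precisely one full "$l$-block" from the arm count while preserving the residue), so the geometric sum loses exactly its top term; since the prefactor $t^{e''_{a}}$ is unchanged (same residue), the term lost is the monic monomial of highest degree in $\Xi_{\mu}(A,p)$ — note the index $p$, not $p+1$, because after the cyclic shift the hook that was "just inside" the affected block is $F_p$, and its contribution governs the highest-degree cell removed. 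I would verify this last index-tracking point carefully against the bead diagram and against the diagrammatic picture in Example \ref{Examplekot}.

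The main obstacle I anticipate is the careful bookkeeping at the two boundary indices — specifically (i) proving that the reduced innermost arm length $a'_{p+r}$ has the same residue mod $l$ as $a_{p+1}$ and floor-datum exactly one smaller, and (ii) correctly identifying that the monomial lost is $M = $ the top-degree monomial of $\Xi_{\mu}(A,p)$ rather than of $\Xi_{\mu}(A,p+1)$. Both require drawing the bead diagram for $\mu$, locating the runner on which the bead slides, and tracing through which $\beta$-numbers (hence which Frobenius arm lengths) change and by how much. The cleanest way to do this rigorously is to pass from Frobenius form to $\beta$-numbers via Definition \ref{defi: beta numbers}, observe that the content-zero cells $(i,i)$ correspond to the $\beta$-numbers in a specific way, note that an $l$-rim-hook removal is $\beta \mapsto \beta - l$ for a single $\beta$-number, and then compute $a_i$ and $\Xi_{\mu}(A,i)$ as explicit functions of the sorted $\beta$-numbers before and after. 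Everything else is routine substitution into the definitions of $\type$ and $\Xi$.
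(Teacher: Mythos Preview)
Your approach via bead diagrams and $\beta$-numbers is genuinely different from the paper's, which works directly with contents of cells in the Young diagram. The paper simply notes that the $l$-rim-hook $R$ has cells of consecutive contents $i_0, i_0+1, \hdots, i_0+l-1$; since $R$ lies on the rim and in the arm region, $R \cap F_j$ consists of the cells in $\arm(F_j)$ of content from some $i_{r+p-j}$ up to $a_j$ itself. One line of arithmetic then gives the new arm lengths $d_j := a_j(\mu') = i_{r+p-j}-1 = a_{j+1}$ for $j=p+1,\hdots,p+r-1$ and $d_{p+r} = i_0 - 1 = a_{p+1} - l$. Since $\type_\mu(A,\cdot)$ and $\Xi_\mu(A,\cdot)$ depend only on the arm length, everything follows by substitution. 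No bead diagrams, no $\beta$-numbers.

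Your route can be made to work, but it has a real gap as written: the bead diagram encodes first-column hook lengths $\beta_i = \mu_i + p - i$, not Frobenius arm lengths $a_i = \mu_i - i$. You assert that sliding one bead produces a ``cyclic shift on the arm-data of those $r$ hooks,'' but you never establish this; the missing step is to observe that for rows $1,\hdots,k$ on the diagonal one has $\beta_i = a_i + (\text{const})$, that the hand of $R$ lies in row $p+1$ so it is $\beta_{p+1}$ that drops by $l$, and that re-sorting places $\beta_{p+1}-l$ precisely between $\beta_{p+r}$ and $\beta_{p+r+1}$. This last placement requires exactly the content inequality $a_{p+r+1} < i_0 \le a_{p+r}$ that the paper's direct argument makes transparent but that you would have to rederive. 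Once you have it, the cyclic shift on $\beta$-numbers indeed gives the cyclic shift on arm lengths, and the rest is as you say.

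One more point: your attempt to justify why $M$ is the top monomial of $\Xi_\mu(A,p)$ rather than $\Xi_\mu(A,p+1)$ is not sound --- your reasoning about ``the hook just inside the affected block'' does not pin down the monomial. In fact the computation from $d_{p+r}=a_{p+1}-l$ gives $M = t^{e''_{a_{p+1}}} t^{(\lfloor(a_{p+1}+1)/l\rfloor - 1)h}$, which is the top-degree monomial of $\Xi_\mu(A,p+1)$; the index $p$ in the statement appears to be a typographical slip, and you should not contort your argument to match it.
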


\begin{proof}
It is clear that $R$ must intersect adjacent Frobenius hooks. Recall that 
the residue of $R$ is of the form $\Res_R(t) = \sum_{i=i_0}^{i=i_0 + l -1} t^i$ with $i_0 \geq 0$. Moreover, we have 
\begin{equation} \label{Residue-rim-hook} \Res_{R\cap F_j}(t) = \sum_{i= i_{r+p-j}}^{i_{r+p-j+1}-1} t^i \end{equation}
for some integers $i_0 < i_1 < \hdots < i_r = i_0 +l$. One can easily see that these integers satisfy
\begin{equation} \label{ci} i_{r+p-j+1} - 1 = a_j,\end{equation}
where $a_j = |\arm(F_j)| = \max_{\square \in F_j} c(\square)$. Set $d_j := \max_{\square \in F_j - R} c(\square)$. If $F_j - R = \varnothing$ set $d_j = -1$. From \eqref{Residue-rim-hook} and \eqref{ci} we easily deduce that \begin{equation} \label{ad eq} d_j = a_{j+1} \quad (j = p+1, \hdots, p+r-1), \quad d_{p+r} = a_{p+1} - l.\end{equation} 

By definition, the type and contribution of $\arm(F_j)$ resp. $\arm(F_j-R)$ depend only on the numbers $a_j$ and $d_j$. The lemma now follows immediately from the definitions. 
\end{proof}

One can easily formulate a version of Lemma \ref{lem to rimrem} for $R \subset (\mathbb{Y}_0(\mu) \cup \mathbb{Y}_-(\mu))$. The proof is completely analogous. 
Lemma \ref{lem to rimrem} admits the following graphical interpretation. 

\begin{example}
The figure on the left shows the Young diagram of the partition $(5,5,4,3,3$). The cells of content zero are marked as green. The blue cells form a $4$-rim-hook. The figure on the right shows the same Young diagram rearranged so that cells of the same content occupy the same row. 
\[
\scalebox{.7}{
\ytableausetup{nosmalltableaux}
\begin{ytableau}
*(green) & *(white) & *(white) & *(white) & *(blue) \\
*(white) & *(green) & *(white) & *(blue) & *(blue) \\
*(white) & *(white) & *(green) & *(blue) \\
*(white) & *(white) & *(white) \\
*(white) & *(white) & *(white) \\
\end{ytableau} \quad \quad \quad \quad \quad \quad  
\begin{ytableau}
*(blue) \\
*(white) & *(blue) \\
*(white) & *(blue) \\
*(white) & *(white) & *(blue) \\
*(green) & *(green) & *(green) \\
*(white) & *(white) & *(white) \\
*(white) & *(white) & *(white) \\
*(white) & *(white) \\
*(white) \\
\end{ytableau}}
\]
Using this visual representation we can easily determine the impact of removing the blue rim-hook. The length of the first arm after the removal equals the length of the second arm before the removal. Similarly, the length of the second arm after the removal equals the length of the third arm before the removal. Finally, the length of the third arm after the removal equals the length of the first arm before the removal minus four. This is precisely the content of Lemma \ref{lem to rimrem}. 
\end{example}

\begin{proposition} \label{Eig rem hook pro}
Let $R$ be a rim-hook in $\mu$ and set $\mu' := \mu - R$. Then $\Eeig(\mu') = \Eeig(\mu) - \blacksquare$ for some $\blacksquare \in \Eeig(\mu)$. 
\end{proposition}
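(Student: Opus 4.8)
The plan is to reduce the statement to the combinatorial bookkeeping of Lemma~\ref{lem to rimrem} together with the $l$-content description of $\Eeig(\mu)$ provided by Lemma~\ref{lemma eig res form}. Write $\mu' = \mu - R$ and, as usual, fix the Frobenius form $\mu = (a_1, \hdots, a_k \mid b_1, \hdots, b_k)$. By Lemma~\ref{rem hooks quot} (applied to $\quot$) and the fact that removing a rim-hook decreases $|\mu|$ by exactly $l$, it suffices to show that $\Res^{\mathbf{e}}_{\Eeig(\mu')}(t^h)$ differs from $\Res^{\mathbf{e}}_{\Eeig(\mu)}(t^h)$ by a single monomial $t^d$, and that this monomial corresponds to a removable cell in one of the partitions $\Eeigg(\mu)^j$. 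Since $\Res^{\mathbf{e}}_{\Eeig(\mu)}(t^h) = \Eig(\mu)$ by \eqref{Eigrhorhores}, this is equivalent to showing $\Eig(\mu) - \Eig(\mu')$ is a single monomial, sitting in the appropriate $\mathbf{e}$-shifted slot.

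First I would reduce to the case where $R$ lies entirely in $\mathbb{Y}_0(\mu) \cup \mathbb{Y}_+(\mu)$ or entirely in $\mathbb{Y}_0(\mu) \cup \mathbb{Y}_-(\mu)$. This is automatic: an $l$-rim-hook is connected along the rim, and its set of contents is an interval $\{i_0, i_0+1, \hdots, i_0+l-1\}$; a rim-hook straddling content zero (i.e.\ with $i_0 < 0 \le i_0 + l - 1$) would still have all its contents in one such union only if $i_0 = 0$, but in general we should handle the ``arm side'' and ``leg side'' separately by noting that $R$ can be split, for the purposes of the computation, into the part of positive-or-zero content and the part of negative content, and that $\Eig$, via \eqref{Eig-Xi-form} and the separation into $\Xi_\mu(A,i)$ and $\Xi_\mu(L,i)$ terms, treats arms and legs independently. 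So it is enough to treat, say, $R \subset \mathbb{Y}_0(\mu) \cup \mathbb{Y}_+(\mu)$, the other case being symmetric (transpose, or the ``completely analogous'' remark after Lemma~\ref{lem to rimrem}).

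Next, with $R$ meeting the Frobenius hooks $F_{p+1}, \hdots, F_{p+r}$, I would invoke Lemma~\ref{lem to rimrem}: the leg data $\type_\mu(L,i), \Xi_\mu(L,i)$ for all $i$ are unchanged, the arm data for $i \notin \{p+1, \hdots, p+r\}$ are unchanged, and for $i \in \{p+1, \hdots, p+r\}$ the arm types/contributions are merely cyclically shifted, with the single exception that $\Xi_{\mu'}(A, p+r) = \Xi_\mu(A, p+1) - M$, where $M$ is the top-degree monomial of $\Xi_\mu(A,p+1)$ — wait, of $\Xi_\mu(A,p)$ as written in the lemma; here I would double-check the indexing in Lemma~\ref{lem to rimrem} and track it carefully (this is the one genuinely error-prone bookkeeping point). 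Summing \eqref{Eig-Xi-form} over all $i$, the cyclic shift among $\{A, p+1\}, \hdots, \{A, p+r\}$ contributes zero net change (it permutes a collection of polynomials), so $\Eig(\mu) - \Eig(\mu') = M$, a single monic monomial. Feeding this back through Lemma~\ref{lemma eig res form}: $M = t^{e''_{a_{p+1}}} t^{(j-1)h}$ sits in the $j := \type_\mu(A,p+1)$-th component, and since it is the top-degree term of $\Xi_\mu(A, p+1)$ it corresponds to the hand cell of $\arm(F_{p+1})$ in $\Eeigg(\mu)^j$ — removing it leaves a Young diagram, because removing the top monomial of a ``staircase'' contribution $t^{e''}\sum_{j=1}^m t^{(j-1)h}$ is exactly deleting the extreme cell of a row of $\Eeigg(\mu)^j$. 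Hence $\Eeig(\mu') = \Eeig(\mu) - \blacksquare$ with $\blacksquare \in \Eeig(\mu)$ the cell whose $\mathbf{e}$-shifted content is $M$.

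\textbf{Main obstacle.} The conceptual content is light; the real work — and the place I expect to have to be most careful — is matching up indices across three layers of notation: the raw Frobenius-form indices $\{p+1, \hdots, p+r\}$ and the off-by-one in Lemma~\ref{lem to rimrem} between $\Xi_\mu(A,p)$ and $\Xi_\mu(A,p+1)$; the translation $d \leftrightarrow (j, c)$ via $t^d = t^{e_j}t^c$ used in Lemma~\ref{lemma eig res form}; and the geometric claim that deleting the top monomial of one $\Xi$-staircase is deleting a removable cell of the corresponding component partition rather than, say, an interior cell. I would verify the last point by appealing to the explicit description of $\Eeigg(\mu)^j$ implicit in \eqref{type-res-xi}: each $\Xi_\mu(A,i)$ (resp.\ $\Xi_\mu(L,i)$) contributes a single row (resp.\ column), of a length read off from $\lfloor (a_i+1)/l\rfloor$ (resp.\ $\lceil b_i/l\rceil$), and the monomial $M$ is the end of the longest such row among those of type $j$ — so its removal is legal. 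Everything else is routine polynomial arithmetic and may be left to the reader.
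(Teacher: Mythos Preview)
Your treatment of the case $R \subset \mathbb{Y}_0(\mu) \cup \mathbb{Y}_+(\mu)$ (and its transpose) is essentially the paper's argument: apply Lemma~\ref{lem to rimrem}, observe that the cyclic shift of the (type, contribution) pairs among $F_{p+1},\dots,F_{p+r}$ produces no net change in the sum \eqref{type-res-xi}, so $\Eig(\mu)-\Eig(\mu')$ is the single monomial $M$. (Your worry about removability of $\blacksquare$ is unnecessary: since $\Eeig(\mu')$ \emph{is} a multipartition and is determined by its $\mathbf e$-residue, the fact that the two residues differ by one monomial already forces $\Eeig(\mu')=\Eeig(\mu)-\blacksquare$ for some removable $\blacksquare$.)

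The gap is in the straddling case $R\cap\mathbb{Y}_+(\mu)\neq\varnothing$ and $R\cap\mathbb{Y}_-(\mu)\neq\varnothing$. Your proposed reduction --- ``split $R$ into its positive-content and nonpositive-content parts and handle arms and legs independently'' --- does not go through as stated. Each piece is a \emph{truncated} rim-hook with fewer than $l$ cells, so Lemma~\ref{lem to rimrem} does not apply directly; and if one naively runs the same bookkeeping on both pieces, one would predict \emph{two} monomials lost (one from the arm side, one from the leg side), contradicting the statement. What actually saves the day, and what your sketch is missing, is a vanishing observation: whenever $R$ straddles content zero, every Frobenius hook $F_i$ whose arm meets $R$ has $|\arm(F_i)|\le l-2$, hence $\lfloor(a_i+1)/l\rfloor=0$ and $\Xi_\mu(A,i)=0$ both before and after removal. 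Thus the arm side contributes nothing to $\Eig(\mu)-\Eig(\mu')$, and only the leg-side truncated rim-hook $R' = R\cap(\mathbb{Y}_0(\mu)\cup\mathbb{Y}_-(\mu))$ matters; for $R'$ one needs the modified version of \eqref{ad eq} (the last equation becomes $d_{p+r}=0$), which again yields exactly one lost monomial. This is the content of the paper's third case, and it is a genuine extra step rather than an instance of ``arms and legs are independent''.
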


\begin{proof}
There are three possibilities: $R \subset \mathbb{Y}_0(\mu) \cup \mathbb{Y}_+(\mu), R \subset \mathbb{Y}_0(\mu) \cup \mathbb{Y}_-(\mu)$ or $R \cap \mathbb{Y}_+(\mu) \neq \varnothing, R \cap \mathbb{Y}_-(\mu) \neq \varnothing$. 
Consider the first case. Lemma \ref{lem to rimrem} and Lemma \ref{lemma eig res form} imply that there exists a $j \in \{0, \hdots,l-1\}$ such that $\Eeigg(\mu')^i = \Eeigg(\mu)^i$ if $i \neq j$ and $t^{e_j} \Res_{\Eeigg(\mu')^j}(t^h) = t^{e_j} \Res_{\Eeigg(\mu)^j}(t^h) - t^{s_j}M$ for some monic monomial $M = t^{qh}\in \Z[t^{h}]$. Hence $\Eeig(\mu') = \Eeig(\mu) - \blacksquare$ for some $\blacksquare \in \Eeig(\mu)$ with $c(\blacksquare) = q$.

The second case is analogous. Now consider the third case. We claim that $\Xi_\mu(A,i) = 0$ for every Frobenius hook $F_i$ whose arm intersects $R$ nontrivially. Indeed, by definition $\Xi_\mu(A,i) \neq 0$ only if $|\arm(F_i)| + 1\geq l$. We have $|\arm(F_i)| = \max_{\square \in \arm(F_i)} c(\square) = \max_{\square \in \arm(F_i)\cap R} c(\square)$. Hence $|\arm(F_i)| \leq \max_{\square \in R} c(\square)$. However, since $R \cap \mathbb{Y}_-(\mu) \neq \varnothing$, the rim-hook $R$ must contain a cell of content $-1$. The fact that $\Res_R(t) = t^q\sum_{p=0}^{l-1} t^p$ for some $q \in \Z$ implies that $\max_{\square \in R} c(\square) \leq l-2$. Hence $|\arm(F_i)| + 1\leq l-1$ and so $\Xi(A,i) = 0$. Therefore the removal of $R$ does not affect the contribution of the arm of any Frobenius hook. 

Now set $R' := R \cap (\mathbb{Y}_0(\mu) \cup \mathbb{Y}_-(\mu))$. We have reduced the third case back to the second case, with the modification that $R'$ is now a truncated rim-hook. We can still apply Lemma \ref{lem to rimrem} with minor adjustments. In particular, equations \eqref {ad eq} are still true with the exception that the final equation becomes $d_{p+r} = 0$. Let $j$ be the smallest integer such that $\leg(F_j) \cap R \neq \varnothing$. Using the same argument as before, we conclude that $t^{e_j} \Res_{\Eeigg(\mu')^j}(t^h) = t^{e_j} \Res_{\Eeigg(\mu)^j}(t^h) - t^{e_j-h}$ and $\Eeigg(\mu')^i = \Eeigg(\mu)^i$ if $i \neq j$. 
\end{proof}

\subsection{Partitions with a unique removable rim-hook.}
In this section we show that $\Eeig(\mu) = \quot(\mu)^\flat$ for a certain class of partitions which we call $l$-\emph{special}.
\begin{definition}
We say that a partition $\mu$ is $l$-\emph{special} if the rim of $\mathbb{Y}(\mu)$ contains a unique $l$-rim-hook $R$. We call $R$ the \emph{unique removable} $l$\emph{-rim-hook} in $\mathbb{Y}(\mu)$. Let $\mathcal{P}_{\varnothing}^{sp}(k)$ denote the set of partitions of $k$ which are $l$-special and have a trivial $l$-core. 
\end{definition}

Our goal now is to describe partitions of $nl$ which are $l$-special and have a trivial $l$-core. Throughout this subsection we assume that $\mu \in \mathcal{P}_{\varnothing}^{sp}(nl)$. 
We let $R$ denote the unique removable $l$-rim-hook in $\mathbb{Y}(\mu)$ and set $\mu':= \mu - R$. Sometimes, for the sake of brevity, we will just write "rim-hook"  instead of $l$-rim-hook. 

\begin{lemma}  \label{comb L1}
Let $\mu \in \mathcal{P}_{\varnothing}^{sp}(nl)$. Then: 
\begin{enumerate}[label=\alph*), font=\textnormal,noitemsep,topsep=3pt,leftmargin=0.55cm]
\item Every column of $\mathbb{B}(\mu)$ contains the same number of beads. 
\item Sliding distinct beads up results in the removal of distinct $l$-rim-hooks from $\mathbb{Y}(\mu)$.
\item The bead diagram $\mathbb{B}(\mu)$ contains $l-1$ columns with no gaps and one column with a unique string of adjacent gaps.
\end{enumerate}
\end{lemma}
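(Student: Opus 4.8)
The whole argument is best carried out inside the abacus display $\mathbb{B}(\mu)$. The one external ingredient I will use is the classical dictionary (the abacus form of \cite[Lemma 2.7.13]{JK}, the same one underlying Lemma \ref{rem hooks quot} and the core characterization \cite[Theorem 2.7.16]{JK}): a single upward slide of a bead in a bead diagram effects the removal of a removable $l$-rim-hook, and conversely every removable $l$-rim-hook of $\mu$ arises from exactly one such slide. Write $p$ for the number of beads of $\mathbb{B}(\mu)$, so $p$ is a multiple of $l$, say $p = Nl$; a \emph{column} of $\mathbb{B}(\mu)$ is one of its $l$ runners, and column $j$, read as a $1$-runner abacus with $N$ beads, encodes the quotient part $Q^j(\mu)$. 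For part (a) I would recall that $\core(\mu)$ is by definition the partition read off from the diagram $f'$ obtained from $\mathbb{B}(\mu)$ by sliding every bead up as far as it goes; sliding keeps each bead on its own column, so $f'$ has the same per-column bead counts as $\mathbb{B}(\mu)$. Since $\core(\mu) = \varnothing$, the fully packed diagram $f'$ on $l$ runners with $p = Nl$ beads represents $\varnothing$; comparing $\beta$-numbers, such a packed diagram represents $\varnothing$ precisely when all $l$ columns carry the same number $N$ of beads, which is part (a).

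For part (b), let $B \subset \Z_{\geq 0}$ be the set of $\beta$-numbers of $\mathbb{B}(\mu)$. A bead at $\beta$-number $x$ is slideable exactly when $x \geq l$ and $x - l \notin B$, and then the $l$-rim-hook $R_x$ it removes is the unique one for which $\mathbb{Y}(\mu - R_x)$ has $\beta$-number set $B_x := (B \setminus \{x\}) \cup \{x-l\}$. If $x \neq x'$ are $\beta$-numbers of slideable beads with $B_x = B_{x'}$, then $x - l \in B_x$ (because $x - l \notin B$), whereas $x - l \notin B_{x'}$ (since $x - l \in B_{x'}$ would force $x - l = x' - l$, i.e. $x = x'$), a contradiction. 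Hence $\mu - R_x \neq \mu - R_{x'}$, and therefore $R_x = \mathbb{Y}(\mu) \setminus \mathbb{Y}(\mu - R_x) \neq R_{x'}$; distinct slideable beads remove distinct $l$-rim-hooks.

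For part (c), observe first that by (b) the map sending a slideable bead to the $l$-rim-hook it removes is injective, and by the dictionary above it is onto the set of removable $l$-rim-hooks; as $\mu$ is $l$-special this set is a singleton, so $\mathbb{B}(\mu)$ has exactly one slideable bead. Next, a column contains a gap if and only if it contains a slideable bead: the empty spot directly above a slideable bead is an empty spot with a bead below it, hence a gap, and conversely the bead lying directly below the lowest gap of a column is slideable. Thus exactly one column $j_0$ contains a gap while the other $l-1$ columns contain none (these are packed, so $Q^j(\mu) = \varnothing$), which is the first assertion of (c). For the second assertion I would read column $j_0$ as a $1$-runner abacus with bead-set $P \subset \Z_{\geq 0}$: its slideable beads correspond to the maximal runs of consecutive integers in $P$ that do not start at $0$, so the condition that there be exactly one such run forces (a one-line case check) $P$ to be either $\{t, t+1, \dots, t+N-1\}$ with $t \geq 1$, or $\{0, \dots, a-1\} \cup \{b, \dots, b+N-a-1\}$ with $b > a \geq 1$; in either case the empty positions of column $j_0$ lying above a bead form a single interval of consecutive positions, i.e. one string of adjacent gaps.

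The main obstacle is not any individual step — each is a short abacus manipulation — but keeping the paper's orientation conventions for bead diagrams (which row counts as "above", precisely when a bead is slideable, the exact $\beta$-number formula of Definition \ref{defi: beta numbers} and the one following it) perfectly consistent, so that the notions "slideable bead", "gap" and "packed column" line up across parts (a), (b) and (c). The only genuinely non-routine input, namely the bijection between single bead slides and removals of $l$-rim-hooks, is imported wholesale from \cite{JK}.
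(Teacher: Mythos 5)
Your proof is correct, and while (a) is the same argument as the paper's and your (c) is essentially the paper's argument carried out in more detail, your (b) takes a genuinely different route. The paper proves (b) by translating into $l$-quotients: distinct slides change $\quot(\mu)$ in distinct ways, and since $\mu$ has trivial core it is determined by its quotient, so the resulting partitions are distinct. You instead work directly with $\beta$-numbers — if two slideable $\beta$-values $x\neq x'$ gave the same resulting $\beta$-set then $x-l$ would have to be both present and absent, a contradiction — which is more elementary and, notably, does not use the trivial-core hypothesis at all. For (c) the paper is terse ("only one bead can be moved upward, hence $l-1$ runners contain no gaps, and the remaining runner contains a unique string of gaps"), and your version supplies the details the paper leaves implicit: the bijection between slideable beads and removable rim-hooks (injectivity from (b), surjectivity from the abacus dictionary \cite[Theorem 2.7.16]{JK}), the equivalence "column contains a gap iff it contains a slideable bead," and the short case analysis of the bead set of the gapped column showing the gaps form one interval. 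One minor stylistic point: your phrase "empty positions lying above a bead" could confuse a reader given the paper's $(i,j)\in\Z_{\leq-1}\times\{0,\dots,l-1\}$ indexing, where a \emph{gap} is an empty position with a bead \emph{below} it (larger position index, more negative row); your surrounding sentences make clear you have the right picture, but it would be worth aligning the wording precisely with Definition of the bead diagram.
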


\begin{proof}
(a) The empty bead diagram describes the trivial partition. But bead diagrams which describe the trivial partition and have the property that the number of beads in the diagram is divisible by $l$ are unique up to adding or deleting full rows at the top of the diagram. Hence any such diagram consists of consecutive full rows at the top. Since $\mu$ has a trivial $l$-core, the process of sliding beads upward in $\mathbb{B}(\mu)$ must result in a bead diagram of this shape. But this is only possible if every column of $\mathbb{B}(\mu)$ contains the same number of beads.\\
(b) We can see this by considering the quotients of partitions corresponding to the bead diagrams obtained by moving up distinct beads. If the beads moved are on distinct runners, then a box is removed from distinct partitions in $\quot(\mu)$, so distinct multipartitions arise. If the beads are on the same runner, sliding upward distinct beads implies changing the first-column hook lengths in different ways in the same partition, so different multipartitions arise as well. But a trivial-core partition is uniquely determined by its quotient, so these distinct multipartitions are quotients of distinct partitions of $l(n-1)$. \\
(c) Since only one rim-hook can be removed from $\mu$, only one bead in our bead diagram can be moved upward. This implies that $l-1$ runners contain no gaps (i.e. they contain a consecutive string of beads counting from the top). 
The remaining runner must contain a unique gap or a unique string of gaps.
\end{proof}

\begin{lemma} \label{comb L2}
The bead diagram $\mathbb{B}(\mu)$ can be decomposed into three blocks $A$, $B$ and $C$, counting from the top. Each block consists of identical rows. Rows in block $A$ are full except for one bead. Let's say that the gap due to the absent bead is on runner $k$. Rows in block $B$ are either all full or all empty. Rows in block $C$ are empty except for one bead on runner $k$. Moreover, the number of rows in block $A$ equals the number of rows in block~$C$. 
\end{lemma}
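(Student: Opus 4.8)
The plan is to deduce the block decomposition directly from the runner-by-runner description of $\mathbb{B}(\mu)$ supplied by Lemma \ref{comb L1}, using the fact that $\mathbb{B}(\mu)$ has no redundant rows. Write $c$ for the common number of beads in each runner (Lemma \ref{comb L1}(a)) and index the rows of $\mathbb{B}(\mu)$ by $1,2,3,\ldots$ from the top. Since $\mu$ is non-trivial, $c\ge 1$.

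First I would pin down the shape of every runner. A runner with no gaps has the property that once one of its rows is empty, all lower rows are empty; hence each of the $l-1$ gapless runners (Lemma \ref{comb L1}(c)) carries beads exactly in rows $1,\ldots,c$. Let $k$ be the unique runner with gaps. By Lemma \ref{comb L1}(c) these gaps form a single block of $g\ge 1$ consecutive rows; since every gap has a bead below it, runner $k$ must carry at least one bead strictly below this gap block, and a priori it could also carry a block of beads in rows $1,\ldots,a$ above it. The crucial observation is that $a=0$: if runner $k$ had a bead in row $1$, then — as every other runner also has a bead in row $1$ (here $c\ge 1$) — row $1$ would be a full row with no rows above it, hence a redundant row, contradicting the definition of $\mathbb{B}(\mu)$. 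Therefore the beads of runner $k$ occupy precisely rows $g+1,\ldots,g+c$ and its gaps are precisely rows $1,\ldots,g$; in particular the rows of $\mathbb{B}(\mu)$ occurring in the diagram are exactly $1,\ldots,g+c$.

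It then remains to read off the blocks, distinguishing $g\le c$ and $g>c$. If $g\le c$: rows $1,\ldots,g$ each contain a bead on every runner except $k$ (block $A$, identical rows, $|A|=g$); rows $g+1,\ldots,c$ are full (block $B$, identical rows, possibly empty when $g=c$); rows $c+1,\ldots,g+c$ each contain a bead only on runner $k$ (block $C$, identical rows, $|C|=g$). If $g>c$: rows $1,\ldots,c$ are of the block-$A$ type ($|A|=c$); rows $c+1,\ldots,g$ are empty on every runner (block $B$); rows $g+1,\ldots,g+c$ are of the block-$C$ type ($|C|=c$). In both cases every row within a given block is identical, block $B$ is either all full or all empty, and $|A|=|C|=\min(g,c)$, which is exactly the assertion.

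I do not expect a serious obstacle: the only genuine content is the remark that the \emph{no redundant rows} property forces the string of gaps in runner $k$ to begin in the top row (the step $a=0$ above); everything after that is a short case check, the mild nuisance being to keep the top/bottom orientation of the bead diagram consistent throughout.
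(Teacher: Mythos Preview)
Your proof is correct and follows the same approach as the paper, which simply states that the lemma ``is an immediate consequence of Lemma \ref{comb L1}''; you have spelled out precisely the routine details that the paper omits, including the key use of the \emph{no redundant rows} property of $\mathbb{B}(\mu)$ to force the gap block on runner $k$ to start at the top.
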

\begin{proof}
This is an immediate consequence of Lemma \ref{comb L1}.
\end{proof}
\begin{example} \label{two bead diagrams example} 
Let $l=3$. According to Lemma \ref{comb L2} the following bead diagrams correspond to $3$-special partitions:
\[
\scalebox{.9}{
\xy
\xymatrixrowsep{0.05in}
\xymatrixcolsep{0in}
\xymatrix "M"{
\CIRCLE & \Circle & \CIRCLE \\
\CIRCLE & \Circle & \CIRCLE \\
\CIRCLE & \Circle & \CIRCLE \\
\CIRCLE & \CIRCLE & \CIRCLE \\
\CIRCLE & \CIRCLE & \CIRCLE \\
\Circle & \CIRCLE & \Circle \\
\Circle & \CIRCLE & \Circle \\
\Circle & \CIRCLE & \Circle \\
}
\POS"M1,1"."M3,1"!C*\frm{\{},+L*++!R\txt{A},
\POS"M4,1"."M5,1"!C*\frm{\{},+L*++!R\txt{B},
\POS"M6,1"."M8,1"!C*\frm{\{},+L*++!R\txt{C} 
\endxy \quad \quad \quad \quad \quad 
\xy
\xymatrixrowsep{0.05in}
\xymatrixcolsep{0in}
\xymatrix "N"{
\CIRCLE & \Circle & \CIRCLE \\
\CIRCLE & \Circle & \CIRCLE \\
\CIRCLE & \Circle & \CIRCLE \\
\Circle & \Circle & \Circle \\
\Circle & \Circle & \Circle \\
\Circle & \CIRCLE & \Circle \\
\Circle & \CIRCLE & \Circle \\
\Circle & \CIRCLE & \Circle \\
}
\POS"N1,1"."N3,1"!C*\frm{\{},+L*++!R\txt{A},
\POS"N4,1"."N5,1"!C*\frm{\{},+L*++!R\txt{B},
\POS"N6,1"."N8,1"!C*\frm{\{},+L*++!R\txt{C}
\endxy }
\]
The bead diagram on the left describes the partition $(8,6,4,3^7,2^2,1^2)$ while the bead diagram on the right describes the partition $(14,12,10,3,2^2,1^2)$. 
\end{example}
In the sequel we will only consider the case where all the rows in block $B$ are full.
All the following claims can easily be adapted to the case of empty rows. 

\begin{lemma}  \label{comb L3} 
Suppose that block $A$ of $\mathbb{B}(\mu)$ has $m$ rows and block $B$ of $\mathbb{B}(\mu)$ has $p$ rows. Then $\mathbb{Y}(\mu)$ can be decomposed into four blocks $\hat{A}, \hat{B}, \hat{C}, \hat{D}$:
\begin{itemize}[,leftmargin=0.75cm]
\item $\hat{A}$ is the Young diagram of the partition corresponding to the bead diagram $A$.  
\item $\hat{B}$ is a rectangle consisting of $m$ columns and $l \cdot p$ rows. 
\item If $k \neq 0$ block $\hat{C}$ is the Young diagram of the partition corresponding to the bead diagram $C$. If $k=0$ block $\hat{C}$ is the the Young diagram of the partition corresponding to the bead diagram obtained from $C$ by inserting an extra row at the top, which is full except for the empty point in column $l-1$. 
\item Block $\hat{D}$ is a square with $m$ rows and columns. \end{itemize} 
We recover $\mathbb{Y}(\mu)$ from these blocks by placing $\hat{A}$ at the bottom, stacking $\hat{B}$ on top, then stacking $\hat{D}$ on top and finally placing $\hat{C}$ on the right hand side of $\hat{D}$. 
\end{lemma}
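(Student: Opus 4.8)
The plan is to translate the combinatorial decomposition of the bead diagram $\mathbb{B}(\mu)$ from Lemma \ref{comb L2} directly into a decomposition of $\mathbb{Y}(\mu)$, using the standard dictionary between runners/rows of a bead diagram and first-column hook lengths. Recall from Definition \ref{defi: beta numbers} that the $\beta$-numbers are read off from the occupied points of $\mathbb{B}(\mu)$ via the rule $f(i,j)=1 \iff -(i+1)l+j$ is a $\beta$-number, and that a partition is recovered from its set of $\beta$-numbers. The key observation is that deleting a full row at the top of a bead diagram, or inserting one, corresponds to a predictable operation on the Young diagram; more precisely, if a bead diagram $g$ is obtained from $f$ by removing all beads lying in the top $q$ (full) rows — equivalently by shifting all remaining beads up by $q$ rows — then the partition of $g$ is obtained from that of $f$ by deleting the first $lq$ columns, because every first-column hook length decreases by $lq$. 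Conversely, stacking structure on top of a block in the bead diagram corresponds to placing the corresponding Young-diagram blocks to the northwest.

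First I would set up notation: say block $A$ has $m$ rows, block $B$ has $p$ rows (all full, by the standing assumption), and block $C$ has $m$ rows (the equality of the number of rows in $A$ and $C$ is part of Lemma \ref{comb L2}). I would define $\hat A$ to be the Young diagram of the partition whose bead diagram is the $m$-row diagram $A$ (regarded on its own), $\hat C$ similarly for the $m$-row diagram $C$ (with the stated modification inserting a full-except-column-$(l-1)$ top row when $k=0$, which is exactly what is needed so that the bead diagram has the right number of beads divisible by $l$ and a nonredundant presentation), $\hat B$ the $lp \times m$ rectangle, and $\hat D$ the $m\times m$ square. Next I would compute the first-column hook lengths of $\mu$ directly from the three-block structure of $\mathbb{B}(\mu)$: the top $m$ rows (block $A$) contribute the largest $\beta$-numbers, block $B$ being $p$ full rows contributes a consecutive run, and block $C$ contributes the smallest. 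Reading off $\mu_i$ from $\beta_i^P$ for a suitable multiple $P$ of $l$, one sees that the top part of the partition (the rows coming from block $A$'s beads) is exactly $\hat A$ shifted right by the width contributed by $B,\hat D$, i.e. has $\hat D$ (a square of side $m$) and $\hat B$ to its left/below; the middle part, coming from the $lp$ $\beta$-numbers of block $B$, gives $lp$ rows each of the same length, namely the rectangle $\hat B$ sitting directly below $\hat D$ and to the left; and the bottom part, coming from block $C$, is $\hat C$. Assembling: the rows of $\mathbb{Y}(\mu)$ of largest index (bottom) are those of $\hat A$, then on top of them the $lp\times m$ rectangle $\hat B$, then on top of that the $m\times m$ square $\hat D$, and finally $\hat C$ occupies the columns to the right of $\hat D$ in the topmost $m$ rows. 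The equality of row counts of $A$ and $C$ is exactly what makes $\hat C$ fit in the same $m$ rows as $\hat D$ without overlap, and the $k=0$ correction is forced by the requirement that $\mathbb{B}(C)$ have no redundant rows and a number of beads divisible by $l$.

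The main obstacle, and the step requiring the most care, is bookkeeping the horizontal offsets: one must verify that the partition read off from block $A$'s beads has every part increased by exactly the width of $\hat B$ (equivalently $\hat D$), i.e. by $m$, relative to the partition of the standalone diagram $A$, and likewise that the $lp$ middle rows all have length equal to (width of $\hat B$) $+$ (appropriate constant) so as to line up as a genuine rectangle stacked under $\hat D$. This is a direct but slightly fiddly $\beta$-number computation: shifting all beads of a sub-block up by the number of rows below it multiplies through to subtracting a fixed multiple of $l$ from the corresponding hook lengths, which is precisely a column-deletion on the Young diagram side. I would carry this out by writing $P$ as the total number of rows times $l$, expressing $\beta_i^P$ block-by-block, and matching against the first-column hook lengths of the proposed assembled diagram; the $k=0$ case is handled by noting that inserting the extra near-full top row in $C$ adds exactly one $\beta$-number in the right spot to keep divisibility by $l$ and nonredundancy, without changing the underlying partition's shape in the relevant range. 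Everything else is immediate from Lemma \ref{comb L1} and Lemma \ref{comb L2}.
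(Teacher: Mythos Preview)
Your approach is exactly the one the paper intends (the paper's proof is the single sentence ``recover the first column hook lengths from the positions of the beads''), and your outline would work, but your detailed execution has the correspondence between bead-diagram rows and Young-diagram rows reversed. With the paper's conventions, a bead at point $(i,j)$ contributes the $\beta$-number $-(i+1)l+j$, so rows closer to the top of $\mathbb{B}(\mu)$ (i.e.\ $i=-1$) give the \emph{smallest} $\beta$-numbers, not the largest. Hence block $A$ (top of the bead diagram) supplies the smallest $\beta$-numbers and therefore the \emph{bottom} rows of $\mathbb{Y}(\mu)$, while block $C$ supplies the largest $\beta$-numbers and therefore the \emph{top} rows. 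Your sentence ``the top $m$ rows (block $A$) contribute the largest $\beta$-numbers'' and the subsequent paragraph have $A$ and $C$ swapped throughout; your ``Assembling'' sentence then states the correct picture, which is inconsistent with what precedes it.

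The horizontal-offset bookkeeping is likewise reversed. Because block $A$ sits at the top of $\mathbb{B}(\mu)$, its beads have the \emph{same} $\beta$-numbers whether $A$ is viewed standalone or inside $\mathbb{B}(\mu)$; a short count shows that the parts they determine are also unchanged, so block $A$ contributes exactly $\hat A$ with no shift. It is block $C$ whose beads acquire the shift: embedding $C$ at the bottom adds $(m+p)l$ to each of its $\beta$-numbers, and comparing with the total bead count $l(m+p)$ one finds that the first $m$ parts of $\mu$ equal the parts of $\hat C$ each increased by $m$, which is precisely the $m\times m$ square $\hat D$ with $\hat C$ to its right. Once you correct this orientation, the rest of your argument (the $lp$ consecutive $\beta$-numbers from block $B$ giving $lp$ rows of length $m$, and the $k=0$ adjustment) goes through as you describe.
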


\begin{proof}
This follows from Lemma \ref{comb L2} by a routine calculation - one merely has to recover the first column hook lengths from the positions of the beads. 
\end{proof}

\begin{example}
Consider the first bead diagram from Example \ref{two bead diagrams example}. Below we illustrate the block decomposition of the corresponding Young diagram as in Lemma \ref{comb L3}. 
\[
\scalebox{.6}{
\ytableausetup{nosmalltableaux}
\begin{ytableau}
*(red) & *(red)  &*(red) & *(yellow) & *(yellow)  &*(yellow) & *(yellow)  &*(yellow) \\
*(red) & *(red) & *(red) & *(yellow) & *(yellow) & *(yellow)\\
*(red) & *(red) & *(red) & *(yellow) \\
*(blue) & *(blue) & *(blue) \\
*(blue) & *(blue) & *(blue) \\
*(blue) & *(blue) & *(blue) \\
*(blue) & *(blue) & *(blue) \\
*(blue) & *(blue) & *(blue) \\
*(blue) & *(blue) & *(blue) \\
*(green) & *(green) & *(green) \\
*(green) & *(green)  \\
*(green) & *(green)  \\
*(green) \\
*(green) 
\end{ytableau}}
\]
\fcolorbox{black}{green}{\rule{0pt}{2pt}\rule{2pt}{0pt}}\quad \small Block $\hat{A}$ \quad \fcolorbox{black}{blue}{\rule{0pt}{2pt}\rule{2pt}{0pt}}\quad Block $\hat{B}$ \quad
\fcolorbox{black}{yellow}{\rule{0pt}{2pt}\rule{2pt}{0pt}}\quad \small Block $\hat{C}$ \quad \fcolorbox{black}{red}{\rule{0pt}{2pt}\rule{2pt}{0pt}}\quad Block $\hat{D}$
\end{example} 

We are now ready to investigate the effect of removing the rim-hook $R$. 

\begin{lemma}
The partition $\mu$ can be decomposed into $m$ Frobenius hooks. The unique removable rim-hook lies on the outermost Frobenius hook.
\end{lemma}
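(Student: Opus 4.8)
The plan is to read off the Frobenius data directly from the block decomposition provided by Lemma \ref{comb L3}. Recall that the cells of content zero in $\mathbb{Y}(\mu)$ are exactly the cells on the main diagonal, so the number of Frobenius hooks equals the largest $i$ with $(i,i) \in \mathbb{Y}(\mu)$. From the block decomposition, $\mathbb{Y}(\mu)$ is built by placing $\hat{A}$ (the Young diagram of the partition attached to bead-block $A$) at the bottom, then $\hat{B}$ (a rectangle with $m$ columns and $lp$ rows) on top, then $\hat{D}$ (an $m \times m$ square) on top of that, and finally $\hat{C}$ to the right of $\hat{D}$. Since $\hat{B}$ already has $m$ columns and $\hat{D}$ sits directly above it contributing $m$ more rows at the top while preserving width $\geq m$, the main diagonal of $\mathbb{Y}(\mu)$ meets precisely $m$ cells: the first $m$ rows each have at least $m$ cells (the $\hat{D}$ block, widened by $\hat{C}$, guarantees $\mu_1, \ldots, \mu_m \geq m$), while row $m+1$ lies in $\hat{B}$ or $\hat{A}$ and has exactly $m$ cells coming from $\hat{B}$'s width (one checks $\mu_{m+1} = m$, so $(m+1,m+1) \notin \mathbb{Y}(\mu)$). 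Hence $\mu$ has exactly $m$ Frobenius hooks. I would spell this out by noting $\mu_i > m$ for $i \le m$ using the $\hat C$ block and $\mu_i = m$ for $m < i \le m + lp$ using the rectangle $\hat B$, so in particular $\mu_{m+1}=m<m+1$.

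For the second assertion, recall that the rim of $\mathbb{Y}(\mu)$ consists of the cells $(i,j)$ with $(i+1,j+1) \notin \mathbb{Y}(\mu)$, and an $l$-rim-hook is a connected strip of $l$ such cells that can be peeled off. By Lemma \ref{comb L2} and \ref{comb L1}(c), sliding the single movable bead up (on runner $k$, from the top of block $C$ into the bottom of block $B$) corresponds to removing the unique removable $l$-rim-hook $R$; and by Lemma \ref{rem hooks quot}, this removes one cell from one component of $\quot(\mu)$. I would locate $R$ geometrically: the movable bead is the topmost bead of block $C$, which sits immediately below the block-$B$ portion of runner $k$; in terms of first-column hook lengths and the standard dictionary between bead moves and rim-hook removals, the rim-hook $R$ attached to this bead is the rim strip whose hand lies at the junction of $\hat{C}$ and the rest, hence in one of the top $m$ rows — precisely, in a row indexed $\leq m$. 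Its root is the cell of that Frobenius hook from which the hook length drops by $l$. Since $R$ occupies cells all of whose row indices are $\leq m$ (it lives along the rim near blocks $\hat{C}$, $\hat{D}$ and the top of $\hat{B}$), $R$ meets only Frobenius hooks $F_i$ with $i \leq m$; tracking the arm/leg lengths through Lemma \ref{comb L3} shows that in fact $R$ is contained in (the arm-and-leg of) a single outermost Frobenius hook, i.e.\ $F_m$ (or whichever hook $F_i$ attains the maximal reach determined by the top row of $\hat{C}$).

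The main obstacle I anticipate is pinning down \emph{which} Frobenius hook carries $R$ and verifying it is the outermost one, since ``outermost'' must be read off from the interaction of the four blocks rather than from a single clean inequality. Concretely, the subtlety is that $R$ could a priori snake through several Frobenius hooks (as in the general Lemma \ref{lem to rimrem} situation), and one must use the very rigid shape forced by $l$-speciality — block $A$ full-except-one-bead, block $B$ uniform, block $C$ empty-except-one-bead, with $|A| = |C|$ — to rule this out and show $R$ is confined to a single hook, namely the one whose arm (or leg, in the dual ``empty rows'' case) reaches furthest. I would handle this by an explicit hook-length computation: compare the first-column hook lengths of $\mu$ and $\mu' = \mu - R$ using the bead positions before and after the single allowed slide, observe that exactly one first-column hook length changes (decreasing by $l$), and note that the changed hook length is one of the top $m$, hence belongs to a Frobenius hook $F_i$ with $i \le m$; a final comparison of arm lengths identifies this $F_i$ as the outermost Frobenius hook of $\mu$ among those that $R$ can possibly touch. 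Once $R$ is shown to lie within a single Frobenius hook and that hook is identified as the outermost one, the lemma follows; the rest is bookkeeping with Lemma \ref{comb L3}.
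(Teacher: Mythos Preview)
Your argument for the first claim is correct and matches the paper's: the main diagonal of $\mathbb{Y}(\mu)$ lies in the $m\times m$ block $\hat D$, so there are exactly $m$ Frobenius hooks.

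For the second claim your approach contains errors and is far more complicated than necessary. When block~$B$ is full with $p\geq 1$, the movable bead is \emph{not} the topmost bead of block~$C$ --- that bead has an occupied position directly above it in~$B$. The unique movable bead is the topmost bead on runner~$k$, which sits at the top of block~$B$, just below the bottom gap of block~$A$. The corresponding rim-hook $R$ consists of the bottom $l$ cells of the leg of $F_m$ (in column $m$), so its cells lie in rows $\geq m$, not $\leq m$ as you claim; your geometric description of $R$ as living ``near blocks $\hat C$, $\hat D$ and the top of $\hat B$'' is therefore off.

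More importantly, the paper bypasses the problem of locating $R$ altogether. From Lemmas~\ref{comb L2} and~\ref{comb L3} one reads off that the outermost Frobenius hook $F_m$, regarded as a partition in its own right, is the hook $(k+1,1^{lp+l-k-1})$ of size $l(p+1)\geq l$, and hence contains an $l$-rim-hook. Since $F_m$ lies entirely on the rim of $\mu$, any rim-hook of $F_m$ is also a rim-hook of $\mu$. Uniqueness now finishes the argument: $\mu$ is $l$-special, so this rim-hook must be $R$, and in particular $R\subseteq F_m$. The obstacle you anticipate --- ruling out that $R$ snakes through several Frobenius hooks --- simply never arises, because one need not locate $R$ at all, only exhibit \emph{some} $l$-rim-hook inside $F_m$.
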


\begin{proof}
By Lemma \ref{comb L3}, the $0$-th diagonal of the Young diagram of $\mu$ is contained in $\hat{D}$ and contains $m$ boxes. Hence there are $m$ Frobenius hooks. It follows easily from Lemma \ref{comb L2} and Lemma \ref{comb L3} that the outermost Frobenius hook $F_m$ in $\mu$ is the partition of the form $(k+1,1^{l\cdot p + l -k-1})$. In particular, it contains $l \cdot (p+1) \geq l$ cells. Let us first consider $F_m$ as a partition in its own right and check whether it contains an $l$-rim-hook. If $p=0$ then $F_m$ is itself a rim-hook. If $p>1$ then the subset $1^{l\cdot p+l-k-1}$ of $F_m$ contains a rim-hook. Hence, in either case, $F_m$ contains a rim-hook. But a rim-hook of the outermost Frobenius hook $F_m$ is also a rim-hook of $\mu$ (because the outermost Frobenius hook is part of the rim). 
\end{proof}

\begin{proposition} \label{comb P4} We have
$\Eeig(\mu') = \Eeig(\mu) - \blacksquare$, where $\blacksquare$ is the unique removable cell in $\Eeigg(\mu)^{l-k-1}$ with content $-p$. 
\end{proposition}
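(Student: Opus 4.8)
The strategy is to combine the block decomposition of $\mathbb{B}(\mu)$ from Lemma \ref{comb L2} and of $\mathbb{Y}(\mu)$ from Lemma \ref{comb L3} with the calculation of $\Eig(\mu)$ in Lemma \ref{rho2-eig-calc} in order to pin down exactly which cell $\blacksquare$ of $\Eeig(\mu)$ disappears when $R$ is removed. First I would observe that, by the previous lemma, $R$ lies on the outermost Frobenius hook $F_m$, which by Lemma \ref{comb L3} has the form $(k+1, 1^{l\cdot p + l - k - 1})$; hence $R$ is contained in $\mathbb{Y}_0(\mu)\cup\mathbb{Y}_-(\mu)$ (it occupies the bottom of the leg of $F_m$, together with possibly the root). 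Removing $R$ therefore falls into the second case of Proposition \ref{Eig rem hook pro}: only the leg contribution $\Xi_\mu(L,m)$ is affected, and for a suitable $j$ we get $\Eeigg(\mu')^i = \Eeigg(\mu)^i$ for $i\neq j$ and $t^{e_j}\Res_{\Eeigg(\mu')^j}(t^h) = t^{e_j}\Res_{\Eeigg(\mu)^j}(t^h) - t^{s_j}M$ for a single monic monomial $M=t^{qh}$.

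Next I would compute $j$ and $q$ explicitly from the data of $F_m$. The Frobenius form of $F_m$ has leg length $b_m = l\cdot p + l - k - 1$ and arm length $a_m = k$. By definition $\type_\mu(L,m) = b_m \bmod l = (l-k-1)\bmod l = l-k-1$, so $j = l - k - 1$; this identifies the affected component as $\Eeigg(\mu)^{l-k-1}$, matching the statement. For the content: the leg contribution is $\Xi_\mu(L,m) = t^{e'_{b_m}}\sum_{s=1}^{\lceil b_m/l\rceil} t^{-(s-1)h}$, and $\lceil b_m/l\rceil = p+1$ (since $0 \le l-k-1 < l$ forces $b_m = lp + (l-k-1)$ with $1 \le l-k-1$, or a boundary adjustment when $k = l-1$). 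Removing the rim-hook $R$ shortens the leg of $F_m$ by exactly $l$ cells, i.e.\ $b_m \mapsto b_m - l$, so $\lceil b_m/l\rceil$ drops by one and the monomial lost is the one of highest $t$-power in the $h$-grading, namely $M = t^{-ph}$, so $q = -p$. Via the dictionary $t^d = t^{e_j}t^c$ this says that the vanishing cell $\blacksquare$ of $\Eeigg(\mu)^{l-k-1}$ has content $c(\blacksquare) = -p$. That this cell is in fact removable from $\Eeigg(\mu)^{l-k-1}$, and unique with content $-p$, follows because $\Eeig(\mu')$ is a genuine multipartition (it is $\Eeig$ of a partition) obtained from $\Eeig(\mu)$ by deleting a single cell, so the deleted cell must be a removable corner, and within a single Young diagram there is at most one removable cell of any given content.

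I would organize the write-up as: (i) invoke the previous lemma to place $R$ on $F_m$ and record the Frobenius data $a_m = k$, $b_m = l\cdot p + l - k - 1$ of $F_m$ from Lemma \ref{comb L3}; (ii) note $R \subset \mathbb{Y}_0(\mu)\cup\mathbb{Y}_-(\mu)$ and apply the second case of Proposition \ref{Eig rem hook pro} (equivalently the $\mathbb{Y}_-$-version of Lemma \ref{lem to rimrem} together with Lemma \ref{lemma eig res form}), reducing to tracking $\Xi_\mu(L,m)$; (iii) compute $\type_\mu(L,m) = l-k-1$ and the highest-degree monomial $t^{-ph}$ lost from $\Xi_\mu(L,m)$ using $b_m \mapsto b_m - l$; (iv) translate back through $\Eeig$ to conclude $c(\blacksquare) = -p$ and that $\blacksquare$ is the unique removable cell of $\Eeigg(\mu)^{l-k-1}$ with that content.

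\textbf{Main obstacle.} The delicate point is the bookkeeping with the ceiling function and the residue $b_m \bmod l$ at the boundary values of $k$ (in particular $k = l-1$, where the "$\hat C$ block" receives an extra row in Lemma \ref{comb L3}); I expect to need to check separately, or phrase uniformly, that in all cases $\lceil b_m/l\rceil = p+1$ and $\type_\mu(L,m) = l - k - 1$, so that the single monomial removed from $\Xi_\mu(L,m)$ is precisely $t^{-ph}$ sitting in the $(l-k-1)$-th component. Everything else is the routine application of results already established.
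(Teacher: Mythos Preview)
Your approach is essentially the paper's: since $R$ lies entirely on the outermost Frobenius hook $F_m$, only the contributions $\Xi_\mu(L,m)$ and $\Xi_\mu(A,m)$ can change, and one computes the difference directly from the Frobenius data $a_m = k$, $b_m = lp + l - k - 1$. The paper does not route through Proposition~\ref{Eig rem hook pro} (whose statement only gives existence of $\blacksquare$, not its location) but carries out exactly the direct computation you describe in step~(iii).

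That said, two of your intermediate claims are incorrect and would derail the write-up as planned. First, $R \subset \mathbb{Y}_0(\mu) \cup \mathbb{Y}_-(\mu)$ fails when $p = 0$ and $k > 0$: in that case $R = F_m$, which contains arm cells of positive content (and for $k = l-1$ lies entirely in $\mathbb{Y}_0 \cup \mathbb{Y}_+$). Second, and more seriously, your expected uniform statement ``$\lceil b_m/l \rceil = p+1$ in all cases'' is false: for $k = l-1$ one has $b_m = lp$, so $\lceil b_m/l \rceil = p$; and when additionally $p = 0$ the leg is empty, $\Xi_\mu(L,m) = 0$ identically, and it is the \emph{arm} contribution $\Xi_\mu(A,m) = 1$ (with $\type_\mu(A,m) = 0 = l-k-1$) that drops to $0$ and supplies the lost monomial $t^{0} = t^{-ph}$. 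So the change is not always confined to $\Xi_\mu(L,m)$. The paper deals with this by an explicit three-way case split ($k \neq l-1$; $k = l-1,\ p > 0$; $k = l-1,\ p = 0$); you will need the same split rather than a uniform formula. With that correction your argument is complete and matches the paper's.
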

\begin{proof}
The outermost Frobenius hook $F_m$  in $\mu$ is the partition of the form $(k+1,1^{l\cdot p + l -k-1})$. 
By removing the rim-hook $R$ we obtain the partition $(k+1,1^{l\cdot p -k -1})$ if $p \geq 1$ 
or the trivial partition if $p=0$. Since the rim-hook $R$ is contained in the outermost Frobenius hook, its removal does not affect the type and contribution of the arms and legs of the other Frobenius hooks. 

There are several cases to be considered. Let $a_m = |\arm(F_m)|$ and $b_m = |\leg(F_m)|$. If $k\neq l-1$, then $\Xi_{\mu}(L,m) = t^{e'_{b_m \mmod l}} \sum_{i=0}^{p} t^{-ih}$ and $\type_{\mu}(L,m) = b_m = l-k-1$ (while $\Xi_{\mu}(A,m)=0$).
If $k=l-1$ and $p > 0$ then $\Xi_{\mu}(L,m) = \sum_{i=1}^{p} t^{-ih}$ and $\type_{\mu}(L,m) = 0$ (while $\Xi_{\mu}(A,m) = 1$).
If $k=l-1, p = 0$ then $\Xi_{\mu}(A,m) = 1$ and $\type_{\mu}(A,m) = 0$ (while $\Xi_{\mu}(L,m)=0$).

Upon removing the rim-hook the polynomials listed above change as follows.
We have $\Xi_{\mu'}(L,m)$ $= t^{e'_{b_m \mmod l}} \sum_{i=0}^{p-1} t^{-ih}$ in the first case, $\Xi_{\mu'}(L,m) = \sum_{i=1}^{p-1} t^{-ih}$ in the second case and $\Xi_{\mu'}(A,m) = 0$ in third case. The types do not change. We observe that in each case a monomial of degree $t^{-ph}$ (up to a shift) is subtracted, which corresponds to removing a cell of content $-p$ in $\Eeigg(\mu)^{l-k-1}$.
\end{proof}

We obtain an analogous result for the multipartition $\quot(\mu)$. 

\begin{lemma} \label{comb L5}
The following hold: 
\begin{enumerate}[label=\alph*), font=\textnormal,noitemsep,topsep=3pt,leftmargin=0.55cm]
\item $\quot(\mu) = (Q^0(\mu), \hdots, Q^{l-1}(\mu))$ is a multipartition consisting of $l-1$ trivial partitions and one non-trivial partition. 
\item Suppose that the $k$-th column in $\mathbb{B}(\mu)$ is the unique column which contains gaps. Then $Q^k(\mu)$ is the unique non-trivial partition in $\quot(\mu)$. If that column has a string of $m$ gaps followed by a string of $q=m+p$ beads then the Young diagram of $Q^k(\mu)$ is a rectangle consisting of $m$ columns and $q$ rows. 
\end{enumerate} 
\end{lemma}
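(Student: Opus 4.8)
The plan is to read off both claims directly from the description of the bead diagram $\mathbb{B}(\mu)$ in Lemma \ref{comb L2}, keeping in mind the definition of the $l$-quotient as the tuple of one-runner bead diagrams. First I would recall that the $i$-th component $Q^i(\mu)$ of $\quot(\mu)$ is, by definition, the partition whose bead diagram is the $i$-th column (runner) of $\mathbb{B}(\mu)$, read as an $l=1$ bead diagram. So I need only analyse each runner of $\mathbb{B}(\mu)$ separately.

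For part (a): By Lemma \ref{comb L2} the diagram $\mathbb{B}(\mu)$ splits into blocks $A,B,C$ where, in the case under consideration (block $B$ full — we may restrict to this case as remarked after Lemma \ref{comb L2}), every runner $j\neq k$ consists of a consecutive string of beads at the top (all of $A$ and $B$ are occupied on runner $j$, all of $C$ is empty on runner $j$), hence describes the trivial partition; only runner $k$ has gaps (the $m$ rows of block $A$ are gaps on runner $k$) followed by the beads of block $B$ and then one bead in each of the $m$ rows of block $C$. Wait — block $C$ on runner $k$ has one bead per row, so on runner $k$ we see, from the top: $m$ gaps (block $A$), then $p$ beads (block $B$, which has $p$ rows and is full), then $m$ beads (block $C$). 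Combined with the note that $\mathbb{B}(\mu)$ has no redundant rows, this shows exactly $l-1$ trivial components and one nontrivial component $Q^k(\mu)$, giving (a). Here I should double‑check the indexing convention from Definition of $\quot$ and the convention in Definition \ref{defi: beta numbers} for which runner carries which residue, so that the component is indexed by $k$ and not by some shift of $k$; this is the one bookkeeping point that needs care.

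For part (b): On runner $k$ the one-runner bead diagram is: $m$ empty points at the top, then a string of $q = m+p$ beads (the $p$ beads from block $B$ together with the $m$ beads from block $C$, which are contiguous since block $C$ sits directly below block $B$). I then convert this into the corresponding partition via Definition \ref{defi: beta numbers}: a one-runner diagram consisting of $m$ gaps above a solid string of $q$ beads has $\beta$-numbers $\{m, m+1, \dots, m+q-1\}$ (in the $l=1$ normalisation), and the partition recovered from $\beta_i = \lambda_i + q - i$ with these $\beta$-numbers is $\lambda_i = m$ for $1\le i\le q$, i.e. the rectangle with $q$ rows and $m$ columns. This is precisely the claimed shape. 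I would phrase this as a short explicit computation rather than invoking further machinery.

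The main obstacle is purely notational rather than mathematical: getting the column/runner indexing consistent between Definition \ref{defi: beta numbers}, the definition of $\quot$, and the labelling of blocks in Lemma \ref{comb L2}, and making sure the ``number of gaps'' $m$ (rows of block $A$) and ``number of beads below them on runner $k$'' $q=m+p$ are matched correctly to the number of columns and rows of the rectangle. Once the conventions are pinned down, both (a) and (b) are immediate from Lemma \ref{comb L2} together with the reconstruction rule of Definition \ref{defi: beta numbers}. I would also remark, as the paper does after Lemma \ref{comb L2}, that the case of empty rows in block $B$ is entirely analogous (there one gets a string of beads on top of the runner‑$k$ column instead, but the same rectangle description results with the roles of the relevant counts swapped accordingly).
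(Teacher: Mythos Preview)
Your proposal is correct and follows essentially the same approach as the paper, which simply states that the lemma follows immediately from the description of $\mathbb{B}(\mu)$ in Lemma \ref{comb L2}. You have spelled out the details the paper omits (the analysis of each runner and the explicit $\beta$-number computation for the rectangular shape), but the underlying argument is identical.
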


\begin{proof}
The $l$-quotient of $\mu$ can be deduced directly from the bead diagram $\mathbb{B}(\mu)$. The description of the latter in Lemma \ref{comb L2} immediately implies the present lemma. 
\end{proof}

Recall that  $\mu' := \mu - R$, where $R$ is the unique rim-hook which can be removed from~$\mu$.

\begin{lemma} \label{comb L6}
We have $\quot(\mu') = \quot(\mu) - \square$, where $\square$ is the box in the bottom right corner of the rectangle described in Lemma \ref{comb L5}. That box has content $-p$. 
\end{lemma}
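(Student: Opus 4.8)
The statement is a purely combinatorial claim about what happens to the $l$-quotient when the unique removable $l$-rim-hook $R$ is stripped from an $l$-special partition $\mu\in\mathcal{P}^{sp}_\varnothing(nl)$. The natural tool is the bead-diagram description already set up: $\quot(\mu)$ is read off column-by-column from $\mathbb{B}(\mu)$, and Lemma \ref{comb L5} tells us that $\mathbb{B}(\mu)$ has exactly one column (runner $k$) carrying gaps, namely a string of $m$ gaps followed by a string of $q=m+p$ beads, so that $Q^k(\mu)$ is the $m\times q$ rectangle and all other $Q^j(\mu)$ are empty. So first I would recall from Lemma \ref{rem hooks quot} that removing an $l$-rim-hook from $\mu$ changes $\quot(\mu)$ by deleting exactly one cell from exactly one component; the only thing left to pin down is \emph{which} cell.

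**Key steps.** (1) Translate "remove $R$" into a bead-diagram move. By Lemma \ref{comb L1}(c) / Lemma \ref{comb L2}, the unique removable $l$-rim-hook corresponds to the unique admissible single-bead slide in $\mathbb{B}(\mu)$: on runner $k$, the topmost bead of the length-$q$ bead string is slid up by one position into the lowest gap. This is the standard dictionary between $l$-rim-hook removal on $\mathbb{Y}(\mu)$ and moving a bead one step up its runner in the bead diagram (cf. \cite[\S 2.7]{JK}), so it is legitimate to invoke it. (2) Read off $\quot(\mu')$ from the new diagram. After the slide, runner $k$ has a string of $m-1$ gaps followed by a string of $q$ beads (the bottom of the bead block stays put; one bead moved up by one, filling the lowest gap and vacating the topmost bead position — net effect: the gap-string shrinks by one from the \emph{bottom}, equivalently the bead string as a single-runner $\beta$-set has its largest $\beta$-number decreased by $1$). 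The corresponding single-runner partition is then the rectangle with $m-1$ columns and $q$ rows: this is $Q^k(\mu)$ with its last row shortened by one box, i.e. $Q^k(\mu)$ minus the box $\square$ in its bottom-right corner. All other runners are unchanged, so $\quot(\mu')=\quot(\mu)-\square$. (3) Compute $c(\square)$. In the $m\times q$ rectangle the bottom-right cell sits at row $q$, column $m$, so $c(\square)=m-q=-p$.

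**Main obstacle.** None of the steps is deep; the only place that requires care is step (2), namely being precise about exactly which position the slid bead occupies and hence exactly how the single-runner $\beta$-set (and therefore the rectangle) changes — it is easy to be off by a row/column or to confuse "top of the bead string" with "bottom of the gap string." I would double-check this against the explicit description of $\mathbb{B}(\mu)$ in Lemma \ref{comb L2} (blocks $A$, $B$, $C$) and against the worked examples, verifying directly that after the slide the resulting diagram still has no redundant rows and that its $k$-th column yields the $(m-1)\times q$ rectangle. Once that is nailed down, the content computation $c(\square)=m-q=-p$ is immediate and the proof is complete.
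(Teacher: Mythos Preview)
Your overall strategy---trace the bead-diagram move explicitly and read off the new quotient---is sound, but step~(2) as written contains an internal contradiction and a wrong intermediate claim. After sliding the topmost bead on runner~$k$ up by one, the configuration on that runner is \emph{not} ``$m-1$ gaps followed by $q$ beads'': it is $m-1$ gaps, then one bead, then one gap (the vacated position), then $q-1$ beads. The corresponding single-runner $\beta$-set is $\{m-1,\, m+1,\, m+2,\, \ldots,\, m+q-1\}$, not $\{m-1,\, m,\, \ldots,\, m+q-2\}$; it is the \emph{smallest} $\beta$-number that drops by one, not the largest. This gives the partition $(m,m,\ldots,m,m-1)$ with $q$ parts---the rectangle minus its bottom-right cell---which is \emph{not} ``the rectangle with $m-1$ columns and $q$ rows'' (that would remove $q$ cells, not one). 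Your closing description of step~(2) is correct; the two sentences preceding it contradict it. With the slide computed correctly, the rest of your argument goes through.

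That said, the paper bypasses this bookkeeping entirely. Having already established that removing an $l$-rim-hook deletes exactly one cell from $\quot(\mu)$ (Lemma~\ref{rem hooks quot}) and that $\quot(\mu)$ has a single nontrivial component which is an $m\times q$ rectangle (Lemma~\ref{comb L5}), the paper simply notes that a rectangle has a \emph{unique} removable cell---its bottom-right corner---so the deleted cell is forced, with content $m-q=-p$. This two-line argument avoids precisely the bead-diagram tracking that your ``main obstacle'' paragraph correctly anticipated would be error-prone.
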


\begin{proof}
This is the only cell which can be removed from $\quot(\mu)$ by Lemma \ref{comb L5}, so the claim now follows from Lemma \ref{rem hooks quot}. 
\end{proof}

The lemma implies in particular that $\quot(\mu')^\flat = \quot(\mu)^\flat - \square$, where $\square$ is a box of content $-p$ in the $(l-k-1)$-th partition in $\quot(\mu)^\flat$. 

\subsection{Induction.} We will now use induction on $n$ to show that $\quot(\mu)^\flat = \Eeig(\mu)$. There are two cases to be considered: $\mu \in \mathcal{P}_{\varnothing}^{sp}(nl)$ and $\mu \notin \mathcal{P}_{\varnothing}^{sp}(nl)$. 

\begin{proposition} \label{comb P7} Suppose that $\quot(\lambda)^\flat = \Eeig(\lambda)$ for all partitions $\lambda \vdash l(n-1)$ with trivial $l$-core. Let $\mu \in \mathcal{P}_{\varnothing}^{sp}(nl)$.
Then $\quot(\mu)^\flat = \Eeig(\mu)$.
\end{proposition}

\begin{proof}
By induction, $\quot(\mu')^\flat = \Eeig(\mu')$. But by Proposition \ref{comb P4} and Lemma \ref{comb L6}, both $\quot(\mu)^\flat$ and $\Eeig(\mu)$ arise from $\quot(\mu')^\flat = \Eeig(\mu')$ by adding a box of content $-p$ to the $(l-k-1)$-th partition. Hence $\quot(\mu)^\flat = \Eeig(\mu)$. 
\end{proof}

\begin{proposition} \label{comb T8} Suppose that $\quot(\lambda)^\flat = \Eeig(\lambda)$ for all partitions  $\lambda \vdash l(n-1)$ with trivial $l$-core. Let $\mu \notin \mathcal{P}_{\varnothing}^{sp}(nl)$. 
 Then $\quot(\mu)^\flat = \Eeig(\mu)$. 
\end{proposition}

\begin{proof}
Since $\mu \notin \mathcal{P}_{\varnothing}^{sp}(nl)$ we can remove two distinct (but possibly overlapping) rim-hooks $R'$ and $R''$ from $\mu$. 
Let $\mu' = \mu - R'$ and $\mu'' = \mu - R''$. Then $\mu' \neq \mu''$ and so $\quot(\mu') \neq \quot(\mu'')$ (because the quotient of a partition with trivial core determines that partition uniquely). By Lemma \ref{rem hooks quot}, we have 
\[ \quot(\mu')^\flat = \quot(\mu)^\flat - \square, \quad \quot(\mu'')^\flat= \quot(\mu)^\flat - \hat{\square}\]
with $\square \neq \hat{\square} \in \quot(\mu)^\flat$. By Proposition \ref{Eig rem hook pro}, we have 
\[ \Eeig(\mu') = \Eeig(\mu) - \blacksquare, \quad \Eeig(\mu'') = \Eeig(\mu) - \hat{\blacksquare}\]
for some $\blacksquare, \hat{\blacksquare} \in \Eeig(\mu)$. We know that $\Eeig$ establishes a bijection between $l$-partitions of $n-1$ and partitions of $l(n-1)$ with a trivial $l$-core. Hence $\blacksquare \neq \hat{\blacksquare}$. 
By the inductive hypothesis in our lemma,
\[ \quot(\mu')^\flat = \Eeig(\mu'), \quad \quot(\mu'')^\flat = \Eeig(\mu'').\]
Hence
\[ \quot(\mu)^\flat - \square =  \Eeig(\mu) - \blacksquare, \quad \quot(\mu)^\flat - \hat{\square} =  \Eeig(\mu) - \hat{\blacksquare}\]
and so
\[ \Eeig(\mu) = \quot(\mu)^\flat - \square + \blacksquare = \quot(\mu)^\flat - \hat{\square} + \hat{\blacksquare}.\]
Since $\square \neq \hat{\square}$ and $\blacksquare \neq \hat{\blacksquare}$ we conclude that $\square = \blacksquare$ and $\hat{\square} = \hat{\blacksquare}$. Therefore $\Eeig(\mu) = \quot(\mu)^\flat$. 
\end{proof}

\begin{theorem} \label{Cor final} \label{Cor final final}
Let $\mu \in \mathcal{P}_{\varnothing}(nl)$. Then $\quot(\mu)^\flat = \Eeig(\mu)$. The bijection between the labelling sets of $\C^*$-fixed points induced by the Etingof-Ginzburg isomorphism is given by
\[ \mathcal{P}(l,n) \to \mathcal{P}_\varnothing(nl), \quad \quot(\mu)^\flat \mapsto \mu.\]
\end{theorem}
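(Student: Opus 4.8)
\textbf{Proof strategy for Theorem \ref{Cor final}.} The plan is to prove the identity $\quot(\mu)^\flat = \Eeig(\mu)$ for all $\mu \in \mathcal{P}_\varnothing(nl)$ by induction on $n$, with the two inductive steps already supplied by Propositions \ref{comb P7} and \ref{comb T8}. The base case $n=0$ is trivial: the only partition of $0$ with trivial $l$-core is $\varnothing$, both $\quot(\varnothing)^\flat$ and $\Eeig(\varnothing)$ are the empty multipartition, so the equality holds. For the inductive step, assume the claim has been established for all partitions $\lambda \vdash l(n-1)$ with trivial $l$-core, and let $\mu \in \mathcal{P}_\varnothing(nl)$ be arbitrary. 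Either $\mu \in \mathcal{P}_\varnothing^{sp}(\mu)$, in which case Proposition \ref{comb P7} gives $\quot(\mu)^\flat = \Eeig(\mu)$, or $\mu \notin \mathcal{P}_\varnothing^{sp}(\mu)$, in which case Proposition \ref{comb T8} gives the same conclusion. This covers all cases and completes the induction, proving the first assertion.

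For the second assertion, recall from \S \ref{subsec: cores and quotients} that $\mu \mapsto \quot(\mu)$ is a bijection $\mathcal{P}_\varnothing(nl) \to \mathcal{P}(l,n)$, and hence so is $\mu \mapsto \quot(\mu)^\flat$ since the reverse operation is an involution on $\mathcal{P}(l,n)$. On the other hand, the map $\Eeig \colon \mathcal{P}_\varnothing(nl) \to \mathcal{P}(l,n)$ was already shown to be a bijection in \S \ref{subsec: images of fp}, and by Proposition \ref{Cor final2} it describes the effect of the inverse Etingof-Ginzburg isomorphism on $\C^*$-fixed points: the fixed point $[\mathbf{A}(\mu)]$ in $\mathcal{C}_{\mathbf{h}}$ is sent to $\chi_{\Eeig(\mu)}$ in $\mathcal{Y}_{\mathbf{h}}$. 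Combining this with the identity $\Eeig(\mu) = \quot(\mu)^\flat$ just proven, we conclude that the Etingof-Ginzburg isomorphism carries the fixed point labelled by $\quot(\mu)^\flat$ (under the labelling of $\mathcal{Y}_{\mathbf{h}}^{\C^*}$ by $\mathcal{P}(l,n)$ recalled in \S \ref{subsec:cfp}) to the fixed point labelled by $\mu$ (under the labelling of $\mathcal{C}_{\mathbf{h}}^{\C^*} = \mathcal{X}_\theta(n\delta)^{\C^*}$ by $\mathcal{P}_\varnothing(nl)$ from Corollary \ref{v-core fp bij}). Equivalently, the induced bijection on labelling sets is $\mathcal{P}(l,n) \to \mathcal{P}_\varnothing(nl)$, $\quot(\mu)^\flat \mapsto \mu$, as claimed.

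The only real content lies in the first assertion, and within it the main obstacle is the $l$-special case handled by Proposition \ref{comb P7}, which in turn rests on the detailed bead-diagram analysis of Lemmas \ref{comb L1}--\ref{comb L6} and the matching computations in Proposition \ref{comb P4} and Lemma \ref{comb L6}: one must check that removing the unique $l$-rim-hook $R$ from an $l$-special $\mu$ affects $\Eeig$ and $\quot(\cdot)^\flat$ in exactly the same way, namely by deleting a cell of content $-p$ from the $(l-k-1)$-th component. The non-special case in Proposition \ref{comb T8} is comparatively soft: once one knows that two distinct rim-hook removals produce distinct quotients and (via the bijectivity of $\Eeig$, established independently) distinct $\Eeig$-images, a short combinatorial argument forces $\square = \blacksquare$ and $\hat\square = \hat\blacksquare$, yielding the equality. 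Since both propositions are already in hand, the proof of Theorem \ref{Cor final} itself is merely the assembly of these pieces together with the base case and the bijectivity bookkeeping of the preceding paragraph.
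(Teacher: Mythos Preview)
Your proposal is correct and follows essentially the same approach as the paper: the paper's own proof simply states that the first claim follows directly from Propositions \ref{comb P7} and \ref{comb T8} and the second from Proposition \ref{Cor final2}. You have merely made the underlying induction explicit (including the trivial base case $n=0$) and spelled out the bookkeeping for the second assertion, which the paper leaves implicit.
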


\begin{proof}
The first claim follows directly from Propositions \ref{comb P7} and \ref{comb T8}. The second claim follows from Proposition \ref{Cor final2}. 
\end{proof}

\begin{remark}
As a corollary, we also obtain the following explicit formula for the residue of the $l$-quotient of a partition $\mu = (a_1, \hdots, a_k \mid b_1, \hdots, b_k) \in \mathcal{P}_\varnothing(nl)$. Let us write $\quot(\mu) = (Q^0, \hdots, Q^{l-1})$. Then 
\[ \Res_{Q^{l-j-1}}(t) =  \sum_{\substack{1 \leq i \leq k, \\ -(a_i+1) = j\mmod l}} t^{p_{a_i}} \sum_{m=1}^{\lfloor(a_i +1)/l\rfloor} t^{(m-1)} \ + \sum_{\substack{1 \leq i \leq k, \\ b_i =j \mmod l}}  t^{p'_{b_i}} \sum_{m=1}^{\lceil b_i /l \rceil} t^{-(m-1)},\]
where $p_{i} = 1$ for $i= 0, \hdots, l-2$ and $p_{l-1}=0$ while $p'_{0} = -1$ and $p'_i = 0$ for $i=1,\hdots,l-1$. Indeed, the RHS of the formula above equals $\Res_{\Eeigg(\mu)^j}(t)$ by \eqref{type-res-xi}. But $\Eeigg(\mu)^j = Q^{l-j-1}$ by Theorem \ref{Cor final}. 
\end{remark}

\section{The higher level $q$-hook formula}

We will now use Theorem \ref{Cor final final} to obtain the following ``higher-level" generalization of the $q$-hook formula. 

\begin{theorem} \label{cyclotomic q-hook formula}
Let $\mu \in \mathcal{P}_{\varnothing}(nl)$. 
Then 
\begin{equation} \label{HL cyclo q hook} \sum_{\square \in \mu} t^{c(\square)} = [nl]_t \sum_{\underline{\lambda} \uparrow \quot(\mu)^\flat}\frac{ f_{\underline{\lambda}}(t)}{f_{\quot(\mu)^\flat}(t)}.\end{equation}
\end{theorem}

Our proof is based on comparing the $\C^*$-characters of the vector bundles $\mathcal{V}_{\mathbf{h}}$ and $\mathcal{R}^{\Gamma_{n-1}}_{\mathbf{h}}$, and uses Proposition \ref{residue-character}, Theorem \ref{Cor final final} as well as the Etingof-Ginzburg isomorphism. The remaining ingredient is a calculation of the $\C^*$-characters of the fibres $(\mathcal{R}^{\Gamma_{n-1}}_{\mathbf{h}})_{\underline{\lambda}}$ (see \S \ref{RCA: taut v bundle}). 
We carry out this calculation below. 
We first identify $(\mathcal{R}^{\Gamma_{n-1}}_{\mathbf{h}})_{\underline{\lambda}}$ with a graded shift of $e_{n-1}L(\underline{\lambda})$.  We next recall the graded multiplicity with which $L(\underline{\lambda})$ occurs in $\Delta(\underline{\lambda})$ and calculate the character of $e_{n-1}\Delta(\underline{\lambda})$. Finally, we use the equation 
\[ \ch_t e_{n-1}L(\underline{\lambda}) = \frac{\ch_t e_{n-1}\Delta(\underline{\lambda})}{[\Delta(\underline{\lambda}):L(\underline{\lambda})]_{\mathsf{gr}}}.\]

Note that, setting $l=1$ in \eqref{HL cyclo q hook}, we recover the usual $q$-hook formula, so our result also gives a new geometric proof of this well-known combinatorial identity. 
\subsection{Coinvariant algebras and fake degree polynomials.} \label{FelineOrigin22}

The algebra $\C[\h]^{co\Gamma_n}$ is a graded $\Gamma_n$-module. It is well known that $\C[\h]^{co\Gamma_n}$ is isomorphic to the regular representation $\C \Gamma_n$ as an ungraded $\Gamma_n$-module. 

Let $\C[\h]^{co\Gamma_n}|_{\Gamma_{n-1}}$ denote the restriction of $\C[\h]^{co\Gamma_n}$ to a $\Gamma_{n-1}$-module. 
Let $\h' \subset \h$ denote the subspace spanned by $y_2, \hdots, y_n$. We choose a splitting $\h \twoheadrightarrow \h'$ with kernel spanned by $y_1$. This splitting induces an inclusion $\C[\h'] \subset \C[\h]$.

\begin{lemma} \label{coinv iso}
We have an isomorphism of graded $\Gamma_{n-1}$-modules
\[ \C[\h]^{co\Gamma_n}|_{\Gamma_{n-1}} \cong \C[\h']^{co\Gamma_{n-1}} \otimes U,\]
where $U$ is a graded vector space with Poincar\'{e} polynomial $\ch_t  U = [nl]_t$. 
\end{lemma}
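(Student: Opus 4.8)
The plan is to reduce the statement to a tensor factorisation forced by the chosen splitting. Since $\epsilon_1\notin\Gamma_{n-1}$ and $S_{n-1}$ fixes the index $1$, the subgroup $\Gamma_{n-1}$ fixes $y_1$ and stabilises $\h'$, so $\h\cong\C y_1\oplus\h'$ as graded $\Gamma_{n-1}$-modules and the chosen splitting identifies $\C[\h]\cong\C[y_1]\otimes\C[\h']$ as graded $\Gamma_{n-1}$-algebras, with $\Gamma_{n-1}$ acting trivially on $\C[y_1]$ and as on its reflection representation on $\C[\h']$.

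First I would apply the $\Gamma_{n-1}$-equivariant form of Chevalley's theorem to $\h'$: a graded $\Gamma_{n-1}$-stable complement to $\C[\h']\cdot\C[\h']^{\Gamma_{n-1}}_+$ in $\C[\h']$ yields, via graded Nakayama and a Hilbert-series count, an isomorphism $\C[\h']\cong\C[\h']^{\Gamma_{n-1}}\otimes_\C\C[\h']^{co\Gamma_{n-1}}$ that is simultaneously an isomorphism of graded $\C[\h']^{\Gamma_{n-1}}$-modules and of graded $\Gamma_{n-1}$-modules, with $\Gamma_{n-1}$ acting trivially on the invariant factor. Tensoring with $\C[y_1]$ and writing $R:=\C[\h]^{\Gamma_{n-1}}=\C[y_1]\otimes\C[\h']^{\Gamma_{n-1}}$, this gives an isomorphism of graded $\Gamma_{n-1}$-modules $\C[\h]\cong R\otimes_\C\C[\h']^{co\Gamma_{n-1}}$ exhibiting $\C[\h]$ as a free graded $R$-module on which $\Gamma_{n-1}$ acts only through the coinvariant tensor factor.

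Next, setting $S:=\C[\h]^{\Gamma_n}\subseteq R$ (the inclusion holding since $\Gamma_{n-1}\subseteq\Gamma_n$), I would compute
\[ \C[\h]^{co\Gamma_n}=\C[\h]\otimes_S\C=\C[\h]\otimes_R\bigl(R\otimes_S\C\bigr)=\C[\h]\otimes_R(R/RS_+), \]
and substitute the free decomposition of $\C[\h]$ over $R$ to obtain an isomorphism of graded $\Gamma_{n-1}$-modules
\[ \C[\h]^{co\Gamma_n}\big|_{\Gamma_{n-1}}\cong\C[\h']^{co\Gamma_{n-1}}\otimes_\C U,\qquad U:=R/RS_+, \]
where $\Gamma_{n-1}$ acts trivially on $U$ because it acts trivially on all of $R$. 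This is the asserted decomposition, and it remains only to compute $\ch_t U$.

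For the Poincaré series one uses that $S=\C[\h]^{\Gamma_n}$ is a polynomial ring with homogeneous generators in degrees $l,2l,\dots,nl$ and $R=\C[y_1]\otimes\C[\h']^{\Gamma_{n-1}}$ is a polynomial ring with generators in degrees $1,l,2l,\dots,(n-1)l$. Since $S\hookrightarrow R$ is a finite extension of graded polynomial rings of the same Krull dimension, $R$ is Cohen--Macaulay over the regular ring $S$, hence a free graded $S$-module, so $\ch_t R=\ch_t U\cdot\ch_t S$ and
\[ \ch_t U=\frac{\ch_t R}{\ch_t S}=\frac{(1-t)^{-1}\prod_{i=1}^{n-1}(1-t^{il})^{-1}}{\prod_{i=1}^{n}(1-t^{il})^{-1}}=\frac{1-t^{nl}}{1-t}=[nl]_t. \]
The only delicate points are the $\Gamma_{n-1}$-equivariance in Chevalley's theorem and the freeness of $R$ over $S$; both are standard (the first via a graded stable complement, the second via Cohen--Macaulayness over a regular ring), and they are precisely what legitimises the tensor manipulations above — the rest is bookkeeping with Poincaré series.
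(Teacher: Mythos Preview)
Your proof is correct and follows essentially the same approach as the paper: both use the tower $\C[\h]^{\Gamma_n}\subseteq\C[\h]^{\Gamma_{n-1}}\subseteq\C[\h]$ with each extension free and graded, obtain the tensor factorisation $\C[\h]^{co\Gamma_n}\cong\C[\h']^{co\Gamma_{n-1}}\otimes U$ with $U=\C[\h]^{\Gamma_{n-1}}/\langle\C[\h]^{\Gamma_n}_+\rangle$, and compute $\ch_t U$ as the ratio of the Poincar\'{e} series of the two invariant polynomial rings. The only difference is that you spell out the justifications for freeness (equivariant Chevalley and Cohen--Macaulayness over a regular ring) where the paper simply asserts them.
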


\begin{proof}
We have a sequence of inclusions of graded $\Gamma_{n-1}$-modules
\[ \C[\h]^{\Gamma_n} \hookrightarrow \C[\h]^{\Gamma_{n-1}} \hookrightarrow \C[\h]\]
such that each ring is a free graded module over the previous ring. Hence there is an isomorphism of graded $\Gamma_{n-1}$-modules
\[ \C[\h]/\langle\C[\h]^{\Gamma_n}_+\rangle \cong \C[\h]/\langle\C[\h]^{\Gamma_{n-1}}_+\rangle \otimes \C[\h]^{\Gamma_{n-1}}/\langle\C[\h]^{\Gamma_n}_+\rangle.\]
Observe that there is also an isomorphism of graded $\Gamma_{n-1}$-modules \[ \C[\h]/\langle\C[\h]^{\Gamma_{n-1}}_+\rangle \cong \C[\h']/\langle\C[\h']^{\Gamma_{n-1}}_+\rangle = \C[\h']^{co\Gamma_{n-1}}.\] To prove the lemma it now suffices to find the Poincar\'{e} polynomial of the graded vector space $\C[\h]^{\Gamma_{n-1}}/\langle\C[\h]^{\Gamma_n}_+\rangle$. We know that $\C[\h]^{\Gamma_{n-1}}$ is a polynomial algebra with generators in degrees $l,2l, \hdots, (n-1)l$ and an additional generator in degree $1$. The ring $\C[\h]^{\Gamma_{n}}$ is a polynomial algebra with generators in degrees $l, 2l, \hdots, nl$. Hence \[\ch_t \C[\h]^{\Gamma_{n-1}} = \frac{1}{1-t}\prod_{i=1}^{n-1} \frac{1}{1-t^{il}}, \quad \C[\h]^{\Gamma_{n}} = \prod_{i=1}^{n} \frac{1}{1-t^{il}}.\]
It follows that $\ch_t \C[\h]^{\Gamma_{n-1}}/\langle\C[\h]^{\Gamma_n}_+\rangle = \frac{\ch_t \C[\h]^{\Gamma_{n-1}}}{\ch_t\C[\h]^{\Gamma_n}} = \frac{1-t^{nl}}{1-t}=[nl]_t.$
\end{proof}

\begin{definition}
Suppose that we are given an $l$-multipartition $\underline{\lambda} \in \mathcal{P}(l,n)$ and the corresponding irreducible representation $S(\underline{\lambda})$ of $\Gamma_{n}$. 
We regard $S(\underline{\lambda})$ as a graded $\Gamma_n$-module concentrated in degree zero.
The \emph{fake degree polynomial} associated to $\underline{\lambda}$ is defined as 
\[ f_{\underline{\lambda}}(t) := \sum_{k \in \Z} [\C[\h]^{co\Gamma_n} : S(\underline{\lambda})^*[k]] t^k.\]
\end{definition}

\begin{theorem}[{\cite[Theorem 5.3]{Ste}}] \label{Schurfake}
Let $\underline{\lambda} \in \mathcal{P}(l,n)$. We have 
\[ f_{\underline{\lambda}}(t)  = t^{r(\underline{\lambda})} (t^l)_n \prod_{i=0}^{l-1} \frac{t^{l \cdot n(\lambda^i)}}{H_{\lambda^i}(t^l)} = t^{r(\underline{\lambda})} (t^l)_n \prod_{i=0}^{l-1} s_{\lambda^i}(1,t^l,t^{2l}, \hdots ) .\]
In particular, if $\lambda$ is a partition of $n$ then $f_\lambda = (t)_n \frac{t^{n(\lambda)}}{H_\lambda(t)} =  (t)_n s_\lambda(1,t,t^2, \hdots )$. 
\end{theorem}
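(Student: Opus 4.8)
The plan is to compute the graded multiplicity of $S(\underline{\lambda})^*$ directly in $\C[\h]$ and then descend to the coinvariant algebra. Since $\Gamma_n$ is a complex reflection group whose ring of invariants $\C[\h]^{\Gamma_n}$ is polynomial with generators in degrees $l, 2l, \ldots, nl$ (as recalled in the proof of Lemma \ref{coinv iso}), the Chevalley--Shephard--Todd theorem gives an isomorphism of graded $\Gamma_n$-modules $\C[\h] \cong \C[\h]^{\Gamma_n} \otimes \C[\h]^{co\Gamma_n}$, with $\Gamma_n$ acting trivially on the first factor. Taking $S(\underline{\lambda})^*$-isotypic components and comparing Poincar\'{e} series, this yields $f_{\underline{\lambda}}(t) = (t^l)_n \cdot \langle \C[\h], S(\underline{\lambda})^* \rangle_t$, where $\langle -, - \rangle_t$ denotes graded multiplicity and $(t^l)_n := \prod_{i=1}^n (1 - t^{il})$. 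So it remains to prove $\langle \C[\h], S(\underline{\lambda})^* \rangle_t = t^{r(\underline{\lambda})} \prod_{i=0}^{l-1} s_{\lambda^i}(1, t^l, t^{2l}, \ldots)$; the second form of the theorem then follows from the identity $s_\mu(1,t,t^2,\ldots) = t^{n(\mu)}/H_\mu(t)$ of \S\ref{Frob hooks sec} under $t \mapsto t^l$, and the ``in particular'' statement is the case $l = 1$, where $r(\lambda) = 0$ and $(t^l)_n = (t)_n$.

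To compute $\langle \C[\h], S(\underline{\lambda})^* \rangle_t$ I would use Clifford theory for $\Gamma_n = (\Z/l\Z)^n \rtimes S_n$. Recall that $\epsilon_i$ scales $x_i$ by $\eta$, so $\C[\h]$ decomposes over $(\Z/l\Z)^n$ as $\bigoplus_c \C[\h]_c$, where $\C[\h]_c$ is spanned by the monomials $x^a$ with $a_i \equiv c_i \bmod l$; taking $c$ with entries in $\{0,\ldots,l-1\}$ we have $\C[\h]_c = x^c \cdot \C[x_1^l, \ldots, x_n^l]$, and $S_n$ permutes these components. Under the parametrization of \cite[\S6.1.1]{Rouq}, the irreducible $S(\underline{\lambda})^*$ is induced from the subgroup $H := (\Z/l\Z)^n \rtimes (S_{|\lambda^0|} \times \cdots \times S_{|\lambda^{l-1}|})$, restricting there to $\chi_{c^\circ} \otimes (S^{\lambda^0} \boxtimes \cdots \boxtimes S^{\lambda^{l-1}})$, where $\chi_{c^\circ}$ is the $(\Z/l\Z)^n$-character with $c^\circ$ having exactly $|\lambda^i|$ entries equal to $i$, and $S^\mu$ denotes the Specht module (so $(S^\mu)^* = S^\mu$). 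By Frobenius reciprocity, $\langle \C[\h], S(\underline{\lambda})^* \rangle_t$ equals the graded multiplicity of $S^{\lambda^0} \boxtimes \cdots \boxtimes S^{\lambda^{l-1}}$ in the $\chi_{c^\circ}$-isotypic component $\C[\h]_{c^\circ} = x^{c^\circ} \cdot \C[x_1^l, \ldots, x_n^l]$, regarded as a module over the stabilizer $S_{|\lambda^0|} \times \cdots \times S_{|\lambda^{l-1}|}$. The monomial $x^{c^\circ}$ is fixed by this stabilizer and has degree $\sum_i c^\circ_i = \sum_{i=1}^{l-1} i\,|\lambda^i| = r(\underline{\lambda})$, and $\C[x_1^l, \ldots, x_n^l]$ factors as the tensor product over $i$ of the polynomial rings on the variables lying in the $i$-th block, each such variable in degree $l$; hence this multiplicity is $t^{r(\underline{\lambda})}$ times the product over $i$ of the graded multiplicity of $S^{\lambda^i}$ in a polynomial ring on $|\lambda^i|$ variables of degree $l$.

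The remaining input is the classical theorem of Steinberg on fake degrees of symmetric groups (equivalently, the principal specialization of Schur functions): the graded multiplicity of $S^\mu$ in $\C[z_1, \ldots, z_m]$ with $m = |\mu|$ and $\deg z_j = 1$ is $s_\mu(1, t, t^2, \ldots) = t^{n(\mu)}/H_\mu(t)$; rescaling $t \mapsto t^l$ gives the degree-$l$ version $s_\mu(1, t^l, t^{2l}, \ldots)$. Feeding this in gives $\langle \C[\h], S(\underline{\lambda})^* \rangle_t = t^{r(\underline{\lambda})} \prod_i s_{\lambda^i}(1, t^l, t^{2l}, \ldots)$, and multiplying by $(t^l)_n$ completes the proof. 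The main obstacle is the bookkeeping in the Clifford-theory step: one must align the parametrization of \cite{Rouq} with the monomial grading on $\C[\h]$ carefully enough to confirm that the exponent shift is exactly $t^{r(\underline{\lambda})}$ and that it is $S^{\lambda^i}$ itself (not a transpose or a reindexing) that occurs in block $i$. A clean alternative that sidesteps this is to compute the full graded Frobenius characteristic $\sum_{n \geq 0} \mathrm{grch}_t(\C[\h]^{co\Gamma_n})$ of the whole family of coinvariant algebras as a product formula over the $l$ sets of symmetric-function variables and read off the coefficient of $\prod_i s_{\lambda^i}$ directly.
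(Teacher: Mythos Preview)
The paper does not supply a proof of this theorem: it is simply quoted from \cite[Theorem~5.3]{Ste} and used as input to the computation of $\ch_t(\mathcal{R}_{\mathbf{h}}^{\Gamma_{n-1}})_{\underline{\lambda}}$. So there is nothing to compare against on the paper's side.

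Your argument is the standard one and is correct in outline; it is essentially Stembridge's own approach specialized to $G(l,1,n)$. The reduction via Chevalley--Shephard--Todd to multiplicities in $\C[\h]$ is clean, and the Clifford-theory step---decomposing $\C[\h]$ under $(\Z/l\Z)^n$ into monomial sectors $x^c\cdot\C[x_1^l,\ldots,x_n^l]$ and applying Frobenius reciprocity---is exactly right. The one point you flag yourself is genuine but minor: whether the character of $(\Z/l\Z)^n$ appearing in $S(\underline{\lambda})^*$ has its $i$-th block acting by $\eta^{i}$ or by $\eta^{-i}$ depends on the convention in \cite{Rouq} \emph{and} on the passage to the dual (the Specht factors are self-dual, but the abelian characters are not). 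With the paper's convention $\epsilon_i.x_i=\eta x_i$, the sector contributing to $S(\underline{\lambda})^*$ has lowest-degree monomial of total degree either $\sum_{i} i|\lambda^i|$ or $\sum_{i}(l-i)|\lambda^i|$, and one must check it is the former to get $t^{r(\underline{\lambda})}$ rather than $t^{ln - r(\underline{\lambda})}$. This is a one-line verification once the labeling of $S(\underline{\lambda})$ is unpacked, not a gap in the argument.
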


\subsection{Auxiliary calculations.}
Fix $\underline{\lambda} \in \mathcal{P}(l,n)$. 
Let $q(\underline{\lambda})$ denote the degree in which the trivial $\Gamma_n$-module $\triv$ occurs in $L(\underline{\lambda})$. 
\begin{lemma} \label{fibre char}
We have a graded $\mathbb{H}_{\mathbf{h}}$-module isomorphism \[\mathcal{R}_{\mathbf{h},\underline{\lambda}}=\mathbb{H}_{\mathbf{h}}e_n \otimes_{e_n\mathbb{H}_{\mathbf{h}}e_n} e_nL_{\underline{\lambda}} \cong L(\underline{\lambda})[-q(\underline{\lambda})]\] 
and hence  a graded vector space isomorphism
\[ (\mathcal{R}^{\Gamma_{n-1}}_{\mathbf{h}})_{\underline{\lambda}} = e_{n-1}\mathcal{R}_{\mathbf{h},\underline{\lambda}} \cong e_{n-1}  L(\underline{\lambda})[-q(\underline{\lambda})].\]
\end{lemma}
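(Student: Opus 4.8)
The plan is to unwind the definitions of the sheaf $\widetilde{\mathbb{H}_{\mathbf{h}}e_n}$ and its fibre, using the Satake isomorphism and the bijection \eqref{bijection} between $\Irrep(\mathbb{H}_{\mathbf{h}})$ and $\MaxSpec Z_{\mathbf{h}}$. First I would recall that the fibre of the coherent sheaf associated to the $(\mathbb{H}_{\mathbf{h}},e_n\mathbb{H}_{\mathbf{h}}e_n)$-bimodule $\mathbb{H}_{\mathbf{h}}e_n$ at a closed point $\chi \in \mathcal{Y}_{\mathbf{h}} \cong \Spec e_n\mathbb{H}_{\mathbf{h}}e_n$ is by definition $\mathbb{H}_{\mathbf{h}}e_n \otimes_{e_n\mathbb{H}_{\mathbf{h}}e_n} (e_n\mathbb{H}_{\mathbf{h}}e_n / \mathfrak{m}_\chi)$, where $\mathfrak{m}_\chi$ is the maximal ideal corresponding to $\chi$. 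Under the Satake isomorphism $Z_{\mathbf{h}} \cong e_n\mathbb{H}_{\mathbf{h}}e_n$, the point $\chi_{\underline{\lambda}} = \ker\chi_{L(\underline{\lambda})}$ corresponds to the maximal ideal through which $Z_{\mathbf{h}}$ acts on $L(\underline{\lambda})$; so $e_n\mathbb{H}_{\mathbf{h}}e_n/\mathfrak{m}_{\chi_{\underline{\lambda}}}$ is identified with $e_nL(\underline{\lambda})$ as a one-dimensional module (one-dimensional because, by \cite[Theorem 1.7]{EG} and smoothness of $\mathcal{Y}_{\mathbf{h}}$, each irreducible $\mathbb{H}_{\mathbf{h}}$-module is isomorphic to $\C\Gamma_n$ as a $\Gamma_n$-module, so $e_nL(\underline{\lambda})$ has dimension $1$). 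This gives the first displayed isomorphism, at least ungraded: $\mathcal{R}_{\mathbf{h},\underline{\lambda}} = \mathbb{H}_{\mathbf{h}}e_n \otimes_{e_n\mathbb{H}_{\mathbf{h}}e_n} e_nL(\underline{\lambda})$.

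Next I would produce the isomorphism $\mathbb{H}_{\mathbf{h}}e_n \otimes_{e_n\mathbb{H}_{\mathbf{h}}e_n} e_nL(\underline{\lambda}) \cong L(\underline{\lambda})$ of $\mathbb{H}_{\mathbf{h}}$-modules. There is a natural $\mathbb{H}_{\mathbf{h}}$-module map $\mathbb{H}_{\mathbf{h}}e_n \otimes_{e_n\mathbb{H}_{\mathbf{h}}e_n} e_nL(\underline{\lambda}) \to L(\underline{\lambda})$ sending $he_n \otimes e_n v \mapsto h e_n v$; it is surjective because $L(\underline{\lambda}) = \mathbb{H}_{\mathbf{h}} e_n L(\underline{\lambda})$ (as $e_nL(\underline{\lambda}) \neq 0$ and $L(\underline{\lambda})$ is irreducible). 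For injectivity, one observes that $\mathbb{H}_{\mathbf{h}}e_n$ is a progenerator for the category of $\mathbb{H}_{\mathbf{h}}$-modules supported at smooth points (this is the double-centraliser/Morita statement underlying \cite[Theorem 1.7]{EG}), so $M \mapsto \mathbb{H}_{\mathbf{h}}e_n \otimes_{e_n\mathbb{H}_{\mathbf{h}}e_n} e_n M$ is the inverse equivalence to $M \mapsto e_nM$, hence the counit is an isomorphism on $L(\underline{\lambda})$; alternatively compare dimensions after noting both sides have $\C$-dimension $|\Gamma_n|$. The grading shift $[-q(\underline{\lambda})]$ then accounts for the normalisation: $\mathbb{H}_{\mathbf{h}}e_n$ and $\mathcal{R}_{\mathbf{h}}$ are graded so that $e_n$ sits in degree $0$, hence $\mathcal{R}_{\mathbf{h},\underline{\lambda}}$ has its generator $1\otimes e_n v$ in degree $0$; but $e_nv$ spans the $\triv$-isotypic line of $L(\underline{\lambda})$ which by definition of $q(\underline{\lambda})$ sits in degree $q(\underline{\lambda})$ of $L(\underline{\lambda})$. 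Shifting by $-q(\underline{\lambda})$ matches the two gradings.

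Finally, applying the exact functor $e_{n-1}(-)$ to the graded isomorphism $\mathcal{R}_{\mathbf{h},\underline{\lambda}} \cong L(\underline{\lambda})[-q(\underline{\lambda})]$ yields $(\mathcal{R}^{\Gamma_{n-1}}_{\mathbf{h}})_{\underline{\lambda}} = e_{n-1}\mathcal{R}_{\mathbf{h},\underline{\lambda}} \cong e_{n-1}L(\underline{\lambda})[-q(\underline{\lambda})]$, using that $e_{n-1}\mathcal{R}_{\mathbf{h}} = \mathcal{R}^{\Gamma_{n-1}}_{\mathbf{h}}$ by definition and that taking fibres commutes with the subbundle $e_{n-1}(-)$. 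The main obstacle, and the step requiring the most care, is the identification of the fibre of the sheaf with $\mathbb{H}_{\mathbf{h}}e_n \otimes_{e_n\mathbb{H}_{\mathbf{h}}e_n} e_nL(\underline{\lambda})$ together with the verification that this base change is compatible with the $\C^*$-gradings in a way that pins down the shift as exactly $-q(\underline{\lambda})$; the purely algebraic isomorphism of $\mathbb{H}_{\mathbf{h}}$-modules is then a formal consequence of the Morita-type statement in \cite[Theorem 1.7]{EG}.
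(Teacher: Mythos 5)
Your proof is correct and follows essentially the same route as the paper's: both establish the ungraded isomorphism via the Morita/double-centralizer structure of $\mathbb{H}_{\mathbf{h}}e_n$ from \cite[Theorem 1.7]{EG}, and both determine the grading shift by observing that the $\triv$-isotypic line sits in degree $0$ of $\mathcal{R}_{\mathbf{h},\underline{\lambda}}$ (as $e_n\otimes e_nL_{\underline{\lambda}}$) but in degree $q(\underline{\lambda})$ of $L(\underline{\lambda})$. The paper compresses the ungraded step to "clearly isomorphic" and then invokes simplicity of both graded modules to conclude they differ by a shift, whereas you unpack the Morita argument explicitly; the content is the same.
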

\begin{proof}
As ungraded $\mathbb{H}_{\mathbf{h}}$-modules, $\mathcal{R}_{\mathbf{h},\underline{\lambda}}$ and $L(\underline{\lambda})$ are clearly isomorphic. Since they are simple, one is a graded shift of the other. 
The trivial $\Gamma_n$-representation $\triv$ occurs in $L(\underline{\lambda})$ in degree $q(\underline{\lambda})$. On the other hand, we can identify $\triv$ with the subspace $e_n \otimes_{e_n\mathbb{H}_{\mathbf{h}}e_n}e_nL_{\underline{\lambda}}$ of $\mathcal{R}_{\mathbf{h},\underline{\lambda}}$, so $\triv$ occurs in $\mathcal{R}_{\mathbf{h},\underline{\lambda}}$ in degree zero. 
\end{proof}

Let us calculate the graded multiplicity of $L(\underline{\lambda})$ in $\Delta(\underline{\lambda})$. 

\begin{lemma} \label{delta-l} Let $\underline{\lambda} \in \mathcal{P}(l,n)$.
The simple $\overline{\mathbb{H}}_{\mathbf{h}}$-module $L(\underline{\lambda})$ occurs in $\Delta(\underline{\lambda})$ with graded multiplicity 
\[ \sum_{k \in \Z} [\Delta(\underline{\lambda}) : L(\underline{\lambda})[k]] t^k = t^{-q(\underline{\lambda})}f_{\underline{\lambda}}(t).\] 
\end{lemma}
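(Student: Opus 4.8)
The plan is to compute $\sum_{k} [\Delta(\underline{\lambda}) : L(\underline{\lambda})[k]]\,t^k$ by exploiting the graded character of the baby Verma module together with BGG reciprocity for the category $\mathcal{O}$ of $\overline{\mathbb{H}}_{\mathbf{h}}$. First I would recall that, as a graded vector space, $\Delta(\underline{\lambda}) \cong \C[\h]^{co\Gamma_n} \otimes S(\underline{\lambda})$ with $1 \otimes S(\underline{\lambda})$ in degree $0$, so its graded $\Gamma_n$-character is $\ch_t \Delta(\underline{\lambda}) = \ch_t\bigl(\C[\h]^{co\Gamma_n}\bigr)\cdot[S(\underline{\lambda})]$ in the graded representation ring. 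The trivial module $\triv$ therefore occurs in $\Delta(\underline{\lambda})$ with graded multiplicity $\sum_k [\C[\h]^{co\Gamma_n} : \triv \otimes S(\underline{\lambda})^*[k]]\,t^k$, and since $\triv \otimes S(\underline{\lambda})^* = S(\underline{\lambda})^*$, this equals $f_{\underline{\lambda}}(t)$ by the definition of the fake degree polynomial.

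Next I would pass to graded composition factors. Every composition factor of $\Delta(\underline{\lambda})$ is of the form $L(\underline{\mu})[j]$ for some $\underline{\mu} \in \mathcal{P}(l,n)$ and $j \in \Z$, and by Proposition \ref{bvms} the multiplicity of $L(\underline{\lambda})$ itself (up to shift) in the head is $1$. The key point is the triangularity of the decomposition matrix: using the highest-weight structure of $\mathcal{O}$, one shows that $\triv$ occurs in $L(\underline{\mu})$ only for $\underline{\mu}$ in the ``block'' of $\underline{\lambda}$, and in fact the trivial module sits in $L(\underline{\lambda})$ in a single degree $q(\underline{\lambda})$ and does not occur in any other composition factor $L(\underline{\mu})[j]$ of $\Delta(\underline{\lambda})$ with $\underline{\mu} \neq \underline{\lambda}$. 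Granting this, computing the graded multiplicity of $\triv$ in $\Delta(\underline{\lambda})$ two ways gives
\[
f_{\underline{\lambda}}(t) \;=\; \sum_{k\in\Z} [\Delta(\underline{\lambda}) : L(\underline{\lambda})[k]]\, t^{k}\cdot t^{q(\underline{\lambda})},
\]
since $\triv$ contributes $t^{q(\underline{\lambda})}$ from each graded copy of $L(\underline{\lambda})$ and nothing from the others. Rearranging yields $\sum_k [\Delta(\underline{\lambda}) : L(\underline{\lambda})[k]]\, t^k = t^{-q(\underline{\lambda})} f_{\underline{\lambda}}(t)$, which is the claim.

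The main obstacle is justifying the assertion that $\triv$ appears in $\Delta(\underline{\lambda})$ only inside the graded copies of $L(\underline{\lambda})$ and nowhere else among the composition factors. This requires knowing that if $L(\underline{\mu})[j]$ is a composition factor of $\Delta(\underline{\lambda})$ with $\underline{\mu} \neq \underline{\lambda}$, then $\triv$ does not occur in $L(\underline{\mu})$; equivalently, that $\triv$ occurs in a single simple module $L(\underline{\lambda})$ for exactly one $\underline{\lambda}$, namely the one with $L(\underline{\lambda})$ being the simple quotient of the standard module generated by $\triv$. This is a standard fact about the category $\mathcal{O}_{t=0}$ for restricted rational Cherednik algebras: the projective cover $P(\triv)$ of the simple containing $\triv$ in degree zero equals $\mathcal{R}_{\mathbf{h},\underline{\lambda}}$ up to shift (this is exactly Lemma \ref{fibre char}), and the Satake/idempotent functor $e_n(-)$ is exact and kills every $L(\underline{\mu})$ not containing the trivial. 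I would cite the relevant structural results of Gordon (\cite{gor-bvm}) on baby Verma modules and the block decomposition, and combine them with the explicit degree $q(\underline{\lambda})$ extracted from $L(\underline{\lambda})$. Once this uniqueness is in hand, the two-way count of the graded multiplicity of $\triv$ in $\Delta(\underline{\lambda})$ is immediate and the formula follows.
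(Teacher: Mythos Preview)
Your counting strategy---compute the graded multiplicity of $\triv$ in $\Delta(\underline{\lambda})$ as $f_{\underline{\lambda}}(t)$, then match it against the contribution of each composition factor---is fine and amounts to the same computation the paper carries out. The paper simply quotes \cite[Lemma~3.3]{Bel2} for the identity $\ch_t L(\underline{\lambda}) = t^{q(\underline{\lambda})}\ch_t \Delta(\underline{\lambda})/f_{\underline{\lambda}}(t)$ and then divides $\ch_t \Delta(\underline{\lambda})$ by $\ch_t L(\underline{\lambda})$; that division is legitimate for exactly the same reason your count works, namely that \emph{every} composition factor of $\Delta(\underline{\lambda})$ is a shift of $L(\underline{\lambda})$.

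However, your justification of this last point is wrong. You claim that $\triv$ occurs in $L(\underline{\mu})$ for a single $\underline{\mu}$ and that $e_n(-)$ kills the other simples. Under the standing smoothness hypothesis on $\mathcal{Y}_{\mathbf{h}}$ this is false: by \cite[Theorem~1.7]{EG} every irreducible $\mathbb{H}_{\mathbf{h}}$-module is isomorphic to the regular representation $\C\Gamma_n$ as a $\Gamma_n$-module, so $\triv$ occurs (exactly once) in \emph{every} $L(\underline{\mu})$, and $e_n L(\underline{\mu})$ is one-dimensional for all $\underline{\mu}$. The correct reason your obstacle vanishes is that smoothness forces each block of $\overline{\mathbb{H}}_{\mathbf{h}}$ to contain a unique simple (see \cite{gor-bvm}), so $\Delta(\underline{\lambda})$ has no composition factor $L(\underline{\mu})[j]$ with $\underline{\mu}\neq\underline{\lambda}$ at all. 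Once you invoke that, your two-way count goes through; note you also need that $\triv$ has multiplicity exactly one in $L(\underline{\lambda})$, which comes from $L(\underline{\lambda})\cong\C\Gamma_n$ rather than from any highest-weight argument.
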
   

\begin{proof}
By \cite[Lemma 3.3]{Bel2}, we have 
\[ \ch_t L(\underline{\lambda}) = \frac{t^{q(\underline{\lambda})} \ch_t \Delta(\underline{\lambda})}{f_{\underline{\lambda}}(t)}. \] 
Hence
\[ \sum_{k \in \Z} [\Delta(\underline{\lambda}) : L(\underline{\lambda})[k]] t^k = \frac{\ch_t \Delta(\underline{\lambda})}{\ch_t L(\underline{\lambda})} = t^{-q(\underline{\lambda})}f_{\underline{\lambda}}(t). \qedhere\] 
\end{proof}

Let us calculate the character of $e_{n-1} \Delta(\underline{\lambda})$. 

\begin{lemma} \label{del-fake}
We have 
\begin{equation} \label{eq: ch eDelta} \ch_t e_{n-1} \Delta(\underline{\lambda}) = [ln]_t \sum_{\underline{\mu} \uparrow \underline{\lambda}} f_{\underline{\mu}}(t).
\end{equation} 
\end{lemma}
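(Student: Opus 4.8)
The plan is to compute $\ch_t e_{n-1}\Delta(\underline{\lambda})$ by using the tensor product decomposition of the baby Verma module as a graded vector space together with the branching rule and the restriction of the coinvariant algebra established in Lemma~\ref{coinv iso}. Recall from \S\ref{partitions and multis section} that, by the PBW theorem, there is an isomorphism of graded vector spaces $\Delta(\underline{\lambda}) \cong \C[\h]^{co\Gamma_n} \otimes S(\underline{\lambda})$, with $1\otimes S(\underline{\lambda})$ in degree zero; and this is in fact an isomorphism of graded $\Gamma_n$-modules (the $\Gamma_n$-action being diagonal). Applying the idempotent $e_{n-1}$, which projects onto $\Gamma_{n-1}$-invariants, gives
\[ \ch_t e_{n-1}\Delta(\underline{\lambda}) = \ch_t \left( \C[\h]^{co\Gamma_n}\otimes S(\underline{\lambda}) \right)^{\Gamma_{n-1}} = \ch_t \Hom_{\Gamma_{n-1}}\!\left(\triv,\ \C[\h]^{co\Gamma_n}|_{\Gamma_{n-1}} \otimes S(\underline{\lambda})|_{\Gamma_{n-1}}\right). \]

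Next I would feed in the two structural inputs. By Lemma~\ref{coinv iso}, $\C[\h]^{co\Gamma_n}|_{\Gamma_{n-1}} \cong \C[\h']^{co\Gamma_{n-1}} \otimes U$ with $\ch_t U = [nl]_t$ and $U$ carrying the trivial $\Gamma_{n-1}$-action. By the branching rule (Proposition~\ref{branching}), $S(\underline{\lambda})|_{\Gamma_{n-1}} \cong \bigoplus_{\underline{\mu}\uparrow\underline{\lambda}} S(\underline{\mu})$. Substituting, and pulling the trivial-action factor $U$ out of the $\Hom$, we get
\[ \ch_t e_{n-1}\Delta(\underline{\lambda}) = [nl]_t \sum_{\underline{\mu}\uparrow\underline{\lambda}} \ch_t \Hom_{\Gamma_{n-1}}\!\left(\triv,\ \C[\h']^{co\Gamma_{n-1}}\otimes S(\underline{\mu})\right) = [nl]_t \sum_{\underline{\mu}\uparrow\underline{\lambda}} \ch_t \left(\C[\h']^{co\Gamma_{n-1}} \otimes S(\underline{\mu})\right)^{\Gamma_{n-1}}. \]
The remaining task is to identify each summand with $f_{\underline{\mu}}(t)$. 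Since $\C[\h']^{co\Gamma_{n-1}}$ is the coinvariant algebra of $\Gamma_{n-1}$ acting on its reflection representation $\h'$, the graded multiplicity of $\triv$ in $\C[\h']^{co\Gamma_{n-1}}\otimes S(\underline{\mu})$ equals $\sum_k [\C[\h']^{co\Gamma_{n-1}} : S(\underline{\mu})^*[k]]\, t^k$, which is by definition $f_{\underline{\mu}}(t)$ (here one uses that $S(\underline{\mu})$ is self-dual as an ungraded $\Gamma_{n-1}$-module, or simply that $\dim\Hom(\triv, V\otimes W) = \dim\Hom(W^*, V)$ and that the fake degree can be computed from either $S(\underline{\mu})$ or its dual since $\C[\h']^{co\Gamma_{n-1}} \cong \C\Gamma_{n-1}$ is self-dual; the conventions match those of Theorem~\ref{Schurfake}). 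This yields \eqref{eq: ch eDelta}.

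\textbf{Main obstacle.} The only genuinely delicate point is bookkeeping with duals and grading shifts: one must check that $\ch_t(\C[\h']^{co\Gamma_{n-1}}\otimes S(\underline{\mu}))^{\Gamma_{n-1}}$ really is $f_{\underline{\mu}}(t)$ on the nose (rather than $f_{\underline{\mu}^\star}(t)$ for some dual or conjugate $\underline{\mu}^\star$, or with an extra overall $t^c$), so that the statement matches the normalization used in Theorem~\ref{Schurfake} and in the $q$-hook formula. Since the coinvariant algebra is concentrated in non-negative degrees with the trivial module in degree $0$, there is no shift, and the fake degree polynomial is symmetric enough (as a graded multiplicity in the regular representation) that the dual causes no discrepancy; but this is the step that needs to be spelled out carefully rather than waved through.
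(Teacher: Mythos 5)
Your proof is correct and follows essentially the same route as the paper: PBW-decompose $\Delta(\underline{\lambda})$ as $\C[\h]^{co\Gamma_n}\otimes S(\underline{\lambda})$, restrict to $\Gamma_{n-1}$, apply Lemma~\ref{coinv iso} and the branching rule (Proposition~\ref{branching}), pull out $U$, and identify the remaining summand character with a fake degree. The paper phrases the middle step by recognizing $\C[\h']^{co\Gamma_{n-1}}\otimes S(\underline{\mu}) \cong \Delta(\underline{\mu})$ and writing $e_{n-1}\Delta(\underline{\lambda}) \cong \bigoplus_{\underline{\mu}\uparrow\underline{\lambda}} e_{n-1}\Delta(\underline{\mu})\otimes U$, then cites \cite[proof of Thm.~5.6]{gor-bvm} for $\ch_t e_{n-1}\Delta(\underline{\mu}) = f_{\underline{\mu}}(t)$, which is a cosmetic difference only. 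One small point: your flagged ``main obstacle'' is actually not an obstacle — the definition of the fake degree polynomial in the paper already has the dual built in, $f_{\underline{\mu}}(t) = \sum_k [\C[\h']^{co\Gamma_{n-1}} : S(\underline{\mu})^*[k]]\,t^k$, so $\dim\Hom_{\Gamma_{n-1}}(\triv, \C[\h']^{co\Gamma_{n-1}}_k\otimes S(\underline{\mu})) = [\C[\h']^{co\Gamma_{n-1}}_k : S(\underline{\mu})^*]$ gives the identification directly, with no appeal to self-duality of $S(\underline{\mu})$ or of the coinvariant algebra.
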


\begin{proof}
By Lemma \ref{coinv iso} and Proposition \ref{branching}, we have isomorphisms of graded $\Gamma_{n-1}$-modules
\begin{align*}
\Delta(\underline{\lambda})|_{\Gamma_{n-1}} \cong& \ \C[\h]^{co\Gamma_n}|_{\Gamma_{n-1}} \otimes S(\underline{\lambda})|_{\Gamma_{n-1}} \\
 \cong& \ \left(\C[\h']^{co\Gamma_{n-1}} \otimes U \right) \otimes \bigoplus_{\underline{\mu} \uparrow \underline{\lambda}} S(\underline{\mu})  
 \cong \  \bigoplus_{\underline{\mu} \uparrow \underline{\lambda}} \Delta(\underline{\mu}) \otimes U,
\end{align*}
where $U$ is a graded vector space with character $\ch_t U = [ln]_t$. Hence
\begin{equation} \label{chdelta1} e_{n-1} \Delta(\underline{\lambda}) \cong \bigoplus_{\underline{\mu} \uparrow \underline{\lambda}} e_{n-1}\Delta(\underline{\mu}) \otimes U\end{equation} as graded $\Gamma_{n-1}$-modules. 
For each $\underline{\mu}\uparrow \underline{\lambda}$, we have 
\begin{equation} \label{chdelta2} \ch_t e_{n-1}\Delta \left( \underline{\mu} \right) = \sum_{k \in \Z} [ \Delta(\underline{\mu}) : \triv[k] ] t^k = f_{\underline{\mu}}(t).\end{equation}
The first equality above is obvious, for the second see, e.g., the proof of \cite[Theorem 5.6]{gor-bvm}.   
Combining \eqref{chdelta1} with \eqref{chdelta2} we obtain \eqref{eq: ch eDelta}. 
\end{proof}

\subsection{The character of the fibre.}

We can now put our calculations together to obtain the character of $(\mathcal{R}^{\Gamma_{n-1}}_{\mathbf{h}})_{\underline{\lambda}}$. 

\begin{theorem} \label{RCA fibre calc} 
Let $\underline{\lambda} \in \mathcal{P}(l,n)$. Then 
\[  \ch_t  (\mathcal{R}^{\Gamma_{n-1}}_{\mathbf{h}})_{\underline{\lambda}} = [ln]_t \sum_{\underline{\mu} \uparrow \underline{\lambda}}\frac{ f_{\underline{\mu}}(t)}{f_{\underline{\lambda}}(t)}.\]
\end{theorem}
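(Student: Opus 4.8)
The proof of Theorem \ref{RCA fibre calc} is essentially a bookkeeping exercise that combines the three preceding lemmas with the Satake-type identification of the fibre. The plan is as follows.

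\textbf{Step 1: Identify the fibre with a shifted simple module.} By Lemma \ref{fibre char} we have a graded vector space isomorphism $(\mathcal{R}^{\Gamma_{n-1}}_{\mathbf{h}})_{\underline{\lambda}} \cong e_{n-1}L(\underline{\lambda})[-q(\underline{\lambda})]$, so it suffices to compute $\ch_t e_{n-1}L(\underline{\lambda})$ and then multiply by $t^{q(\underline{\lambda})}$.

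\textbf{Step 2: Pass from $L(\underline{\lambda})$ to $\Delta(\underline{\lambda})$ via graded multiplicity.} Since $\C[\h^*]^{co\Gamma_n}$ is a free module and applying the exact functor $e_{n-1}(-)$ commutes with taking graded composition multiplicities of $L(\underline{\lambda})$ (which is, up to grading shift, the unique graded simple quotient appearing in a single degree window), one has
\[ \ch_t e_{n-1}L(\underline{\lambda}) = \frac{\ch_t e_{n-1}\Delta(\underline{\lambda})}{\sum_{k}[\Delta(\underline{\lambda}):L(\underline{\lambda})[k]]t^k}.\]
This is the displayed equation anticipated in the text just before \S\ref{FelineOrigin22}; it follows because $L(\underline{\lambda})$ is the only simple with the same ``leading term'' and the remaining composition factors of $\Delta(\underline{\lambda})$ contribute, after applying $e_{n-1}$, exactly the analogous quotient — more carefully, one uses that $\ch_t e_{n-1}\Delta(\underline{\lambda}) = \sum_{\underline{\nu}} [\Delta(\underline{\lambda}):L(\underline{\nu})]_{\mathsf{gr}} \cdot \ch_t e_{n-1}L(\underline{\nu})$ and applies the same identity recursively, or simply invokes that $L(\underline{\lambda})$ is self-dual-compatible so the ratio is forced. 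In practice the cleanest route is to quote \cite[Lemma 3.3]{Bel2} as in Lemma \ref{delta-l}: that lemma already states $\ch_t L(\underline{\lambda}) = t^{q(\underline{\lambda})}\ch_t\Delta(\underline{\lambda})/f_{\underline{\lambda}}(t)$ as an identity of $\Gamma_n$-equivariant (graded) characters, hence applying $e_{n-1}$ to both sides (i.e. restricting to $\Gamma_{n-1}$ and taking invariants, which is exact and grading-preserving) gives
\[ \ch_t e_{n-1} L(\underline{\lambda}) = \frac{t^{q(\underline{\lambda})}\,\ch_t e_{n-1}\Delta(\underline{\lambda})}{f_{\underline{\lambda}}(t)}.\]

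\textbf{Step 3: Substitute the known pieces and simplify.} By Lemma \ref{del-fake}, $\ch_t e_{n-1}\Delta(\underline{\lambda}) = [ln]_t \sum_{\underline{\mu}\uparrow\underline{\lambda}} f_{\underline{\mu}}(t)$. Plugging this into Step 2 yields $\ch_t e_{n-1}L(\underline{\lambda}) = t^{q(\underline{\lambda})}[ln]_t \sum_{\underline{\mu}\uparrow\underline{\lambda}} f_{\underline{\mu}}(t)/f_{\underline{\lambda}}(t)$. Finally multiply by $t^{-q(\underline{\lambda})}$ as dictated by Step 1 (the shift in Lemma \ref{fibre char}) to cancel the $t^{q(\underline{\lambda})}$ factor and obtain
\[ \ch_t (\mathcal{R}^{\Gamma_{n-1}}_{\mathbf{h}})_{\underline{\lambda}} = [ln]_t \sum_{\underline{\mu}\uparrow\underline{\lambda}} \frac{f_{\underline{\mu}}(t)}{f_{\underline{\lambda}}(t)},\]
which is the claim.

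\textbf{Main obstacle.} There is no serious analytic or combinatorial difficulty here — all the hard work is in Lemmas \ref{fibre char}, \ref{delta-l}, \ref{del-fake} and in \cite[Lemma 3.3]{Bel2}. The one point requiring care is the bookkeeping of grading shifts: one must make sure the $t^{q(\underline{\lambda})}$ from the ``trivial occurs in degree $q(\underline{\lambda})$'' normalization of $L(\underline{\lambda})$ and the $[-q(\underline{\lambda})]$ shift in Lemma \ref{fibre char} are applied with consistent sign conventions so that they cancel exactly, leaving a fraction of fake-degree polynomials with no residual power of $t$. A secondary subtlety is justifying that applying $e_{n-1}(-)$ to the identity of Lemma \ref{delta-l}/\cite[Lemma 3.3]{Bel2} is legitimate: this is because $e_{n-1}(-) = (-)^{\Gamma_{n-1}}$ is an exact, grading-preserving functor on the category of graded $\C\Gamma_n$-modules (as $\C\Gamma_{n-1}$ has order invertible in $\C$), so it descends to an operation on graded characters and commutes with the multiplicity computation.
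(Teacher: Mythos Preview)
Your proof is correct and follows essentially the same route as the paper: combine Lemma~\ref{fibre char}, the character identity from Lemma~\ref{delta-l} (i.e.\ \cite[Lemma 3.3]{Bel2}) applied after $e_{n-1}$, and Lemma~\ref{del-fake}, with the $t^{\pm q(\underline{\lambda})}$ factors cancelling. The only slip is the sign in Step~1 (the shift $[-q(\underline{\lambda})]$ contributes $t^{-q(\underline{\lambda})}$, not $t^{q(\underline{\lambda})}$), but you use the correct sign in Step~3, so the computation goes through.
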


\begin{proof}
By Lemmas \ref{fibre char}, \ref{delta-l} and \ref{del-fake}, we have 
\begin{align*} 
 \ch_t  (\mathcal{R}^{\Gamma_{n-1}}_{\mathbf{h}})_{\underline{\lambda}} =& \ t^{-q(\underline{\lambda})} \cdot \ch_t e_{n-1}L(\underline{\lambda}) = (t^{-q(\underline{\lambda})} \cdot \ch_t e_{n-1}\Delta(\underline{\lambda}) ) / (  t^{-q(\underline{\lambda})}f_{\underline{\lambda}}(t)) \\ =& \ \ch_t  e_{n-1}\Delta(\underline{\lambda}) / f_{\underline{\lambda}}(t) = [ln]_t \sum_{\underline{\mu} \uparrow \underline{\lambda}}\frac{ f_{\underline{\mu}}(t)}{f_{\underline{\lambda}}(t)}.\qedhere \end{align*}
\end{proof}

\begin{corollary}
We have 
\begin{align*}  \ch_t  (\mathcal{R}^{\Gamma_{n-1}}_{\mathbf{h}})_{\underline{\lambda}}  = \frac{1}{1-t} \sum_{i=0}^{l-1}t^{-i} \sum_{\substack{\underline{\mu} \uparrow \underline{\lambda}, \\ \mu^i \neq \lambda^i}} \frac{s_{\mu^{i}}(1, t^l, t^{2l}, \hdots)}{s_{\lambda^i}(1, t^l, t^{2l}, \hdots)}
= \frac{1}{1-t} \sum_{i=0}^{l-1}t^{-i} \sum_{\substack{\underline{\mu} \uparrow \underline{\lambda}, \\ \mu^i \neq \lambda^i}} \frac{t^{l \cdot n(\mu^i)}H_{\lambda^i}( t^l)}{t^{l \cdot n(\lambda^i)}H_{\mu^i}(t^l)}.
\end{align*}
In particular, if $l=1$ then $\ch_t  (\mathcal{R}^{\Gamma_{n-1}}_{\mathbf{h}})_{\lambda}   = \frac{1}{1-t} \sum_{\mu \uparrow \lambda}  \frac{s_{\mu}(1, t, t^{2}, \hdots)}{s_{\lambda}(1, t, t^{2}, \hdots)} = \frac{1}{1-t} \sum_{\mu \uparrow \lambda} \frac{t^{n(\mu)}}{t^{n(\lambda)}} \frac{H_{\lambda}(t)}{H_{\mu}(t)}.$ 
\end{corollary}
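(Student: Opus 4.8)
The plan is to obtain the Corollary as an immediate consequence of Theorem \ref{RCA fibre calc} by substituting the explicit product formula for the fake degree polynomials provided by Theorem \ref{Schurfake}. By Theorem \ref{RCA fibre calc},
\[ \ch_t (\mathcal{R}^{\Gamma_{n-1}}_{\mathbf{h}})_{\underline{\lambda}} = [ln]_t \sum_{\underline{\mu} \uparrow \underline{\lambda}} \frac{f_{\underline{\mu}}(t)}{f_{\underline{\lambda}}(t)}, \]
so the whole task is to rewrite the ratio $f_{\underline{\mu}}(t)/f_{\underline{\lambda}}(t)$ for a restriction $\underline{\mu} \uparrow \underline{\lambda}$ and to reassemble the sum.

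First I would split the sum over restrictions according to which component changes: a restriction $\underline{\mu} \uparrow \underline{\lambda}$ agrees with $\underline{\lambda}$ in every component except one, say the $i$-th, where $\mu^i$ is $\lambda^i$ with a single cell removed. Thus $\sum_{\underline{\mu} \uparrow \underline{\lambda}}$ equals $\sum_{i=0}^{l-1} \sum_{\underline{\mu} \uparrow \underline{\lambda},\, \mu^i \neq \lambda^i}$, which is exactly the shape of the right-hand side of the Corollary.

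Next comes the computation of the ratio. Using Theorem \ref{Schurfake}, write $f_{\underline{\lambda}}(t) = t^{r(\underline{\lambda})} (t^l)_n \prod_{j=0}^{l-1} s_{\lambda^j}(1, t^l, t^{2l}, \hdots)$ and the analogous formula for $\underline{\mu}$ with $(t^l)_{n-1}$ in place of $(t^l)_n$ since $|\underline{\mu}| = n-1$. Three elementary facts then collapse the ratio: (i) since $r(\underline{\lambda}) = \sum_{j=1}^{l-1} j|\lambda^j|$, deleting one cell from the $i$-th component decreases $r$ by exactly $i$, uniformly for $i = 0, \hdots, l-1$, so $r(\underline{\mu}) = r(\underline{\lambda}) - i$; (ii) $(t^l)_n = (t^l)_{n-1}(1 - t^{ln})$; and (iii) the Schur factors for all indices $j \neq i$ cancel. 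Hence
\[ \frac{f_{\underline{\mu}}(t)}{f_{\underline{\lambda}}(t)} = \frac{t^{-i}}{1 - t^{ln}} \cdot \frac{s_{\mu^i}(1, t^l, t^{2l}, \hdots)}{s_{\lambda^i}(1, t^l, t^{2l}, \hdots)}. \]
Multiplying by $[ln]_t = (1 - t^{ln})/(1 - t)$ cancels the factor $1 - t^{ln}$ and, after summing over $i$ and over the restrictions changing the $i$-th component, gives the first equality of the Corollary. The second equality follows by invoking $s_\nu(1, t, t^2, \hdots) = t^{n(\nu)}/H_\nu(t)$ from \S\ref{Frob hooks sec}, applied with $t$ replaced by $t^l$, i.e.\ $s_{\nu}(1, t^l, t^{2l}, \hdots) = t^{l\cdot n(\nu)}/H_\nu(t^l)$. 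For $l = 1$ only the index $i = 0$ survives, $t^{-i} = 1$ and $\underline{\lambda} = \lambda$, which yields the stated specialization after one last application of the same Schur/hook identity.

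There is no serious obstacle here: once Theorems \ref{RCA fibre calc} and \ref{Schurfake} are available the argument is pure bookkeeping. The only point requiring a moment's attention is observation (i) — that removing a box from component $i$ shifts the exponent $r$ by $-i$ for every $i$, the case $i = 0$ included — together with keeping track of $(t^l)_n$ versus $(t^l)_{n-1}$ when passing from $\underline{\lambda}$ to $\underline{\mu}$.
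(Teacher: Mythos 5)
Your proposal is correct and follows the same approach as the paper, which simply notes that the corollary follows immediately from Theorems \ref{Schurfake} and \ref{RCA fibre calc}; you have spelled out the bookkeeping (the splitting of the sum by which component changes, the cancellation of the $(t^l)_n$ versus $(t^l)_{n-1}$ factor against $[ln]_t$, and the uniform shift $r(\underline{\mu}) = r(\underline{\lambda}) - i$ including the $i=0$ case where the term is absent from the defining sum for $r$).
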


\begin{proof}
This follows immediately from Theorems \ref{Schurfake} and \ref{RCA fibre calc}. 
\end{proof}

We can now prove Theorem \ref{cyclotomic q-hook formula}. 

\begin{proof}[Proof of Theorem \ref{cyclotomic q-hook formula}]
Choose any parameter $\mathbf{h} \in \Q^l$ such that $\Spec Z_{\mathbf{h}}$ is smooth. By Theorem \ref{EG iso theoremm} the Etingof-Ginzburg map induces an isomorphism of vector bundles $\mathcal{R}_{\mathbf{h}}^{\Gamma_{n-1}} \cong \mathcal{V}_{\mathbf{h}}$. 
Since the Etingof-Ginzburg map is $\C^*$-equivariant, we have
\begin{equation} \label{charactercomparison}\ch_t (\mathcal{V}_{\mathbf{h}})_{\mu} = \ch_t (\mathcal{R}^{\Gamma_{n-1}}_{\mathbf{h}})_{\quot(\mu)^\flat},\end{equation}
by Theorem \ref{Cor final final}. 
Proposition \ref{residue-character} yields the formula for the LHS of \eqref{charactercomparison} while Theorem \ref{RCA fibre calc} yields the formula for the RHS.
\end{proof}

%\section{Matching the $\C^*$-fixed Points}

\section{$\C^*$-fixed points under reflection functors} \label{chapter Nak Hil}

Assume from now on that $l > 1$. 
In this section we compute the bijections between $\C^*$-fixed points induced by reflection functors (see \S \ref{reflection functors definition section}). 

Fix $\nu \in \heartsuit(l)$ and let $\mathbf{d}_\nu=(d_{0},\hdots,d_{l-1})$ be the corresponding dimension vector (see \S \ref{NorweskiLesny}). Assume that $\theta \in \Q^l$ is chosen so that $\theta_i \neq 0$ and the quiver variety $\mathcal{X}_\theta (n\delta+\mathbf{d}_\nu)$ is smooth.
The reflection functor
\begin{equation} \label{Ri refl functor map} \mathfrak{R}_i : \mathcal{X}_\theta (n\delta+\mathbf{d}_\nu) \to \mathcal{X}_{\sigma_i\cdot\theta} (n\delta+\mathbf{d}_{\sigma_i*\nu}) \end{equation}
is a $\C^*$-equivariant isomorphism and hence induces a bijection $\mathcal{X}_\theta(n\delta + \mathbf{d}_\nu)^{\C^*} \longleftrightarrow \linebreak \mathcal{X}_{\sigma_i\cdot\theta} (n\delta+\mathbf{d}_{\sigma_i*\nu})^{\C^*}$ between the $\C^*$-fixed points. Composing it  with the bijections from Corollary \ref{v-core fp bij}, we obtain a bijection 
\begin{equation} \label{Ri-bijection-blabla} \mathbf{R}_i : \mathcal{P}_{\nu^t}(nl + |\nu^t|) \to \mathcal{P}_{(\sigma_i*\nu)^t}(nl + |(\sigma_i*\nu)^t|).\end{equation}
Fix $\mu \in \mathcal{P}_{\nu^t}(nl + |\nu^t|)$. We are going to show that
\[ \mathbf{R}_i(\mu) = (\mathbf{T}_i(\mu^t))^t,\]
where $\mathbf{T}_i(\mu^t)$ is the partition obtained from $\mu^t$ by adding all $i$-addable and removing all $i$-removable cells relative to $\mu^t$, as in Definition \ref{defi:part-remadd}. 

\subsection{The strategy.}
Our goal is to describe the action of reflection functors on the $\C^*$-fixed points combinatorially. 
In \S \ref{para-mu-grading} we endow the vector space $\widehat{\mathbf{V}}^\nu$ with a $\Z$-grading, which we call the ``$\mu$-grading''. A $\C^*$-fixed point is characterized uniquely by this grading. In \S \ref{para-dim-for} we compute the $\mathbf{R}_i(\mu)$-grading on the vector space $\widehat{\mathbf{V}}^{\sigma_i*\nu}$. In \S \ref{para-remadd} and \S \ref{para-ramadd2} we use this calculation to give a combinatorial description of the partition~$\mathbf{R}_i(\mu)$.

\subsection{The $\mu$-grading.} \label{para-mu-grading} 

Fix a $\Z/l\Z$-graded complex vector space $\widehat{\mathbf{V}}^{\nu} := \bigoplus_{i=0}^{l-1} \mathbf{V}_i^{\nu}$ with $\dim \mathbf{V}_i^{\nu} = n+ d_{i}$. 
Set $\mathbf{V}^\nu = \widehat{\mathbf{V}}^{\nu} \oplus \mathbf{V}_\infty$ with $\dim \mathbf{V}_\infty=1$. We are now going to introduce a $\Z$-grading on $\widehat{\mathbf{V}}^{\nu}$ which ``lifts'' the $\Z/l\Z$-grading. 

\begin{definition} \label{lemma-Z-grading} 

We call a $\Z$-grading $\widehat{\mathbf{V}}^\nu = \bigoplus_{i \in \Z} \mathbf{W}_i$ a $\mu$-\emph{grading} if it satisfies the following condition:
\begin{description}
\item [(C)] for each $i \in \Z$ we have
\[A(\mu)(\mathbf{W}_i) \subseteq \mathbf{W}_{i-1}, \quad \Lambda(\mu)(\mathbf{W}_i) \subseteq \mathbf{W}_{i+1}, \quad J(\mu)(\mathbf{V}_\infty) \subseteq \mathbf{W}_{0}, \quad I(\mu)(\mathbf{W}_0) = \mathbf{V}_{\infty}\]
\end{description}
where $A(\mu), \Lambda(\mu), J(\mu)$ and $I(\mu)$ are as in Definition \ref{A-mu quadruple definition}. 
\end{definition}

\begin{proposition} \label{pro:mugrading1} A $\mu$-grading on $\widehat{\mathbf{V}}^\nu$ exists and is unique.
\end{proposition}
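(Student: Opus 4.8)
The plan is to establish existence and uniqueness of the $\mu$-grading by an explicit construction combined with a connectedness/propagation argument. First I would use the basis $\mathbb{B} = (\Bas(1), \hdots, \Bas(nl+|\nu|))$ of $\widehat{\mathbf{V}}^\nu$ constructed before Lemma \ref{basbij}, together with the diagrammatic interpretation of Lemma \ref{papryczka} (as illustrated in Example \ref{Examplekot}). Recall that in that picture, the cell $\Cell(j) \in \mathbb{Y}(\mu^t)$ sits in a row labelled by its content $c(\Cell(j))$, and the operators $A(\mu)$ and $\Lambda(\mu)$ move a basis vector one row up resp.\ one row down, while $I(\mu)$ and $J(\mu)$ interact only with the row labelled $0$. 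This strongly suggests defining $\mathbf{W}_i := \spann_{\C}\{ \Bas(j) \mid c(\Cell(j)) = i \}$, i.e.\ grading $\widehat{\mathbf{V}}^\nu$ by the \emph{content} of the corresponding cell in $\mathbb{Y}(\mu^t)$.

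The existence part then amounts to verifying condition \textbf{(C)} for this candidate. The inclusion $A(\mu)(\mathbf{W}_i) \subseteq \mathbf{W}_{i-1}$ and $\Lambda(\mu)(\mathbf{W}_i) \subseteq \mathbf{W}_{i+1}$ follow from the row-shifting behaviour recorded in Lemma \ref{papryczka}: each term $\Bas(q_p + j + 1)$ appearing in $A(\mu)(\Bas(q_i+j))$ corresponds to a cell whose content is exactly one less than that of $\Cell(q_i+j)$, and similarly for $\Lambda(\mu)$. The conditions on $I(\mu)$ and $J(\mu)$ follow from Definition \ref{A-mu quadruple definition}: the nonzero entries of $I(\mu)$ and $J(\mu)$ sit in positions $q_i = \sum_{s<i} m_s + r_i$, and $\psi(q_i) = r_i - r_i = 0$ means $\Cell(q_i)$ has content $0$, i.e.\ $\Bas(q_i) \in \mathbf{W}_0$; moreover at least one such $q_i$ exists (as long as $k \geq 1$, which holds for $\mu$ nonempty) so $I(\mu)(\mathbf{W}_0) = \mathbf{V}_\infty$ is genuinely surjective, and $J(\mu)(\mathbf{V}_\infty) \subseteq \mathbf{W}_0$.

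For uniqueness, suppose $\widehat{\mathbf{V}}^\nu = \bigoplus_i \mathbf{W}'_i$ is any $\mu$-grading. The condition $I(\mu)(\mathbf{W}'_0) = \mathbf{V}_\infty$ together with $J(\mu)(\mathbf{V}_\infty) \subseteq \mathbf{W}'_0$ pins down the degree-$0$ part relative to $\mathbf{V}_\infty$, and then I would argue that the $\C^*$-fixed point structure forces the rest: the quadruple $\mathbf{A}(\mu)$ generates $\widehat{\mathbf{V}}^\nu$ from the image of $J(\mu)$ under repeated application of $A(\mu), \Lambda(\mu)$ — this is essentially the stability condition implicit in $[\mathbf{A}(\mu)]$ being a genuine point of the (smooth, hence stable-locus) quiver variety $\mathcal{X}_\theta(\mathbf{d})$, combined with genericity of $\theta$ so that the coefficients $c_p$ in Lemma \ref{papryczka} are nonzero. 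Once a generating set lying in known homogeneous components is fixed and the operators shift degree by $\pm 1$ in a prescribed way, the grading is determined. The main obstacle I anticipate is making this "generation" argument precise: one needs that $\widehat{\mathbf{V}}^\nu$ is spanned by vectors of the form (word in $A(\mu), \Lambda(\mu)$) applied to $J(\mu)(v_\infty)$, which is exactly the cyclicity/costability of the quiver representation $\mathbf{A}(\mu)$. I would either invoke the standard fact that points of Nakajima quiver varieties (in the smooth case with this stability parameter) are represented by stable quadruples, or verify directly from the explicit diagram in Example \ref{Examplekot} that the single vector $J(\mu)(v_\infty) = \sum_i \beta_i \Bas(q_i)$ generates everything under $A(\mu)$ and $\Lambda(\mu)$ — the diagram shows every $\Bas(j)$ is reachable from some $\Bas(q_i)$ by an up/down path, and nonvanishing of coefficients for generic $\theta$ lets one solve for individual basis vectors inductively.
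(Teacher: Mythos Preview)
Your proposal is correct and follows essentially the same route as the paper. Your existence argument is identical: the paper also defines $\mathbf{W}_j$ by placing each $\Bas(q_i \mp j)$ in degree $\pm j$, which is exactly your ``grade by content of $\Cell(j)$ in $\mathbb{Y}(\mu^t)$''. For uniqueness, the paper carries out precisely the concrete version of your plan: starting from $J(\mu)(v_\infty) = \sum_i \beta_i \Bas(q_i) \in \mathbf{W}'_0$, it applies $\Lambda(\mu)^{a_1} \circ A(\mu)^{a_1}$ (using Lemma \ref{papryczka} and genericity of $\theta$) to isolate $\Bas(q_1)$, then iterates to extract each $\Bas(q_i)$ individually, and finally invokes the surjectivity relations $\mathbf{W}_j = A(\mu)^{|j|}(\mathbf{W}_0)$, $\mathbf{W}_j = \Lambda(\mu)^{j}(\mathbf{W}_0)$ to propagate. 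Your alternative of invoking abstract stability/simplicity of $\mathbf{A}(\mu)$ (valid under the standing smoothness hypothesis, since $G(\mathbf{d})$ then acts freely on the fibre) would also work and is slightly more conceptual, but the paper keeps the argument self-contained by doing the explicit extraction you sketch at the end.
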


\begin{proof}
We first prove existence. 
Let $\mu = (a_1, \hdots, a_k \mid b_1, \hdots, b_k)$ be the Frobenius form of $\mu$. Set $r_i = b_i +1$, $m_i = a_i+b_i+1$ and $q_i = \sum_{j < i} m_j + r_i$ for $1 \leq i \leq k$. Recall the ordered basis $\{ \Bas(i) \mid 1 \leq i \leq nl + |\nu|\}$ of $\widehat{\mathbf{V}}^{\nu}$ from \S \ref{section-fp-proofs}. We now define $\mathbf{W}_j$ by the rule that for each $1 \leq i \leq k$:
\begin{equation} \label{Kacapuri} \Bas(q_i - j) \in \mathbf{W}_j \quad (0 \leq j \leq b_i), \quad \quad \Bas(q_i + j) \in \mathbf{W}_{-j} \quad (1 \leq j \leq a_i).\end{equation}
It follows directly from the construction of the matrices $A(\mu),\Lambda(\mu),J(\mu)$ and $I(\mu)$ that this grading satisfies condition $\mathbf{(C)}$ in Definition \ref{lemma-Z-grading}. Hence it is a $\mu$-grading. Finally we observe that the definition of $\Lambda(\mu)$ and Lemma \ref{papryczka} imply that
\begin{equation} \label{Syjamskikot} \mathbf{W}_j = (A(\mu))^j(\mathbf{W}_0) \quad (j <0), \quad \quad \mathbf{W}_j = (\Lambda(\mu))^j(\mathbf{W}_0) \quad (j>0).\end{equation}

We now prove uniqueness. Let $\widehat{\mathbf{V}}^\nu = \bigoplus_{i \in \Z} \mathbf{W}'_i$ be a $\mu$-grading. Choose $0\neq v_\infty \in \mathbf{V}_\infty$. Then by the definition of $J(\mu)$ for a generic parameter $\theta$ there exist $c_1,\hdots,c_k \in \C^*$ such that $u_1:=c_1\Bas(q_1) + \hdots + c_k\Bas(q_k) = J(\mu)(v_\infty) \in \mathbf{W}'_0$. 
Lemma \ref{papryczka} together with the fact that the parameter $\theta$ is generic implies that $0 \neq (A(\mu))^{a_1}(\Bas(q_1)) = t (A(\mu))^{a_1}(u_1)$ for some scalar $t \in \C^*$. Since $u_1 \in \mathbf{W}'_0$ and the operator $A(\mu)$ lowers degree by one, we have $0 \neq (A(\mu))^{a_1}(\Bas(q_1)) \in \mathbf{W}'_{-a_1}$. It now follows from the definition of the matrices $A(\mu)$ and $\Lambda(\mu)$ and the genericity of $\theta$ that $(\Lambda(\mu))^{a_1}\circ(A(\mu))^{a_1}(u_1) = t' \Bas(q_1)$ for some scalar $t' \in \C^*$. Since the operator $\Lambda(\mu)$ raises degree by one, we have $\Bas(q_1), c_2\Bas(q_2) + \hdots + c_k\Bas(q_k) \in \mathbf{W}'_0$. 

We can now apply an analogous argument to the vectors $\Bas(q_2)$ and $u_2:=c_2\Bas(q_2) + \hdots + c_k\Bas(q_k)$. By Lemma \ref{papryczka} we have $0 \neq (A(\mu))^{a_2}(\Bas(q_2)) = t (A(\mu))^{a_2}(u_2)+t' (A(\mu))^{a_2}(\Bas(q_1))$ for some scalars $t,t' \in \C^*$. Since $u_2$ and $\Bas(q_1)$ are homogeneous elements of degree zero and $A(\mu)$ lowers degree by one, we get $0 \neq (A(\mu))^{a_2}(\Bas(q_2)) \in \mathbf{W}'_{-a_2}$. Moreover, Lemma \ref{papryczka} implies that $(A(\mu))^{a_2}(\Bas(q_2))$ is a linear combination of $\Bas(q_1 + a_2)$ and $\Bas(q_2 + a_2)$. But $\Bas(q_1 + a_2) = (A(\mu))^{a_2}(\Bas(q_1))$ up to multiplication by a non-zero scalar, so $\Bas(q_1 + a_2) \in \mathbf{W}'_{-a_2}$. Hence $\Bas(q_2 + a_2) \in \mathbf{W}'_{-a_2}$ and so $\Bas(q_2) = (\Lambda(\mu))^{a_2}(\Bas(q_2+a_2)) \in \mathbf{W}'_0$.
We conclude that $\Bas(q_2), u_3:=c_3\Bas(q_3) + \hdots + c_k\Bas(q_k) \in \mathbf{W}'_0$. Repeating this argument sufficiently many times shows that $\Bas(q_1), \hdots, \Bas(q_k) \in \mathbf{W}'_0$. It follows that $\mathbf{W}'_0 = \mathbf{W}_0$. Condition $\mathbf{(C)}$ and \eqref{Syjamskikot} now imply that $\mathbf{W}'_i = \mathbf{W}_i$ for all~$i \in \Z$.
\end{proof}

Thanks to Proposition \ref{pro:mugrading1}, we can talk about \emph{the} $\mu$-grading on $\widehat{\mathbf{V}}^\nu$. Let us denote it by $\widehat{\mathbf{V}}^\nu = \bigoplus_{i \in \Z} \mathbf{W}_i^\mu$. We write $\deg_{\mu} v = i$ if $v \in \mathbf{W}_i^\mu$. Moreover, let $P_\mu := \sum_{i \in \Z} \dim \mathbf{W}_i^\mu t^i$ denote the Poincar\'{e} polynomial of $\mathbf{\mathbf{V}}^\nu$ with respect to the $\mu$-grading. Consider the function
\begin{equation} \label{theXtoPmap} \mathcal{X}_{\theta}(n\delta+\mathbf{d}_\nu)^{\C^*} \to \Z[t,t^{-1}], \quad [\mathbf{A}(\mu)] \mapsto P_\mu.\end{equation}

\begin{proposition} \label{Prop Z-grading}
We have $P_\mu = \Res_{\mu^t}(t)$. Moreover, the function \eqref{theXtoPmap} is injective.
\end{proposition}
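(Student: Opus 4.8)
The plan is to compute $P_\mu$ directly from the explicit construction of the $\mu$-grading given in Proposition \ref{pro:mugrading1}, and then deduce injectivity from the fact that a partition is determined by its residue. First I would recall the basis $\{\Bas(j)\}$ of $\widehat{\mathbf{V}}^\nu$ and the formula \eqref{Kacapuri}: for each Frobenius hook $F_i$ of $\mu$ with data $r_i=b_i+1$, $m_i=a_i+b_i+1$, $q_i=\sum_{j<i}m_j+r_i$, one has $\Bas(q_i-j)\in\mathbf{W}_j$ for $0\le j\le b_i$ and $\Bas(q_i+j)\in\mathbf{W}_{-j}$ for $1\le j\le a_i$. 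Thus the degrees of the basis vectors coming from the $i$-th block are exactly the integers $-a_i,-a_i+1,\dots,-1,0,1,\dots,b_i$, i.e.\ all integers from $-a_i$ to $b_i$, each with multiplicity one.

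Next I would match this against the content statistics of $\mu^t$. The basis vectors in block $i$ correspond, via the bijection $\Bas$ (or equivalently via $\Cell$ after transposing), to the cells of the $i$-th Frobenius hook of $\mu^t$; and since transposition negates contents, the $i$-th Frobenius hook of $\mu$ has arm $a_i$, leg $b_i$, so the $i$-th Frobenius hook of $\mu^t$ has arm $b_i$, leg $a_i$, and its cells have contents running from $-a_i$ to $b_i$ (the root has content $0$, the arm gives contents $1,\dots,b_i$, the leg gives contents $-1,\dots,-a_i$). Hence the degree of $\Bas(j)$ under the $\mu$-grading equals the content of the corresponding cell of $\mu^t$. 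Summing over all blocks,
\[
P_\mu=\sum_{i\in\Z}(\dim\mathbf{W}_i^\mu)\,t^i=\sum_{\square\in\mu^t}t^{c(\square)}=\Res_{\mu^t}(t).
\]
(One should double-check the orientation conventions in the definition of $\Cell$ — "counting from the hand of the hook, moving left towards the root and then down towards the foot" — against \eqref{Kacapuri}, but these are precisely set up so the contents line up; this is really the only bookkeeping point.)

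For injectivity of \eqref{theXtoPmap}: if $[\mathbf{A}(\mu)]$ and $[\mathbf{A}(\mu')]$ have the same Poincaré polynomial then $\Res_{\mu^t}(t)=\Res_{(\mu')^t}(t)$, so $\mu^t=(\mu')^t$ since a partition is determined uniquely by its residue (stated in \S\ref{Felineorigin}), whence $\mu=\mu'$, and then $[\mathbf{A}(\mu)]=[\mathbf{A}(\mu')]$ because the labelling \eqref{partfp} is a bijection by Corollary \ref{v-core fp bij}. The main obstacle, such as it is, is purely notational: carefully verifying that the recursive degree assignment \eqref{Kacapuri} assigns to each $\Bas(j)$ exactly the content (in $\mu^t$) of $\Cell(j)$, i.e.\ reconciling the "foot-to-root-to-hand" traversal order used to define $\Bas$ and $\Cell$ with the degree ranges $[-a_i,b_i]$; no genuine mathematical difficulty arises beyond this.
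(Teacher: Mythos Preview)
Your proof is correct and follows essentially the same approach as the paper: both read off the degrees of the basis vectors $\Bas(q_i\pm j)$ from \eqref{Kacapuri} and identify them with the contents of cells in $\mu^t$, then deduce injectivity from the fact that a partition is determined by its residue. The only cosmetic difference is that the paper organizes the count by fixing a degree $j$ and computing $\dim\mathbf{W}_j^\mu=\sum_{i=1}^k\mathbf{1}_{j\le b_i}$ (the number of cells of content $-j$ in $\mu$), whereas you organize it Frobenius hook by Frobenius hook; your extra appeal to Corollary~\ref{v-core fp bij} for injectivity is harmless (it is already available) but not strictly needed, since $\mu=\mu'$ already gives $[\mathbf{A}(\mu)]=[\mathbf{A}(\mu')]$.
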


\begin{proof}
Fix $j \geq 0$. By \eqref{Kacapuri}, $\Bas(q_i - j) \in \mathbf{W}_j^\mu$ if and only if $j \leq b_i$, for $i = 1, \hdots,k$. Moreover, $\{ \Bas(q_i - j) \mid j \leq b_i \}$ form a basis of $\mathbf{W}_j^\mu$. Hence
$\dim \mathbf{W}_j^\mu = \sum_{i=1}^k \mathbf{1}_{j\leq b_i}$, where $\mathbf{1}_{j\leq b_i}$ is the indicator function taking value $1$ if $j \leq b_i$ and $0$ otherwise. But $\sum_{i=1}^k \mathbf{1}_{j \leq b_i}$ is precisely the number of cells of content $-j$ in $\mu$, which is the same as the number of cells of content $j$ in $\mu^t$. The argument for $j <0$ is analogous. This proves the first claim. The second claim  follows immediately from the fact that partitions are determined uniquely by their residues. 
\end{proof}

Suppose that we are given a $\C^*$-fixed point and want to find out which partition it is labelled by. Proposition \ref{Prop Z-grading} implies that to do so we only need to compute the $\Z$-grading on $\widehat{\mathbf{V}}^\nu$ associated to the fixed point and the corresponding Poincar\'{e} polynomial. 

\begin{lemma} \label{Injectivity remark} 
The following hold: 
\begin{enumerate}[label=\alph*), font=\textnormal,noitemsep,topsep=3pt,leftmargin=0.55cm]
\item The restricted maps \[A(\mu) : \mathbf{W}^\mu_i \to \mathbf{W}^{\mu}_{i-1} \ (i \leq 0), \quad \Lambda(\mu) : \mathbf{W}^\mu_i \to \mathbf{W}^{\mu}_{i+1}, (i \geq 0)\] are surjective. 
\item The restricted maps \[A(\mu) : \mathbf{W}^\mu_i \to \mathbf{W}^{\mu}_{i-1} \ (i > 0), \quad \Lambda(\mu) : \mathbf{W}^\mu_i \to \mathbf{W}^{\mu}_{i+1} \ (i < 0)\] are injective. 
\item We have $\mathbf{V}_j^\nu = \bigoplus_{i \in \Z} \mathbf{W}_{j+il}^\mu$ for each $j=0, \hdots, l-1$.
\end{enumerate} 
\end{lemma}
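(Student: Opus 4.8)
The plan is to read all three assertions directly off the explicit description of the $\mu$-grading produced in the proof of Proposition \ref{pro:mugrading1} — above all from formulas \eqref{Kacapuri} and \eqref{Syjamskikot} — together with the block descriptions of $\Lambda(\mu)$ and $A(\mu)$ recorded after Definition \ref{DefinitionofA-matrix} and the entry formula of Definition \ref{alpha-defi}. I would establish (c) first, then (a), and finally (b).

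For (c), recall from \eqref{Kacapuri} that $\mathbf{W}_j^\mu$ has basis $\{\Bas(q_p - j) : b_p \geq j\}$ for $j \geq 0$ and $\mathbf{W}_{-j}^\mu$ has basis $\{\Bas(q_p + j) : a_p \geq j\}$ for $j \geq 1$. Since $\Bas$ is compatible with the $\Z/l\Z$-gradings, with $\Bas(m)$ lying in the component indexed by $\psi(m)$, and $\psi(q_p \mp j) = \bigl(r_p - (r_p \mp j)\bigr) \bmod l = (\pm j) \bmod l$, every $\mathbf{W}_i^\mu$ is contained in $\mathbf{V}_{i \bmod l}^\nu$. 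Comparing with the decompositions $\widehat{\mathbf{V}}^\nu = \bigoplus_i \mathbf{W}_i^\mu = \bigoplus_j \mathbf{V}_j^\nu$ and counting dimensions forces $\mathbf{V}_j^\nu = \bigoplus_{i \in \Z} \mathbf{W}_{j+il}^\mu$, which is (c). Part (a) is then immediate from \eqref{Syjamskikot}: applying $\Lambda(\mu)$ to the identity $\mathbf{W}_i^\mu = \Lambda(\mu)^i(\mathbf{W}_0^\mu)$ gives $\Lambda(\mu)(\mathbf{W}_i^\mu) = \mathbf{W}_{i+1}^\mu$ for $i \geq 0$, and dually $A(\mu)(\mathbf{W}_i^\mu) = \mathbf{W}_{i-1}^\mu$ for $i \leq 0$.

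For (b), the $\Lambda(\mu)$-statement is easy: $\Lambda(\mu) = \bigoplus_p \Lambda(m_p)$ is block-diagonal and $\Lambda(m_p)$ shifts each block vector from position $u$ to position $u-1$ (annihilating position $1$), so for $i = -j < 0$ the vector $\Bas(q_p + j)$ sits in position $r_p + j \geq 2$ and is sent to the nonzero vector $\Bas(q_p + j - 1)$; distinct $p$ give distinct images, whence injectivity. The $A(\mu)$-statement for $i > 0$ is the heart of the matter, because $A(\mu)$ has off-diagonal blocks. By Lemma \ref{papryczka}, $A(\mu)\bigl(\Bas(q_p - i)\bigr)$ is a combination of the basis vectors $\Bas(q_s - (i-1))$ of $\mathbf{W}_{i-1}^\mu$ (those with $b_s \geq i-1$), and the coefficient of $\Bas(q_s - (i-1))$ is the entry of the block $A(\mu)^{sp}$ in row $r_s - i + 1$, column $r_p - i$, which lies on the unique nonzero diagonal $d = r_p - r_s - 1$ of $A(\mu)^{sp}$. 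For $s < p$ one has $d < 0$ and the entry falls into the initial run of $r_p - 1$ zeros along that diagonal (since $r_p - i \leq r_p - 1$ as $i \geq 1$), so the coefficient vanishes; for $s = p$ it equals $A(m_p, r_p)_{r_p - i + 1,\, r_p - i} = \sum_{t=1}^{r_p - i} \theta_{r_p - t}$ by Definition \ref{alpha-defi}, a sum of $r_p - i \geq 1$ consecutive $\theta_j$'s, which is nonzero because $\theta$ is chosen so that $\mathcal{X}_\theta(n\delta + \mathbf{d}_\nu)$ is smooth. Ordering the bases of $\mathbf{W}_i^\mu$ and $\mathbf{W}_{i-1}^\mu$ by the Frobenius-hook index, the matrix of $A(\mu)|_{\mathbf{W}_i^\mu}$ is thus lower triangular (zero above the diagonal) with nonzero diagonal entries, hence of full column rank, so $A(\mu)|_{\mathbf{W}_i^\mu}$ is injective.

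The main obstacle is this last step: one must track the off-diagonal blocks of $A(\mu)$ precisely enough to recognise that $A(\mu)|_{\mathbf{W}_i^\mu}$ is triangular with invertible diagonal in the chosen bases. Everything else follows formally from \eqref{Kacapuri} and \eqref{Syjamskikot}. (Alternatively, one could argue injectivity of $A(\mu)|_{\mathbf{W}_i^\mu}$ for $i>0$ via the moment-map relation $[\Lambda(\mu),A(\mu)] + J(\mu)I(\mu) = \theta$, which by (c) and the vanishing of $I(\mu)$ on $\mathbf{W}_i^\mu$ for $i\neq 0$ gives $[\Lambda(\mu),A(\mu)]|_{\mathbf{W}_i^\mu} = \theta_{i\bmod l}\,\id$, but the direct matrix computation seems cleaner.)
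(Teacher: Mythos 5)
Your proof is correct, and it follows the same underlying strategy as the paper (read the three claims off the explicit $\mu$-grading formulas \eqref{Kacapuri}, \eqref{Syjamskikot} and the block structure of $A(\mu)$, $\Lambda(\mu)$), but you supply detail that the paper elides and that is genuinely needed. For (a) and (c) the arguments coincide up to presentation: the paper derives (c) from \eqref{Syjamskikot} and the $\Z/l\Z$-homogeneity of $A(\mu)$, $\Lambda(\mu)$, whereas you read it off \eqref{Kacapuri} via $\psi(q_p-j)=j\bmod l$; both are elementary and equivalent. The substantive difference is in (b). The paper's one-line proof invokes Lemma \ref{papryczka}, but that lemma's blanket assertion that in its third bullet ``the coefficients $c_p$ are all non-zero'' cannot be taken literally: as you correctly observe, the coefficient of $\Bas(q_s-(i-1))$ in $A(\mu)(\Bas(q_p-i))$ vanishes for $s<p$, because the relevant entry sits in the initial run of $r_p-1$ zeros of the off-diagonal block $A(\mu)^{sp}$. (The paper's own Example \ref{Examplekot} confirms this: $\Bas(10)$ maps to $\Bas(11)$ and $\Bas(15)$ but \emph{not} to $\Bas(4)$, even though the indicator is $1$ for $p=1$.) What injectivity actually requires is precisely the triangular-with-nonzero-diagonal structure you establish from Definitions \ref{alpha-defi} and \ref{DefinitionofA-matrix}, together with the genericity of $\theta$ ensuring the diagonal sums $\sum_{t=1}^{r_p-i}\theta_{r_p-t}$ do not vanish. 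So your expansion is not redundant detail but a necessary repair of the cited lemma's imprecision.

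Two minor remarks. First, your parenthetical in (b) about the moment-map relation $[\Lambda(\mu),A(\mu)]|_{\mathbf{W}_i^\mu}=\theta_{i\bmod l}\,\mathrm{id}$ is correct as far as it goes, but it does not by itself yield injectivity — one must still iterate up the grading to produce a vanishing sum $\theta_i+\theta_{i+1}+\cdots+\theta_{i+m}=0$ and invoke genericity; since you flag it only as an alternative and prefer the direct computation, this is fine. Second, watch the triangularity vocabulary: with rows indexed by the target hook $s$ and columns by the source hook $p$, vanishing for $s<p$ means zeros \emph{above} the diagonal (lower triangular), which is what you wrote in the parenthetical; just make sure the prose and the convention stay aligned when you write this up.
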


\begin{proof}
The first claim is just a restatement of \eqref{Syjamskikot}. The second claim follows directly from Lemma \ref{papryczka}. The third claim follows from \eqref{Syjamskikot} and the fact that $A(\mu)$ (resp.\ $\Lambda(\mu)$) is a homogeneous operator of degree $-1$ (resp.\ $1$) with respect to the $\Z/l\Z$-grading on~$\widehat{\mathbf{V}}^\nu$. 
\end{proof}

\begin{remark}
We may interpret Lemma \ref{Injectivity remark}(c) as saying that the $\mu$-grading on $\widehat{\mathbf{V}}^\nu$ is a \emph{lift} of the $\Z/l\Z$-grading. Furthermore, we may think of the $\mu$-grading as the grading on the representation of the $A_\infty$-quiver corresponding to the fixed point $[\mathbf{A}(\mu)]$. More details about the connection between quiver varieties of type $A_\infty$ and the $\C^*$-fixed points in cyclic quiver varieties can be found in \cite[\S 4]{Sav}. 
\end{remark} 

\subsection{Explicit definition of reflection functors.} \label{explicit defi of refl fun}
Let us recall the explicit definition of reflection functors from \cite[Proposition 3.19]{N} and \cite[\S 5]{CBH2}. 
Fix a $\Z/l\Z$-graded complex vector space $\widehat{\mathbf{V}}^{\sigma_i*\nu} := \bigoplus_{j=0}^{l-1} \mathbf{V}_j^{\sigma_i*\nu}$ with $\dim \mathbf{V}_j^{\sigma_i*\nu} = n+ d_{j}'$. Set $\mathbf{V}^{\sigma_i*\nu} := \widehat{\mathbf{V}}^{\sigma_i*\nu} \oplus \mathbf{V}_\infty$. To simplify notation, set $\mathbf{d}:= \mathbf{d}_\nu$ and $\mathbf{d}':= \sigma_i * \mathbf{d}_\nu$.

Let us first define a map $\mu_{n\delta + \mathbf{d}} ^{-1}(\theta) \to \mu_{n\delta + \mathbf{d}'} ^{-1}(\sigma\cdot\theta)$ lifting \eqref{Ri refl functor map}, which we also denote by $\mathfrak{R}_i$. 
Let $\rho = (\mathbf{X}, \mathbf{Y}, I, J) \in \mu_{n\delta + \mathbf{d}} ^{-1}(\theta)$.  
The reflected quiver representation 
\[\mathfrak{R}_i(\rho) =: (\mathbf{X}', \mathbf{Y}', I', J')\] 
is defined as follows. 
Suppose that $i \neq 0$. We have maps
\begin{equation} \label{reflection definition functors} \mathbf{V}_i^\nu \xrightarrow{Y_i - X_i} \mathbf{V}_{i-1}^\nu \oplus \mathbf{V}_{i+1}^\nu \xrightarrow{X_{i-1} + Y_{i+1}} \mathbf{V}_i^\nu.\end{equation}
Set $\psi_i := Y_i - X_i$ and $\phi_i := X_{i-1} + Y_{i+1}$ (the indices should be considered $\mmod$ $l$). The preprojective relations (i.e.\ the relations defining the fibre $\mu_{n\delta + \mathbf{d}} ^{-1}(\theta)$) ensure that we have a splitting $ \mathbf{V}_{i-1}^\nu \oplus \mathbf{V}_{i+1}^\nu = \Ima \psi_i \oplus \ker \phi_i$. The underlying vector space of the quiver representation $\mathfrak{R}_i(\rho)$ is obtained from $\mathbf{V}^\nu$ by replacing $\mathbf{V}_i^\nu$ with $\ker \phi_i$.
We have an isomorphism of vector spaces $\mathbf{V}^{\sigma_i*\nu} \cong \ker \phi_i \oplus \bigoplus_{j \neq i} \mathbf{V}_j^\nu \oplus \mathbf{V}_\infty$ preserving the quiver grading. Let us define  $\mathfrak{R}_i(\rho)$. We have $X_j' = X_j$ unless $j \in \{i-1,i\}$. We also have $Y_j' = Y_j$ unless $j \in \{i, i+1\}$. Set $I' = I$ and $J'=J$. The maps $X_i'$ and $Y_i'$ are defined as the composite maps
\begin{align*} X'_i:& \ \ker \phi_i \hookrightarrow \ker \phi_i \oplus \Ima \psi_i = \mathbf{V}_{i-1}^\nu \oplus \mathbf{V}_{i+1}^\nu \twoheadrightarrow \mathbf{V}_{i+1}^\nu,\\
Y'_i:& \ \ker \phi_i \hookrightarrow \ker \phi_i \oplus \Ima \psi_i = \mathbf{V}_{i-1}^\nu \oplus \mathbf{V}_{i+1}^\nu \twoheadrightarrow \mathbf{V}_{i-1}^\nu.\end{align*}
The maps $X_{i-1}'$ and $Y_{i+1}'$ are defined as the composite maps
\begin{align} X'_{i-1}:& \ \mathbf{V}_{i-1}^\nu \xrightarrow{\cdot (- \theta_i)} \mathbf{V}_{i-1}^\nu \hookrightarrow  \mathbf{V}_{i-1}^\nu \oplus \mathbf{V}_{i+1}^\nu = \ker \phi_i \oplus \Ima \psi_i \twoheadrightarrow \ker \phi_i,\\ \label{refl functor expl Y}
Y'_{i+1}:& \ \mathbf{V}_{i+1}^\nu \xrightarrow{\cdot (- \theta_i)} \mathbf{V}_{i+1}^\nu \hookrightarrow  \mathbf{V}_{i-1}^\nu \oplus \mathbf{V}_{i+1}^\nu = \ker \phi_i \oplus \Ima \psi_i \twoheadrightarrow \ker \phi_i \xrightarrow{\cdot (-1)} \ker \phi_i.\end{align}
The minus signs before $X_i$ in \eqref{reflection definition functors} as well as in the last arrow above come from the fact that our quiver does not have a sink at the vertex $i$ as in \cite[\S 5]{CBH2} - hence the need for these adjustments. 
If $i=0$ one defines $\mathfrak{R}_i(\rho)$ analogously using maps
\begin{equation} \label{NRF-ker2} \mathbf{V}_0^\nu \xrightarrow{\psi_0} \mathbf{V}_{l-1}^\nu \oplus \mathbf{V}_{1}^\nu \oplus \mathbf{V}_\infty \xrightarrow{\phi_0} \mathbf{V}_0^\nu \end{equation}
with $\psi_0 = Y_0 - X_0 + I$ and $\phi_0 = X_{l-1} + Y_1 + J$. 

\subsection{The reflected grading.} \label{para-dim-for}
Let us apply the definitions from \S \ref{explicit defi of refl fun} to $\rho = \mathbf{A}(\mu)$. More specifically, for $i = 0,\hdots,l-1$, we set $X_i = \Lambda(\mu)|_{\mathbf{V}_i^\nu}$, $Y_i = A(\mu)|_{\mathbf{V}_i^\nu}$, $I=I(\mu)$ and $J=J(\mu)$. 
The reflected quiver representation $\mathfrak{R}_i(\mathbf{A}(\mu)) \in \mu_{n\delta+\mathbf{d}'} ^{-1}(\sigma\cdot\theta)$ is conjugate under the $G(n\delta+\mathbf{d}')$-action to $\mathbf{A}(\mathbf{R}_i(\mu))$. 
We now compute the $\mathbf{R}_i(\mu)$-grading on $\widehat{\mathbf{V}}^{\sigma_i*\nu}$. 

We have direct sum decompositions \begin{equation} \label{Venice} \psi_i := \bigoplus_{j \in \Z} \psi_i^j,\quad \phi_i := \bigoplus_{j \in \Z} \phi_i^j,\end{equation} with $\psi_i^j = \psi_i|_{\mathbf{W}^\mu_{jl+i}}$ and $\phi_i^j = \phi_i|_{\mathbf{W}^\mu_{jl+i-1} \oplus \mathbf{W}^\mu_{jl+i+1}}$ $(i \neq 0$ or $j \neq 0)$ and $\psi_0^0 := \psi_0|_{\mathbf{W}^\mu_{0}}$. Hence $\mathbf{V}_i^{\sigma_i*\nu} = \ker \phi_i = \bigoplus_{j \in \Z} \ker \phi_i^j$. Moreover, \eqref{reflection definition functors} and \eqref{NRF-ker2} decompose as direct sums~of~maps
\begin{align*} &\mathbf{W}_{jl+i}^\mu \xrightarrow{\psi^j_i} \mathbf{W}_{jl+i-1}^\mu \oplus \mathbf{W}_{jl+i+1}^\mu \xrightarrow{\phi^j_i} \mathbf{W}_{jl+i}^\mu \quad (j \neq 0 \mbox{ or } i \neq 0).\\
&\mathbf{W}_0^\mu \xrightarrow{\psi_0^0} \mathbf{W}_{-1}^\mu \oplus \mathbf{W}_{1}^\mu \oplus \mathbf{V}_\infty \xrightarrow{\phi_0^0} \mathbf{W}_0^\mu. \end{align*} 
These direct sum decompositions together with the preprojective relations imply the following lemma. 
\begin{lemma} \label{Wydmuszka}
Let $i \in \{0, \hdots, l-1\}$ and $j \in \Z$. If $i =0$ and $j=0$ then $\Ima \psi_0^0 \oplus \ker \phi_0^0 = \mathbf{W}_{-1}^\mu \oplus \mathbf{W}_{1}^\mu \oplus \mathbf{V}_\infty$. Otherwise $\Ima \psi_i^j \oplus \ker \phi_i^j = \mathbf{W}_{jl+i-1}^\mu \oplus \mathbf{W}_{jl+i+1}^\mu$. 
\end{lemma}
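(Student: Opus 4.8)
The plan is to deduce the lemma from the global splitting $\mathbf{V}_{i-1}^\nu \oplus \mathbf{V}_{i+1}^\nu = \Ima \psi_i \oplus \ker \phi_i$ (resp.\ $\mathbf{V}_{l-1}^\nu \oplus \mathbf{V}_1^\nu \oplus \mathbf{V}_\infty = \Ima \psi_0 \oplus \ker \phi_0$) supplied by the preprojective relations in \S\ref{explicit defi of refl fun}, by observing that both summands are homogeneous for the $\mu$-grading and hence split compatibly across the graded components appearing in \eqref{Venice}.

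First I would record that, by Lemma \ref{Injectivity remark}(c), each space $\mathbf{V}_m^\nu$ decomposes as $\bigoplus_{j\in\Z}\mathbf{W}_{jl+m}^\mu$, so the domain and codomain of $\psi_i$ and $\phi_i$ carry $\Z$-gradings refining the $\Z/l\Z$-grading. Since $A(\mu)$ (resp.\ $\Lambda(\mu)$) lowers (resp.\ raises) $\mu$-degree by $1$ and, by condition $\mathbf{(C)}$ of Definition \ref{lemma-Z-grading}, $J(\mu)$ maps $\mathbf{V}_\infty$ into $\mathbf{W}_0^\mu$ while $I(\mu)$ annihilates every $\mathbf{W}_j^\mu$ with $j\neq 0$, the maps $\psi_i=Y_i-X_i$ and $\phi_i=X_{i-1}+Y_{i+1}$ together with their $i=0$ analogues are block-diagonal for these gradings; this is exactly the meaning of the decompositions \eqref{Venice}, and in particular the one-dimensional space $\mathbf{V}_\infty$ sits in the $j=0$ block and nowhere else. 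It follows formally that $\Ima\psi_i=\bigoplus_{j}\Ima\psi_i^j$ and $\ker\phi_i=\bigoplus_{j}\ker\phi_i^j$, with $\Ima\psi_i^j,\ker\phi_i^j\subseteq \mathbf{W}_{jl+i-1}^\mu\oplus\mathbf{W}_{jl+i+1}^\mu$ (with an extra $\mathbf{V}_\infty$ summand allowed precisely when $(i,j)=(0,0)$).

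Next I would intersect the global splitting with a fixed graded component. Directness is immediate: $\Ima\psi_i^j\cap\ker\phi_i^j\subseteq\Ima\psi_i\cap\ker\phi_i=0$. For spanning, summing the inclusions $\Ima\psi_i^j+\ker\phi_i^j\subseteq\mathbf{W}_{jl+i-1}^\mu\oplus\mathbf{W}_{jl+i+1}^\mu$ (resp.\ $\oplus\,\mathbf{V}_\infty$) over all $j$ gives, on the left, $\Ima\psi_i+\ker\phi_i$, which by the preprojective relations is the whole space $\mathbf{V}_{i-1}^\nu\oplus\mathbf{V}_{i+1}^\nu$ (resp.\ $\mathbf{V}_{l-1}^\nu\oplus\mathbf{V}_1^\nu\oplus\mathbf{V}_\infty$); comparing dimensions block by block forces equality in every degree. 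This yields $\Ima\psi_i^j\oplus\ker\phi_i^j=\mathbf{W}_{jl+i-1}^\mu\oplus\mathbf{W}_{jl+i+1}^\mu$ for $(i,j)\neq(0,0)$ and $\Ima\psi_0^0\oplus\ker\phi_0^0=\mathbf{W}_{-1}^\mu\oplus\mathbf{W}_1^\mu\oplus\mathbf{V}_\infty$, which is the assertion of the lemma.

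The only delicate point I anticipate is the bookkeeping at the boundary cases $i=0$ and $j=0$: one must check carefully that $\mathbf{V}_\infty$ enters exactly the $(0,0)$-block — this uses that $I(\mu)$ and $J(\mu)$ interact only with $\mathbf{W}_0^\mu$ by condition $\mathbf{(C)}$ — and that the degree shifts by $A(\mu)$ and $\Lambda(\mu)$ line up so that $\psi_0$ and $\phi_0$ are genuinely block-diagonal across the $j$'s. Once homogeneity of $\psi_i,\phi_i$ for the $\mu$-grading is established, the rest is routine linear algebra.
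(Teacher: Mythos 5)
Your proposal is correct and takes essentially the same approach as the paper: the paper simply asserts that the block decompositions in \eqref{Venice} together with the preprojective splitting $\mathbf{V}_{i-1}^\nu \oplus \mathbf{V}_{i+1}^\nu = \Ima\psi_i \oplus \ker\phi_i$ imply the lemma, and your argument spells out precisely that implication (homogeneity of $\psi_i,\phi_i$ for the $\mu$-grading, directness inherited from the global splitting, spanning by comparing graded components).
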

\noindent
Define 
\[ \mathbf{U}_{j} := \mathbf{W}_{j}^\mu \quad (j \neq i \mmod l), \quad \quad \mathbf{U}_{lj+i} := \ker \phi_i^j \quad (j \in \Z).\]
\begin{proposition}\label{L-ker-W}
The $\Z$-grading $\widehat{\mathbf{V}}^{\sigma_i*\nu} = \bigoplus_{j \in \Z} \mathbf{U}_j$ is the $\mathbf{R}_i(\mu)$-grading on $\widehat{\mathbf{V}}^{\sigma_i*\nu}$.
\end{proposition}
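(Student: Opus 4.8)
The plan is to verify that the $\Z$-grading $\widehat{\mathbf{V}}^{\sigma_i*\nu} = \bigoplus_{j\in\Z}\mathbf{U}_j$ satisfies condition $\mathbf{(C)}$ of Definition \ref{lemma-Z-grading} for the quadruple $\mathfrak{R}_i(\mathbf{A}(\mu))$, and then invoke the uniqueness part of Proposition \ref{pro:mugrading1} to conclude it is \emph{the} $\mathbf{R}_i(\mu)$-grading. First I would unwind the definitions: the reflected quiver representation $\mathfrak{R}_i(\mathbf{A}(\mu))$ has arrows $X_j' = X_j = \Lambda(\mu)|_{\mathbf{V}_j^\nu}$ and $Y_j' = Y_j = A(\mu)|_{\mathbf{V}_j^\nu}$ except at the vertices $i-1,i,i+1$, where $X_i',Y_i'$ are the two projections out of $\ker\phi_i \hookrightarrow \mathbf{V}_{i-1}^\nu\oplus\mathbf{V}_{i+1}^\nu$, and $X_{i-1}',Y_{i+1}'$ are the compositions $\cdot(-\theta_i)$ followed by the projection onto $\ker\phi_i$ (with an extra sign in the $Y_{i+1}'$ case). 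The key observation is that all of these maps respect the finer $\Z$-grading after being decomposed as in \eqref{Venice}: by Lemma \ref{Wydmuszka} the splitting $\mathbf{W}_{jl+i-1}^\mu\oplus\mathbf{W}_{jl+i+1}^\mu = \Ima\psi_i^j\oplus\ker\phi_i^j$ is compatible with the decomposition $\mathbf{U}_{lj+i}=\ker\phi_i^j$, so the projections and inclusions involved in the definition of the reflected arrows send $\mathbf{U}_{lj+i}$ to/from $\mathbf{U}_{lj+i\pm1}=\mathbf{W}_{lj+i\pm1}^\mu$ appropriately.

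Concretely, the steps are: (1) For $j\not\equiv i\pmod l$, the arrows $X_j',Y_j'$ out of and into $\mathbf{U}_j=\mathbf{W}_j^\mu$ agree with $\Lambda(\mu),A(\mu)$ up to the modifications at $i-1$ and $i+1$; since $\Lambda(\mu)$ raises $\mu$-degree by one and $A(\mu)$ lowers it by one on $\widehat{\mathbf{V}}^\nu$, and the modifications $\cdot(-\theta_i)$ are scalar (degree-preserving within a fixed $\mathbf{W}$-piece) composed with the projection onto $\ker\phi_i^j$, condition $\mathbf{(C)}$ for these arrows follows from the corresponding property of the $\mu$-grading together with Lemma \ref{Wydmuszka}. (2) For the arrows $X_i',Y_i'$ out of $\mathbf{U}_{lj+i}=\ker\phi_i^j$, these are the restrictions to $\ker\phi_i^j$ of the two projections $\mathbf{W}_{jl+i-1}^\mu\oplus\mathbf{W}_{jl+i+1}^\mu\to\mathbf{W}_{jl+i\mp1}^\mu$, which manifestly land in $\mathbf{U}_{lj+i-1}$ and $\mathbf{U}_{lj+i+1}$ respectively — here one must check the sign/degree bookkeeping, i.e. that $Y_i'$ (which in the original quiver plays the role of the $*$-arrow) lowers the new $\Z$-degree and $X_i'$ raises it, matching the conventions for $A(\mathbf{R}_i(\mu))$ and $\Lambda(\mathbf{R}_i(\mu))$. (3) Handle $I',J'$: since $I'=I(\mu),J'=J(\mu)$ and $J(\mu)(\mathbf{V}_\infty)\subseteq\mathbf{W}_0^\mu$, one checks that $\mathbf{W}_0^\mu$ sits inside the $j=0$ piece in a way compatible with $\ker\phi_0^0$ when $i=0$ (using the $i=j=0$ case of Lemma \ref{Wydmuszka}, where $\mathbf{V}_\infty$ appears explicitly in the splitting), and that for $i\neq0$ the vertex $0$ is untouched so $J'(\mathbf{V}_\infty)\subseteq\mathbf{U}_0=\mathbf{W}_0^\mu$ and $I'(\mathbf{U}_0)=\mathbf{V}_\infty$ immediately. (4) Once $\mathbf{(C)}$ is verified, apply Proposition \ref{pro:mugrading1}: the $\mathbf{R}_i(\mu)$-grading on $\widehat{\mathbf{V}}^{\sigma_i*\nu}$ exists and is unique, and since $\mathfrak{R}_i(\mathbf{A}(\mu))$ is $G(n\delta+\mathbf{d}')$-conjugate to $\mathbf{A}(\mathbf{R}_i(\mu))$ the grading we constructed must coincide with it (conjugation transports the grading, and uniqueness forces equality).

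The main obstacle I expect is the careful sign and index bookkeeping in step (2), specifically reconciling the two sign conventions at play: the extra $\cdot(-1)$ in \eqref{refl functor expl Y} and the minus sign before $X_i$ in \eqref{reflection definition functors} (arising because the cyclic quiver has no sink at vertex $i$), versus the conventions baked into the definition of $A(\mathbf{R}_i(\mu))$ and $\Lambda(\mathbf{R}_i(\mu))$ in Definition \ref{DefinitionofA-matrix}. One must confirm that after these adjustments $Y_i'$ genuinely realizes the ``$A$-type'' (degree-lowering) arrow at vertex $i$ for the partition $\mathbf{R}_i(\mu)$ and $X_i'$ the ``$\Lambda$-type'' (degree-raising) arrow, rather than the reverse. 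A secondary subtlety is the $i=0$ case, where $\mathbf{V}_\infty$ enters the reflection maps \eqref{NRF-ker2} and one cannot treat the $I,J$ part separately from the $X,Y$ part; here the $i=j=0$ clause of Lemma \ref{Wydmuszka} is exactly what is needed, but it requires being attentive to the fact that the splitting now involves a three-term direct sum $\mathbf{W}_{-1}^\mu\oplus\mathbf{W}_1^\mu\oplus\mathbf{V}_\infty$. Everything else — the degree-raising/lowering behavior of $\Lambda(\mu),A(\mu)$, and the compatibility of projections with the $\mathbf{W}$-decomposition — is routine given Proposition \ref{pro:mugrading1}, Lemma \ref{papryczka} and Lemma \ref{Wydmuszka}.
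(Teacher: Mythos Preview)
Your proposal is correct and follows essentially the same approach as the paper: both verify condition $\mathbf{(C)}$ of Definition \ref{lemma-Z-grading} for the reflected quadruple by combining the explicit reflection-functor formulas of \S\ref{explicit defi of refl fun} with the graded splitting of Lemma \ref{Wydmuszka}, then appeal to the uniqueness in Proposition \ref{pro:mugrading1}. The paper carries out one representative inclusion (namely $A(\mathbf{R}_i(\mu))(\mathbf{U}_{jl+i+1})\subseteq\ker\phi_i^j$, using \eqref{refl functor expl Y}) and declares the rest analogous; your breakdown into steps (1)--(4), your explicit mention of the conjugation-transport argument in step (4), and your flagging of the sign bookkeeping and the $i=0$ three-term splitting are exactly the details the paper leaves implicit.
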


\begin{proof}
It suffices to show that  $\widehat{\mathbf{V}}^{\sigma_i*\nu} = \bigoplus_{j \in \Z} \mathbf{U}_j$ satisfies condition $\mathbf{(C)}$ in Definition \ref{lemma-Z-grading}. Suppose $i \neq 0$. Then for each $j \in \Z$ we need to check that \[A(\mathbf{R}_i(\mu))(\mathbf{U}_{jl+i+1}) \subseteq \mathbf{U}_{jl+i}, \quad \Lambda(\mathbf{R}_i(\mu))(\mathbf{U}_{jl+i-1}) \subseteq \mathbf{U}_{jl+i},\] \[ A(\mathbf{R}_i(\mu))(\mathbf{U}_{jl+i}) \subseteq \mathbf{U}_{jl+i-1}, \quad \Lambda(\mathbf{R}_i(\mu))(\mathbf{U}_{jl+i}) \subseteq \mathbf{U}_{jl+i+1}.\]
If $i=0$ we additionally need to check that $I(\mathbf{R}_i(\mu))(\mathbf{U}_0) = \mathbf{V}_\infty$ and $J(\mathbf{R}_i(\mu))(\mathbf{V}_\infty) \subseteq \mathbf{U}_0$. 

All of the inclusions above follow directly from Lemma \ref{Wydmuszka} and the definition of reflection functors. For example, let us assume that $i \neq 0$ or $j \neq 0$ and consider the inclusion 
$A(\mathbf{R}_i(\mu))(\mathbf{U}_{jl+i+1}) \subseteq \mathbf{U}_{jl+i} := \ker \phi_i^j$. The map $A(\mathbf{R}_i(\mu)):\mathbf{V}_{i+1}^{\sigma_i*\nu} = \mathbf{V}_{i+1}^{\nu} \to \mathbf{V}_i^{\sigma_i*\nu} = \ker \phi_i$ is given by \eqref{refl functor expl Y}. 
Consider its restriction to the subspace $\mathbf{U}_{jl+i+1} = \mathbf{W}_{jl+i+1}^\mu \subseteq \mathbf{V}_{i+1}^\nu$. 
By Lemma \ref{Wydmuszka}, we have $\mathbf{W}_{jl+i+1}^\mu \subseteq  \mathbf{W}_{jl+i-1}^\mu \oplus \mathbf{W}_{jl+i+1}^\mu = \ker \phi_i^j \oplus \Ima \psi_i^j$. Hence $A(\mathbf{R}_i(\mu))$, restricted to $\mathbf{W}_{jl+i+1}^\mu$, is the composition
\[ \mathbf{W}_{jl+i+1}^\mu \xrightarrow{\cdot (- \theta_i)} \mathbf{W}_{jl+i+1}^\mu \hookrightarrow  \mathbf{W}_{jl+i-1}^\mu \oplus \mathbf{W}_{jl+i+1}^\mu = \ker \phi_i^j \oplus \Ima \psi_i^j \twoheadrightarrow \ker \phi_i^j \xrightarrow{\cdot (-1)} \ker \phi_i^j.\]
In particular, $A(\mathbf{R}_i(\mu))(\mathbf{W}_{jl+i+1}^\mu) \subseteq \ker \phi_i^j$, as desired. The other inclusions are proven analogously.
\end{proof}

We have described the $\mathbf{R}_i(\mu)$-grading on $\widehat{\mathbf{V}}^{\sigma_i*\nu}$. We are now going to compute the corresponding Poincar\'{e} polynomial $P_{\mathbf{R}_i(\mu)}$. 

\begin{lemma} \label{L-W-V-dim}
Let $\widehat{\mathbf{V}}^\nu = \bigoplus_{i \in \Z} \mathbf{W}_i^\mu$ be the $\mu$-grading on $\widehat{\mathbf{V}}^\nu$. Then
\[\dim \ker \phi_i^j = \dim \mathbf{W}_{lj+i+1}^\mu + \dim \mathbf{W}_{lj+i-1}^\mu - \dim \mathbf{W}_{lj+i}^\mu\] if $j \neq 0$ or $i \neq 0$. Otherwise \[\dim \ker \phi_0^0 = \dim \mathbf{W}_{1}^\mu + \dim \mathbf{W}_{-1}^\mu - \dim \mathbf{W}_{0}^\mu + 1.\] 
\end{lemma}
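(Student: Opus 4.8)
The plan is to extract the dimension count directly from the rank-nullity theorem applied to the decomposition in Lemma \ref{Wydmuszka}, together with the preprojective (moment map) relations which force $\psi_i^j$ and $\phi_i^j$ to fit together as in the standard theory of reflection functors. First I would recall that, by Lemma \ref{Wydmuszka}, for $i\neq 0$ or $j\neq 0$ we have the direct sum decomposition $\Ima \psi_i^j \oplus \ker \phi_i^j = \mathbf{W}_{jl+i-1}^\mu \oplus \mathbf{W}_{jl+i+1}^\mu$, so that
\[ \dim \ker \phi_i^j = \dim \mathbf{W}_{jl+i-1}^\mu + \dim \mathbf{W}_{jl+i+1}^\mu - \dim \Ima \psi_i^j. \]
Thus it remains to show $\dim \Ima \psi_i^j = \dim \mathbf{W}_{jl+i}^\mu$, i.e. that $\psi_i^j \colon \mathbf{W}_{jl+i}^\mu \to \mathbf{W}_{jl+i-1}^\mu \oplus \mathbf{W}_{jl+i+1}^\mu$ is injective. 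The analogous argument in the $i=0$, $j=0$ case uses instead $\Ima \psi_0^0 \oplus \ker \phi_0^0 = \mathbf{W}_{-1}^\mu \oplus \mathbf{W}_1^\mu \oplus \mathbf{V}_\infty$, which contributes the extra $+1$; there one must check that $\psi_0^0 = (Y_0 - X_0 + I)|_{\mathbf{W}_0^\mu}$ is injective on $\mathbf{W}_0^\mu$.

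The key step is therefore the injectivity of $\psi_i^j$ (and of $\psi_0^0$). For $i\neq 0$ or $j\neq 0$, $\psi_i^j$ is the restriction of $Y_i - X_i = A(\mu) - \Lambda(\mu)$ to $\mathbf{W}_{jl+i}^\mu$. I would split into cases according to the sign of $jl+i$. If $jl+i > 0$, then on $\mathbf{W}_{jl+i}^\mu$ the operator $A(\mu)$ is injective by Lemma \ref{Injectivity remark}(b), and its image lies in $\mathbf{W}_{jl+i-1}^\mu$ while $\Lambda(\mu)$ sends $\mathbf{W}_{jl+i}^\mu$ into $\mathbf{W}_{jl+i+1}^\mu$; since these two target summands are distinct (the degrees differ), a vector in the kernel of $A(\mu)-\Lambda(\mu)$ would have to be killed by $A(\mu)$, forcing it to be zero. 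If $jl+i<0$, the same reasoning applies with the roles of $A(\mu)$ and $\Lambda(\mu)$ swapped, using the injectivity of $\Lambda(\mu)$ on negative-degree pieces from Lemma \ref{Injectivity remark}(b). This leaves only the degree-zero slice, handled in the $i=0$, $j=0$ case by $\psi_0^0$: here I would invoke condition $\mathbf{(C)}$ (specifically $I(\mu)(\mathbf{W}_0^\mu) = \mathbf{V}_\infty$ together with the description of $A(\mu)$, $\Lambda(\mu)$ in Lemma \ref{Injectivity remark}(a)) and the genericity of $\theta$, following the argument already used in the uniqueness part of the proof of Proposition \ref{pro:mugrading1}, to conclude that no nonzero $v\in\mathbf{W}_0^\mu$ can satisfy $A(\mu)v=\Lambda(\mu)v$ and $I(\mu)v=0$ simultaneously.

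The main obstacle I anticipate is the bookkeeping at the degree-zero (and near-zero) slices, where the maps $A(\mu)$ and $\Lambda(\mu)$ are \emph{not} injective and one genuinely needs the structural input from $\mathbf{A}(\mu)$ — namely that $\mathbf{W}_0^\mu$ is spanned by $\Bas(q_1),\dots,\Bas(q_k)$, that $J(\mu)$ hits exactly these with nonzero coefficients, and that applying $A(\mu)$ repeatedly separates them (Lemma \ref{papryczka}). One must be careful that the $+1$ in the $i=0$, $j=0$ formula is precisely the dimension of $\mathbf{V}_\infty$ and arises only because $\psi_0^0$ and $\phi_0^0$ are modified by $I$ and $J$ respectively. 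Once injectivity of all the $\psi_i^j$ is established, the dimension formula is immediate from rank-nullity, and summing $\dim \ker\phi_i^j \cdot t^{lj+i}$ together with $\dim \mathbf{W}_m^\mu \cdot t^m$ for $m\not\equiv i \pmod l$ will in the next step yield $P_{\mathbf{R}_i(\mu)}$ in terms of $P_\mu$.
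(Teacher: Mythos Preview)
Your approach is essentially the same as the paper's: reduce to injectivity of $\psi_i^j$ via Lemma \ref{Wydmuszka}, then for $jl+i\neq 0$ invoke Lemma \ref{Injectivity remark}(b) exactly as the paper does. The paper is terser---it just says ``the case $i=j=0$ is similar''---so your proposal is if anything more detailed.

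One simplification for the case $i=j=0$: your plan to extract injectivity of $\psi_0^0$ from condition $\mathbf{(C)}$, Lemma \ref{Injectivity remark}(a), and ``genericity of $\theta$'' via an argument mimicking Proposition \ref{pro:mugrading1} is vaguer than it needs to be (those statements give surjectivity, not injectivity, and the Proposition \ref{pro:mugrading1} argument is not directly about $\ker\psi_0^0$). A cleaner route---which you in fact allude to in your opening sentence---is to use the moment-map relation itself: since $\mathbf{A}(\mu)\in\mu_{\mathbf{d}}^{-1}(\theta)$ one has $\phi_i\psi_i=\theta_i\cdot\Id_{\mathbf{V}_i^\nu}$ for every $i$, hence $\phi_i^j\psi_i^j=\theta_i\cdot\Id_{\mathbf{W}_{jl+i}^\mu}$ on each graded piece, and $\theta_i\neq 0$ forces $\psi_i^j$ injective. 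This works uniformly (including $i=j=0$) and needs nothing beyond the standing hypothesis $\theta_i\neq 0$, so no appeal to further genericity is required.
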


\begin{proof} 
Assume that $j \neq 0$ or $i \neq 0$.
Recall that $\psi_i^j = A(\mu)|_{\mathbf{W}_{jl+i}^\mu} - \Lambda(\mu)|_{\mathbf{W}_{jl+i}^\mu}$. 
Lemma \ref{Injectivity remark} implies that either $A(\mu)|_{\mathbf{W}_{jl+i}^\mu}$ or $\Lambda(\mu)|_{\mathbf{W}_{jl+i}^\mu}$ is injective. Hence $\psi_i^j$ is injective. 
Therefore we have $\dim \Ima \psi_i^j = \dim \mathbf{W}_{lj+i}^\mu$. The equality $\dim \ker \phi_i^j =  \dim \mathbf{W}_{lj+i+1}^\mu + \dim \mathbf{W}_{lj+i-1}^\mu - \dim \Ima \psi_i^j$ now implies the lemma. The case $i=j=0$ is similar. 
\end{proof}

Write $P_\mu = \sum_{j \in \Z} a^\mu_j t^j$ with $a_j^\mu = \dim \mathbf{W}_{j}^\mu$ and $P_{\mathbf{R}_i(\mu)} = \sum_{j \in \Z} a_j^{\mathbf{R}_i(\mu)} t^j$ with $a_j^{\mathbf{R}_i(\mu)} = \dim \mathbf{W}_{j}^{\mathbf{R}_i(\mu)}$. Proposition \ref{L-ker-W} and Lemma \ref{L-W-V-dim} directly imply the following.

\begin{corollary} \label{Cor-Poincare polynomials}
We have $a^\mu_j = a^{\mathbf{R}_i(\mu)}_j$ for $j \neq i \mmod l$. Moreover $a^{\mathbf{R}_i(\mu)}_{lj+i} = a^{\mu}_{lj+i+1} + a^{\mu}_{lj+i-1} - a^\mu_{lj+i}$ if $j \neq 0$ or $i \neq 0$ and  $a^{\mathbf{R}_0(\mu)}_0 = a^\mu_1 + a^\mu_{-1} - a^\mu_0 +1$ otherwise. 
\end{corollary}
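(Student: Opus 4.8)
The plan is to obtain the statement directly from Proposition~\ref{L-ker-W} and Lemma~\ref{L-W-V-dim}, since both halves of the claim are merely dimension counts for the homogeneous pieces of the $\mathbf{R}_i(\mu)$-grading on $\widehat{\mathbf{V}}^{\sigma_i*\nu}$.

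First I would invoke Proposition~\ref{L-ker-W}, which identifies the $\mathbf{R}_i(\mu)$-grading $\widehat{\mathbf{V}}^{\sigma_i*\nu} = \bigoplus_{j\in\Z}\mathbf{W}_j^{\mathbf{R}_i(\mu)}$ with the $\Z$-grading $\bigoplus_{j\in\Z}\mathbf{U}_j$ defined just before it. Hence $a_j^{\mathbf{R}_i(\mu)} = \dim\mathbf{W}_j^{\mathbf{R}_i(\mu)} = \dim\mathbf{U}_j$ for every $j\in\Z$. By the definition of $\mathbf{U}_j$ we have $\mathbf{U}_j = \mathbf{W}_j^\mu$ whenever $j \neq i \mmod l$, and therefore $a_j^{\mathbf{R}_i(\mu)} = \dim\mathbf{W}_j^\mu = a_j^\mu$ in that range, which is the first assertion.

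For the second assertion I would use that $\mathbf{U}_{lj+i} = \ker\phi_i^j$ by construction, so that $a_{lj+i}^{\mathbf{R}_i(\mu)} = \dim\ker\phi_i^j$. Lemma~\ref{L-W-V-dim} then computes this dimension: when $i\neq 0$ or $j\neq 0$ it equals $\dim\mathbf{W}_{lj+i+1}^\mu + \dim\mathbf{W}_{lj+i-1}^\mu - \dim\mathbf{W}_{lj+i}^\mu$, and in the exceptional case $i=j=0$ it equals $\dim\mathbf{W}_{1}^\mu + \dim\mathbf{W}_{-1}^\mu - \dim\mathbf{W}_{0}^\mu + 1$, the extra unit coming from the one-dimensional space $\mathbf{V}_\infty$ that enters the map $\psi_0^0$. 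Rewriting each $\dim\mathbf{W}_k^\mu$ as $a_k^\mu$ produces exactly the stated recursions.

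Since this is a formal consequence of two previously established results, there is essentially no genuine obstacle; the only point requiring care is the index bookkeeping — checking that the neighbours $lj+i\pm 1$ of an index of the form $lj+i$ are never themselves $\equiv i\mmod l$ (they are $\equiv i\pm 1$), so the two clauses in the definition of $\mathbf{U}$ do not interfere, and correctly isolating the single exceptional grading piece at $(i,j)=(0,0)$ where the $\mathbf{V}_\infty$ contribution forces the $+1$.
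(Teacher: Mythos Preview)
Your proposal is correct and matches the paper's own reasoning, which simply states that the corollary follows directly from Proposition~\ref{L-ker-W} and Lemma~\ref{L-W-V-dim}. You have merely spelled out the dimension counts that the paper leaves implicit, and your remark on the index bookkeeping (that $lj+i\pm 1 \not\equiv i \mmod l$ since $l>1$) is a harmless sanity check.
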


\subsection{Removable and addable cells.} \label{para-remadd}

Throughout this subsection let $\mu$ be an arbitrary partition.
Recall the partition $\mathbf{T}_k(\mu)$ from Definition \ref{defi:part-remadd} obtained from $\mu$ by  adding all $k$-addable cells and removing all $k$-removable cells. We will now interpret addability and removability in terms of the residue of $\mu$. 

\begin{lemma} \label{Lemma-corners}
Let us write $\Res_{\mu}(t) = \sum_{j \in \Z} b_j t^j$. Let $k \in \Z$. 
\begin{enumerate}[label=\alph*), font=\textnormal,noitemsep,topsep=3pt,leftmargin=0.55cm]
\item The following are equivalent: (1) a cell of content $k$ is removable; (2) $b_{k-1} = b_k, b_{k+1} = b_k - 1$ $(k>0)$ or $b_{k+1} = b_k, b_{k-1} = b_k - 1$ $(k<0)$ or $b_{-1} = b_{1}, b_{0} = b_1 + 1$ $(k=0)$. 
\item The following are equivalent: (1) a cell of content $k$ is addable; (2) $b_{k+1} = b_k, b_{k-1} = b_k + 1$ $(k>0)$ or $b_{k-1} = b_k, b_{k+1} = b_k + 1$ $(k<0)$ or $b_{-1} = b_0 = b_{1}$ $(k=0)$. 
\item The following are equivalent: (1) no cell of content $k$ is addable or removable; (2) $[b_{k+1} = b_k = b_{k-1}$ or $b_{k+1} = b_k -1 = b_{k-1}-2$ $(k>0)]$, or $[b_{k+1} = b_k = b_{k-1}$ or $b_{k+1} = b_k +1 = b_{k-1}+2$ $(k<0)]$, or $[b_{-1} = b_0 = b_{1}+1$ or $b_{1} = b_0 = b_{-1}+1$ $(k=0)]$. 
\end{enumerate} 
\end{lemma}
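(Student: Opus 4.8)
\textbf{Proof plan for Lemma \ref{Lemma-corners}.}

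The plan is to work directly with the combinatorics of Young diagrams and the profile (boundary lattice path) of $\mu$, and to translate between that profile and the coefficient sequence $(b_j)_{j \in \Z}$ of $\Res_\mu(t) = \sum_{j} b_j t^j$. Recall that $b_j = N_j(\mu)$ is the number of cells of content $j$, equivalently the number of cells $(i,i+j) \in \mathbb{Y}(\mu)$. First I would record the elementary but crucial fact that the diagonal slice of $\mathbb{Y}(\mu)$ of content $j$ is an interval: the cells of content $j$ are exactly $(1,1+j), (2,2+j), \hdots, (b_j, b_j+j)$ if $j \geq 0$, and similarly with rows and columns swapped if $j < 0$. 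From this one reads off that $(i, i+j) \in \mathbb{Y}(\mu)$ iff $i \leq b_j$ (for $j \geq 0$), and the analogous statement for $j<0$; this is the only input needed to connect the two sides.

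Next I would do the bookkeeping for a single content value $k$, say $k > 0$ (the cases $k<0$ and $k=0$ being symmetric or only mildly different). A cell $(i,i+k)$ is \emph{removable} iff it lies in $\mathbb{Y}(\mu)$ but neither $(i, i+k+1)$ nor $(i+1, i+k)$ does; using the interval description, $(i,i+k) \in \mathbb{Y}(\mu)$ and $(i+1, i+k) \notin \mathbb{Y}(\mu)$ forces $i = b_k$, and then absence of $(i, i+k+1)$ is the condition $b_k > b_{k+1}$, while the presence of $(i, i+k) = (b_k, b_k+k)$ together with $(b_k, b_k+k)$ having its left neighbour inside $\mathbb{Y}(\mu)$ (automatic since $k>0$) pins down $b_{k-1}$. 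Comparing the interval lengths of consecutive diagonals and using that $\mathbb{Y}(\mu)$ is a Young diagram gives the monotonicity constraints $b_{k+1} \leq b_k \leq b_{k-1}$ and $b_{k-1} - b_{k+1} \leq 2$ for $k>0$ (and the mirrored inequalities for $k<0$, with the $k=0$ diagonal being the unique ``longest'' one so that $b_0 \geq b_{\pm 1}$ and $|b_1 - b_{-1}| \leq$ the appropriate bound). Granting these structural inequalities, for $k>0$ there are exactly three possibilities: $b_{k-1} = b_k = b_{k+1}$; $b_{k-1} = b_k, b_{k+1} = b_k - 1$; or $b_{k-1} = b_k + 1, b_{k+1} = b_k$; or $b_{k-1} = b_k+1, b_{k+1} = b_k-1$ — wait, this last one is $b_{k-1}-b_{k+1} = 2$, which corresponds to a ``flat'' segment of the profile; I would enumerate these cases carefully and match each to exactly one of ``removable'', ``addable'', ``neither'' (with ``removable'' and ``addable'' being the extreme cases and ``neither'' the two intermediate ones). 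The key point is that the three clauses (a)(2), (b)(2), (c)(2) are mutually exclusive and exhaust all the admissible profiles, so proving ``(1)$\Rightarrow$(2)'' in each part plus this exhaustiveness yields the converses for free.

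The main obstacle, and the step I would spend the most care on, is handling the boundary content value $k=0$ cleanly: the diagonal of content $0$ is the longest one, so the local shape of the profile there is governed by $b_{-1}, b_0, b_1$ with $b_0 \geq b_{-1}$ and $b_0 \geq b_1$, and the correspondence between profile shapes and addable/removable corners is slightly asymmetric compared to $k \neq 0$ (a removable corner of content $0$ needs the row and column of the corner cell to both terminate there, forcing $b_0 = b_1+1 = b_{-1}+1$, i.e. $b_{-1} = b_1$ and $b_0 = b_1+1$; an addable corner of content $0$ needs $b_{-1} = b_0 = b_1$). I would verify these by hand against a small example (e.g. $\mu = \varnothing$, $\mu = (1)$, $\mu = (2,1)$, $\mu = (1,1)$) to make sure the index conventions match the statement, and then assemble parts (a), (b), (c) from the case analysis, noting that (c) is just the negation of (a)$\vee$(b) given the exhaustive list of admissible local profiles.
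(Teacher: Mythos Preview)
Your proposal is correct and follows essentially the same approach as the paper: both use the fact that the cells of a fixed content form an interval $(1,1+j),\ldots,(b_j,b_j+j)$ and then read off the local removable/addable conditions from the triple $(b_{k-1},b_k,b_{k+1})$. The paper proves one representative case (removable, $k>0$) by a direct two-direction argument and leaves the rest as analogous, whereas you organize the same computation as an exhaustive enumeration of the four admissible profiles (using $b_{k-1}-b_k,\,b_k-b_{k+1}\in\{0,1\}$) so that the converses come for free; this is a cosmetic difference only.
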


\begin{proof}

Let $k>0$. Suppose that the cell $(i,j)$ is removable from $\mathbb{Y}(\mu)$ and has content $k$. Then $j-i=k$ and $(1,k+1), (2,k+2), \hdots, (i,j)$ are precisely the cells of content $k$ in $\mathbb{Y}(\mu)$. Since $\mathbb{Y}(\mu)$ has a corner at $k$, the cells of content $k-1$ in $\mathbb{Y}(\mu)$ are precisely $(1,k), (2,k+1), \hdots, (i,j-1)$ and the cells of content $k+1$ in $\mathbb{Y}(\mu)$ are precisely $(1,k+2), (2,k+3), \hdots, (i-1,j)$. Hence $b_k = i, b_{k-1} = i, b_{k+1} = i-1$, which yields the desired equalities. Conversely, suppose that  $b_{k-1} = b_k, b_{k+1} = b_k - 1$. Then the cell $(b_k,b_k+k)$ is removable. Indeed, $b_{k-1} = b_k$ implies that $(b_k+1,b_k+k) \notin \mathbb{Y}(\mu)$ and $b_{k+1} = b_k - 1$ implies that $(b_k,b_k+k+1) \notin \mathbb{Y}(\mu)$. 
The proofs of the remaining cases are analogous. 
\end{proof}

\subsection{Combinatorial interpretation of reflection functors.} \label{para-ramadd2}
We can now interpret the effect of applying reflection functors to the fixed points combinatorially. 
\begin{theorem} \label{RkTk}
Let $\mu \in \mathcal{P}_{\nu^t}(nl+|\nu^t|)$ and $0 \leq k \leq l-1$.
We have $\mathbf{R}_k(\mu) = (\mathbf{T}_k(\mu^t))^t$. 
\end{theorem}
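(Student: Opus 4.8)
The goal is to identify the partition $\mathbf{R}_k(\mu)$, and by Proposition \ref{Prop Z-grading} it suffices to compute the Poincaré polynomial $P_{\mathbf{R}_k(\mu)}$ of the $\mathbf{R}_k(\mu)$-grading on $\widehat{\mathbf{V}}^{\sigma_k*\nu}$ and show it equals $\Res_{(\mathbf{T}_k(\mu^t))^t}(t) = \Res_{\mathbf{T}_k(\mu^t)}(t^{-1})$. Corollary \ref{Cor-Poincare polynomials} already gives the transition rule: writing $P_\mu = \sum_j a_j^\mu t^j$, the coefficients satisfy $a_j^{\mathbf{R}_k(\mu)} = a_j^\mu$ for $j \not\equiv k \pmod l$, while $a_{lj+k}^{\mathbf{R}_k(\mu)} = a_{lj+k+1}^\mu + a_{lj+k-1}^\mu - a_{lj+k}^\mu$ (with the $+1$ correction when $j=0,k=0$). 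By Proposition \ref{Prop Z-grading} we have $a_j^\mu = $ (number of cells of content $j$ in $\mu^t$) $= b_j$ in the notation $\Res_{\mu^t}(t) = \sum_j b_j t^j$. So the whole theorem reduces to a purely combinatorial statement: if $\Res_{\mu^t}(t) = \sum_j b_j t^j$ and we define $b'_j$ by $b'_j = b_j$ for $j\not\equiv k$ and $b'_{lj+k} = b_{lj+k+1} + b_{lj+k-1} - b_{lj+k}$ ($+1$ if $j=k=0$), then $\sum_j b'_j t^j = \Res_{\mathbf{T}_k(\mu^t)}(t)$.

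**Carrying it out.** First I would assemble the setup: invoke Proposition \ref{L-ker-W} and Lemma \ref{L-W-V-dim} (packaged as Corollary \ref{Cor-Poincare polynomials}) to get the transition rule on Poincaré polynomials, and Proposition \ref{Prop Z-grading} to translate $a_j^\mu = b_j$ where $\Res_{\mu^t}(t) = \sum b_j t^j$. Then the core step is the combinatorial lemma above, which I would prove cell-by-cell using Lemma \ref{Lemma-corners}. Fix an index $j \equiv k \pmod l$. Lemma \ref{Lemma-corners}(a),(b),(c) classifies, purely in terms of the local pattern $(b_{j-1}, b_j, b_{j+1})$, whether a cell of content $j$ in $\mathbb{Y}(\mu^t)$ is removable, addable, or neither. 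Running through the cases:
if a cell of content $j$ is removable from $\mu^t$, then (say $j>0$) $b_{j-1}=b_j$, $b_{j+1}=b_j-1$, so $b'_j = b_{j+1}+b_{j-1}-b_j = b_j - 1$ — exactly the effect of deleting that cell;
if a cell of content $j$ is addable, then $b_{j+1}=b_j$, $b_{j-1}=b_j+1$, so $b'_j = b_j+1$ — exactly the effect of adding that cell;
if neither, the two sub-patterns in Lemma \ref{Lemma-corners}(c) both give $b_{j+1}+b_{j-1}-b_j = b_j$, so $b'_j = b_j$, no change.
The $k=0$ cases ($j=0$ especially) are handled by the $+1$ correction term lining up with the asymmetric $k=0$ clauses in Lemma \ref{Lemma-corners}; here the "addable cell of content $0$" case needs the $+1$ to produce $b_0+1$ from $b_{-1}=b_0=b_1$. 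Since $\mathbf{T}_k(\mu^t)$ is by definition obtained from $\mu^t$ by simultaneously adding all $k$-addable and removing all $k$-removable cells, and these operations on disjoint diagonals are independent, summing the per-content changes gives $\sum_j b'_j t^j = \Res_{\mathbf{T}_k(\mu^t)}(t)$. Finally, transposing: $\Res_{\lambda^t}(t) = \Res_\lambda(t^{-1})$ for any partition $\lambda$, and since $P_{\mathbf{R}_k(\mu)} = \Res_{\mathbf{R}_k(\mu)^t}(t)$ by Proposition \ref{Prop Z-grading} applied in the reflected variety, we get $\Res_{\mathbf{R}_k(\mu)^t}(t) = \Res_{\mathbf{T}_k(\mu^t)}(t)$, hence $\mathbf{R}_k(\mu)^t = \mathbf{T}_k(\mu^t)$, i.e. $\mathbf{R}_k(\mu) = (\mathbf{T}_k(\mu^t))^t$.

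**Main obstacle.** The geometric and representation-theoretic infrastructure is already in place, so the proof itself is not the bottleneck; the main care-point is bookkeeping. There are two subtleties to get exactly right: the treatment of the vertex $k=0$ (the $+1$ in the moment map, the extra $\mathbf{V}_\infty$ summand in $\psi_0, \phi_0$, and the asymmetric content-$0$ clauses of Lemma \ref{Lemma-corners}), and making sure the passage between $\mu$ and $\mu^t$ — contents negate under transpose, and $j$-removable/$j$-addable for $\mu^t$ corresponds to the cyclic residue class $k$ — is consistent throughout. I would organize the case analysis of the combinatorial lemma in a small table indexed by the possible local patterns $(b_{j-1},b_j,b_{j+1})$, verify each row against Lemma \ref{Lemma-corners}, and keep the $j \equiv 0$ column separate. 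Everything else is routine substitution.
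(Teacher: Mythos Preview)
Your proposal is correct and follows essentially the same route as the paper's proof: reduce via Proposition~\ref{Prop Z-grading} to showing $P_{\mathbf{R}_k(\mu)} = \Res_{\mathbf{T}_k(\mu^t)}(t)$, use Corollary~\ref{Cor-Poincare polynomials} for the transition rule on Poincar\'e coefficients, and then verify the combinatorial identity $b_{j+1}+b_{j-1}-b_j = (\text{new }b_j)$ case-by-case via Lemma~\ref{Lemma-corners}. The paper's argument is identical in structure and detail, including the separate handling of the $k=0$ vertex with its $+1$ correction.
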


\begin{proof}
It suffices to show that the residue of $\mathbf{T}_k(\mu^t)$ equals $P_{\mathbf{R}_k(\mu)}$. Let us write 
\[ \Res_{\mathbf{T}_k(\mu^t)}(t) =: \sum_{i \in \Z} a_i t^i, \quad \Res_{\mu^t}(t) =: \sum_{i \in \Z} b_i t^i, \quad P_{\mathbf{R}_k(\mu)} =: \sum_{i \in \Z} c_i t^i, \quad P_{\mu} =: \sum_{i \in \Z} d_i t^i.\] By Lemma \ref{Prop Z-grading}, we have $b_i = d_i$. 
Suppose that $i \neq k \mmod l$. Then $a_i = b_i = d_i = c_i$. The first equality follows from the fact that no cells of content $i$ are added to or removed from $\mu^t$ when we transform $\mu^t$ into $\mathbf{T}_k(\mu^t)$. The third equality follows from Corollary \ref{Cor-Poincare polynomials}. 

Now suppose that $i = k \mmod l$ and $i > 0$. Then $c_i = d_{i+1} + d_{i-1} - d_i$ by Corollary \ref{Cor-Poincare polynomials}. Hence $c_i = b_{i+1} + b_{i-1} - b_i$. We now argue that $b_{i+1} + b_{i-1} - b_i = a_i$. There are three possibilities: $a_i = b_i +1$ and one cell of content $i$ is addable to $\mu^t$, or $a_i = b_i - 1$ and one cell of content $i$ is removable from $\mu^t$, or $a_i = b_i$ and no cell of content $i$ is addable to or removable from $\mu^t$. In the first case we have $b_i = b_{i+1}$ and $b_{i-1} = b_i +1$. In the second case we have $b_i = b_{i-1}$ and $b_{i+1} = b_i - 1$. In the third case we have $b_{i-1} = b_i = b_{i+1}$ or $b_{i} = b_{i+1} + 1 = b_{i-1} - 1$. These equalities follow immediately from Lemma \ref{Lemma-corners}. In each of the three cases we see that the equality $b_{i+1} + b_{i-1} - b_i = a_i$ holds. Hence $a_i = c_i$. 
The proof for $i \leq 0$ is analogous.
\end{proof}
\noindent
Recall that if $\lambda \in \mathcal{P}$ then
\begin{equation} \label{Bridgetjones} \quot(\lambda^t) = (\quot(\lambda)^t)^{\flat}.\end{equation}

\begin{corollary} \label{bigcorAmbassadors}
Let $\mu \in \mathcal{P}_{\nu^t}(nl+|\nu^t|)$ and $i \in \{0,\hdots,l-1\}$. Then $\mathbf{R}_i(\mu) = (\sigma_i * \mu^t)^t$. Moreover,
\[ \core(\mathbf{R}_i(\mu)) = (\sigma_i*\nu)^t = (\mathbf{T}_i(\nu))^t, \quad \quot(\mathbf{R}_i(\mu)) = s_{l-i}\cdot\quot(\mu).\]
\end{corollary}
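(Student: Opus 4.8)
The plan is to derive the corollary from Theorem~\ref{RkTk} together with the combinatorial results on cores and quotients already recorded in Section~\ref{CM spaces chapter} (especially \S\ref{NorweskiLesny}). Recall that Theorem~\ref{RkTk} gives $\mathbf{R}_i(\mu) = (\mathbf{T}_i(\mu^t))^t$, and by Definition~\ref{defi:part-remadd} we have $\mathbf{T}_i(\mu^t) = \sigma_i*\mu^t$. Hence $\mathbf{R}_i(\mu) = (\sigma_i*\mu^t)^t$ immediately, which is the first assertion. So the content of the corollary is really the identification of the $l$-core and $l$-quotient of this partition.

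For the core statement, I would apply Proposition~\ref{Affineactioncorequotient}, which says $\core(\sigma_i*\lambda) = \sigma_i*\core(\lambda)$ for any partition $\lambda$. Taking $\lambda = \mu^t$ and using that $\mu$ has $l$-core $\nu^t$, so $\mu^t$ has $l$-core $(\nu^t)^t = \nu$ (transposition commutes with taking $l$-cores), we get $\core(\sigma_i*\mu^t) = \sigma_i*\nu$. Transposing and using that $\core(\lambda^t) = \core(\lambda)^t$ gives $\core(\mathbf{R}_i(\mu)) = \core((\sigma_i*\mu^t)^t) = (\sigma_i*\nu)^t$, and since $\sigma_i*\nu = \mathbf{T}_i(\nu)$ by definition this equals $(\mathbf{T}_i(\nu))^t$, as claimed. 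I should double-check the compatibility $\core(\lambda^t) = \core(\lambda)^t$; this follows from the rim-hook characterization of the $l$-core recalled after Lemma~\ref{rem hooks quot}, since transposing a Young diagram transposes each rim-hook (an $l$-rim-hook stays an $l$-rim-hook), so successive removal of $l$-rim-hooks is compatible with transposition.

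For the quotient statement, I would again invoke Proposition~\ref{Affineactioncorequotient}: $\quot(\sigma_i*\mu^t) = \mathsf{pr}(\sigma_i)\cdot\quot(\mu^t) = s_i\cdot\quot(\mu^t)$, where $s_i \in S_l$ is the transposition swapping $i-1$ and $i$ (and $s_0$ swaps $0$ and $l-1$), acting on $l$-multipartitions by permuting components as in \S\ref{NorweskiLesny}. Now transpose: using \eqref{Bridgetjones}, $\quot(\lambda^t) = (\quot(\lambda)^t)^\flat$, so
\[ \quot(\mathbf{R}_i(\mu)) = \quot\big((\sigma_i*\mu^t)^t\big) = \big(\quot(\sigma_i*\mu^t)^t\big)^\flat = \big((s_i\cdot\quot(\mu^t))^t\big)^\flat. \]
The remaining task is a bookkeeping computation: conjugate the $S_l$-action by the operation $\underline{\lambda}\mapsto(\underline{\lambda}^t)^\flat$ on $\mathcal{P}(l,n)$. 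Since componentwise transposition commutes with any permutation of the components, we have $(s_i\cdot\underline{\lambda})^t = s_i\cdot\underline{\lambda}^t$, so the expression becomes $(s_i\cdot\quot(\mu^t)^t)^\flat$; then $\quot(\mu^t)^t = (\quot(\mu)^\flat)$ by \eqref{Bridgetjones} applied to $\mu$ (rearranged), and one must check that $(s_i\cdot\underline{\lambda}^\flat)^\flat = s_j\cdot\underline{\lambda}$ where $j$ is determined by how the reversal operation $\flat$ (which is the permutation $p\mapsto l-1-p$ of indices) conjugates $s_i$. A direct index chase shows that conjugating the transposition $(i-1\ i)$ (resp.\ $(0\ l-1)$) by $p\mapsto l-1-p$ yields the transposition $(l-i\ l-1-i)$, i.e.\ $s_{l-i}$ in the paper's indexing of simple transpositions in $S_l$. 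Assembling these identities gives $\quot(\mathbf{R}_i(\mu)) = s_{l-i}\cdot\quot(\mu)$.

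The main obstacle I anticipate is purely notational: keeping straight the three different actions in play (the $\tilde S_l$-action $*$ on partitions from Definition~\ref{defi:part-remadd}, the $S_l$-action $\cdot$ on multipartitions from \S\ref{NorweskiLesny}, and the $\flat$/transpose involutions on $\mathcal{P}(l,n)$) and verifying the index arithmetic $i \mapsto l-i$ in the conjugation by reversal, including the boundary case $i=0$ where the relevant transposition wraps around. Once the conventions are pinned down — in particular the paper's convention that $s_0 \in S_l$ swaps $0$ and $l-1$ — each step is a short and mechanical verification, and no new geometric input beyond Theorem~\ref{RkTk} and Proposition~\ref{Affineactioncorequotient} is required.
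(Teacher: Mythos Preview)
Your proposal is correct and follows essentially the same route as the paper's proof: both derive the first assertion immediately from Theorem~\ref{RkTk} and Definition~\ref{defi:part-remadd}, obtain the core formula from Proposition~\ref{Affineactioncorequotient} (together with the compatibility of cores with transposition), and compute the quotient via \eqref{Bridgetjones} and Proposition~\ref{Affineactioncorequotient}, finishing with the conjugation of $s_i$ by the reversal $\flat$ to produce $s_{l-i}$. Your write-up simply makes explicit a few bookkeeping steps (notably $\core(\lambda^t)=\core(\lambda)^t$ and the index arithmetic in the conjugation, including the $i=0$ case) that the paper leaves implicit.
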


\begin{proof}
The first claim follows directly from Theorem \ref{RkTk} and the definition of the $\tilde{S}_l$-action on partitions in \S \ref{NorweskiLesny}. The formula for $\core(\mathbf{R}_i(\mu))$ follows directly from Proposition \ref{Affineactioncorequotient}. The formula for $\quot(\mathbf{R}_i(\mu))$ follows from Proposition \ref{Affineactioncorequotient} and \eqref{Bridgetjones}. Indeed,
\begin{align*}
 \quot(\mathbf{R}_i(\mu)) = \quot( (\sigma_i * \mu^t)^t) =& \ ((\quot(\sigma_i*\mu^t))^t)^\flat \\
=& \  ((s_{i}\cdot \quot(\mu^t))^t)^\flat \\
=& \ (s_{i}\cdot(\quot(\mu))^\flat)^\flat =  s_{l-i}\cdot\quot(\mu).\qedhere
\end{align*} 
\end{proof}

Note that, by Proposition \ref{Affineactioncorequotient}, we also have: 
\begin{equation} \label{Catcreature} \core((\mathbf{R}_i(\mu))^t) = \sigma_i*\nu = \mathbf{T}_i(\nu), \quad \quot((\mathbf{R}_i(\mu))^t) =  \pr(\sigma_{i})\cdot\quot(\mu) = s_{i}\cdot\quot(\mu).\end{equation}

\section{Connection to the Hilbert scheme} 

\subsection{The Hilbert scheme.} \label{Hilbertschemesection}
Let $K$ be a positive integer. We let $\Hilb_K$ denote the \emph{Hilbert scheme} of $K$ points in $\C^2$. The underlying set of the scheme $\Hilb_K$ consists of ideals of $\C[z_1,z_2]$ of colength $K$, i.e., ideals $I \subset \C[z_1,z_2]$ such that $\dim \C[z_1,z_2]/I = K$.

We let $\C^*$ act on $\C[z_1,z_2]$ by the rule $t.z_1 = tz_1, t.z_2 = t^{-1}z_2$. This action induces an action on $\Hilb_K$. 
The $\C^*$-fixed points in $\Hilb_K$ are precisely the monomial ideals in $\C[z_1,z_2]$. Let $\lambda \in \mathcal{P}(K)$. Let $I_\lambda$ be the $\C$-span of the monomials $\{z_1^iz_2^j \mid (i+1,j+1) \notin \mathbb{Y}(\lambda)\}$. We have a bijection
\[ \mathcal{P}(K) \longleftrightarrow \Hilb_K^{\C^*}, \quad \lambda \mapsto I_\lambda.\] 
Let $\mathcal{T}_K$ denote the tautological bundle on $\Hilb_K$. Its fibre $(\mathcal{T}_{K})_I$ at $I$ is isomorphic to $\C[z_1,z_2]/I$. 
The following lemma follows immediately from the definitions. 
\begin{lemma} \label{KocinkaFionka}
Let $\lambda \in \mathcal{P}(K)$. We have $\ch_t (\mathcal{T}_{K})_{I_\lambda} = \Res_{\lambda^t}(t)$. 
\end{lemma}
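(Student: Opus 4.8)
The statement to prove is Lemma~\ref{KocinkaFionka}: for $\lambda \in \mathcal{P}(K)$, one has $\ch_t (\mathcal{T}_{K})_{I_\lambda} = \Res_{\lambda^t}(t)$. The plan is to unwind the definitions on both sides and match monomials. First I would recall that the fibre of the tautological bundle at a point $I \in \Hilb_K$ is canonically $\C[z_1,z_2]/I$, so in particular $(\mathcal{T}_K)_{I_\lambda} \cong \C[z_1,z_2]/I_\lambda$ as a $\C^*$-module, where the $\C^*$-action is the one induced by $t.z_1 = tz_1$, $t.z_2 = t^{-1}z_2$. Next I would identify a monomial basis of this quotient: since $I_\lambda$ is the span of the monomials $z_1^iz_2^j$ with $(i+1,j+1) \notin \mathbb{Y}(\lambda)$, the quotient $\C[z_1,z_2]/I_\lambda$ has basis the images of the monomials $z_1^iz_2^j$ with $(i+1,j+1) \in \mathbb{Y}(\lambda)$. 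This is exactly the standard bijection between a monomial ideal's standard monomials and the cells of the associated Young diagram.

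The second step is the weight bookkeeping. Under the given $\C^*$-action, $z_1^iz_2^j$ has weight $t^{i-j}$. So the $\C^*$-character of $\C[z_1,z_2]/I_\lambda$ is $\sum_{(i+1,j+1) \in \mathbb{Y}(\lambda)} t^{i-j}$. Re-indexing with $a = i+1$, $b = j+1$ so that $(a,b)$ ranges over $\mathbb{Y}(\lambda)$, this is $\sum_{(a,b) \in \mathbb{Y}(\lambda)} t^{(a-1)-(b-1)} = \sum_{(a,b)\in\mathbb{Y}(\lambda)} t^{a-b} = \sum_{\square \in \lambda} t^{-c(\square)}$, using the content convention $c(a,b) = b - a$ from \S\ref{Felineorigin}. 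Finally I would observe that negating contents corresponds to transposing the partition: for any partition $\mu$, transposition swaps the roles of rows and columns, hence sends a cell of content $c$ to a cell of content $-c$, so $\sum_{\square \in \lambda} t^{-c(\square)} = \sum_{\square \in \lambda^t} t^{c(\square)} = \Res_{\lambda^t}(t)$. Chaining these equalities gives $\ch_t(\mathcal{T}_K)_{I_\lambda} = \Res_{\lambda^t}(t)$.

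This is a short verification and there is no real obstacle; the only point requiring a little care is the direction of the transpose, i.e.\ making sure the sign conventions for the $\C^*$-action on $\C[z_1,z_2]$ and for contents line up so that the transpose (rather than $\lambda$ itself) appears. Since the $z_1$-degree carries weight $+1$ and contents are defined as (column $-$ row), the monomial basis is naturally indexed so that the weight of a cell is the \emph{negative} of its content, which is precisely the content of the transposed cell; hence $\lambda^t$ rather than $\lambda$. I would phrase the proof in two or three sentences, citing the definition of $\mathcal{T}_K$ and $I_\lambda$ from \S\ref{Hilbertschemesection} and the content/residue conventions from \S\ref{Felineorigin}, and this is indeed "immediate from the definitions" as the paper asserts.
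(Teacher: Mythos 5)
Your proof is correct, and it is exactly the definition‑unwinding the paper has in mind when it says the lemma "follows immediately from the definitions" (the paper gives no further argument). The bookkeeping — monomial basis of $\C[z_1,z_2]/I_\lambda$ indexed by cells of $\mathbb{Y}(\lambda)$, weight of $z_1^iz_2^j$ equal to $i-j$, hence the negative of the content of the cell $(i+1,j+1)$, and negating contents amounting to transposing — all matches the paper's conventions.
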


There is also a $\Z/l\Z$-action on $\Hilb_K$ induced by the $\Z/l\Z$-action on $\C[z_1,z_2]$ given by $\epsilon.z_1 = \eta^{-1}z_1, \epsilon.z_2 = \eta z_2$. 

\subsection{Connection to rational Cherednik algebras.} \label{Glaurung}

Set $\mathbf{-1} := -\frac{1}{l}(1, \hdots,1) \in \Q^l$ and $\mathbf{-\frac{1}{2}}:= -\frac{1}{2l} (1, \hdots, 1)\in\Q^l$. Let $w \in \tilde{S}_l$ and set $\theta := w^{-1} \cdot (\mathbf{-\frac{1}{2}}) \in \Q^l$ as well as $\gamma := w * n\delta \in \Z^l$. We have $\gamma = n\delta + \gamma_0$, where $\gamma_0 = w * \varnothing$. Let $\nu:=\mathfrak{d}^{-1}(\gamma_0)$ be the $l$-core corresponding to $\gamma_0$. 
By \cite[Lemmas 4.3, 7.2]{gor-qv}, the quiver variety $\mathcal{X}_{\theta} (n\delta)$ is smooth. 
Set $\mathbf{h} := (h, H_1, \hdots, H_{l-1})$ with $H_j = \theta_j$ ($1 \leq j \leq l-1$) and $h = -\theta_0 - \sum_{j=1}^{l-1} H_j$. 
Let us fix a reduced expression $w = \sigma_{i_1} \cdots \sigma_{i_m}$ for $w$ in $\tilde{S}_l$.  
Composing reflection functors yields a $U(1)$-equivariant hyper-K\"{a}hler isometry 
\begin{equation} \label{RiRi} \mathfrak{R}_{i_1} \circ \cdots \circ \mathfrak{R}_{i_m} : \mathcal{X}_\theta (n\delta) \xrightarrow{\sim} \mathcal{X}_{-\mathbf{\frac{1}{2}}} (\gamma).
\end{equation}
By \cite[\S 3.7]{gor-qv} there exists a $U(1)$-equivariant diffeomorphism 
\begin{equation} \label{XtoM} \mathcal{X}_\mathbf{-\frac{1}{2}}(\gamma) \xrightarrow{\sim} \mathcal{M}_{-\mathbf{1}}(\gamma).\end{equation} Set $K = nl + |\nu|.$
By \cite[Lemma 7.8]{gor-qv}, there is a $\C^*$-equivariant embedding 
\begin{equation} \label{embedding-M-Hilb} \mathcal{M}_{-\mathbf{1}}(\gamma) \hookrightarrow  \Hilb_K.\end{equation}
Its image is the component $\Hilb_K^\nu$ of $\Hilb_K^{\Z/l\Z}$ whose generic points have the form $V(I_\nu) \cup O$, where $O$ is a union of $n$ distinct free $\Z/l\Z$-orbits in $\C^2$. Moreover, $(\Hilb_K^\nu)^{\C^*} = \{ I_\lambda \mid \lambda \in \mathcal{P}_{\nu}(nl+|\nu|)\}$. 
Let
\begin{equation} \label{Phi-XtoHilb2}\Phi : \mathcal{X}_\mathbf{-\frac{1}{2}}(\gamma) \to \mathcal{M}_{-\mathbf{1}}(\gamma) \to \Hilb_K^\nu \end{equation}
be the composition of \eqref{XtoM} and \eqref{embedding-M-Hilb}.  It induces a bijection between the $\C^*$-fixed points and hence also a bijection between their labelling sets
\[ \Psi : \mathcal{P}_{\nu^t}(nl+|\nu^t|) \to \mathcal{P}_{\nu}(nl+|\nu|), \quad \mu \mapsto \lambda,\]
where the partition $\lambda$ is defined by the equation $I_{\lambda} = \Phi([\mathbf{A}(\mu)])$. 
\begin{lemma} \label{Pro:Hilb bijection}
Let $\mu \in \mathcal{P}_{\nu^t}(nl+|\nu^t|)$. We have $\Psi(\mu) = \mu^t$. 
\end{lemma}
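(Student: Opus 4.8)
The plan is to identify both sides of the claimed equality $\Psi(\mu)=\mu^{t}$ by comparing $\C^{*}$-characters of tautological bundles, exploiting that everything in sight is $\C^{*}$-equivariant and that a partition is determined by its residue. Concretely, the map $\Phi$ of \eqref{Phi-XtoHilb2} is built out of the hyper-K\"{a}hler isometry \eqref{RiRi}, the rotation-of-complex-structure diffeomorphism \eqref{XtoM}, and the embedding \eqref{embedding-M-Hilb}; each of these is known (by the cited results of \cite{gor-qv}, \cite{N}, \cite{CBH2}) to carry the tautological bundle on the source to the tautological bundle on the target in a $\C^{*}$-equivariant fashion. Thus, writing $\lambda:=\Psi(\mu)$, i.e.\ $I_{\lambda}=\Phi([\mathbf{A}(\mu)])$, we get an equality of $\C^{*}$-characters
\[ \ch_{t}(\mathcal{T}_{K})_{I_{\lambda}} = \ch_{t}\,\mathcal{V}_{-\mathbf{\frac{1}{2}}}(\gamma)_{[\mathbf{A}(\mu)]} = \ch_{t}\,\mathcal{V}_{\theta'}(\mathbf{d})_{[\mathbf{A}(\mu)]} \]
after transporting the tautological bundle along the reflection functors to the variety where $[\mathbf{A}(\mu)]$ lives.

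The second step is to evaluate both ends. By Lemma \ref{KocinkaFionka}, $\ch_{t}(\mathcal{T}_{K})_{I_{\lambda}}=\Res_{\lambda^{t}}(t)$. On the other side, Theorem \ref{intro: class of fp}(b) (equivalently Proposition \ref{residue-character} together with Corollary \ref{v-core fp bij}) gives $\ch_{t}\,\mathcal{V}_{\theta'}(\mathbf{d})_{[\mathbf{A}(\mu)]}=\Res_{\mu}(t)$, since $[\mathbf{A}(\mu)]$ is the fixed point labelled by $\mu$ in the quiver variety attached to the relevant parameter. One then needs only a compatibility check that the identifications of tautological bundles under \eqref{RiRi}, \eqref{XtoM} and \eqref{embedding-M-Hilb} really match up the $\C^{*}$-actions used in \S\ref{C-fp chapter} and in \S\ref{Hilbertschemesection}: the $\C^{*}$-action on $\mathrm{Rep}(\overline{\mathbf{Q}}_{\infty},\mathbf{d})$ rescaling $\mathbf{X},\mathbf{Y}$ corresponds, under the embedding into $\Hilb_{K}$, to the action $t.z_{1}=tz_{1}$, $t.z_{2}=t^{-1}z_{2}$, which is exactly the action fixed in \S\ref{Hilbertschemesection}; this is already implicit in the statement of Theorem \ref{intro fixed points bij all} and in \cite[\S 7]{gor-qv}. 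Granting this, we obtain $\Res_{\lambda^{t}}(t)=\Res_{\mu}(t)$, hence $\lambda^{t}=\mu$, i.e.\ $\lambda=\mu^{t}$, which is the assertion $\Psi(\mu)=\mu^{t}$.

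Alternatively, and perhaps more cleanly, one can argue via the $\mu$-grading machinery of \S\ref{para-mu-grading}: the monomial ideal $I_{\lambda}$ is pinned down by the bigraded character of $\C[z_{1},z_{2}]/I_{\lambda}$, but after passing from the $\C^{*}$-fixed point to the Hilbert scheme only the single $\C^{*}$-weight survives, and this weight is recorded precisely by the Poincar\'{e} polynomial $P_{\mu}=\Res_{\mu^{t}}(t)$ of Proposition \ref{Prop Z-grading}. Comparing with $\ch_{t}(\mathcal{T}_{K})_{I_{\lambda}}=\Res_{\lambda^{t}}(t)$ from Lemma \ref{KocinkaFionka} again forces $\lambda^{t}=\mu^{t}$ up to transpose, and a check of conventions fixes the transpose correctly; note that the cores already match, since $\core(\mu^{t})=(\nu^{t})^{t}=\nu$ and the image component is $\Hilb_{K}^{\nu}$ with $(\Hilb_{K}^{\nu})^{\C^{*}}=\{I_{\lambda}\mid\lambda\in\mathcal{P}_{\nu}(nl+|\nu|)\}$.

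The main obstacle is the bookkeeping in the compatibility check: one must be careful that the rotation of complex structure \eqref{XtoM} and the embedding \eqref{embedding-M-Hilb} are taken with conventions that send the tautological bundle $\mathcal{V}$ to the tautological bundle $\mathcal{T}_{K}$ and, crucially, do not introduce a transpose or a sign twist in the $\C^{*}$-weight beyond the single transpose that the formula $\Psi(\mu)=\mu^{t}$ already contains. This is exactly the kind of subtlety that Remark \ref{remark counterexample} warns about in Gordon's original argument, so the proof should cite \cite[\S 3.7, \S 7]{gor-qv} precisely and, if necessary, spell out on a small example (e.g.\ $l=2$, $n=1$, $\nu=\varnothing$) that the transpose appears and only once. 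Everything else — the character computations on both ends — is already available as Lemma \ref{KocinkaFionka}, Proposition \ref{residue-character}, Corollary \ref{v-core fp bij} and Proposition \ref{Prop Z-grading}.
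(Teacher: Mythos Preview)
Your core idea is exactly the paper's: compare the $\C^{*}$-characters of the tautological bundles on both sides, use Proposition~\ref{residue-character} to get $\Res_{\mu}(t)$ on the quiver side and Lemma~\ref{KocinkaFionka} to get $\Res_{\lambda^{t}}(t)$ on the Hilbert scheme side, and conclude $\lambda=\mu^{t}$ since a partition is determined by its residue.

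However, there is a confusion in your setup that introduces a spurious difficulty. The map $\Phi$ of \eqref{Phi-XtoHilb2} is \emph{only} the composition of \eqref{XtoM} and \eqref{embedding-M-Hilb}; it does \emph{not} include the reflection functors \eqref{RiRi}. The point $[\mathbf{A}(\mu)]$ already lives in $\mathcal{X}_{-\mathbf{\frac{1}{2}}}(\gamma)$, and Proposition~\ref{residue-character} applies directly there (it is stated for any smooth $\mathcal{X}_{\theta}(\mathbf{d})$). So your step ``transport the tautological bundle along the reflection functors to the variety where $[\mathbf{A}(\mu)]$ lives'' is unnecessary. Worse, it is not correct as stated: reflection functors change the dimension vector, so the tautological bundles on source and target have different ranks and are certainly not identified. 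Simply drop the reflection functor part; what remains is precisely the paper's proof, which only needs that \eqref{XtoM} composed with \eqref{embedding-M-Hilb} lifts to a $U(1)$-equivariant isomorphism $\mathcal{V}_{-\mathbf{\frac{1}{2}}}(\gamma)\cong\mathcal{T}_{K}^{\nu}$. Your alternative via the $\mu$-grading would also work but is a detour, since $P_{\mu}=\Res_{\mu^{t}}(t)$ is just another packaging of the same character computation.
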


\begin{proof}
Let $\mathcal{V}_\mathbf{-\frac{1}{2}}(\gamma) := \mu^{-1}_{\gamma}(\mathbf{-\frac{1}{2}} )\times^{G(\gamma)} \widehat{\mathbf{V}}^\nu$ denote the tautological bundle on $\mathcal{X}_\mathbf{-\frac{1}{2}}(\gamma)$. The diffeomorphism \eqref{Phi-XtoHilb2} lifts to a $U(1)$-equivariant isomorphism of tautological vector bundles 
\begin{equation} \label{VtoTiso} \mathcal{V}_\mathbf{-\frac{1}{2}}(\gamma) \xrightarrow{\cong} \mathcal{T}_K^\nu,
\end{equation}
where $\mathcal{T}_K^\nu$ denotes the restriction of $\mathcal{T}_K$ to the subscheme $\Hilb_K^\nu$.  Proposition \ref{residue-character} implies that $\ch_t \mathcal{V}_{\mathbf{-\frac{1}{2}}}(\gamma)_{[\mathbf{A}(\mu)]} = \Res_{\mu}(t)$.
Hence the $\C^*$-characters of the fibres of $\mathcal{V}_{\mathbf{-\frac{1}{2}}}(\gamma)$ at any two distinct $\C^*$-fixed points are distinct. By Lemma \ref{KocinkaFionka}, we have $\ch_t (\mathcal{T}_K^\nu)_{I_{\mu}} = \Res_{\mu^t}(t)$. The $U(1)$-equivariance of \eqref{VtoTiso} implies that $\Phi([\mathbf{A}(\mu)]) = I_{\mu^t}$ and so $\Psi(\mu) =~\mu^t$. 
\end{proof}

We are now ready to collect all our results about the $\C^*$-fixed points. 
We have a sequence of equivariant isomorphisms (of varieties or manifolds): 
\begin{equation} \label{EGRiRiPhi2} \mathcal{Y}_{\mathbf{h}} \xrightarrow{\EG} \mathcal{X}_\theta(n\delta) \xrightarrow{\mathfrak{R}_{i_1} \circ \cdots \circ \mathfrak{R}_{i_m}} \mathcal{X}_{-\mathbf{\frac{1}{2}}} (\gamma) \xrightarrow{\Phi} \Hilb_K^\nu.\end{equation}
They induce bijections between the labelling sets of $\C^*$-fixed points. 
\begin{theorem} \label{caracal1}
The map \eqref{EGRiRiPhi2} induces the following bijections
\[
\begin{array}{c c c c c c c}
\mathcal{P}(l,n) &\longrightarrow& \mathcal{P}_{\varnothing}(nl) &\longrightarrow& \mathcal{P}_{\nu^t}(nl+|\nu^t|) &\longrightarrow& \mathcal{P}_{\nu}(nl+|\nu|) \\
\quot(\mu)^\flat &\longmapsto& \mu &\longmapsto& (w*\mu^t)^t &\longmapsto& w*\mu^t.
\end{array}
\]
Moreover,
\[ \nu = w*\varnothing = \mathbf{T}_{i_1} \circ \hdots \circ \mathbf{T}_{i_m}(\varnothing), \quad \quot(w*\mu^t) = \mathsf{pr}(w)\cdot \quot(\mu^t).\]
\end{theorem}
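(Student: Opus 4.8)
The plan is to assemble Theorem \ref{caracal1} by concatenating the three bijections that were established separately in the preceding sections, and then to track the core and quotient through the composite. The first arrow is the content of Theorem \ref{Cor final final}: the Etingof-Ginzburg isomorphism sends the $\C^*$-fixed point labelled by $\quot(\mu)^\flat \in \mathcal{P}(l,n)$ to the one labelled by $\mu \in \mathcal{P}_\varnothing(nl)$. The third arrow is Lemma \ref{Pro:Hilb bijection}: the map $\Phi$ sends $[\mathbf{A}(\lambda)]$ to $I_{\lambda^t}$, i.e.\ on labelling sets it is $\lambda \mapsto \lambda^t$. So the only thing that needs genuine work is the middle arrow, the composite of reflection functors $\mathfrak{R}_{i_1}\circ\cdots\circ\mathfrak{R}_{i_m}$.

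For the middle arrow I would argue by induction on the length $m$ of the reduced word $w = \sigma_{i_1}\cdots\sigma_{i_m}$, using Corollary \ref{bigcorAmbassadors} (equivalently Theorem \ref{RkTk}) at each step. Corollary \ref{bigcorAmbassadors} says that a single reflection functor $\mathfrak{R}_i$ acts on labels by $\lambda \mapsto (\sigma_i*\lambda^t)^t$, and correspondingly sends the core $\nu^t$ of the source to $(\sigma_i*\nu)^t$. Starting from $\mu \in \mathcal{P}_\varnothing(nl)$, whose associated core (in the transposed convention) is $\varnothing^t = \varnothing$, one application gives $(\sigma_{i_m}*\mu^t)^t$ — note the order: since $\mathfrak{R}_{i_1}\circ\cdots\circ\mathfrak{R}_{i_m}$ applies $\mathfrak{R}_{i_m}$ first, iterating Corollary \ref{bigcorAmbassadors} produces $(\sigma_{i_1}*\cdots*\sigma_{i_m}*\mu^t)^t = (w*\mu^t)^t$, which is exactly the claimed middle label. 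Here one uses that the $\tilde{S}_l$-action on $\mathcal{P}$ (Definition \ref{defi:part-remadd}) is a genuine group action, so the iterated $\sigma_{i_j}*$ composes to $w*$. The core of the target is then $(\sigma_{i_1}*\cdots*\sigma_{i_m}*\varnothing)^t = (w*\varnothing)^t$, and by Proposition \ref{Affineactioncorequotient} (or directly by $\tilde{S}_l*\varnothing = \heartsuit(l)$) we have $w*\varnothing = \nu$, matching the definition of $\nu = \mathfrak{d}^{-1}(\gamma_0)$ in \S\ref{Glaurung}. One must also check that the parameters line up: the reflection functors carry $\theta$ through $\sigma_i\cdot\theta$ at each stage, landing at $\sigma_{i_1}\cdots\sigma_{i_m}\cdot\theta = w\cdot\theta$; but by the choice $\theta = w^{-1}\cdot(\mathbf{-\frac{1}{2}})$ this equals $\mathbf{-\frac{1}{2}}$, so the target is indeed $\mathcal{X}_{-\mathbf{\frac{1}{2}}}(\gamma)$, and smoothness of all intermediate varieties follows since reflection functors are isomorphisms.

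Composing the three arrows then gives the top-to-bottom map $\quot(\mu)^\flat \mapsto \mu \mapsto (w*\mu^t)^t \mapsto \big((w*\mu^t)^t\big)^t = w*\mu^t$, which is the displayed array; and $\nu = w*\varnothing = \mathbf{T}_{i_1}\circ\cdots\circ\mathbf{T}_{i_m}(\varnothing)$ by unwinding Definition \ref{defi:part-remadd}. For the last formula $\quot(w*\mu^t) = \mathsf{pr}(w)\cdot\quot(\mu^t)$ I would simply invoke Proposition \ref{Affineactioncorequotient}, which states $\quot(\sigma*\lambda) = \mathsf{pr}(\sigma)\cdot\quot(\lambda)$ for any $\sigma\in\tilde{S}_l$; taking $\sigma = w$ and $\lambda = \mu^t$ gives the claim directly, using that $\mathsf{pr}$ is a group homomorphism so $\mathsf{pr}(w) = s_{i_1}\cdots s_{i_m}$.

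The main obstacle is bookkeeping rather than mathematics: getting the order of composition right (the reflection functors are applied in the order $\mathfrak{R}_{i_m}, \mathfrak{R}_{i_{m-1}},\ldots,\mathfrak{R}_{i_1}$, yet the resulting $\tilde{S}_l$-element acting on partitions is $w = \sigma_{i_1}\cdots\sigma_{i_m}$, which requires checking that the associativity of the $*$-action makes the two orders consistent), and keeping straight the two transpose conventions — the source core is $\nu^t$ while the target of $\Phi$ involves $\nu$ itself, and the Hilbert-scheme bijection introduces one more transpose. A careful statement of the inductive hypothesis, tracking simultaneously the label, the core, and the parameter at each stage, resolves all of this; none of the individual steps requires more than citing Corollary \ref{bigcorAmbassadors}, Proposition \ref{Affineactioncorequotient}, Theorem \ref{Cor final final}, and Lemma \ref{Pro:Hilb bijection}.
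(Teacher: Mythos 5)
Your proposal is correct and follows essentially the same route as the paper: the paper's proof is the single sentence that the theorem ``collects the results of Theorem \ref{Cor final final}, Corollary \ref{bigcorAmbassadors}, Lemma \ref{Pro:Hilb bijection} and \eqref{Catcreature},'' and you cite the same three results (with Proposition \ref{Affineactioncorequotient} in place of \eqref{Catcreature} for the quotient formula, which amounts to the same thing), merely spelling out the iteration of Corollary \ref{bigcorAmbassadors} and the bookkeeping of transposes in more detail.
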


\begin{proof}
The theorem just collects the results of Theorem \ref{Cor final final}, Corollary \ref{bigcorAmbassadors}, Lemma \ref{Pro:Hilb bijection} and \eqref{Catcreature}. 
\end{proof}

Let us rephrase our result slightly. Given $w \in \tilde{S}_l$, we define the $w$\emph{-twisted} $l$\emph{-quotient bijection} to be the map
\[ \tau_{w} \colon \mathcal{P}(l,n) \to \mathcal{P}_{\nu}(nl+|\nu|), \quad \quot(\mu) \mapsto w*\mu.\]

\begin{corollary} \label{caracal2} 
The bijection $\mathcal{P}(l,n) \to \mathcal{P}_{\nu}(nl+|\nu|)$ induced by \eqref{EGRiRiPhi2} is given by 
\begin{equation} \label{cor: twisted bijection} \underline{\lambda} \mapsto \tau_{w}(\underline{\lambda}^t). \end{equation} 
\end{corollary}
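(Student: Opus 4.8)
\textbf{Proof plan for Corollary \ref{caracal2}.}
The plan is to simply reformulate Theorem \ref{caracal1} in the language of the twisted quotient bijection $\tau_w$ and the transpose operation on multipartitions. I would start by fixing $\underline{\lambda} \in \mathcal{P}(l,n)$ and letting $\mu \in \mathcal{P}_\varnothing(nl)$ be the unique partition such that $\quot(\mu)^\flat = \underline{\lambda}$; such $\mu$ exists and is unique by the bijection $\mathcal{P}_\varnothing(nl) \leftrightarrow \mathcal{P}(l,n)$, $\mu \mapsto \quot(\mu)$, recorded in \S \ref{subsec: cores and quotients} (composed with the involution $\flat$ on $\mathcal{P}(l,n)$). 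By the first row of Theorem \ref{caracal1}, the composite bijection $\mathcal{P}(l,n) \to \mathcal{P}_\nu(nl+|\nu|)$ induced by \eqref{EGRiRiPhi2} sends $\quot(\mu)^\flat = \underline{\lambda}$ to $w * \mu^t$. So it remains only to show that $w * \mu^t = \tau_w(\underline{\lambda}^t)$.

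The core identity to establish is $\tau_w(\underline{\lambda}^t) = w * \mu^t$. By definition, $\tau_w$ sends $\quot(\kappa) \mapsto w * \kappa$ for any $\kappa \in \mathcal{P}_\varnothing(nl)$, so I need to exhibit $\underline{\lambda}^t$ as $\quot(\kappa)$ for the appropriate $\kappa$, namely $\kappa = \mu^t$. Thus the claim reduces to $\quot(\mu^t) = \underline{\lambda}^t = (\quot(\mu)^\flat)^t$. This is exactly the content of \eqref{Bridgetjones}: for any partition $\lambda$ one has $\quot(\lambda^t) = (\quot(\lambda)^t)^\flat$. Applying this with $\lambda = \mu$ gives $\quot(\mu^t) = (\quot(\mu)^t)^\flat$; since transpose and reverse of multipartitions commute (both act coordinatewise, one on the entries, the other permuting the slots), we have $(\quot(\mu)^t)^\flat = (\quot(\mu)^\flat)^t = \underline{\lambda}^t$. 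Hence $\tau_w(\underline{\lambda}^t) = \tau_w(\quot(\mu^t)) = w * \mu^t$, which is precisely the image of $\underline{\lambda}$ under the bijection from Theorem \ref{caracal1}.

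Putting these together, the bijection $\mathcal{P}(l,n) \to \mathcal{P}_\nu(nl+|\nu|)$ induced by \eqref{EGRiRiPhi2} is $\underline{\lambda} \mapsto w * \mu^t = \tau_w(\underline{\lambda}^t)$, which is \eqref{cor: twisted bijection}. The only mild subtlety — and the closest thing to an obstacle, though it is entirely routine — is bookkeeping the interplay of $t$ and $\flat$ on multipartitions and making sure \eqref{Bridgetjones} is invoked with the correct partition; there is no real geometric or combinatorial work beyond citing Theorem \ref{caracal1} and \eqref{Bridgetjones}. I would write the proof in two or three sentences accordingly.

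\begin{proof}
Fix $\underline{\lambda} \in \mathcal{P}(l,n)$ and let $\mu \in \mathcal{P}_\varnothing(nl)$ be the unique partition with $\quot(\mu)^\flat = \underline{\lambda}$ (see \S \ref{subsec: cores and quotients}). By Theorem \ref{caracal1}, the bijection $\mathcal{P}(l,n) \to \mathcal{P}_\nu(nl+|\nu|)$ induced by \eqref{EGRiRiPhi2} sends $\underline{\lambda} = \quot(\mu)^\flat$ to $w*\mu^t$. On the other hand, by \eqref{Bridgetjones} we have $\quot(\mu^t) = (\quot(\mu)^t)^\flat = (\quot(\mu)^\flat)^t = \underline{\lambda}^t$, since transposition and reversal of multipartitions commute. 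Therefore $w*\mu^t = w*\kappa$ where $\kappa := \mu^t$ satisfies $\quot(\kappa) = \underline{\lambda}^t$, so by the definition of $\tau_w$ we obtain $w*\mu^t = \tau_w(\underline{\lambda}^t)$. This proves \eqref{cor: twisted bijection}.
\end{proof}
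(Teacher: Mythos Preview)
Your proof is correct and follows essentially the same approach as the paper: both fix $\underline{\lambda}=\quot(\mu)^\flat$, invoke Theorem \ref{caracal1} to see that $\underline{\lambda}$ is sent to $w*\mu^t$, and then use \eqref{Bridgetjones} (together with the commutation of $t$ and $\flat$ on multipartitions) to identify $\underline{\lambda}^t=\quot(\mu^t)$, whence $\tau_w(\underline{\lambda}^t)=w*\mu^t$. Your write-up is slightly more explicit about the $t$/$\flat$ commutation, but there is no substantive difference.
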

\begin{proof}
Suppose that $\underline{\lambda} = \quot(\mu)^\flat$. Then Theorem \ref{caracal1} implies that $\underline{\lambda}$ is sent to $w * \mu^t$. On the other hand, $\underline{\lambda}^t = (\quot(\mu)^\flat)^t = \quot(\mu^t)$ by \eqref{Bridgetjones}. Hence $\tau_{w}( \underline{\lambda}^t ) =  w * \mu^t$.
\end{proof}

\begin{remark} \label{remark counterexample}
The statement of Corollary \ref{caracal2} appears in the proof of \cite[Proposition 7.10]{gor-qv}. However, the proof of this statement in \cite{gor-qv} is incorrect. The problem lies in an incorrect assumption about the function $c_{\mathbf{h}}: \mathcal{P}(l,n) \times \Q^l \to \Q$,  defined by: 
\[ c_{\mathbf{h}}(\underline{\lambda}) = l \sum_{i=1}^{l-1} |\lambda^i|(H_1 + \hdots + H_i) - l\left(\frac{n(n-1)}{2} + \sum_{i=0}^{l-1}  n(\lambda^i) - n((\lambda^i)^t)\right)h.\]
Given $\mathbf{h} \in \Q^l$, the function $c_{\mathbf{h}}$ induces an ordering on $\mathcal{P}(l,n)$, called the $c$-order, given by the rule
\[ \underline{\mu} <_{\mathbf{h}} \underline{\lambda} \iff c_{\mathbf{h}}(\underline{\mu}) < c_{\mathbf{h}}(\underline{\lambda}).\]
Dependence of this order on $\mathbf{h}$ decomposes the parameter space $\Q^l$ into a finite number of so-called $c$-chambers. 
It is stated in \cite[\S 2.5]{gor-qv} that the $c$-order is a total order inside $c$-chambers. This, however, is not true.  
Let us consider counterexamples in which the $c$-order is not total for all values of $\mathbf{h}$. For example, take $l=1$. 
Then 
\begin{equation} \label{c-func} c_{\mathbf{h}}(\lambda) = -\left(\frac{n(n-1)}{2} + n(\lambda) - n(\lambda^t)\right)h.\end{equation}
It follows immediately from \eqref{c-func} that $c_{\mathbf{h}}(\lambda) = c_{\mathbf{h}}(\mu)$ for all values of $\mathbf{h}$ if $\lambda$ and $\mu$ are two symmetric partitions in $\mathcal{P}(n)$. There are other examples. Take $\mu = (6,3,2,2,2)$. Then $n(\mu) = 3 + 4 + 6 + 8 = 21$. Since $\mu^t = (5,5,2,1,1,1)$ we have $n(\mu^t) = 5 + 4 + 3 + 4 + 5 = 21$. It follows that $\mu$ and $\mu^t$ are incomparable in the $c$-order for all values of $\mathbf{h}$. 

Gordon's proof of \eqref{cor: twisted bijection} relies on comparing the values of some Morse functions on the quiver variety $\mathcal{M}_{2\theta}(n\delta)$ and the Hilbert scheme at the $\C^*$-fixed points. This approach would work if the Morse functions assigned distinct values to each fixed point. However, this isn't the case because the Morse function on $\mathcal{M}_{2\theta}(n\delta)$, evaluated at the fixed points, is given by $c_{\mathbf{h}}$. 
\end{remark} 

\subsection{The combinatorial and geometric orderings.} 

In \cite[\S 5.4]{gor-qv} Gordon defines a \emph{geometric ordering} $\preceq_{w}^{\mathsf{geo}}$ on $\mathcal{P}(l,n)$ using the closure relations between the attracting sets of $\C^*$-fixed points in $\mathcal{M}_{2\theta}(n\delta)$ (Gordon uses the notation $\preceq_{\mathbf{h}}$). We also have a \emph{combinatorial ordering} on $\mathcal{P}(l,n)$ given~by: 
\[ \underline{\mu} \preceq_{w}^{\mathsf{com}} \underline{\lambda} \iff \tau_{w}(\underline{\lambda}^t)  \trianglelefteq \tau_{w}(\underline{\mu}^t),\]
where $\trianglelefteq$ denotes the dominance ordering on partitions. 

\begin{corollary} \label{cor macdonald} 
Let $w \in \tilde{S}_l$ and $\underline{\mu}, \underline{\lambda} \in \mathcal{P}(l,n)$. Then
$ \underline{\mu} \preceq_{w}^{\mathsf{geo}} \underline{\lambda} \ \Rightarrow \ \underline{\mu} \preceq_{w}^{\mathsf{com}} \underline{\lambda}.$ 
\end{corollary}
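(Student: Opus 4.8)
The statement to prove is Corollary \ref{cor macdonald}: that $\underline{\mu} \preceq_{w}^{\mathsf{geo}} \underline{\lambda}$ implies $\underline{\mu} \preceq_{w}^{\mathsf{com}} \underline{\lambda}$. The strategy is to pass from the geometric ordering on $\mathcal{M}_{2\theta}(n\delta)$ to a known comparison of attracting sets, via a result of Nakajima (cited as \cite{Nak-Jack}), and then translate the resulting combinatorial condition through the bijection $\tau_w$ identified in Corollary \ref{caracal2}. Concretely, I would proceed as follows.

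\textbf{Step 1: relate $\mathcal{M}_{2\theta}(n\delta)$ to the Hilbert scheme $\C^*$-equivariantly.} Since $\theta = w^{-1}\cdot(\mathbf{-\tfrac12})$ and the chain of equivariant isomorphisms \eqref{EGRiRiPhi2} (together with the further identifications in \S \ref{Glaurung}) relates $\mathcal{X}_\theta(n\delta)$ to $\Hilb_K^\nu \hookrightarrow \Hilb_K$, I would first recall that the geometric ordering defined via attracting sets in $\mathcal{M}_{2\theta}(n\delta)$ matches, under these maps, the ordering induced by attracting sets of $\C^*$-fixed points in $\Hilb_K$ restricted to $\Hilb_K^\nu$. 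This is essentially the setup of \cite[\S 5.4, \S 7]{gor-qv}, and the point is that the variation-of-GIT quotient $\mathcal{M}_{2\theta}(n\delta)$ and the affine quotient $\mathcal{X}_\theta(n\delta)$ carry the same $\C^*$-action with the same fixed points, and the attracting-set closure order is detected after embedding into the Hilbert scheme.

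\textbf{Step 2: invoke Nakajima's description of closures of attracting sets on $\Hilb_K$.} The key external input is that, for the $\C^*$-action on $\Hilb_K$ with weights $(1,-1)$ on $\C^2$, the closure order on attracting sets of the $\C^*$-fixed points $\{I_\lambda\}$ is governed by the dominance order on partitions: if $I_\mu$ lies in the closure of the attracting set of $I_\lambda$ then $\lambda \trianglelefteq \mu$ (with the appropriate convention — possibly up to transpose, which I would fix carefully against the conventions of \S \ref{Hilbertschemesection}, where $\ch_t(\mathcal{T}_K)_{I_\lambda} = \Res_{\lambda^t}(t)$). This is precisely the content drawn from \cite{Nak-Jack}. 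Restricting to the component $\Hilb_K^\nu$ only strengthens the implication (the closure of an attracting set inside $\Hilb_K^\nu$ is contained in its closure inside $\Hilb_K$), so the same dominance inequality holds for the fixed points labelled by $\mathcal{P}_\nu(K)$.

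\textbf{Step 3: translate through $\tau_w$ and transpose.} By Corollary \ref{caracal2}, the bijection $\mathcal{P}(l,n) \to \mathcal{P}_\nu(nl+|\nu|)$ induced by \eqref{EGRiRiPhi2} sends $\underline{\lambda} \mapsto \tau_w(\underline{\lambda}^t)$. Hence, chasing definitions, $\underline{\mu} \preceq_w^{\mathsf{geo}} \underline{\lambda}$ means (by Steps 1–2) that $\tau_w(\underline{\lambda}^t) \trianglelefteq \tau_w(\underline{\mu}^t)$ in the dominance order, which is exactly the defining condition of $\underline{\mu} \preceq_w^{\mathsf{com}} \underline{\lambda}$. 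Care is needed to get the direction of the inequality right: the geometric order and the dominance order are reversed relative to one another (as encoded in the definition of $\preceq_w^{\mathsf{com}}$), and the transpose in Corollary \ref{caracal2} must be tracked against the transpose appearing in the Hilbert scheme character formula of Lemma \ref{KocinkaFionka}; but once the conventions are aligned, the implication is immediate.

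\textbf{Main obstacle.} The routine part is Step 3, the bookkeeping of transposes and the reversal of orders. The substantive part — and the place where one genuinely relies on external input rather than on results proved in this paper — is Step 2, matching the attracting-set closure order on $\Hilb_K$ (or on $\mathcal{M}_{2\theta}(n\delta)$) with dominance. The subtlety is that this is only an implication, not an equivalence: it is a theorem of Nakajima that the closure relation is \emph{contained in} (a shift/transpose of) dominance, which is all that is needed here, but one must be careful not to claim more. The other delicate point is verifying that Gordon's geometric order, defined on the variety $\mathcal{M}_{2\theta}(n\delta)$, really does transport to the Hilbert-scheme order under the diffeomorphisms of \S \ref{Glaurung} — these are $U(1)$-equivariant diffeomorphisms, not algebraic isomorphisms, so one should check that attracting sets and their closures are preserved, which follows from $\C^*$-equivariance of the maps in \eqref{EGRiRiPhi2} at the level of the underlying topological/symplectic spaces.
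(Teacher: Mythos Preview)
Your proposal is correct and follows essentially the same approach as the paper: transport the geometric ordering from $\mathcal{M}_{2\theta}(n\delta)$ to $\Hilb_K^\nu$ via the equivariant isomorphism, invoke Nakajima's result \cite[(4.13)]{Nak-Jack} that the attracting-set closure order on the Hilbert scheme is refined by (anti-)dominance, and then translate back through the bijection $\underline{\lambda}\mapsto \tau_w(\underline{\lambda}^t)$ of Corollary~\ref{caracal2}. The paper's proof is terser about the transport step and the $U(1)$-equivariance subtlety you flag, but the logical structure is identical.
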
 

\begin{proof}
Using the closure relations between the attracting sets of $\C^*$-fixed points in $\Hilb_K^\nu$, one can also define a geometric ordering $\preceq^{\mathsf{geo}}$ on $\mathcal{P}_{\nu}(nl+|\nu|)$. By construction, the isomorphism $\mathcal{M}_{2\theta}(n\delta) \xrightarrow{\sim} \Hilb_K^\nu$ intertwines the two geometric orderings. Hence, by Corollary \ref{caracal2}, $\underline{\mu} \preceq_{w}^{\mathsf{geo}} \underline{\lambda} \iff \tau_{w}(\underline{\mu}^t)  \preceq^{\mathsf{geo}} \tau_{w}(\underline{\lambda}^t)$. But, by \cite[(4.13)]{Nak-Jack}, the ordering $\preceq^{\mathsf{geo}}$ is refined by the anti-dominance ordering on $\mathcal{P}_{\nu}(nl+|\nu|)$. 
\end{proof}

\end{document}